\newtheorem{theorem}{Theorem}[section]
\newtheorem{lemma}[theorem]{Lemma}
\newtheorem{corollary}[theorem]{Corollary}
\newtheorem{proposition}[theorem]{Proposition}
\newtheorem{fact}[theorem]{Fact}
\theoremstyle{definition}
\newtheorem{definition}[theorem]{Definition}
\newtheorem{remark}[theorem]{Remark}
\newtheorem{notation}[theorem]{Notation}
\def\seq{\subseteq}
\def\nv{\text{-}}
\def\inv{^{\text{-}1}}
\def\st{\textnormal{st}}
\def\smd{\raisebox{.4pt}{\textrm{\scriptsize{~\!$\triangle$\!~}}}}
\def\cU{\mathcal{U}}
\def\cB{\mathcal{B}}
\def\cF{\mathcal{F}}
\def\cP{\mathcal{P}}
\def\cR{\mathcal{R}}
\def\cS{\mathcal{S}}
\def\cZ{\mathcal{Z}}
\def\N{\mathbb{N}}
\def\G{\mathbb{G}}
\def\B{\mathbb{B}}
\def\E{\mathbb{E}}
\def\F{\mathbb{F}}
\def\R{\mathbb{R}}
\def\Z{\mathbb{Z}}
\def\Stab{\operatorname{Stab}}
\def\Th{\operatorname{Th}}
\def\VC{\operatorname{VC}}
\def\Def{\operatorname{Def}}
\newcommand{\miff}{\makebox[.4in]{$\Leftrightarrow$}}
\newcommand{\claim}{\hfill$\dashv_{\text{\scriptsize{claim}}}$}
\newcommand{\abar}{\bar{a}}
\newcommand{\bbar}{\bar{b}}
\newcommand{\gbar}{\bar{g}}
\renewcommand{\hbar}{\bar{h}}
\newcommand{\ubar}{\bar{u}}
\newcommand{\xbar}{\bar{x}}
\newcommand{\ybar}{\bar{y}}
   \def\MR#1{}
\title{Approximate subgroups with bounded VC-dimension}
\date{May 21, 2020}
\author[G. Conant]{Gabriel Conant}
\address{Department of Pure Mathematics and Mathematical Statistics\\
University of Cambridge\\
Cambridge CB3 0WB\\
 UK}
\email{gconant@maths.cam.ac.uk}
\author[A. Pillay]{Anand Pillay}
\thanks{Partially supported by NSF grants: DMS-1855503 (Conant); DMS-1665035, DMS-1790212 (Pillay)}
\address{Department of Mathematics\\
University of Notre Dame\\
Notre Dame, IN 46556\\
 USA}
\email{apillay@nd.edu}
\begin{document}

\begin{abstract} We combine the fundamental results of Breuillard, Green, and Tao \cite{BGT} on the structure of approximate groups, together with  ``tame" arithmetic regularity  methods based on work of the authors and Terry \cite{CPTNIP}, to give a structure theorem for finite subsets $A$ of arbitrary groups $G$ where $A$ has ``small tripling" and bounded VC-dimension: Roughly speaking, up to a small error, $A$ will be a union of a bounded number of translates of a coset nilprogression of bounded rank and step (see Theorem \ref{thm:NIPgen}). We also prove a stronger result in the setting of bounded exponent (see Theorem \ref{thm:NIPexp}).
Our results extend recent work of Martin-Pizarro, Palac\'{i}n, and Wolf \cite{MPW} on finite stable sets of small tripling.
\end{abstract}

\maketitle

\section{Introduction}

\subsection{Small doubling in abelian groups}\label{sec:intro1} In additive combinatorics, an ``inverse theorem" is a result in which one deduces structural information for a subset $A$ of an abelian group by studying the behavior of its iterated sumsets $A+A+\ldots+A$. If $A$ is finite then, by comparing the size of $A$ to the sizes of its sumsets, one can approximate the extent to which $A$ is ``group-like" in the sense of being closed under addition. The philosophy is that if $A$ is approximately structured in this way, then it can be approximated by objects that are ``perfectly structured" such as subgroups or arithmetic progressions \cite{TaoICM}.  

A concrete example of this philosophy is the simple exercise that if $G$ is an abelian group and $A\seq G$ is a finite set satisfying $|A+A|=|A|$, then $A$ is either empty or a coset of a subgroup of $G$ (see \cite[Proposition 2.2]{TaoVu}). More generally, we say that a (nonempty) finite set $A\seq G$ has \textbf{$k$-doubling} if $|A+A|\leq k|A|$. The next result provides an approximate description of finite sets with small doubling in abelian groups.

\begin{theorem}[Green \& Ruzsa \cite{GrRuz}]\label{thm:GR}
Suppose $G$ is an abelian group and $A\seq G$ is a finite set with $k$-doubling. Then there is a proper coset progression $P\seq 2A-2A$ of rank $O_k(1)$ such that $A$ is covered by $O_k(1)$ translates of $P$.
\end{theorem}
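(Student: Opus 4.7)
The plan is to follow the classical three-step strategy for such inverse theorems: (i) reduce from the arbitrary abelian group $G$ to a finite abelian group via a Freiman modeling lemma; (ii) in that finite model, locate a large structured subset of a small iterated sumset (Bogolyubov's lemma); (iii) pull back through the Freiman isomorphism and cover $A$.

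First I would apply the Plünnecke--Ruzsa inequalities to $A$ to obtain $|mA - nA| \leq k^{O(m+n)}|A|$ for all $m,n \geq 0$; in particular, $|8A - 8A| \leq k^{O(1)}|A|$. Second, I would invoke Ruzsa's modeling lemma to find a subset $A' \seq A$ with $|A'| \geq |A|/2$ together with a Freiman $8$-isomorphism $\phi$ from $A'$ into a finite abelian group $H$ of size $|H| = O_k(|A|)$. The image $\phi(A')$ then has density $\Omega_k(1)$ in $H$, and an application of Bogolyubov's lemma to $\phi(A')$ yields a Bohr set $B(\Gamma,\rho) \seq 2\phi(A') - 2\phi(A')$ with $|\Gamma| = O_k(1)$ and $\rho = \Omega_k(1)$. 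A standard application of Minkowski's second theorem to the lattice dual to $\Gamma$ then extracts from this Bohr set a proper coset progression $Q$ of rank $|\Gamma|$ (the subgroup part of $Q$ absorbing the torsion of $H$).

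Third, I would transfer $Q$ through $\phi^{-1}$: since $Q \seq 2\phi(A') - 2\phi(A')$ is an expression of Freiman complexity at most $4$, the defining property of an $8$-isomorphism guarantees that the pre-image $P \seq 2A - 2A$ is a proper coset progression of the same rank $O_k(1)$. For the covering, observe that $|P| \geq \Omega_k(|A|)$ (the rank and density bounds give $|Q| \gtrsim_k |H| \gtrsim_k |A|$), so $|A+P| \leq |4A - 2A| \leq k^{O(1)}|A| \leq k^{O(1)}|P|$; Ruzsa's covering lemma then covers $A$ by $O_k(1)$ translates of $P - P$, and a routine adjustment (replacing $P$ by $P - P$ and then re-asserting properness, or inflating the rank by a bounded factor) gives the statement as written.

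The main obstacle, and the chief innovation of Green and Ruzsa over Freiman's original theorem for $\Z$, lies in the handling of torsion: in a general abelian group one cannot expect $A$ to be modeled by a pure generalized arithmetic progression, and the modeling step must be performed into a carefully chosen finite abelian group so that the resulting structure is a coset progression rather than a bare progression. Arranging simultaneously that $P$ is contained in $2A - 2A$, is proper, and has rank $O_k(1)$ is the delicate quantitative balancing act, ultimately controlled by the quality of the Bohr-set-to-progression conversion via Minkowski's second theorem.
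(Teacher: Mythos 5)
Note first that the paper does not prove Theorem \ref{thm:GR} at all: it is quoted from Green and Ruzsa \cite{GrRuz}, and only its ingredients resurface later (the modeling lemma, restated as Lemma \ref{lem:GR}, and the preservation of proper coset progressions under Freiman $2$-isomorphisms, Proposition \ref{prop:TV}$(a)$). Your outline is essentially the Green--Ruzsa argument itself: model $A$ inside a finite abelian group, run Bogolyubov--Chang there to place a Bohr set of dimension $O_k(1)$ and radius $\Omega_k(1)$ inside $2A'-2A'$, convert the Bohr set into a large proper coset progression via geometry of numbers (this is indeed where torsion is absorbed and where the real work of \cite{GrRuz} lies), pull back through the Freiman $8$-isomorphism (which induces a Freiman $2$-isomorphism of the difference sets, preserving properness, the coset-progression structure, and the rank), and finish with Pl\"{u}nnecke--Ruzsa and Ruzsa covering. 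One small remark: for an arbitrary abelian group the Green--Ruzsa modeling lemma models \emph{all} of $A$ (as in Lemma \ref{lem:GR}), not merely a half-density subset; your dense-subset variant is the $\Z$-version, but either suffices here, since the covering step only needs $P\seq 2A-2A$ with $|P|\geq \Omega_k(|A|)$ and Pl\"{u}nnecke--Ruzsa applied to $A$ itself.

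The one step that would fail as written is your final adjustment. Ruzsa covering gives $A\seq X+(P-P)$ with $|X|\leq O_k(1)$, and you propose to replace $P$ by $P-P$. But $P-P\seq 4A-4A$, so this sacrifices the containment $P\seq 2A-2A$ demanded by the statement, and properness of $P-P$ is not automatic either; inflating the rank does not address the containment problem. The standard repair keeps $P$: since $P$ is symmetric, $P-P=2P$, and a coset progression of rank $r\leq O_k(1)$ satisfies that $2P$ is covered by at most $3^{r}$ translates of $P$ (split each doubled side length coordinatewise, the subgroup part being unchanged). Hence $A$ is covered by $O_k(1)\cdot 3^{r}=O_k(1)$ translates of $P$ itself, retaining both $P\seq 2A-2A$ and properness. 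With that correction your sketch is a faithful account of the proof in the cited source.
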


Coset progressions, and related objects, are defined in Section \ref{sec:Gprelim2}.
The previous theorem was first proved by Freiman \cite{FreiFST} for $(\Z,+)$, and later a different proof was found by Ruzsa, who extended the result to torsion-free abelian groups and groups of bounded exponent (see \cite{Ruz94,RuzBE}).

\subsection{Small tripling in arbitrary groups}\label{sec:intro2} From a distance, Theorem \ref{thm:GR} says that if $A$ is a finite set of small doubling in an abelian group, then $A$ can be covered by a bounded number of translates of a well-structured set $P$ of bounded complexity, which lies inside a small sumset of $A$. We will refer to this kind of a result as a \emph{Freiman-Ruzsa} property. Some sources, such as \cite{SanBR}, follow the naming convention ``Bogolyubov-Ruzsa"
 in reference to the notable influence of \cite{Bog39} on Ruzsa's work. Our  terminology is chosen to be consistent with \cite[Section 2]{BGT}.
 
 In abelian groups, an important feature  of the small doubling assumption for a finite set is that it leads to controlled growth of all iterated sumsets via the \emph{Plunnecke-Ruzsa inequalities} (see Remark \ref{rem:PRI}). For nonabelian groups, this is no longer the case. Instead, one needs the stronger assumption of small \emph{tripling}. Specifically, given an arbitrary group $G$, we say that a finite set $A\seq G$ has \textbf{$k$-tripling} if $|A^3|\leq k|A|$. By results of Tao \cite{TaoPSE}, this property is closely related to the notion of a $k$-approximate group (see Definition \ref{def:approxg}). In particular, a $k$-approximate group has $k$-tripling by definition and, conversely, if $A$ has $k$-tripling then $(A\cup A\inv)^2$ is an $O(k^{O(1)})$-approximate group by \cite{TaoPSE}. Combining this with the main structure theorem for approximate groups, due to Breuillard, Green and Tao \cite{BGT}, we obtain the following analogue of Theorem \ref{thm:GR} for finite sets of small tripling in arbitrary groups. 

\begin{theorem}[Breuillard-Green-Tao \cite{BGT}, Tao \cite{TaoPSE}]\label{thm:BGT}
Suppose $G$ is a group and $A\seq G$ is a finite set with $k$-tripling. Then there is a coset nilprogression $P\seq (A\cup A\inv)^{8}$ of rank and step $O_k(1)$, and in $O_k(1)$-normal form, such that $A$ is covered by $O_k(1)$ left translates of $P$. 
\end{theorem}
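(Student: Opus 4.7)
The plan is to combine, essentially as a black box, the two results already cited just above the statement: Tao's theorem \cite{TaoPSE} converting small tripling into an approximate group, and the Breuillard-Green-Tao structure theorem \cite{BGT} for approximate groups themselves.

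First I would set $H = (A \cup A\inv)^2$. By Tao's theorem, the $k$-tripling assumption on $A$ implies that $H$ is a $K$-approximate group, where $K = O(k^{O(1)})$. The path to this bound is standard noncommutative Plunnecke-Ruzsa machinery: starting from $|A^3| \leq k|A|$, one obtains polynomial control on all iterated products $|A^{\epsilon_1}\cdots A^{\epsilon_n}|$ with $\epsilon_i \in \{1,\nv 1\}$, from which the $K$-approximate-group property of $H$ follows by standard covering arguments.

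Next I would apply the BGT structure theorem directly to the $K$-approximate group $H$. This produces a coset nilprogression $P$ of rank and step $O_K(1)$, in $O_K(1)$-normal form, with $P \seq H^4$ and such that $H$ is covered by $O_K(1)$ left translates of $P$. Substituting $H^4 = (A \cup A\inv)^8$, and using $K = O(k^{O(1)})$, every relevant parameter (rank, step, normal-form constant, number of translates) becomes a function of $k$ alone.

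Finally, since $A \seq (A \cup A\inv)^2 = H$, the $O_k(1)$ left translates of $P$ that cover $H$ automatically cover $A$. This closes the argument. The only genuine obstacle in this plan is the BGT theorem itself, whose proof occupies essentially all of \cite{BGT} and which I would use purely as a black box; Tao's tripling-to-approximate-group step, while nontrivial, is comparatively routine.
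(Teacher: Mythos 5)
Your route is exactly the one the paper intends: it states this theorem as a citation and explains it precisely as the combination of Tao's tripling-to-approximate-group result (Theorem \ref{thm:TaoPSE}$(b)$) applied to $H=(A\cup A\inv)^2$, followed by the BGT structure theorem for approximate groups used as a black box, with $P\seq H^4=(A\cup A\inv)^8$. One small step in your write-up is false as stated, though easily repaired: in general $A\not\seq (A\cup A\inv)^2$. For instance, if $G=\Z$ and $A=\{1\}$, then $(A\cup(\nv A))+(A\cup(\nv A))=\{\nv 2,0,2\}$ does not contain $A$; the issue is that writing $a=xy$ with $x,y\in A\cup A\inv$ requires a spare element, not just $a$ itself. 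The fix costs nothing: for any fixed $a\in A$ one has $a\inv A\seq A\inv A\seq H$, so $A\seq aH$, and if $H\seq g_1P\cup\cdots\cup g_mP$ with $m\leq O_k(1)$, then $A\seq ag_1P\cup\cdots\cup ag_mP$, which is still a covering of $A$ by $O_k(1)$ left translates of $P$. With that one-line correction your argument coincides with the paper's.
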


In \cite{CoBogo}, the first author modified certain parts of \cite{BGT} (specifically, a combinatorial argument of Sanders \cite{SanBS}) to show that in Theorem \ref{thm:BGT}, one can in fact obtain $P\seq A^2A^{\nv 2}\cap A^{\nv 2}A^2\cap (AA\inv)^2\cap (A\inv A)^2$. With this improvement, Theorem \ref{thm:BGT} fully generalizes Theorem \ref{thm:GR}. We also refer the reader to Breuillard's surveys \cite{BreuSSA} and \cite{BreuH5P}. The first paper points out the origins of ``approximate subgroup theory" in  superstrong approximation (as well as in the inverse problems from Section \ref{sec:intro1}). 

\subsection{Bounded exponent}\label{sec:intro3} It is also common in additive and multiplicative combinatorics to combine the small doubling or tripling condition with further restrictions, such as a bound on the exponent of the ambient group. In the abelian case, this reflects the frequent use of groups of the form $\F_p^n$ as ``toy models" for problems about general abelian groups. The next result provides an analogue of Theorem \ref{thm:BGT} for groups of bounded exponent. 

\begin{theorem}\textnormal{\cite{RuzBE,HruAG,BGT,CoBogo}}\label{thm:BGTbe}
Suppose $G$ is a group of exponent $r$ and $A\seq G$ is a finite set with $k$-tripling. Then there is a subgroup $H\seq A^2A^{\nv 2}\cap A^{\nv 2}A^2\cap (AA\inv)^2\cap (A\inv A)^2$ such that $A$ is covered by $O_{k,r}(1)$ left cosets of $H$.
\end{theorem}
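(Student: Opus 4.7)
The plan is to invoke the sharpened form of Theorem~\ref{thm:BGT} from \cite{CoBogo} to obtain a coset nilprogression
\[
P \seq A^2 A^{\nv 2} \cap A^{\nv 2} A^2 \cap (AA\inv)^2 \cap (A\inv A)^2
\]
of rank $d$ and step $s$ both $O_k(1)$, in $O_k(1)$-normal form, together with a set $X \seq G$ of size $O_k(1)$ with $A \seq XP$. The strategy is then to read off the required subgroup $H$ directly from the structure of $P$, using the exponent bound to collapse the ``progression part'' of the coset nilprogression into something finite and controlled.

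Unpacking definitions, $P$ has the form $P = H P_0$ where $H$ is a finite subgroup of $G$ and $P_0 = \{u_1^{n_1}\cdots u_d^{n_d} : |n_i|\le N_i\}$ is a nilprogression whose generators $u_1,\dots,u_d \in G$ normalize $H$. Since $e \in P_0$, we have $H = He \seq HP_0 = P$, and the containment
\[
H \seq A^2 A^{\nv 2} \cap A^{\nv 2} A^2 \cap (AA\inv)^2 \cap (A\inv A)^2
\]
required in the theorem is automatic.

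For the covering statement, we exploit that $G$ has exponent $r$. Each $u_i$ has order dividing $r$, so $u_i^{n_i}$ takes at most $r$ distinct values as $n_i$ ranges over $\Z$; consequently $|P_0|\le r^d = O_{k,r}(1)$. Because the $u_i$ normalize $H$, so does every element of $P_0$, giving $HP_0 = P_0 H$. Hence
\[
A \seq XP = X(HP_0) = X(P_0 H) = (XP_0)H,
\]
and $|XP_0|\le |X|\cdot|P_0| = O_{k,r}(1)$, so $A$ is covered by $O_{k,r}(1)$ left cosets of $H$, as required.

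The main obstacle is really front-loaded into Theorem~\ref{thm:BGT} itself: the argument depends on the structural fact that the ``coset part'' of a coset nilprogression is a genuine subgroup of $G$ normalized by the progression generators, and on the four-fold intersection strengthening of the ambient set from \cite{CoBogo}. Once these are granted, the bounded exponent hypothesis does essentially all the remaining work by collapsing $P_0$ to a set of size controlled by $r$ and the rank; the step $s$ and the $C_0$-normal form condition play no further role in the argument.
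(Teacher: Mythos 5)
Your overall plan---deduce Theorem \ref{thm:BGTbe} from the strengthened Theorem \ref{thm:BGT} of \cite{CoBogo} and then use the exponent bound to collapse the progression part---is a legitimate route, and it differs from the paper, which does not reprove this statement at all but quotes it from \cite{RuzBE,HruAG,BGT,CoBogo} (where the bounded-exponent case is handled by a Gleason--Yamabe/total-disconnectedness argument producing a subgroup directly, in the spirit of Proposition \ref{prop:GYbd}, rather than by post-processing a coset nilprogression). However, your key counting step has a genuine gap. A nilprogression $P_0=P(u_1,\ldots,u_d;L_1,\ldots,L_d)$ in a nonabelian group is \emph{not} the set of ordered products $u_1^{n_1}\cdots u_d^{n_d}$ with $|n_i|\le N_i$: by definition it consists of all values of group words in $u_1^{\pm1},\ldots,u_d^{\pm1}$ in which each letter occurs at most $L_i$ times. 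So the fact that each $u_i$ has order at most $r$ does not give $|P_0|\le r^d$. Your closing remark that the step $s$ and the normal form ``play no further role'' is exactly where the argument fails: the $L_i$ are typically huge (of size comparable to a power of $|A|$, since $P$ must be almost as large as $A$ for the covering to be possible), and a $d$-generated group of exponent $r$ can be infinite (free Burnside groups), so bounded order of the generators alone bounds neither $\langle u_1,\ldots,u_d\rangle$ nor the number of values of bounded-occurrence words.

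The gap can be repaired using either ingredient you discarded. Using the step: $c_{s+1}(\ubar)=\{1\}$ implies, by standard commutator calculus with simple commutators in the generators, that $N:=\langle u_1,\ldots,u_d\rangle$ is nilpotent of class at most $s$; every element of $N\leq G$ has order at most $r$, so $N$ is a quotient of the relatively free $d$-generated nilpotent group of class at most $s$ and exponent dividing $\operatorname{lcm}(1,\ldots,r)$, which is finite of size $O_{d,s,r}(1)=O_{k,r}(1)$; since $P_0\seq N$, your computation $A\seq XP\seq (XP_0)H$ then gives the covering by $O_{k,r}(1)$ left cosets. Alternatively, using the normal form: condition $(ii)$ of Definition \ref{def:cnf}, applied to $\pi(P)$ in $\langle P\rangle/H$, forces $c\inv L_i<\operatorname{ord}(u_iH)\leq r$ (otherwise two distinct admissible exponent choices give the same element), so $L_i<cr$, and condition $(iii)$ then bounds the number of left cosets of $H$ meeting $P$ by $c\prod_{i=1}^d(2\lfloor L_i\rfloor+1)\leq c(2cr+1)^d=O_{k,r}(1)$; combined with $A\seq XP$ and $|X|\leq O_k(1)$ this again yields the covering. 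With either repair, the containment $H\seq P\seq A^2A^{\nv 2}\cap A^{\nv 2}A^2\cap (AA\inv)^2\cap (A\inv A)^2$ is, as you observe, immediate from $1\in P_0$ and the fact that $P_0$ normalizes $H$.
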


In the abelian case, this theorem is due to Ruzsa \cite{RuzBE}. The general case was proved by Hrushovski \cite{HruAG}, and again later by Breuillard, Green, and Tao \cite{BGT} under the stronger assumption that $A$ is a $k$-approximate group. Via the result of Tao \cite{TaoPSE} mentioned above, this implies Theorem \ref{thm:BGTbe} for $A$ with $k$-tripling, but with $H\seq (A\cup A\inv)^{8}$, and the final improvement is done in \cite{CoBogo}. Also, via an observation of van den Dries \cite{vdDag}, it suffices to only assume that every element in the set $A^2A^{\nv 2}\cap A^{\nv 2}A^2\cap (AA\inv)^2\cap (A\inv A)^2$ has order at most $r$ (see Proposition \ref{prop:GYbd}).

\section{Main results}

\subsection{NIP and VC-dimension}\label{sec:intro4} In the present article, we consider a structural assumption on subsets of groups defined using VC-dimension. To see the relevance of this assumption in the context of finding ``group-like" structure, we formulate VC-dimension for subsets of groups in terms of forbidden bipartite graphs. We use the notation $\Gamma=(V,W;E)$ for bipartite graphs, where $V$ and $W$ are vertex sets and $E\seq V\times W$ is an edge set. Given a group $G$ and a set $A\seq G$, let $\Gamma_G(A)$ denote the bipartite graph $(G,G;E_A)$ where $E_A=\{(x,y)\in G^2:yx\in A\}$. 

A simple exercise, in a similar spirit as the one preceding Theorem \ref{thm:GR}, is that if $G$ is an \emph{arbitrary} group and $A\seq G$ is a nonempty set of \emph{any cardinality}, then $A$ is a coset of a subgroup of $G$ if and only if $\Gamma_G(A)$ omits $([2],[2];\leq)$ (by which we always mean as an \emph{induced subgraph}).\footnote{We use $[n]$ to denote $\{1,\ldots,n\}$.} Thus, in addition to the notion of $k$-doubling or tripling, we can use omitted subgraphs in $\Gamma_G(A)$ as a test for finding ``group-like" structure in $A$. In order to work with a numerical parameter, we say that $A\seq G$ is \textbf{$d$-NIP}\footnote{This acronym, which comes from model theory, stands for the ``negation of the independence property".} if $\Gamma_G(A)$ omits the graph $([d],\cP([d]);\in)$. A good exercise is  that if $\Gamma_G(A)$ omits some finite bipartite graph $(V,W;E)$, then $A$ is $d$-NIP for some $d\leq |V|+\lceil \log_2|W|\rceil$. Therefore we have successfully captured the phenomenon of forbidden bipartite subgraphs in a numerical way. This measurement is also mathematically useful since it  provides access to tools from VC-theory. Indeed, it follows from the definitions that $A\seq G$ is $d$-NIP if and only if the collection of left translates of $A$ has VC-dimension strictly less than $d$ (as a set system on $G$). We will also see that NIP sets small tripling are even closer to approximate groups than what is given by \cite{TaoPSE}. Specifically, if $A\seq G$ is $d$-NIP with $k$-tripling, then $A\cup A\inv\cup\{1\}$ is an $O_d(k^{O(1)})$-approximate group (see Theorem \ref{thm:TaoNIP}). 

The following is our main result.

\begin{theorem}[main result]\label{thm:NIPgen}
Suppose $G$ is a group and $A\seq G$ is a finite $d$-NIP set with $k$-tripling. Given $\epsilon>0$, there is a coset nilprogression $P\seq G$ of rank and step $O_{d,k,\epsilon}(1)$ in $O_{d,k,\epsilon}(1)$-normal form, and a set $Z\seq AP$ with $|Z|<\epsilon|A|$, satisfying the following properties.
\begin{enumerate}[$(i)$]
\item $P\seq AA\inv\cap A\inv A$ and $A\seq CP$ for some $C\seq A$ with $|C|\leq O_{d,k,\epsilon}(1)$. 
\item There is a set $D\seq C$ such that $|(A\smd DP)\backslash Z|<\epsilon|P|$.
\item If $g\in G\backslash Z$ then $|gP\cap A|<\epsilon|P|$ or $|gP\cap A|>(1-\epsilon)|P|$.
\end{enumerate}
Moreover, if $G$ is abelian then $P$ is a proper coset progression.
\end{theorem}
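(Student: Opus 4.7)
The plan is to combine the approximate group theorem (Theorem \ref{thm:BGT}) with a tame regularity argument exploiting the $d$-NIP hypothesis, in the spirit of \cite{CPTNIP}. First I would apply Theorem \ref{thm:BGT}, together with the refinement from \cite{CoBogo} mentioned after it, to obtain an initial coset nilprogression $P_0 \seq AA\inv\cap A\inv A$ of rank and step $O_k(1)$ in $O_k(1)$-normal form, and a set $C_0\seq A$ of size $O_k(1)$ with $A\seq C_0 P_0$. This already delivers the qualitative content of items $(i)$ and $(ii)$; what remains is to shrink $P_0$ to a sub-nilprogression $P$ on which the NIP hypothesis forces the generic dichotomy $(iii)$, and to locate the exceptional set $Z$.

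For the regularity step I would pass to an ultraproduct. Assuming toward contradiction that the theorem fails for fixed $d,k,\epsilon$ along a sequence $(G_n,A_n)$, form the ultraproduct $(G^*,A^*)$; then $A^*$ is an internal $d$-NIP subset of a pseudofinite group with $k$-tripling, and by Theorem \ref{thm:TaoNIP} the set $A^*\cup(A^*)\inv\cup\{1\}$ is an honest $O_d(k^{O(1)})$-approximate group. The normalized counting measure $\mu$ on $A^*$ is a Keisler measure concentrated on a NIP formula and is therefore generically stable, so its left-stabilizer $\Stab(\mu)$ is a type-definable bounded-index subgroup of the approximate group generated by $A^*$, and cosets of $\Stab(\mu)$ are either $\mu$-almost contained in $A^*$ or $\mu$-almost disjoint from it.

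The core technical step is to replace the type-definable $\Stab(\mu)$ by a \emph{definable} coset sub-nilprogression $P^*\seq P_0^*$ of rank and step $O_{d,k,\epsilon}(1)$ still lying inside $A^*(A^*)\inv\cap(A^*)\inv A^*$. By compactness $\Stab(\mu)$ is a decreasing intersection of definable sets, and using the geometry of the nonstandard coset nilprogression $P_0^*$ (in particular, that ``Bohr-type'' sub-neighborhoods of a coset nilprogression are again coset nilprogressions of comparable complexity and normal form), one can sandwich $\Stab(\mu)$ between such a $P^*$ and a small dilation of it. Transferring back to the finite setting, the cosets of $P$ that violate the dichotomy become the exceptional set $Z$ with $|Z|<\epsilon|A|$, yielding $(ii)$ and $(iii)$. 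The covering $A\seq CP$ with $C\seq A$ in $(i)$ follows from the original BGT cover by refining $C_0$. In the abelian case, running the same argument with Theorem \ref{thm:GR} in place of Theorem \ref{thm:BGT} produces a proper coset progression.

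The main obstacle is precisely the extraction of a definable coset nilprogression approximating $\Stab(\mu)$: NIP stabilizers in groups are naturally only type-definable, while the conclusion demands a genuine coset nilprogression of bounded rank, step, and normal-form complexity lying inside $AA\inv\cap A\inv A$. Reconciling the algebraic geometry of iterated commutator progressions from \cite{BGT} with the local stability theory of generically stable Keisler measures from \cite{CPTNIP} is the heart of the proof, and is also the step that explains why the bounds degrade with $\epsilon$ while the rank and step remain finite.
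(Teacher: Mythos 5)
Your overall skeleton (ultraproduct, a type-definable stabilizer subgroup, regularity off a small exceptional set, then a nilprogression) points in the right direction, but two of your central claims are asserted rather than proved, and they are exactly where the difficulty lies. First, the statement that ``cosets of $\Stab(\mu)$ are either $\mu$-almost contained in $A^*$ or $\mu$-almost disjoint from it'' does not follow formally from the counting measure being generically stable (or dfs) for NIP formulas; indeed it is false for all cosets in the NIP setting, which is why the exceptional set $Z$ appears at all (contrast the stable case, Theorem \ref{thm:MPW}, where no $Z$ is needed). What is true, and what the paper must prove, is \emph{generic locally compact domination} (Theorem \ref{thm:GCD} and Corollary \ref{cor:oneY}): the set of cosets of $\Gamma_{\!A}$ on which both $C\cap A$ and $C\setminus A$ are wide is compact and Haar null in the locally compact group $\langle A\rangle/\Gamma_{\!A}$. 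Establishing this requires Simon's theorem on NIP sets in locally compact second countable groups (Theorem \ref{thm:SimGCD}), together with nontrivial work showing the coset sets $\B^p_X$ are Borel and pointwise large, and then Lemma \ref{lem:oneY} to convert ``Haar null'' into ``covered by a set of small $\mu_A$-measure'' so that $Z$ can be extracted. Your proposal treats all of this as automatic. Second, and more seriously, your ``core technical step'' --- sandwiching $\Stab(\mu)$ between a definable coset sub-nilprogression $P^*\subseteq P_0^*$ and a small dilation, via an unproved claim that Bohr-type sub-neighborhoods of a coset nilprogression are again coset nilprogressions of comparable normal form containing the stabilizer --- is a restatement of the problem, not an argument: nothing ties the type-definable stabilizer to the particular nilprogression $P_0$ you fixed at the start, and no such sandwiching lemma is available.

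The paper avoids this entirely by reversing the order of operations. It never shrinks a nilprogression obtained from $A$; instead, the NIP stabilizer theorem (Theorem \ref{thm:Stabs}) produces $\Gamma_{\!A}\subseteq AA^{-1}\cap A^{-1}A$ as a countable intersection of definable sets $W_n$ lying in $AA^{-1}\cap A^{-1}A$, each of which is automatically an approximate group generic in $\langle A\rangle$ (because it contains the bounded-index type-definable $\Gamma_{\!A}$). The pseudofinite Breuillard--Green--Tao theorem for internal approximate groups (Theorem \ref{thm:BGTpf}) is then applied to such a $W$, yielding an internal coset nilprogression $P\subseteq W^4\subseteq AA^{-1}\cap A^{-1}A$ in normal form with $\mu_P(W)<\infty$; the regularity of translates of $P$ is inherited from that of translates of $W^4$, and the covering $A\subseteq CP$ plus the set $D$ come from the Ruzsa covering lemma (Lemma \ref{lem:RCL}) together with the scaling estimate for nilprogressions in normal form (Proposition \ref{prop:scaleCN}). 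Note also a smaller inaccuracy at your first step: Theorem \ref{thm:BGT} with the refinement from \cite{CoBogo} gives a nilprogression inside $A^2A^{-2}\cap A^{-2}A^2\cap(AA^{-1})^2\cap(A^{-1}A)^2$, not inside $AA^{-1}\cap A^{-1}A$; the stronger containment is precisely the new content of the NIP stabilizer theorem and cannot be imported from the NIP-free literature.
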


Before continuing, let us compare Theorem \ref{thm:NIPgen} to the other results mentioned above. First, condition $(i)$ mirrors the statement of Theorem \ref{thm:BGT}, except that we have found the coset nilprogression $P$ inside the smaller product set $AA\inv\cap A\inv A$. Beyond this, by introducing the $\epsilon$ parameter, we obtain an even stronger structural approximation of $A$ in terms of $P$. In particular, condition $(ii)$ says that $A$ looks approximately like a union of boundedly many translates of $P$. Indeed, this condition implies the simpler expression 
\[
|A\smd DP|<\epsilon(|P|+|A|).
\]
Here one might find it desirable to have an error bound in terms of $A$ only. Thus it is worth noting that, since $P\seq AA\inv\cap A\inv A$, we can used the Plunnecke-Ruzsa inequalities (Theorem \ref{thm:TaoPSE}$(a)$) to conclude that $|P|\leq k^{O(1)}|A|$. Finally, condition $(iii)$ says that almost all translates of $P$ behave regularly with respect to $A$ in the strong sense of being almost disjoint from $A$ or almost contained in $A$.

If we impose a bounded exponent assumption then, as in Section \ref{sec:intro3}, we obtain a stronger version of Theorem \ref{thm:NIPgen} in which the coset nilprogression is replaced by a subgroup. Given a group $G$, we say that a subset $X\seq G$ has \textbf{exponent at most $r$} if every element of $X$ has order at most $r$.  

\begin{theorem}\label{thm:NIPexp}
Suppose $G$ is a group and $A\seq G$ is a finite $d$-NIP set with $k$-tripling. Assume $AA\inv\cap A\inv A$ has exponent at most $r$. Given $\epsilon>0$, there is a subgroup $H\leq G$ and a set $Z\seq AH$, which is a union of  left cosets of $H$ with $|Z|<\epsilon|A|$, satisfying the following properties.
\begin{enumerate}[$(i)$]
\item $H\seq AA\inv\cap A\inv A$ and $A\seq CH$ for some $C\seq A$ with $|C|\leq O_{d,k,r,\epsilon}(1)$. 
\item There is a set $D\seq C$ such that $|(A\backslash Z)\smd DH|<\epsilon|H|$.
\item If $g\in G\backslash Z$ then $|gH\cap A|<\epsilon|H|$ or $|gH\cap A|>(1-\epsilon)|H|$.
\end{enumerate}
Moreover, $H$ is a finite Boolean combination\footnote{The ``complexity" of this Boolean combination can also be bounded in terms of the parameters $d$, $k$, $r$, and $\epsilon$ only; see Remark \ref{rem:complex}.} of bi-translates of $A$.
\end{theorem}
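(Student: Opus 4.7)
The plan is to apply Theorem \ref{thm:NIPgen} first to obtain a coset nilprogression $P$ of rank and step $O_{d,k,\epsilon}(1)$ inside $AA\inv \cap A\inv A$, together with a covering set $C \seq A$ and exceptional set $Z \seq AP$ satisfying all three conclusions (i)--(iii). Because $P \seq AA\inv \cap A\inv A$ and the latter has exponent at most $r$ by hypothesis, every element of the normal-form nilprogression $P$ has order at most $r$. A coset nilprogression in normal form of bounded rank and step whose generators all have order bounded by $r$ is contained in a finite subgroup of $G$ whose cardinality is bounded by a constant depending only on $d$, $k$, $r$, and $\epsilon$; this is the same device that is used in \cite{BGT,CoBogo} to pass from coset nilprogressions to subgroups in the bounded exponent setting.

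Next, I would invoke the van den Dries observation recorded as Proposition \ref{prop:GYbd} together with Theorem \ref{thm:BGTbe} to upgrade $P$ to an actual subgroup $H \seq AA\inv \cap A\inv A$. Since $P$ already sits inside $AA\inv \cap A\inv A$, every element of $P$ has bounded order, so the hypothesis of Proposition \ref{prop:GYbd} is satisfied and one obtains a subgroup $H \seq AA\inv \cap A\inv A$ whose size is comparable to $|P|$ up to the same bounded constants. With $|H|$ and $|P|$ within a bounded factor, the regularity conclusions for $P$ transfer to $H$: enlarge $Z$ to a union of left $H$-cosets contained in $AH$, which inflates $|Z|$ by at most a bounded factor (absorbed by shrinking the working $\epsilon$ at the outset); then condition (iii) for $P$ upgrades to condition (iii) for $H$ by a pigeonhole counting comparing $|gH \cap A|$ to $|gP \cap A|$ on cosets not wholly inside $Z$, and the covering set $D \seq C$ for $P$ in (ii) yields the corresponding covering by left cosets of $H$ after at worst enlarging $D$ within $C$.

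For the ``moreover'' clause I would appeal to the model-theoretic stabilizer that underlies the proof of Theorem \ref{thm:NIPgen}. Under NIP, the Massicot--Wagner-type stabilizer of the Keisler measure on the ultralimit of $A$ is a type-definable subgroup of the nonstandard group, realized as a countable intersection of sets built from (bi-)translates of the ultralimit of $A$. The bounded exponent hypothesis on $AA\inv \cap A\inv A$ forces this descending chain to stabilize at a finite stage, because in the ultralimit any strictly descending chain of subgroups of bounded exponent in an approximate group would contradict the bounded index. The resulting \emph{definable} subgroup is then transferred back to $G$ via \L{}o\'s's theorem as a Boolean combination of bi-translates of $A$, with the number of bi-translates controlled by the termination stage and hence by $d$, $k$, $r$, and $\epsilon$, yielding the complexity bound referenced in the footnote.

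The main obstacle I expect is the last step: verifying that the bounded exponent hypothesis on $AA\inv \cap A\inv A$ really does collapse the type-definable NIP stabilizer to a definable subgroup, and then faithfully extracting the finitary Boolean-combination description of $H$ from this nonstandard argument while keeping the complexity bounded. The transfer of the regularity conclusions from $P$ to $H$ is mostly bounded-size bookkeeping, but care is needed to ensure that successive enlargements of $Z$ (first to $H$-cosets, then within $AH$) keep $|Z| < \epsilon|A|$.
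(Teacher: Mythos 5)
Your overall plan — deduce the bounded-exponent theorem from Theorem \ref{thm:NIPgen} by converting the coset nilprogression into a subgroup — is not how the paper proceeds, and as written it has a genuine gap at the conversion step. First, the claim that a normal-form nilprogression $P\seq AA\inv\cap A\inv A$ of bounded rank and step lies in a finite subgroup of cardinality $O_{d,k,r,\epsilon}(1)$ is false: writing $P=P_0H_0$ as in Definition \ref{def:nilp}, bounded exponent only collapses the progression part, so $P$ is a union of boundedly many cosets of $H_0$, but $|H_0|$ is comparable to $|A|$ and is not bounded; moreover $\langle P\rangle$ need not lie in $AA\inv\cap A\inv A$. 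Second, producing a subgroup $H$ from Theorem \ref{thm:BGTbe} (or from Proposition \ref{prop:GYbd}, which is a statement about a saturated group and an $\cR$-type-definable $\Gamma$, not a finitary device for upgrading a given finite $P$) gives an object with no containment relation to $P$, and comparability of $|H|$ and $|P|$ does not transfer regularity: knowing each translate $gP$ is almost inside or almost disjoint from $A$ says nothing about cosets of an unrelated subgroup of similar size, so the ``pigeonhole counting'' step has no content. The one salvageable variant, taking $H=H_0\seq P$, transfers conditions (ii) and (iii) only with a loss factor $M=O_{d,k,r,\epsilon'}(1)$ (the number of $H_0$-cosets in $P$), and since the bounds in Theorem \ref{thm:NIPgen} are ineffective you cannot choose the initial $\epsilon'$ so that $\epsilon' M(d,k,r,\epsilon')\leq\epsilon$; the ``shrink $\epsilon$ at the outset'' remark is exactly where the argument is circular. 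This is why the paper does not derive Theorem \ref{thm:NIPexp} from the finitary Theorem \ref{thm:NIPgen}: it proves a separate pseudofinite statement (Theorem \ref{thm:expUP}), where Theorem \ref{thm:Stabs} gives $\Gamma_{\!A_*}\seq A_*A_*\inv\cap A_*\inv A_*$, Proposition \ref{prop:GYbd} (Gleason--Yamabe) replaces nilprogressions by a decreasing chain of $\cR_{\!A_*}$-definable subgroups $H_n$ with $\bigcap_n H_n=\Gamma_{\!A_*}$, Corollary \ref{cor:oneY} selects a single $H_n$ for the given $\epsilon$, and the result is then transferred back and fed into the ultraproduct-of-counterexamples argument of Section \ref{sec:NIPgenpf}.

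Your argument for the ``moreover'' clause also contains a false step: bounded exponent does not force the descending chain of definable subgroups to stabilize, and $\Gamma_{\!A}$ is genuinely type-definable and not definable in general (bounded index is perfectly compatible with a strictly decreasing chain of definable subgroups of growing finite index, as already happens for NIP sets in groups of exponent $2$; the collapse to a definable stabilizer is a feature of the \emph{stable} case, via Theorem \ref{thm:DOT}, not of bounded exponent). The paper obtains the Boolean-combination description not from stabilization but from the fact that each $H_n$ produced by Proposition \ref{prop:GYbd} is itself $\cR_{\!A_*}$-definable over $G$, i.e.\ already a finite Boolean combination of bi-translates of $A$, and the chosen finite stage $n$ depends on $\epsilon$ through Corollary \ref{cor:oneY}; the complexity bound of Remark \ref{rem:complex} then comes from the compactness argument over a sequence of putative counterexamples, not from any termination of the chain.
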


We now discuss how the two theorems above fit into previous work on NIP sets in groups. The additional ``structure and regularity" properties given by conditions $(i)$ and $(iii)$ of Theorems \ref{thm:NIPgen} and \ref{thm:NIPexp} are based on related results concerning ``tame" arithmetic regularity for subsets of finite groups. The notion of arithmetic regularity was developed by Green \cite{GreenSLAG} to provide a group-theoretic analogue of Szemer\'{e}di regularity for graphs. In \cite{TeWo}, Terry and Wolf proved a strong form of arithmetic regularity for \emph{$d$-stable} subsets of $\F_p^n$ (see Section \ref{sec:stable} for details on stability). These results were then qualitatively generalized to arbitrary finite groups by the authors and Terry in \cite{CPT}. A quantitative generalization for finite abelian groups was done later by Terry and Wolf \cite{TeWo2}, and recently for arbitrary finite groups by the first author \cite{CoQSAR}.

Shortly after \cite{TeWo} and \cite{CPT}, Alon, Fox, and Zhao \cite{AFZ} proved a similar arithmetic regularity result for NIP sets in finite abelian groups of bounded exponent. This was soon followed by work of Sisask \cite{SisNIP} on NIP sets in finite abelian groups and, independently, work of the authors and Terry \cite{CPTNIP} on NIP sets in arbitrary finite groups. The work in \cite{CPTNIP} used model theoretic tools developed by the authors in \cite{CPpfNIP}, which will also play a key role in the present paper. 

 For infinite groups, a ``tame" Freiman-Ruzsa result for \emph{stable} sets of small tripling was recently obtained by Martin-Pizarro, Palac\'{i}n, and Wolf \cite{MPW} (see Theorem \ref{thm:MPW} below). We also note that Sisask's earlier work in \cite{SisNIP} contains a strong Freiman-Ruzsa component. For example, \cite[Theorem 5.4]{SisNIP} provides a result along the lines of Theorem \ref{thm:NIPexp} in the case that $G=\F_p^n$ for some $n\geq 1$ and fixed prime $p$.
 
 \subsection{Sketch of the proofs}\label{sec:sketch}
 
 For the most part, our proofs will rely on model theoretic tools involving ultraproducts and pseudofinite sets. Note that Theorems \ref{thm:NIPgen} and \ref{thm:NIPexp} are asymptotic statements about finite subsets of groups. Thus if we assume that such a statement is false, then we obtain an infinite sequence of counterexamples, which can be traded for a single limiting structure via the ultraproduct construction. So that we can more easily apply model theoretic methods, we will pass to a sufficiently saturated elementary extension of this ultraproduct in an appropriate first-order language. In the end, we will have a saturated group $G$ and a pseudofinite definable set $A\seq G$, along with the $A$-normalized pseudofinite counting measure $\mu_A$ on definable subsets of $G$. In this setting, ``small tripling" manifests as the property that $\mu_A(A^3)<\infty$. By the Plunnecke-Ruzsa inequalities, this is \emph{equivalent} to the property that any definable subset of $\langle A\rangle$ has finite $\mu_A$-measure.

The above setting is very similar to the ones used by Hrushovski \cite{HruAG} and Breuillard, Green, and Tao \cite{BGT} in their work on approximate groups. So let us assume for the moment that $A$ is a (pseudofinite) approximate group which, in this setting, means that $1\in A$, $A=A\inv$, and $A^2$ is covered by finitely many translates of $A$. Then there are two main steps in the work from \cite{HruAG} and \cite{BGT}. The first step is to show that $A^4$ contains a type-definable subgroup $\Gamma$ of $G$, which is normal and bounded index in the subgroup generated by $A$. (This is step called the ``stabilizer theorem", as it is related to the general theory of stabilizers of types in definable groups, and is in fact close to the case of groups definable in simple theories.)  Consequently, if $X\seq G$ is any definable set containing $\Gamma$, then $A$ can be covered by finitely many translates of $X$. Since $\Gamma\seq A^4$, we can choose $X\seq A^4$ and so, altogether, this yields a pseudofinite Freiman-Ruzsa property for the set $X$ with respect to $A$. It remains to find such a set $X$ exhibiting desirable algebraic properties, and this is the second main step. The key idea is to replace $X$ by a definable set admitting some kind of structural nilpotence, e.g., via coset nilprogressions in \cite{BGT} or a more technical ``Bourgain chain condition" in \cite{HruAG}. This part of the proof crucially relies on the underlying topological structure of $\langle A\rangle/\Gamma$ as a locally compact Hausdorff group, which can be ``modeled" by Lie groups via Gleason-Yamabe type results. Actually in \cite{BGT} it is shown that that the connected component of the topological group  $\langle A\rangle/\Gamma$ is residually nilpotent, which explains the role of coset nilprogressions. 

We now discuss some of the new aspects of the proofs in this paper, pointing out that they are  more than just a direct combination of \cite{BGT} and \cite{CPTNIP}.  We return to the setting in which $A$ has small tripling and  is NIP.
We construct the subgroup $\Gamma$ as above, in a different way (compared 
to \cite{HruAG}), bearing in mind that $A$ is NIP and giving local (formula-by-formula) versions of stabilizer theorems for definable groups in NIP theories. In particular, we show that $\Gamma$ is contained in the ``smaller" product set $AA\inv\cap A\inv A$.  This appears in Section 5.

Another key tool is ``generic locally compact domination" in place of the ``generic compact domination" from \cite{CPTNIP}, which means in the model-theoretic sense working with ind-definable groups in place of definable groups, to show that definable sets approximating $\Gamma$ behave regularly with respect to $A$. These results appear in Section 6 (see Corollary \ref{cor:oneY} in particular) and, together with results from \cite{BGT} on pseudofinite coset nilprogressions (see Theorem \ref{thm:BGTpf}), they are enough to prove Theorem \ref{thm:NIPgen}. In order to prove Theorem \ref{thm:NIPexp}, we use a standard Gleason-Yamabe argument, which says that if $AA\inv\cap A\inv A$ has finite exponent then $\Gamma$ can be written as an intersection of definable \emph{subgroups} of $G$.

 \subsection{The abelian case (and a note on bounds)} 
 Note that we have omitted all mention of explicit bounds in the results discussed above. The situation is as follows. For results on abelian groups proved using additive combinatorics, one obtains explicit (and often quite efficient bounds). We refer the reader to the relevant sources for further details. On the other hand, with the exception of \cite{CoQSAR}, all of the results mentioned above involving arbitrary groups rely on an ultraproduct construction in one way or another, and thus often lead to no explicit information about bounds. This partially includes Theorem \ref{thm:BGT}, in the sense that the bound on the number of translates of $P$ needed to cover $A$ is not effective. However, if one relaxes $(A\cup A\inv)^{8}$ to $(A\cup A\inv)^{24}$, then one can bound the rank and step of $P$ by $O(k^2\log k)$ (see \cite[Theorem 2.12]{BGT}). 
 
Given the above discussion, it is natural to expect that, for the case of abelian groups,  it should be possible to prove Theorems \ref{thm:NIPgen} and \ref{thm:NIPexp}  via  finitary methods. As a step in this direction, we will show in Section \ref{sec:abelian} how to prove the abelian cases of these theorems directly from the relevant regularity results for finite groups, together with a result of Green and Ruzsa \cite{GrRuz} that a finite set of small doubling in an abelian group can be Freiman-isomorphically modeled by a dense set in a finite abelian group. Combined with quantitative results of  Alon, Fox, and Zhao \cite{AFZ}, this yields a quantitative version of Freiman-Ruzsa  for NIP sets of small doubling in abelian groups of bounded exponent (i.e., the abelian case of Theorem \ref{thm:NIPexp}). In order to obtain explicit bounds for NIP sets of small doubling in arbitrary abelian groups, one would need an effective version of Theorem \ref{thm:CPTNIPab} below. We expect that such a result could be in reach via the tools developed by Sisask in \cite{SisNIP}.

\section{Combinatorial preliminaries}

\subsection{Notation and terminology}\label{sec:Gprelim1}

Given positive integers $m,n$, and some set $X$ of parameters, we use the notation $m\leq O_X(n)$ to mean that $m\leq cn$ where $c$ is a positive constant depending only $X$. If $X=\emptyset$ then $c$ is an absolute constant and we write $m\leq O(n)$.

Let $G$ be a group. We use concatenation for the group operation in $G$, and we let $1$ denote the identity in $G$.  Given sets $A,B\seq G$, we let $AB=\{ab:a\in A,~b\in B\}$. This  generalizes to products of any finite number of sets in the obvious way. Given $A\seq G$ and $g\in G$, we use $gA$ (resp., $Ag$) for $\{g\}A$ (resp., $A\{g\}$), and we call this set a \textbf{left} (resp., \textbf{right}) \textbf{translate} of $A$.  

Given $A\seq G$, we let $A^0=\{1\}$ and, by induction, $A^{n+1}=AA^n$. Similarly, we let $A^{\nv 1}=\{a\inv:a\in A\}$, and set $A^{\nv n}=(A\inv)^n$. Given $n\geq 1$ and $A\seq G$, we let $A^{\pm n}$ denote $(A\cup A\inv)^n\cup \{1\}$. Note that $\langle A\rangle=\bigcup_{n\geq 0}A^{\pm n}$. 

If $G$ is abelian then we will switch to additive notation involving $+$ for the group operation and $0$ for the identity.  

The following are several properties of subsets of groups that will be used throughout the paper.

\begin{definition}\label{def:approxg}
Let $G$ be a group and fix a nonempty set $A\seq G$.
\begin{enumerate}
\item $A$ is \textbf{symmetric} if $1\in A$ and $A=A\inv$ (equivalently, if $A=A^{\pm 1}$).
\item Given $k\geq 1$, we say $A$ has \textbf{$k$-tripling} if $A$ is finite and $|A^3|\leq k|A|$.
\item Given $k\geq 1$, we say $A$ is a \textbf{$k$-approximate group} if $A$ is symmetric and $A^2$ can be covered by $k$ left translates of $A$.
\end{enumerate}
\end{definition}

The following result combines Lemma 3.4 and Theorem 3.9 of \cite{TaoPSE}.

\begin{theorem}[Tao \cite{TaoPSE}]\label{thm:TaoPSE}
Let $G$ be a group, and suppose $A\seq G$ has $k$-tripling. 
\begin{enumerate}[$(a)$]
\item If $n\geq 1$ and $\epsilon_1,\ldots,\epsilon_n\in\{1,\nv1\}$, then $|A^{\epsilon_1}\ldots A^{\epsilon_n}|\leq k^{O_{n}(1)}|A|$.
\item $A^{\pm 2}$ is an $O(k^{O(1)})$-approximate group.
\end{enumerate}
\end{theorem}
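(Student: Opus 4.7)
The approach is the noncommutative Pl\"unnecke-Ruzsa machinery from \cite{TaoPSE}, organized around the Ruzsa triangle inequality: for finite $U,V,W\seq G$,
$$|U|\cdot|VW\inv|\leq |VU\inv|\cdot|UW\inv|,$$
proved by fixing for each $x\in VW\inv$ a representation $x=v(x)w(x)\inv$ and checking that the map $(x,u)\mapsto (v(x)u\inv,\,uw(x)\inv)$ is injective (the product of its two coordinates recovers $x$, hence $v(x)$ and $w(x)$, and then $u$).

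For part $(a)$, I would first treat the ``pure'' case $|A^n|\leq k^{O_n(1)}|A|$ by iterating the triangle inequality: e.g., with $U=A$, $V=A^{n-1}$, $W=A\inv$ one gets $|A|\cdot|A^n|\leq |A^{n-1}A\inv|\cdot|A^2|$, and auxiliary bounds such as $|A^{n-1}A\inv|\leq k^{O(n)}|A|$ are obtained recursively starting from $|A^3|\leq k|A|$. Next, I would reduce a general mixed word $A^{\epsilon_1}\cdots A^{\epsilon_n}$ to pure words by induction on $n$. The small-word base cases ($|AA\inv|$, $|A\inv A|$, $|A\inv A^2|$, $|A^2 A\inv|$, \ldots) are each $\leq k^{O(1)}|A|$ by direct triangle-inequality applications combined with $|A^3|\leq k|A|$; in the inductive step, split $A^{\epsilon_1}\cdots A^{\epsilon_n}=w_1 w_2$ at a sign flip and use $|w_1 w_2|\cdot|U|\leq |w_1 U\inv|\cdot|Uw_2|$ for a cleverly chosen $U$ to reduce to strictly shorter mixed words.

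For part $(b)$, I would combine $(a)$ with the Ruzsa covering lemma: if $|BX|\leq K|X|$, then picking $F\seq B$ maximal subject to $\{fX:f\in F\}$ being pairwise disjoint gives $|F|\leq K$ and $B\seq FXX\inv$. Since $A$ is nonempty, $A^{\pm 2}=(A\cup A\inv)^2$ and $(A^{\pm 2})^2=A^{\pm 4}$. By $(a)$, $|A^{\pm 4}\cdot A|\leq |A^{\pm 5}|\leq k^{O(1)}|A|$, so the covering lemma produces $F\seq A^{\pm 4}$ with $|F|\leq k^{O(1)}$ and $A^{\pm 4}\seq F\cdot AA\inv\seq F\cdot A^{\pm 2}$. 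Together with the symmetry of $A^{\pm 2}$, this exhibits $A^{\pm 2}$ as an $O(k^{O(1)})$-approximate group.

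The main obstacle is the bookkeeping in part $(a)$: one must orchestrate the sequence of triangle-inequality applications so that the recursive bound on mixed words closes with the exponent in $k$ equal to $O_n(1)$, rather than blowing up exponentially in $n$. I would follow Tao's resolution, which fixes an inductive invariant on both word length and the pattern of sign changes, ensuring that each application strictly decreases this invariant.
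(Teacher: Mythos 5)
The paper does not prove this theorem at all: it is quoted directly from Tao's paper (combining Lemma 3.4 and Theorem 3.9 of \cite{TaoPSE}), so there is no internal argument to compare against. Your outline reconstructs exactly the route of the cited source: the Ruzsa triangle inequality (your injectivity proof of it is correct) drives part $(a)$, and the Ruzsa covering lemma applied with $B=A^{\pm 4}$, $X=A$ gives part $(b)$; your deduction of $(b)$ from $(a)$ is complete and correct, including the observations that $1\in(A\cup A\inv)^2$ for nonempty $A$, that $(A^{\pm2})^2=A^{\pm4}$, and that $A^{\pm4}\seq F\cdot AA\inv\seq F\cdot A^{\pm2}$ with $|F|\leq O(k^{O(1)})$. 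For part $(a)$ you correctly identify that the content lies in closing the induction over mixed words starting from $|A^3|\leq k|A|$ (this is where tripling, rather than doubling, is genuinely needed in the nonabelian setting), though you defer that bookkeeping to Tao rather than carrying it out; note that your worry about the exponent ``blowing up exponentially in $n$'' is immaterial for the statement as given, since any bound of the form $k^{c(n)}$ with $c(n)$ depending only on $n$ is already $k^{O_n(1)}$ (the sharper linear-in-$n$ exponents mentioned in Remark \ref{rem:PRI} are a refinement, not a requirement). So the proposal is essentially the standard argument, with part $(a)$ left as a sketch whose missing details are exactly those of the cited Lemma 3.4.
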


\begin{remark}\label{rem:PRI}
The family of inequalities in Theorem \ref{thm:TaoPSE}$(a)$ are often referred to as the \emph{Plunnecke-Ruzsa inequalities} since, in the case of abelian groups, they were first proved by Plunnecke when $\epsilon_i=1$ for all $i$, and then by Ruzsa for any $\epsilon_i$. These proofs for abelian groups yield $O_n(1)=n$ and, moreover, only require $k$-doubling for the set $A$. For nonabelian groups, one may take $O_n(1)=2n$ by \cite[Theorem 5.1]{RuzIrr}. See also \cite{PetPI} for short proofs of these inequalities (with comparable bounds) by Petridis. Up to minor changes in the bounds, the same inequalities hold under the weaker assumption that $A$ has $k$-doubling and $|AaA|\leq k|A|$ for all $a\in A$. 
 
 Altogether, for our purposes, one should view the Plunnecke-Ruzsa inequalities as the primary objective, and assumptions such as small doubling or tripling as means to that end.
\end{remark}

\subsection{Coset nilprogressions}\label{sec:Gprelim2}

In this section we recall the definition from \cite{BGT} of a \emph{coset nilprogression}. We start with the simpler notion of a \emph{generalized progression}.

\begin{definition}[Generalized progression]
Let $G$ be a group. Given real numbers $L_1,\ldots,L_r>0$, let $W(L_1,\ldots,L_r)$ denote the collection of group words\footnote{A \emph{group word} is a term in the language of groups with a function symbol for inversion.}  $w(x_1,\ldots,x_r)$ in variables $x_1,\ldots,x_r$ such that, for all $1\leq i\leq r$,  the terms $x_i$ and $x_i\inv$ appear in $w$ at most $L_i$ times. Given group elements $u_1,\ldots,u_r\in G$, define
\[
P(u_1,\ldots,u_r;L_1,\ldots,L_r)=\{w(u_1,\ldots,u_r):w\in W(L_1,\ldots,L_r)\}.
\]
A \textbf{generalized progression} in $G$ is a subset $P$ of the form $P(u_1,\ldots,u_r,L_1,\ldots,L_r)$ for some choice of $\bar{u}$ and $\bar{L}$. In this case, we call $r$ the \textbf{rank} of $P$. By convention, $\{1\}$ is a generalized progression in $G$ of rank $0$. 
\end{definition}

The decision to allow $L_1,\ldots,L_r$ to be real numbers (rather than just integers) will be relevant in  Proposition \ref{prop:scaleCN} below.
In \cite{BGT}, generalized progressions are called \emph{non-commutative progressions} to emphasize the setting of arbitrary groups. Indeed, for abelian groups the above definition simplifies to the more familiar notion of a \emph{generalized arithmetic progression}.

\begin{definition}[Generalized arithmetic progression]
Let $G$ be an abelian group. A \textbf{generalized arithmetic progression} in $G$ is a subset $P$ of the form
\[
P(u_1,\ldots,u_r;L_1,\ldots,L_r)=\{n_1u_1+\ldots+n_ru_r:|n_i|\leq L_i\text{ for all }1\leq i\leq r\}.
\]
In this case, we call $r$ the \textbf{rank} of $P$. 
\end{definition} 

This definition of a generalized arithmetic progression differs slightly from other sources (such as \cite{GrRuz} and \cite{TaoVu}) where, for example, such progressions are allowed to have an affine term (see also Remark \ref{rem:progsym}). 

Freiman's Theorem in the integers \cite{FreiFST, Ruz94} says that finite sets in $\Z$ with $k$-doubling can be approximated by generalized arithmetic progressions of rank $O_k(1)$. This  fails for arbitrary abelian groups, where one may have nontrivial finite subgroups. In \cite{GrRuz}, Green and Ruzsa formulate the notion of a \emph{coset progression} to address this issue.

\begin{definition}[Coset progression]
Let $G$ be an abelian group. A \textbf{coset progression} in $G$ is a set $P$ of the form $P_0+H$ where $P_0\seq G$ is a generalized arithmetic progression and $H$ is a finite subgroup of $G$. In this case, we say that $P$ has \textbf{rank} $r$ if $P_0$ has rank $r$.
\end{definition}

A finite subgroup of an abelian group is a coset progression of rank $0$. Moreover, a coset progression of rank $r$ (in an abelian group) is a $(2^r+1)$-approximate group. 
On the other hand, it is not necessarily the case that a generalized progression in a nonabelian group is an approximate subgroup (see the discussion after \cite[Remark 2.4]{BGT} for precise details). Indeed, a main aspect of the results from \cite{BGT} and \cite{HruAG} is that approximate groups exhibit a certain kind of nilpotent structure, which is made precise in \cite{BGT} as follows. 

\begin{definition}[Nilprogression]
Let $G$ be a group. Given elements $u_1,\ldots,u_r\in G$, set $c_1(\ubar)=\{u_1,\ldots,u_r,u_1\inv,\ldots,u\inv_r\}$ and, for $n>1$, inductively define $c_n(\ubar)=\{[g,h]:g\in c_j(\ubar),~h\in c_k(\ubar),~j+k=n\}$. 

 A \textbf{nilprogression} in $G$  is a generalized progression $P=P(u_1,\ldots,u_r;L_1,\ldots,L_r)$ such that $c_{s+1}(\ubar)=\{1\}$ for some $s\geq 0$. In this case, we say that $P$ has \textbf{rank} $r$ and \textbf{step} $s$. 
\end{definition}

For example, a generalized arithmetic progression in an abelian group is a nilprogression of step $1$. We can now state the appropriate analogue of coset progressions for arbitrary groups.

\begin{definition}[Coset nilprogression]\label{def:nilp} Let $G$ be a group. A \textbf{coset nilprogression} in $G$ is a set $P$ of the form $P_0H$ where $P_0\seq G$ is a nilprogression and $H$ is a finite subgroup of $G$ normalized by $P_0$.  In this case, we say that $P$ has \textbf{rank} $r$ and \textbf{step} $s$ if $P_0$ has rank $r$ and step $s$.
\end{definition}

Note that a finite subgroup of an arbitrary group is a coset nilprogression of rank and step $0$.

\begin{remark}\label{rem:progsym}
Any coset nilprogression in a group $G$ is symmetric.
\end{remark}

The rest of this section deals with tools for controlling the sizes and multiplicative behavior of nilprogressions. As before, we start with the abelian case.

\begin{definition}[properness]
Let $G$ be an abelian group. A generalized arithmetic progression $P(u_1,\ldots,u_r;L_1,\ldots,L_r)\seq G$ is \textbf{proper} if the elements $n_1u_1+\ldots+n_ru_r$ are pairwise distinct for varying choices of $n_1,\ldots,n_r$. A coset progression $P=P_0H\seq G$ is \textbf{proper} if $P_0$ is proper.
\end{definition}

Note that a generalized arithmetic progression $P=P(u_1,\ldots,u_r;L_1,\ldots,L_r)$ in an abelian group is proper if and only if $|P|=\prod_{i=1}^r(2\lfloor L_i\rfloor+1)$. In arbitrary groups, this behavior is managed in an asymptotic fashion via the notion of ``normal form". Before stating this definition, we need to slightly shift the perspective on coset nilprogressions.

\begin{remark}\label{rem:pushCN}
Suppose $G$ is a group and $P=P_0H$ is a coset nilprogression in $G$, where $P_0$ is a nilprogression  of rank $r$ and step $s$ and $H$ is normalized by $P_0$. Then $H$ is a normal subgroup of $\langle P\rangle$ and, if $\pi\colon\langle P\rangle\to \langle P\rangle/H$ is the quotient map, then $\pi(P)$ is a nilprogression in $\langle P\rangle/H$ of rank $r$ and step $s$. In particular, if $P_0=P(u_1,\ldots,u_r;L_1,\ldots,L_r)$ then $\pi(P)=P(u_1H,\ldots,u_rH;L_1,\ldots,L_r)$. 
\end{remark}

\begin{definition}[$c$-normal form]\label{def:cnf}
Let $G$ be a group and fix an integer $c\geq 1$. A generalized progression $P=P(u_1,\ldots,u_r;L_1,\ldots,L_r)$ in $G$ is in \textbf{$c$-normal form} if:
\begin{enumerate}[$(i)$]
\item For any $1\leq i<j\leq r$,
\[
\textstyle[u_i,u_j],[u_i\inv,u_j],[u_i\inv,u_j],[u_i\inv,u_j\inv]\in P\left(u_{j+1},\ldots,u_r;\frac{cL_{j+1}}{L_iL_j},\ldots,\frac{cL_r}{L_iL_j}\right).
\]
\item The elements $u_1^{n_1}\ldots u_r^{n_r}$ are distinct for distinct choices of $n_i\leq c\inv |L_i|$.
\item $c\inv\prod_{i=1}^r (2\lfloor L_i\rfloor+1)\leq |P|\leq c\prod_{i=1}^r(2\lfloor L_i\rfloor+1)$.
\end{enumerate}
 A coset nilprogression $P$ is in \textbf{$c$-normal form} if the corresponding nilprogression $\pi(P)$ from Remark \ref{rem:pushCN} is in $c$-normal form.
\end{definition}

 We will not need to delve into Definition \ref{def:cnf} in any detail, except to note that a coset progression in an abelian group is proper if and only if it is in $1$-normal form. Our main use of the normal form condition will be to control the sizes of ``scaled" coset nilprogressions via Proposition \ref{prop:scaleCN} below.

\begin{definition}
Let $G$ be a group, and suppose $P=P_0H$ is a coset nilprogression, where $P_0=P(u_1,\ldots,u_r;L_1,\ldots,L_r)$. Given a real number $\epsilon>0$, let $P^{(\epsilon)}$ denote the coset nilprogression $P(u_1,\ldots,u_r;\epsilon L_1,\ldots,\epsilon L_r)H$. 
\end{definition}

The next result proposition follows from  \cite[Lemma C.1]{BGT}. 

\begin{proposition}\label{prop:scaleCN}
Suppose $G$ is a group and $P\seq G$ is a coset nilprogression of rank $r$ in $c$-normal form. Then, for any $\epsilon>0$, one has $|P|\leq O_{r,c,\epsilon}(|P^{(\epsilon)}|)$. 
\end{proposition}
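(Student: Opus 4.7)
The plan is to reduce the coset nilprogression case to the underlying nilprogression, and then extract everything from Definition \ref{def:cnf}(iii) applied to both $P$ and $P^{(\epsilon)}$, the only nontrivial input being that the $c$-normal form condition survives scaling up to a change of constant, which is exactly \cite[Lemma C.1]{BGT}.

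First, write $P=P_0H$ with $P_0=P(u_1,\ldots,u_r;L_1,\ldots,L_r)$ and $H\trianglelefteq \langle P\rangle$, and let $\pi\colon\langle P\rangle\to\langle P\rangle/H$ be the quotient map from Remark \ref{rem:pushCN}. Because $P$ and $P^{(\epsilon)}$ are both $H$-invariant on the right, $|P|=|\pi(P)|\cdot|H|$ and $|P^{(\epsilon)}|=|\pi(P^{(\epsilon)})|\cdot|H|$. So the $H$-factors cancel and it suffices to prove the proposition for $\pi(P)$ and $\pi(P^{(\epsilon)})$; in other words, we may assume $H=\{1\}$ and that $P$ is a genuine nilprogression in $c$-normal form.

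Next, by Definition \ref{def:cnf}(iii) applied to $P$ we have the upper bound $|P|\leq c\prod_{i=1}^{r}(2\lfloor L_i\rfloor+1)$. To get a matching lower bound on $|P^{(\epsilon)}|$, invoke \cite[Lemma C.1]{BGT}: if $P$ is a rank-$r$ nilprogression in $c$-normal form, then the scaled progression $P^{(\epsilon)}$ is in $c'$-normal form for some $c'=c'(r,c,\epsilon)$. Applying Definition \ref{def:cnf}(iii) to $P^{(\epsilon)}$ then yields $|P^{(\epsilon)}|\geq (c')\inv\prod_{i=1}^{r}(2\lfloor \epsilon L_i\rfloor+1)$.

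Finally, combine the two inequalities by noting the elementary pointwise comparison $2\lfloor L_i\rfloor+1\leq (1+2/\epsilon)(2\lfloor \epsilon L_i\rfloor+1)$ for each $i$ (split into the cases $\epsilon L_i\geq 1$, where one can essentially divide, and $\epsilon L_i<1$, where the right-hand side equals $1+2/\epsilon\geq 2L_i+1$). Multiplying over $i$ gives $|P|/|P^{(\epsilon)}|\leq cc'(1+2/\epsilon)^r$, which is $O_{r,c,\epsilon}(1)$, as required. The main obstacle is the middle step: showing that scaling preserves the normal form with a new constant depending only on $r$, $c$, $\epsilon$; this is exactly the technical content of \cite[Lemma C.1]{BGT}, where one must propagate condition (i) through commutator calculations in the nilpotent setting, and this is taken as a black box here.
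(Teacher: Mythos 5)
Your proposal is correct and takes essentially the same route as the paper, which proves this proposition by simply citing \cite[Lemma C.1]{BGT}; your write-up just fleshes out that citation with the (routine and sound) quotient-by-$H$ reduction, the appeal to the scaling-preserves-normal-form content of that lemma, and the elementary floor-function comparison. The only unaddressed point is the trivial case $\epsilon\geq 1$, where $P\seq P^{(\epsilon)}$ and nothing needs proving.
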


\subsection{Set systems and VC-dimension}

 \begin{definition}
 Let $X$ be a set and fix $\cS\seq\cP(X)$. Then $\cS$ \textbf{shatters} a subset $A\seq X$ if $\cP(A)=\{A\cap S:S\in\cS\}$. The \textbf{VC-dimension} of $\cS$, denoted $\VC(\cS)$, is the supremum over all $n\in\N$ such that $\cS$ shatters a subset of $X$ of size $n$. Note that $\VC(\cS)$ takes a value in $\N\cup\{\infty\}$.
 \end{definition}
 
 The following is a corollary of the  ``VC Theorem for finite set systems", proved by Vapnik and Chervonenkis in \cite{VCdim} (see also \cite[Corollary 6.9]{Sibook}). 
 
 \begin{theorem}\label{thm:VC}\textnormal{\cite{VCdim}}
 Let $(X,\mu)$ be a finite probability space, and suppose $\cS$ is a collection of measurable subsets of $X$ with $\VC(\cS)\leq d<\infty$. Then, for any $0<\epsilon<1$, there is a sequence $(x_1,\ldots,x_n)\in X^n$, with $n\leq O_{d,\epsilon}(1)$, such that for any $S\in\cS$,
 \[
 \textstyle\left|\mu(S)-\frac{1}{n}|\{1\leq i\leq n:x_i\in S\}|\right|\leq\epsilon.
 \]
 \end{theorem}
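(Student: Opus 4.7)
The plan is a probabilistic argument. I would draw $x_1, \ldots, x_n \in X$ independently according to $\mu$, and write $\hat\mu_n(S) = \frac{1}{n}|\{1 \le i \le n : x_i \in S\}|$ for the empirical frequency of a measurable set $S$. For a single fixed $S \in \cS$, Hoeffding's inequality (or even just Chebyshev) gives probability at most $2 e^{\nv 2n\epsilon^2}$ that $|\mu(S) - \hat\mu_n(S)| > \epsilon$. The main obstacle is that $\cS$ can be enormous (up to $2^{|X|}$ genuinely distinct members), so a direct union bound over $\cS$ is hopeless; the real work is to reduce the supremum over $S \in \cS$ to a supremum over a bounded number of ``combinatorially distinct'' sets.

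This reduction is exactly what the VC-dimension hypothesis buys, via the Sauer-Shelah lemma: for any finite $F \seq X$ the number of distinct traces $\{F \cap S : S \in \cS\}$ is at most $\sum_{i=0}^d \binom{|F|}{i} \le (e|F|/d)^d$. Combined with the symmetrization trick of Vapnik and Chervonenkis --- introduce an independent ghost sample $y_1, \ldots, y_n$, and argue that if the original sample witnesses some $S$ with $|\mu(S)-\hat\mu_n(S)|>\epsilon$, then with probability at least $\frac{1}{2}$ the two empirical frequencies on that same $S$ differ by at least $\epsilon/2$ --- one replaces the supremum over all of $\cS$ by a supremum over the at most $(2en/d)^d$ distinct traces on $\{x_1,\ldots,x_n,y_1,\ldots,y_n\}$. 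A union bound combined with a Hoeffding estimate on each trace then yields an upper bound of the shape
\[
C\left(\frac{2en}{d}\right)^d \exp(\nv c n \epsilon^2)
\]
for absolute constants $C,c>0$ on the probability that $\sup_{S\in\cS}|\mu(S)-\hat\mu_n(S)|>\epsilon$.

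Finally, since the polynomial factor $(2en/d)^d$ is eventually dominated by $\exp(\nv cn\epsilon^2)$, one can choose $n = n(d,\epsilon) = O_{d,\epsilon}(1)$ large enough that this probability is strictly less than $1$. Some deterministic tuple $(x_1, \ldots, x_n) \in X^n$ must then satisfy the desired $\epsilon$-approximation simultaneously for every $S \in \cS$, which is the conclusion of the theorem. No measurability subtleties arise because $X$ is finite, so all relevant suprema are effectively over finite families and every set under consideration is automatically measurable. The hard part of the argument is really the symmetrization step, where one must carefully condition on the combined sample to bring Sauer-Shelah to bear; everything else is standard concentration.
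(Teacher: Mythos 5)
Your argument is correct: it is the classical Vapnik--Chervonenkis proof (sampling, symmetrization with a ghost sample, Sauer--Shelah to bound traces, union bound plus a Hoeffding-type estimate, then the probabilistic method to extract a deterministic tuple of size $n=O_{d,\epsilon}(1)$), and this is precisely the argument behind the result the paper simply cites from \cite{VCdim} (see also \cite[Corollary 6.9]{Sibook}) without proof. So your route coincides with the paper's source; no further comparison is needed.
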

 
Next we state Matou\v{s}ek's ``$(p,q)$-theorem" (see \cite[Theorem 4]{MatFHP}). This result will not be necessary for the proof of our main results, and will only be used in Section \ref{sec:unique}. 
 
 \begin{theorem}\label{thm:Mat}\textnormal{\cite{MatFHP}}
Let $X$ be a set and suppose $\cS\seq\cP(X)$ is finite with $\VC(\cS)\leq d<\infty$. Fix $p\geq q\geq 2^{d+1}$, and suppose that among any $p$ sets in $\cS$ there are $q$ sets with nontrivial intersection. Then there is some $F\seq X$ such that $|F|\leq O_{p,q}(1)$ and $F\cap S\neq\emptyset$ for all $S\in\cS$.
 \end{theorem}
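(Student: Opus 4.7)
The plan is to follow the Alon--Kleitman proof of the classical Hadwiger--Debrunner $(p,q)$-theorem, adapted to abstract set systems via the combinatorial properties of VC-classes. The core of the argument is to upgrade the $(p,q)$-property to a \emph{fractional Helly property} for $\cS$, and then convert this into a bounded transversal through LP duality and the $\epsilon$-net theorem.

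First I would argue that the $(p,q)$-property forces a positive fraction of $q$-element subfamilies of $\cS$ to have a common point. By the hypothesis, in every $p$-element subfamily some $q$-element subfamily is intersecting, so a direct double-counting shows that at least a $\binom{p}{q}^{-1}$-fraction of all $q$-element subfamilies of $\cS$ have a nonempty common intersection. Call this fraction $\alpha = \alpha(p,q) > 0$.

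Next I would invoke a fractional Helly theorem for VC-classes: if $\VC(\cS) < d$ (so in particular $q \geq 2^{d+1}$), and an $\alpha$-fraction of the $q$-element subfamilies of $\cS$ have nonempty common intersection, then there is a point $x \in X$ lying in at least $\beta |\cS|$ sets of $\cS$, for some $\beta = \beta(\alpha,d,q) > 0$. The threshold $q \geq 2^{d+1}$ enters here through the Sauer--Shelah lemma: a $q$-element subfamily cannot realize all $2^q$ traces on any set, so an intersecting $q$-tuple must concentrate on relatively few ``witness'' points, and counting forces one such point to be covered by many sets. This yields the \emph{weak piercing} or \emph{first-selection} statement for $\cS$.

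Finally I would run the standard Alon--Kleitman reduction. The weak piercing property is stable under taking sub-multisets of $\cS$ (any such sub-multiset still satisfies the same $(p,q)$-condition and VC-dimension bound), and so LP duality on the covering/packing program produces a finitely supported probability measure $\mu$ on $X$ with $\mu(S) \geq \gamma$ for every $S \in \cS$, with $\gamma > 0$ depending only on $p$ and $q$. Applying the $\epsilon$-net theorem for VC-classes --- a standard strengthening of Theorem~\ref{thm:VC} delivering a hitting set of size $O_{d,\epsilon}(1)$ rather than a mere $\epsilon$-approximation --- with $\epsilon = \gamma$ produces a finite set $F \seq X$ of size $O_{p,q}(1)$ satisfying $F \cap S \neq \emptyset$ for every $S \in \cS$.

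The main obstacle is the fractional Helly step. In the convex setting this is derived from Helly's theorem on $(d+1)$-tuples, which has no direct abstract analogue. Here it must be replaced by a purely combinatorial argument based on the Sauer--Shelah bound, and the delicate point is to verify that the threshold $q \geq 2^{d+1}$ is enough to convert the $\alpha$-fraction of intersecting $q$-tuples into a single common point covered by a $\beta$-fraction of $\cS$, with $\beta$ effective in terms of $p$ and $q$ alone.
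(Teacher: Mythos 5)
The paper itself gives no proof of this statement: it is imported directly from Matou\v{s}ek \cite{MatFHP}, and your outline reproduces exactly the architecture of that paper (and of Alon--Kleitman): double counting turns the $(p,q)$-hypothesis into an at least $\binom{p}{q}^{-1}$-fraction of intersecting $q$-tuples, a fractional Helly theorem for bounded VC-dimension converts this into a point lying in a $\beta$-fraction of $\cS$, and LP duality plus the $\epsilon$-net strengthening of Theorem \ref{thm:VC} then yield a transversal of size bounded in terms of $p$ and $q$ (since $q\geq 2^{d+1}$ forces $d\leq\log_2 q$, constants depending on $d$ are absorbed into $O_{p,q}(1)$). The first and third steps are fine as you sketch them.

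The genuine gap is the middle step, and it cannot be waved through: the fractional Helly theorem for set systems of bounded VC-dimension is precisely the new content of \cite{MatFHP}, i.e.\ of the theorem you are being asked to prove, and the heuristic you offer for it is not the actual mechanism. Applying Sauer--Shelah to the primal traces of a single intersecting $q$-tuple says nothing about a point common to a constant fraction of \emph{all} of $\cS$. The role of the threshold $q\geq 2^{d+1}$ is dual: by Fact \ref{fact:VCdim}$(c)$ the dual VC-dimension of $\cS$ is less than $2^{d+1}$, so any $m$ sets of $\cS$ partition $X$ into only $O\bigl(m^{2^{d+1}-1}\bigr)=o(m^{q})$ atoms. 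Matou\v{s}ek derives the fractional Helly property from this polynomial bound on the dual shatter function together with the Erd\H{o}s--Simonovits supersaturation theorem: a positive fraction of intersecting $q$-tuples yields large complete $q$-partite systems of intersecting tuples, and pigeonholing the witness points of their transversal tuples among the few atoms of the corresponding arrangement is the engine that produces a point covered by a $\beta$-fraction of the sets, with $\beta$ depending only on the initial fraction and $q$. So either you reproduce that argument (your ``delicate point'' is the whole theorem, not a verification), or you cite Matou\v{s}ek's fractional Helly theorem explicitly --- in which case your write-up is a faithful reconstruction of the cited proof, which is all the paper itself relies on, but it is not a self-contained argument as currently presented.
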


 Finally, we list some standard facts about various operations on set systems.
 
 \begin{fact}\label{fact:VCdim}
 Let $X$ be a set.
 \begin{enumerate}[$(a)$]
 \item If $\cS\seq \cP(X)$ and $\cS'=\{X\backslash S:S\in\cS\}$, then $\VC(\cS)=\VC(\cS')$.
 \item If $\cS_1,\cS_2\seq\cP(X)$ are such that $\VC(\cS_i)\leq d<\infty$, and $\cS=\{S_1\cap S_2:S_i\in\cS_i\}$, then $\VC(\cS)<10d$.
 \item If $\cS\seq\cP(X)$ is such that $\VC(\cS)<\infty$ and $\cS^*=\{\cS_x:x\in X\}$ where, given $x\in X$, $\cS_x=\{S\in\cS:x\in S\}$,  then $\VC(\cS^*)<2^{\VC(\cS)+1}$.
 \end{enumerate}
 \end{fact}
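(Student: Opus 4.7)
The three claims are standard facts from VC theory, so the plan is to dispatch (a) by unwinding definitions, (b) by one application of the Sauer--Shelah lemma, and (c) by a direct combinatorial shattering argument. Only (c) has any real content; (a) is trivial and (b) is textbook.

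For part (a), I would observe that for every $A\seq X$ and $S\in\cS$, one has $A\cap(X\setminus S)=A\setminus(A\cap S)$. Since complementation within $A$ is a bijection $\cP(A)\to\cP(A)$, the family $\{A\cap S' : S'\in\cS'\}$ is the image of $\{A\cap S : S\in\cS\}$ under this bijection, and so one equals $\cP(A)$ exactly when the other does. This gives $\VC(\cS)=\VC(\cS')$. For part (b), fix $A\seq X$ with $|A|=n\geq d$. By Sauer--Shelah, each $\cS_i$ induces at most $\sum_{j=0}^{d}\binom{n}{j}\leq (en/d)^{d}$ distinct traces on $A$, hence the intersection family induces at most $(en/d)^{2d}$ traces on $A$. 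If $\cS$ shatters $A$ then $2^{n}\leq (en/d)^{2d}$; a direct elementary estimate shows this fails once $n\geq 10d$ (since $\log_{2}(10e)<5$). Thus $\VC(\cS)<10d$.

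For part (c), the plan is a ``dual Sauer--Shelah'' argument by contradiction. Let $d=\VC(\cS)$ and suppose $\cS^{*}$ shatters some $\{S_{1},\ldots,S_{n}\}\seq\cS$ with $n=2^{d+1}$. For every $T\seq[n]$ choose $x_{T}\in X$ with $x_{T}\in S_{i}$ iff $i\in T$ (possible by the shattering assumption applied to $\cS^{*}$). Identify $[n]$ with $\{0,1\}^{d+1}$ and, for each coordinate $j\in[d+1]$, set $T_{j}=\{\sigma\in\{0,1\}^{d+1}:\sigma_{j}=1\}\seq[n]$. Consider the $d+1$ chosen points $x_{T_{1}},\ldots,x_{T_{d+1}}\in X$. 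For any $\sigma\in\{0,1\}^{d+1}$, the trace of $S_{\sigma}\in\cS$ on $\{x_{T_{1}},\ldots,x_{T_{d+1}}\}$ equals $\{x_{T_{j}}:\sigma\in T_{j}\}=\{x_{T_{j}}:\sigma_{j}=1\}$. As $\sigma$ ranges over $\{0,1\}^{d+1}$, these traces enumerate every subset of $\{x_{T_{1}},\ldots,x_{T_{d+1}}\}$, so $\cS$ shatters a set of size $d+1$, contradicting $\VC(\cS)=d$. Hence $\VC(\cS^{*})<2^{d+1}=2^{\VC(\cS)+1}$.

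The only step with any subtlety is the primal/dual bookkeeping in (c) -- correctly turning a shattering of $n=2^{d+1}$ sets in $\cS^{*}$ into a shattering of $d+1$ points in $X$ -- but once the bit-coding $[n]\leftrightarrow\{0,1\}^{d+1}$ is chosen the rest is mechanical, and no auxiliary result beyond Sauer--Shelah is required.
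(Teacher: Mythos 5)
Your proof is correct and matches the paper's approach: part (b) is exactly the paper's Sauer--Shelah computation (bounding the traces of the intersection family on an $n$-point set by $(en/d)^{2d}$ and checking this is below $2^n$ at $n=10d$), while parts (a) and (c), which the paper dismisses as easy or cites to Simon's book, are handled by the standard arguments you supply. The only detail worth adding in (c) is that the points $x_{T_1},\ldots,x_{T_{d+1}}$ are pairwise distinct (for $j\neq j'$ choose $\sigma$ with $\sigma_j\neq\sigma_{j'}$; then $S_\sigma$ contains one of the two points but not the other), so the shattered set genuinely has size $d+1$.
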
 
 \begin{proof}
 Part $(a)$ is easy and part $(c)$ is a straightforward exercise (see, e.g., \cite[Lemma 6.3]{Sibook}). Part $(b)$ is evident from \cite[Theorem 9.2.6]{DudNIP}, but we will sketch the proof to clarify the numerics. Given any $\cS\seq\cP(X)$, define the \emph{shatter function} $\pi_{\cS}\colon\N\to\N$ such that $\pi_{\cS}(n)=\max\{|\{A\cap S:S\in\cS\}|:A\seq X,~|A|=n\}$. 
By the Sauer-Shelah Lemma (see, e.g.,  \cite[Lemma 6.4]{Sibook}), if $\VC(\cS)\leq d$ then $\pi_{\cS}(n)\leq(en/d)^d$ for any $n\geq d$.  Now let $\cS_1$, $\cS_2$, and $\cS$ be as above. One can check that $\pi_{\cS}(n)\leq \pi_{\cS_1}(n)\pi_{\cS_2}(n)$ for all $n\geq 0$, and so $\pi_{\cS}(n)\leq (en/d)^{2d}$ for all $n\geq d$. In particular, if $n\geq d$ and $(en/d)^{2d}<2^n$, then $\pi_{\cS}(n)<2^n$, which implies $\VC(\cS)<n$. Setting $n=10d$ satisfies these constraints.
 \end{proof}

Recall from Section \ref{sec:intro4} that a subset $A$ of a group $G$ is $d$-NIP if and only if the collection of left translates of $A$ has VC-dimension strictly less than $d$. We say that $A\seq G$ is \textbf{NIP} if it is $d$-NIP for some $d\geq 1$. The previous fact has the following consequences for NIP subsets of groups.

\begin{corollary}\label{cor:VCG}
Let $G$ be a group.
\begin{enumerate}[$(a)$]
\item The collection of NIP subsets of $G$ is a bi-invariant Boolean algebra.
\item If $A\seq G$ is $d$-NIP then the collection of right translates of $A$ has VC-dimension at most $2^d$.
\end{enumerate}
\end{corollary}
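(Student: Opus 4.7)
Both parts are bookkeeping reductions to Fact \ref{fact:VCdim}. The main structural observation is that shattering is invariant under bijections of the ambient set, which lets us freely move subsets around via translations and inversion.

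For part $(a)$, let me fix an NIP set $A \subseteq G$ with $d$-NIP witnessed by $\cS = \{gA : g \in G\}$ having $\VC(\cS) < d$. Closure under complement follows since the left translates of $G \setminus A$ are precisely $\{g(G\setminus A) : g \in G\} = \{G \setminus gA : g \in G\}$, which has the same VC-dimension as $\cS$ by Fact \ref{fact:VCdim}$(a)$. Closure under intersection: if $A_1, A_2$ are both $d$-NIP, then left translates of $A_1 \cap A_2$ satisfy $g(A_1 \cap A_2) = gA_1 \cap gA_2$, so Fact \ref{fact:VCdim}$(b)$ gives $A_1 \cap A_2$ is $10d$-NIP. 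Closure under union then follows by De Morgan. For left-translation invariance, $\{g(hA) : g \in G\} = \{gA : g \in G\}$, so left translates of $hA$ coincide set-wise with left translates of $A$. For right-translation invariance, the bijection $x \mapsto xh$ of $G$ sends $gAh$ to $gA$ bijectively (and compatibly across all $g$), so a set $B$ is shattered by $\{gAh\}$ iff $Bh^{-1}$ is shattered by $\{gA\}$; thus the VC-dimensions agree.

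For part $(b)$, the plan is to apply Fact \ref{fact:VCdim}$(c)$ to $\cS$ and identify the resulting dual system with the collection of right translates of $A$ on $G$. Consider the surjection $\pi \colon G \to \cS$ given by $\pi(g) = gA$. For each $x \in G$, the dual set $\cS_x = \{S \in \cS : x \in S\} = \{gA : g^{-1}x \in A\}$ pulls back under $\pi$ to
\[
\pi^{-1}(\cS_x) = \{y \in G : y^{-1}x \in A\} = Ax^{-1},
\]
which ranges over all right translates of $A$ as $x$ varies over $G$. Since $\pi$ is surjective, a subset $B \subseteq \cS$ is shattered by $\cS^*$ if and only if some lift $B' \subseteq G$ with $\pi(B') = B$ and $|B'| = |B|$ is shattered by the pulled-back system $\{Ax^{-1} : x \in G\}$; hence the two VC-dimensions agree. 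Applying Fact \ref{fact:VCdim}$(c)$ and using $\VC(\cS) \leq d - 1$ yields $\VC(\{Ax^{-1}\}) < 2^d$, hence $\leq 2^d$ as claimed.

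The main technical subtlety is the identification of $\cP(\cS)$-level shattering with shattering on $G$ in part $(b)$, since the parameter map $g \mapsto gA$ need not be injective. This is resolved cleanly by the surjectivity of $\pi$ together with the observation that lifts of shattered subsets of $\cS$ are shattered by the pulled-back system; otherwise everything is routine bookkeeping with the facts already established.
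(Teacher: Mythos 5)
Your route is the same as the paper's: part $(a)$ from Fact \ref{fact:VCdim}$(a,b)$ plus invariance of NIP under translation, and part $(b)$ by identifying the right-translate system with the dual $\cS^*$ of the left-translate system $\cS=\{gA:g\in G\}$ and invoking Fact \ref{fact:VCdim}$(c)$ (the paper simply asserts $\VC(\cS_r)=\VC(\cS_\ell^*)$ and cites that fact). Part $(a)$ is correct as you wrote it, up to a harmless slip of direction: it is $x\mapsto xh^{-1}$, not $x\mapsto xh$, that carries $gAh$ to $gA$, but your shattering equivalence ``$B$ shattered by $\{gAh\}$ iff $Bh^{-1}$ shattered by $\{gA\}$'' is exactly right.

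In part $(b)$ there is a genuine error in the key identification. With $\pi(g)=gA$ you correctly get $\pi^{-1}(\cS_x)=\{y\in G:x\in yA\}=\{y:y^{-1}x\in A\}$, but this set equals $xA^{-1}$, not $Ax^{-1}$: indeed $y^{-1}x\in A$ means $y=xa^{-1}$ for some $a\in A$, whereas $y\in Ax^{-1}$ means $yx\in A$, and these differ in general (even for abelian $G$, unless $A$ is suitably symmetric). So the system you pull back is the family of \emph{left} translates of $A^{-1}$, and as written the argument never reaches the right translates of $A$, which is what the corollary is about. The gap is small and repairable in either of two ways: (1) note that inversion $y\mapsto y^{-1}$ is a bijection of $G$ carrying $xA^{-1}$ to $Ax^{-1}$, and shattering is preserved under bijections of the ground set, so $\VC(\{xA^{-1}:x\in G\})=\VC(\{Ag:g\in G\})$; or (2) parametrize $\cS$ by $g\mapsto g^{-1}A$ instead, in which case the pullback of $\cS_x$ really is $\{g:gx\in A\}=Ax^{-1}$. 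With either repair, your remaining bookkeeping is sound: the surjectivity/injective-lift argument correctly shows the pulled-back system and $\cS^*$ have the same VC-dimension (a shattered finite set cannot contain two points with the same image, so lifts behave), and the numerics $\VC(\cS^*)<2^{\VC(\cS)+1}\leq 2^d$ give the stated bound.
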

\begin{proof}
Part $(a)$ is immediate from Fact \ref{fact:VCdim}$(a,b)$ and the easy exercise that a translate of an NIP subset of $G$ is NIP. For part $(b)$, fix $A\seq G$ and define $\cS_\ell=\{gA:g\in G\}$ and $\cS_r=\{Ag:g\in G\}$. Then one can check that $\VC(\cS_r)=\VC(\cS_\ell^*)$, and so the result follows from Fact \ref{fact:VCdim}$(c)$.
\end{proof}

 \subsection{NIP sets of small tripling are essentially approximate groups}
 
 In this section, we prove a useful strengthening of Theorem \ref{thm:TaoPSE}$(b)$ in the NIP setting. 

  \begin{theorem}\label{thm:TaoNIP}
 Let $G$ be a group, and suppose $A\seq G$ is $d$-NIP with $k$-tripling. Then $A^{\pm 2}\seq EA\cap AF$ for some $E,F\seq A^{\pm 3}$ of size $O_d(k^{O(1)})$. In particular, $A^{\pm 1}$ is an $O_d(k^{O(1)})$-approximate group.
 \end{theorem}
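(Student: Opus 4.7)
My plan is to apply the VC-theorem (Theorem \ref{thm:VC}) directly to produce the covers $E$ and $F$. The key observation is that $g \in EA$ is equivalent to $E \cap gA\inv \neq \emptyset$, so the inclusion $A^{\pm 2} \seq EA$ reduces to finding a small hitting set $E \seq A^{\pm 3}$ for the family $\cS_E := \{gA\inv : g \in A^{\pm 2}\}$, each member of which is automatically contained in $A^{\pm 2}A\inv \seq A^{\pm 3}$. By the Plunnecke-Ruzsa inequalities (Theorem \ref{thm:TaoPSE}(a)), $|A^{\pm 3}| \leq k^{O(1)}|A|$, so with respect to the uniform probability measure $\mu$ on $A^{\pm 3}$, each member of $\cS_E$ has measure at least $\delta := 1/k^{O(1)}$.

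The crucial input is a bound on $\VC(\cS_E)$. Via the inversion bijection $x \mapsto x\inv$ on $G$, the system of left translates $\{gA\inv : g \in G\}$ corresponds to the system of right translates $\{Ag\inv : g \in G\}$ of $A$, which by Corollary \ref{cor:VCG}(b) has VC-dimension at most $2^d$. Hence $\VC(\cS_E) \leq 2^d$. I then apply Theorem \ref{thm:VC} with error parameter $\epsilon = \delta/2$ to obtain a sequence $(x_1,\ldots,x_n) \in (A^{\pm 3})^n$ of length $n \leq O_{d,\delta}(1) = O_d(k^{O(1)})$ whose empirical measure approximates $\mu$ uniformly over $\cS_E$ to within $\delta/2$. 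Since every $S \in \cS_E$ has $\mu(S) \geq \delta$, each such $S$ must contain some $x_i$, and $E := \{x_1,\ldots,x_n\} \seq A^{\pm 3}$ is the desired cover.

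For the inclusion $A^{\pm 2} \seq AF$ I will run the parallel argument: now $g \in AF$ iff $F \cap A\inv g \neq \emptyset$, and I will consider the family $\cS_F := \{A\inv g : g \in A^{\pm 2}\}$ on $A^{\pm 3}$. Under inversion, right translates of $A\inv$ correspond to left translates of $A$, whose VC-dimension is strictly less than $d$ by the definition of $d$-NIP, so the same VC-theorem argument yields $F \seq A^{\pm 3}$ of size $O_d(k^{O(1)})$. The ``in particular'' conclusion is then immediate: $A^{\pm 1}$ is symmetric by definition, and $(A^{\pm 1})^2 = A^{\pm 2} \seq EA \seq EA^{\pm 1}$, exhibiting $A^{\pm 1}$ as an $|E|$-approximate group.

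I do not anticipate a serious obstacle; once the framework above is in place, the proof is essentially routine. The only subtlety lies in the bookkeeping that transfers the NIP hypothesis on $A$ (which controls VC-dimension of \emph{left} translates of $A$) to the systems $\cS_E$ and $\cS_F$ of left translates of $A\inv$ and right translates of $A\inv$, which is accomplished by combining the inversion bijection with Corollary \ref{cor:VCG}(b).
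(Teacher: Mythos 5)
Your proposal is correct and is essentially the paper's own proof: both make translates of $A$ dense in $A^{\pm 3}$ via the Plunnecke-Ruzsa inequalities (Theorem \ref{thm:TaoPSE}$(a)$) and then apply the VC theorem (Theorem \ref{thm:VC}) to extract a small hitting set, bounding the VC-dimension by Corollary \ref{cor:VCG}$(b)$ for one side and by the definition of $d$-NIP for the other; your passage through the inversion bijection (left translates of $A\inv$ versus the paper's right translates $\{Ag:g\in A^{\pm 2}\}$, and similarly for the other side) is only a cosmetic difference. One trivial slip in the last line: $(A^{\pm 1})^2=A^{\pm 2}\cup A\cup A\inv$, which need not equal $A^{\pm 2}$ when $1\notin A$, so cover the extra piece by the identity translate and use $E\cup\{1\}$, which does not affect the $O_d(k^{O(1)})$ bound.
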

 \begin{proof}
 Set $X=A^{\pm 3}$, and note that $X$ is finite. Let $\cS=\{Ag:g\in A^{\pm 2}\}$, and note that $\cS$ is a collection of subsets of $X$.  By Theorem \ref{thm:TaoPSE}$(a)$, there is $\epsilon=k^{\nv O(1)}$ such that, for any $S\in\cS$, we have $|S|=|A|\geq \epsilon|X|$. By Corollary \ref{cor:VCG}$(b)$, we have $\VC(\cS)\leq 2^d$.  By Theorem \ref{thm:VC}, there is a set $E\seq X$ such that $|E|\leq O(2^d\epsilon^{\nv 2} \log(2^d\epsilon\inv))\leq O_d(k^{O(1)})$ and $S\cap E\neq\emptyset$ for all $S\in\cS$.
 Therefore, if $g\in A^{\pm 2}$ then $Ag\inv\cap E\neq\emptyset$, i.e., $g\in x\inv A$ for some $x\in E$.  So $A^{\pm 2}\seq E\inv A$. Applying the same argument to $\cS'=\{gA:g\in A^{\pm 2}\}$, which has VC-dimension at most $d-1$, we obtain $F\seq X$ such that $|F|\leq O_d(k^{O(1)})$ and $A^{\pm 2}\seq AF\inv$. 
 \end{proof}
 
 \begin{remark}
In the previous proof, we only needed to bound the VC-dimension of translates of $A$ by elements in $A^{\pm 2}$. Similar situations will arise frequently below, and so it is worth pointing out now that, for our purposes, the difference in the VC-dimensions of such set systems is negligible. Specifically, given a group $G$ and $A\seq G$, let $\cS_1=\{gA:g\in G\}$ and $\cS_2=\{gA:g\in AA\inv\}$. Then clearly $\VC(\cS_2)\leq\VC(\cS_1)$. Conversely, we have $\VC(\cS_1)\leq\VC(\cS_2)+1$ (this is observed by Sisask in \cite{SisNIP}). Indeed, suppose $\cS_1$ shatters a set $X\seq G$. After translating, we may assume $X\seq A$. So if $Y\seq X$ is nonempty, and $g\in G$ is such that $gA\cap X=Y$, then we necessarily have $g\in AA\inv$. Therefore $\cS_2$ shatters any proper subset of $X$.
 \end{remark}

\section{Model theoretic preliminaries}

We assume familiarity with basic notions from first-order model theory, including structures, types, saturation, and the ultraproduct construction. 

\subsection{Formulas and NIP}

Let $M$ be a first-order structure.

\begin{definition}
Let $\phi(\xbar;\ybar)$ be a formula (possibly with parameters from $M$).
\begin{enumerate}
\item Given $\bbar\in M^{\ybar}$, let $\phi(M;\bbar)=\{\abar\in M^{\xbar}:M\models\phi(\abar;\bbar)\}$.
\item A \textbf{instance} of $\phi(\xbar;\ybar)$ is a formula of the form $\phi(\xbar,\bbar)$ for some $\bbar\in M^{\ybar}$.
\item A \textbf{$\phi$-formula} is a (finite) Boolean combination of instances of $\phi$.
\item A subset of $M^{\xbar}$ is \textbf{$\phi$-definable} if it is defined by a $\phi$-formula.
\item A \textbf{$\langle \phi\rangle$-formula} is a Boolean combination of formulas of the form $\phi(\xbar,\ybar_i)$ where the variables $\ybar_i$ can vary. 
\item Let $\phi^*(\ybar;\xbar)$ denote the same formula $\phi(\xbar;\ybar)$, but with the roles of object and parameter variables exchanged.
\end{enumerate}
\end{definition}

Given $B\seq M$, we say that a formula $\theta(\xbar)$ is \textbf{over $B$} if any parameters appearing in $\theta(\xbar)$ lie in $B$. A set $X\seq M^{\xbar}$ is \textbf{definable over $B$} if $X=\theta(M)$ for some formula $\theta(\xbar)$ over $B$. If, moreover, $\theta(\xbar)$ is a $\phi$-formula for some formula $\phi(\xbar;\ybar)$ over $\emptyset$, then we say that $X$ is \textbf{$\phi$-definable over $B$}.

\begin{definition}
A formula $\phi(\xbar;\ybar)$ is \textbf{$d$-NIP} if there do not exist $(\abar_i)_{i\in[d]}$ in $M^{\xbar}$ and $(\bbar_\sigma)_{\sigma\seq[d]}$ in $M^{\ybar}$ such that $M\models\phi(\abar_i;\bbar_\sigma)$ if and only if $i\in \sigma$. We say that $\phi(\xbar;\ybar)$ is \textbf{NIP} if it is $d$-NIP for some $d\geq 1$.
\end{definition}

\begin{remark}
A formula $\phi(\xbar;\ybar)$ is $d$-NIP if and only if  $\{\phi(M;\bbar):\bbar\in M^{\ybar}\}$ has VC-dimension strictly less than $d$ (as a set system on $M^{\xbar}$). Note also that a subset $A$ of a group $G$ is $d$-NIP if and only if the ``formula" $A(y\cdot x)$ is $d$-NIP. 
\end{remark}

As in Corollary \ref{cor:VCG}, one can use Fact \ref{fact:VCdim} to show that NIP formulas are closed under Boolean combinations, and that $\phi(\xbar;\ybar)$ is NIP if and only if $\phi^*(\ybar;\xbar)$ is NIP.

\subsection{Locally compact type spaces}\label{sec:LCLT}

\begin{definition}
Given a set $X$, a \textbf{ring of subsets of $X$} is a nonempty collection $\cR\seq\cP(X)$ that is closed under finite unions, intersections, and relative complements (i.e., if $A,B\in\cR$ then $A\backslash B\in\cR$). 
\end{definition}

Before moving to the model theoretic setting, we set some notation for a few frequently used rings of subsets of groups.

\begin{notation}
Let $G$ be a group and suppose $A\seq G$. We let $\cR_{\!A}$, $\cR^\ell_{\!A}$, and $\cR^r_{\!A}$ denote the rings of subsets of $\langle A\rangle$ generated by $\{gAh:g,h\in\langle A\rangle\}$, $\{gA:g\in\langle A\rangle\}$, and $\{Ag:g\in\langle A\rangle\}$, respectively. 
\end{notation}

Now let $M^*$ be a  $\kappa$-saturated first-order structure for some sufficiently large cardinal $\kappa$. We say that a set $I$ is \emph{bounded} if $|I|<\kappa$. Given variables $\xbar$, let $\Def_{\xbar}(M^*)$ denote the Boolean algebra of definable subsets of $(M^*)^{\xbar}$.  Recall that a subset of $(M^*)^{\xbar}$ is said to be {\em type-definable} if it is an intersection of a bounded collection of definable sets.  And a subset of $(M^*)^{\xbar}$ is said to be {\em ind-definable}  (also called $\vee$-definable) if it is a union of a bounded number of definable sets.  Note that if $X$ is ind-definable as 
$\bigcup_{i\in I}X_{i}$, and $Y$ is a definable subset of $(M^*)^{\xbar}$ which is a subset of $X$, then $Y\subseteq \bigcup_{i\in I_{0}}X_{i}$ for some finite subset $I_{0}$ of $I$ (by saturation of $M^*$).  

We now give some refinements in the light of the above notion of a ring of subsets. 

\begin{definition}\label{def:ind}
Fix $\xbar$ and let $\cR$ be a subring of $\Def_{\xbar}(M^*)$. Then a set $X\seq M^{\xbar}$ is \textbf{$\cR$-type-definable} (resp.,  \textbf{countably $\cR$-type-definable}) if it is an intersection of boundedly (resp., countably) many sets in $\cR$.

A set $X\seq M^{\xbar}$ is \textbf{$\cR$-ind-definable} (resp., \textbf{countably $\cR$-ind-definable}) if it is a union of boundedly (resp., countably) many sets in $\cR$.
\end{definition}

In the previous definition, if $\cR$ is the Boolean algebra of $\phi$-definable sets, for some formula $\phi(\xbar;\ybar)$, then we replace $\cR$ with $\phi$ in the above terminology. If $\cR=\Def_{\xbar}(M^*)$, then we omit it. In these two cases, we will also want to be more precise with parameters.

\begin{definition}
Fix $\xbar$ and $B\seq M^*$. Then $X\seq (M^*)^{\xbar}$ is \textbf{type-definable} (resp., \textbf{countably type-definable}) \textbf{over $B$} if it is an intersection of boundedly (resp., countably) many sets that are definable over $B$. These notions are relativized to a formula $\phi(\xbar;\ybar)$ over $\emptyset$ in the obvious way, and we also have the natural analogues with respect to ind-definability.
\end{definition}

\begin{definition}
Fix $\xbar$ and let $\cR$ be a subring of $\Def_{\xbar}(M^*)$. A complete \textbf{$\cR$-type} is a subset $p\seq\cR$ such that:
\begin{enumerate}[$(i)$]
\item $\emptyset\not\in p$,
\item  $A\in p$ for some $A$, 
\item if $A,B\in p$ then $A\cap B\in p$,
\item if $A\in p$ and $B\in\cR$, with $A\seq B$, then $B\in p$, and
\item for any $A,B\in\cR$, if $A\in p$ and $B\not\in p$ then $A\backslash B\in p$.
\end{enumerate}
We let $S(\cR)$ denote the set of complete $\cR$-types.  (We may omit the adjective ``complete".)
\end{definition}

\begin{remark}
$S(\cR)$ is a totally disconnected locally compact Hausdorff space, with a basis of compact-open sets given by $[X]=\{p\in S(\cR):X\in p\}$ for all $X\in\cR$. Note that every $p\in S(\cR)$ is a partial type in the sense of the ambient structure $M^*$. On the other hand once one picks a definable set $X\in p$, then $p$ is determined by its restriction to the Boolean algebra $\cR^X=\{X\cap A:A\in\cR\}$ of subsets of $X$.  In any case both complete $\cR$-types and $\cR$-type definable sets will ``concentrate" on some definable set $X\in{\cR}$. 

The previous remarks also connect the topology on $S(\cR)$ to more familiar spaces of types over Boolean algebras. Specifically, given $X\in\cR$, the type space $S(\cR^X)$ is a profinite space under the usual Stone topology. Given $X,Y\in\cR$, if $\emptyset\neq X\seq Y$ then for any $p\in S(\cR^X)$ there is a unique $q\in S(\cR^Y)$ such that $p=\{X\cap A:A\in q\}$, and this induces a continuous injective map from $S(\cR^X)$ to $S(\cR^Y)$. These maps yield a direct system  whose direct limit is naturally identified with $S(\cR)$. The final topology on $S(\cR)$ induced by this direct system coincides with the topology described above. 
\end{remark}

Given a type $p$ and a type-definable set $X$, we write $p\models X$ to denote that $p$ implies (or concentrates on) $X$, i.e., any realization of $p$ (perhaps in a larger model) also realizes $X$ when viewed as a partial type. If, moreover, $p\in S(\cR)$ for some ring $\cR$, and $X$ is $\cR$-type-definable, then this means that $p$ actually contains a small collection of $\cR$-definable sets whose intersection is $X$. 

\begin{definition}
Let $X$ be a set and let $\cR$ be a ring of subsets of $X$. An \textbf{extended $\cR$-Keisler measure} is a nontrivial finitely additive $\R_{\geq 0}\cup\{\infty\}$-valued measure on $\cR$. An \textbf{$\cR$-Keisler measure} is an $\R_{\geq 0}$-valued extended $\cR$-Keisler measure.
\end{definition}

  Now let $\cR$ be a subring of $\Def_{\xbar}(M^*)$ and let $\mu$ be an extended $\cR$-Keisler measure. (In the situation most interesting for us, the measure will be $\R_{\geq 0}$-valued.)

\begin{definition}
$~$
\begin{enumerate}
\item An $\cR$-type $p$ is \textbf{$\mu$-wide} if $\mu(X)>0$ for any $X\in p$.
\item An $\cR$-type-definable set $X\seq M$ is \textbf{$\mu$-wide} if $\mu(D)>0$ for any $\cR$-definable set $D\seq (M^*)^{\xbar}$ containing $X$.
\end{enumerate}
\end{definition}

\begin{fact}\label{fact:wide}
Suppose $X\seq (M^*)^{\xbar}$ is $\cR$-type-definable and $\mu$-wide. Then there is a $\mu$-wide type $p\in S(\cR)$ such that $p\models X$.
\end{fact}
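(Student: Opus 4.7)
The plan is to construct $p$ as a maximal filter of $\mu$-wide $\mathcal{R}$-sets extending the filter of $\mathcal{R}$-definable sets containing $X$, and then verify that maximality upgrades it to a complete $\mathcal{R}$-type.

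First, let $\mathcal{F}_0 = \{A \in \mathcal{R} : X \subseteq A\}$. This family is nonempty (any set in $\mathcal{R}$ appearing in the type-definition of $X$ belongs to $\mathcal{F}_0$), is closed under finite intersections, and consists entirely of sets of positive $\mu$-measure by the assumption that $X$ is $\mu$-wide. I would then apply Zorn's lemma to the poset of filters $q \supseteq \mathcal{F}_0$ on $\mathcal{R}$ (i.e., $q$ closed under finite intersections, upward-closed in $\mathcal{R}$, with $\emptyset \notin q$) all of whose members have $\mu(A) > 0$. The chain hypothesis is immediate: the union of a chain of such filters is still a filter of $\mu$-wide sets. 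Let $p$ be a maximal such filter.

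Properties (i)--(iv) in the definition of a complete $\mathcal{R}$-type are built into $p$ being a filter of positive-measure sets, and $p \models X$ holds since $p \supseteq \mathcal{F}_0$ contains any generating family of $\mathcal{R}$-sets whose intersection is $X$. The only nontrivial verification is property (v): given $A \in p$ and $B \in \mathcal{R}$ with $B \notin p$, one must show $A \setminus B \in p$. Maximality forces the filter $p' = \langle p \cup \{B\}\rangle$ generated inside $\mathcal{R}$ to fail the $\mu$-wideness condition, so either $\emptyset \in p'$, in which case some $C \in p$ satisfies $C \cap B = \emptyset$ and thus $C = C \setminus B$, or there is $C \in p$ with $\mu(C \cap B) = 0$. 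In the latter case, finite additivity gives $\mu(C \setminus B) = \mu(C) > 0$, and more generally, for every $D \in p$,
\[
\mu(D \cap (C \setminus B)) = \mu(D \cap C) - \mu(D \cap C \cap B) = \mu(D \cap C) > 0,
\]
since $D \cap C \in p$ and $\mu(D \cap C \cap B) \leq \mu(C \cap B) = 0$. Therefore the filter generated by $p \cup \{C \setminus B\}$ consists of $\mu$-wide sets, and maximality of $p$ forces $C \setminus B \in p$. Either way $(A \cap C) \setminus B \in p$, and by upward closure $A \setminus B \in p$, establishing (v).

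The main obstacle is property (v), and specifically the need to handle the distinction between the filter $\langle p \cup \{B\}\rangle$ being inconsistent ($\emptyset \in p'$) versus merely containing a measure-zero element; the substitution of $B$ by $C \setminus B$ in the second case, justified by finite additivity of $\mu$, is the key trick. Everything else is standard filter/Zorn bookkeeping, and no saturation of $M^*$ beyond what is already used in defining $\mathcal{R}$-type-definability enters the argument.
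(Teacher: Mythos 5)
Your proof is correct and is essentially the paper's argument: the paper fixes some set in $\cR$ containing $X$, passes to the Boolean algebra of its $\cR$-subsets, invokes the ``usual standard exercise'' of extending the filter of $\cR$-definable supersets of $X$ to an ultrafilter all of whose members have positive measure, and then lifts back to $S(\cR)$. You carry out that same Zorn-type maximal-filter construction directly on the ring $\cR$, with the substitution of $B$ by $C\setminus B$ supplying axiom $(v)$, so the content coincides; you have simply written out the standard exercise at the level of rings instead of relativizing to a Boolean algebra.
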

\begin{proof}
Let $\cF$ be the collection of $\cR$-definable subsets of $(M^*)^{\xbar}$ containing $X$. Fix some $X\in\cF$, and let $\cF^X=\{X\cap Y:Y\in\cF\}\seq\cF$. By the usual standard exercise, there is $q\in S(\cR^X)$ such that $\cF^X\seq q$ and $\mu(Y)>0$ for all $Y\in q$. Let $p\in S(\cR)$ be the unique lift of $q$. Then one easily checks that $p$ is $\mu$-wide and $\cF\seq p$.
\end{proof}

\subsection{Locally compact quotients}\label{sec:LCLT}

Let $G^*$ be a sufficiently saturated first-order expansion of a group. We work in $G^*$ (namely take $M^*$ from the previous section to be $G^*$. Suppose $\Sigma\leq G^*$ is an ind-definable subgroup of $G^*$, and $\Gamma\leq\Sigma$ is a type-definable bounded index subgroup of $\Sigma$.  Let $\pi\colon \Sigma\to\Sigma/\Gamma$ be the canonical quotient map. Then $\Sigma/\Gamma$ is a locally compact Hausdorff space under the \textbf{logic topology}, in which a subset $C\seq\Sigma/\Gamma$ is \textbf{closed} if and and only if $\pi\inv(C)\cap X$ is type-definable for any definable set $X\seq \Sigma$. The verification of this claim is a standard exercise (see \cite[Section 7]{HPP} and \cite[Section 4]{vdDag} for details in a slightly restricted setting). When $\Sigma$ is actually \emph{definable}, then $\Sigma/\Gamma$ is compact, and this situation is covered in detail in \cite{LaPi} and \cite{PilCLG}, for example. The following are some other useful exercises.

\begin{fact}\label{fact:LCLT}
Suppose $\Sigma\leq G^*$ is ind-definable, and $\Gamma\leq\Sigma$ is type-definable of bounded index.
\begin{enumerate}[$(a)$]
\item A set $K\seq \Sigma/\Gamma$ is compact if and only if $\pi\inv(K)$ is type-definable.
\item A set $K\seq \Sigma/\Gamma$ is compact-open if and only if $\pi\inv(K)$ is definable.
\item If $X\seq\Sigma$ is type-definable then $\pi(X)$ is compact.
\item If $X\seq \Sigma$ is definable then $U=\{a\Gamma\in \Sigma/\Gamma:a\Gamma\seq X\}$ is open, and $\pi\inv(U)\seq X$.
\item If $\Sigma$ is countably ind-definable and $\Gamma$ is countably type-definable, then $\Sigma/\Gamma$ is second countable.
\item If $\Gamma$ is a normal subgroup of $\Sigma$, then $\Sigma/\Gamma$ is a topological group.
\end{enumerate}
\end{fact}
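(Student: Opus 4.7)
The plan is to derive all six parts from the saturation-compactness duality: any definable superset of a type-definable set must contain a finite sub-intersection of the defining partial type, and dually any type-definable set sitting inside a definable ambient equals the intersection of its definable supersets. I would first establish the key technical lemma that for any definable $Y \seq \Sigma$ and any type-definable $Z \seq \Sigma$, the set $\{g \in Y : g\Gamma \cap Z \neq \emptyset\}$ is type-definable; this follows by writing $\Gamma = \bigcap_j \Delta_j$ and $Z = \bigcap_i Z_i$, expressing the complement as the condition that there exist finite $J, I$ with $g\Delta_J \cap Z_I = \emptyset$, where $\Delta_J = \bigcap_{j \in J} \Delta_j$ and $Z_I = \bigcap_{i \in I} Z_i$, and then invoking saturation of $M^*$.

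For (a): if $K \seq \Sigma/\Gamma$ is compact, then $K$ is closed and covered by finitely many basic compact-open sets, placing $\pi\inv(K)$ inside a single definable $X_0 \seq \Sigma$, so $\pi\inv(K) = \pi\inv(K) \cap X_0$ is type-definable. Conversely, a type-definable $\pi\inv(K)$ sits in a definable $Y \seq \Sigma$ by ind-definability of $\Sigma$ together with saturation, and then $K \seq \pi(Y)$ is compact by a standard finite-subcover argument that mirrors saturation (refining any open cover by basic compact-opens which pull back to definable subsets of $Y$). Part (b) follows by the collapse principle: if $K$ is compact-open then $\pi\inv(K) \cap Y$ and its complement in $Y$ are both type-definable, so saturation forces $\pi\inv(K) \cap Y$ to be definable, and hence $\pi\inv(K) = \pi\inv(K) \cap Y$ is definable. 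Part (c) is then immediate: the key lemma (applied with $Z = X$) gives that $\pi\inv(\pi(X)) = X\Gamma$ is type-definable, so $\pi(X)$ is compact by (a).

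For (d), I show that $\pi\inv(U) = \{a \in \Sigma : a\Gamma \seq X\}$ is ind-definable. For each such $a$, the inclusion $\Gamma \seq a\inv X$ of a type-definable set inside a definable set yields, by saturation, a finite sub-intersection $\Gamma_J = \bigcap_{j \in J} \Delta_j$ with $a\Gamma_J \seq X$, and the sets $\{a : a\Gamma_J \seq X\}$ are definable and cover $\pi\inv(U)$ as $J$ ranges over finite subsets of the defining index set. Hence $\pi\inv(U)$ is ind-definable, making $\pi\inv(\Sigma/\Gamma \setminus U)$ type-definable upon intersection with any definable $W$, so $U$ is open. The containment $\pi\inv(U) \seq X$ is immediate because $1 \in \Gamma$ forces $a \in a\Gamma \seq X$. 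Part (e) follows because under the countability hypotheses only countably many sub-intersections $\Gamma_J$ and countably many definable subsets of $\Sigma$ enter the basis of compact-open sets from (b). Part (f) reduces to continuity of multiplication and inversion on $\Sigma/\Gamma$, which follows from the key lemma applied to preimages of closed sets; normality of $\Gamma$ ensures the operation descends to the quotient.

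I expect the main obstacle to be the saturation argument underlying both (d) and the key lemma, where one must confirm that existential quantification over a type-definable subgroup---restricted to a definable ambient---remains a type-definable condition. Once this is in hand, the other parts follow by adapting the treatment of the compact case (where $\Sigma$ itself is definable) from \cite{LaPi} and \cite{PilCLG} to the ind-definable setting.
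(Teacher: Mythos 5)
Your key lemma (that $Y\cap Z\Gamma$ is type-definable), your treatment of (d) via ind-definability of $\pi\inv(U)$, and the partition/collapse argument in (b) are all correct, and the paper itself leaves this Fact as an exercise (deferring to \cite{HPP}, \cite{vdDag}, \cite{LaPi}, \cite{PilCLG}), so the question is only whether your sketch is sound. The main flaw is your repeated appeal to a basis of \emph{compact-open} sets in $\Sigma/\Gamma$: the logic topology need not have any nonempty compact-open subsets at all (the quotient can be a connected Lie group such as $\R$ -- e.g.\ finite elements modulo infinitesimals in a saturated real closed field -- and indeed the quotients arising in this paper are Lie-modeled, not totally disconnected). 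For the forward direction of (a) the repair is cheap and uses only your part (d): the open sets $U_X=\{a\Gamma: a\Gamma\seq X\}$, for $X$ definable with $X\seq\Sigma$, cover $\Sigma/\Gamma$, since by saturation every type-definable coset $a\Gamma$ lies in a finite union of the definable pieces of $\Sigma$; a compact $K$ is covered by finitely many $U_X$, so $\pi\inv(K)$ sits inside a definable set and closedness of $K$ then gives type-definability. For the converse of (a) you cannot simply ``mirror saturation'' on an arbitrary open cover: the cover may have more than $\kappa$ members, so saturation does not apply directly. Either first pass to a subcover of size at most $|\Sigma/\Gamma|<\kappa$ (this is exactly where bounded index enters) and then run the finite-intersection/saturation argument on the type-definable preimages of the complements intersected with a definable $Y\supseteq\pi\inv(K)$, or realize $K$ as the image, under the continuous map $\tau$ of Proposition \ref{prop:LCLT}, of the compact set of types concentrating on $\pi\inv(K)$ inside $[Y]$. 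Since your (c) is deduced from the converse of (a), it inherits this dependence.

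Part (e) is a genuine gap as written: even granting countably many sets $\Delta_J$ and countably many pieces of $\Sigma$, there is no ``basis of compact-opens'' to count, and countability of the definitions does not by itself yield a countable basis, because the open sets of (d) involve arbitrary definable sets, hence unboundedly many parameters. The standard argument fixes a countable model $M\prec G^*$ over which $\Gamma$ is type-defined and the pieces of $\Sigma$ are defined; since the coset equivalence relation is bounded and type-definable over $M$, elements with the same type over $M$ lie in the same coset, so each compact set $\pi(X)$ (for $X$ a finite union of pieces, defined over $M$) is a compact Hausdorff continuous image of the second-countable compact space of types over $M$ concentrating on $X$, hence metrizable and second countable; the countably many open sets $U_X$ from (d) then cover $\Sigma/\Gamma$ and give a countable basis. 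Finally, in (f) your one-line reduction glosses the point that continuity of multiplication is first proved with respect to the logic topology on $(\Sigma\times\Sigma)/(\Gamma\times\Gamma)$, which must then be identified with the product topology (e.g.\ via a proper continuous bijection onto the product, or by the compact case argument of \cite{LaPi} adapted piecewise); this is routine but is precisely the kind of step the exercise is asking you to check.
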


We can recast the logic topology on $\Sigma/\Gamma$ in terms of the topology on types over rings of definable sets. First, we set some notation.

\begin{definition} Suppose $\cR$ is subring of $\Def(G^*)$, $\Sigma$ is an $\cR$-ind-definable subgroup of $G^*$, $\Gamma$ is an $\cR$-type-definable bounded-index subgroup of $\Sigma$, and $\cR$ is left-$\Sigma$-invariant.  Then every $p\in S(\cR)$ determines (i.e., implies) a unique left coset $a\Gamma$ of $\Gamma$ in $\Sigma$. We let  $\tau(p)$ denote this left coset $a\Gamma$, yielding a function $\tau: S(\cR) \to \sigma/\Gamma$.
\end{definition}

The existence of $\tau$ above is immediate due to our convention that $\Gamma$ (and every left coset $a\Gamma$) is $\cR$-type definable (i.e., by a small collection of formulas compared to the saturation of $G^*$) and that $p$ is a complete $\cR$-type  (over $G^*$).  Namely, our saturation condition on $G^*$, and the bounded index assumption,  implies that every coset of $\Gamma$ in $\Sigma$ (in a bigger model) has a representative in $G^*$, whereby if $a$ realizes $p$ in a bigger model, then $a$ is in the same coset of $\Gamma$ as some $b$ in $G^*$ and this information is part of the type $p$.

\begin{proposition}\label{prop:LCLT}
Let $\cR$ be a subring of $\Def(G^*)$. Suppose $\Sigma$ is an
$\cR$-ind-definable subgroup of $G^*$, $\Gamma$ is an $\cR$-type-definable bounded-index subgroup of $\Sigma$, and $\cR$ is left-$\Sigma$-invariant. Then the logic topology on $\Sigma/\Gamma$ is the finest topology for which $\tau\colon S(\cR)\to \Sigma/\Gamma$ is continuous.
\end{proposition}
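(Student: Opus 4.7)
The plan is to identify the ``finest topology for which $\tau$ is continuous'' with the quotient topology induced by $\tau$, i.e., the topology in which $C \subseteq \Sigma/\Gamma$ is closed exactly when $\tau^{-1}(C)$ is closed in $S(\cR)$. It then suffices to prove: a set $C \subseteq \Sigma/\Gamma$ is closed in the logic topology if and only if $\tau^{-1}(C)$ is closed in $S(\cR)$. As a setup for both directions, write $\Gamma = \bigcap_k \Gamma_k$ with $\Gamma_k \in \cR$ and $\Sigma = \bigcup_j \Sigma_j$ with $\Sigma_j \in \cR$; by the left-$\Sigma$-invariance of $\cR$, each coset $a\Gamma$ with $a \in \Sigma$ is $\cR$-type-definable as $\bigcap_k a\Gamma_k$.

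For the continuity of $\tau$, fix a logic-closed $C$ and a type $p \in S(\cR)$ with $\tau(p) \notin C$, and pick a realization $a \in G^*$ of $p$ lying in $\Sigma$, so $\tau(p) = a\Gamma$. Because $\tau(p) \notin C$, the coset $a\Gamma$ is disjoint from $\pi^{-1}(C)$, hence contained in $\Sigma \setminus \pi^{-1}(C)$. Logic-closedness of $C$ says $\pi^{-1}(C) \cap X$ is type-definable for every definable $X \subseteq \Sigma$, and combined with the ind-definability $\Sigma = \bigcup_j \Sigma_j$ this forces $\Sigma \setminus \pi^{-1}(C)$ to be ind-definable, say $\Sigma \setminus \pi^{-1}(C) = \bigcup_l V_l$ with $V_l$ definable. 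Applying saturation to the inclusion $\bigcap_k a\Gamma_k \subseteq \bigcup_l V_l$ yields a finite intersection $W = a\Gamma_{k_1} \cap \cdots \cap a\Gamma_{k_m}$ contained in $\Sigma \setminus \pi^{-1}(C)$. Since $W \in \cR$ contains $a$, the basic open $[W] \subseteq S(\cR)$ is a neighborhood of $p$; and for any $q \in [W]$, any realization of $q$ lies in $W$, so $\tau(q) \notin C$. Thus $[W] \cap \tau^{-1}(C) = \emptyset$, proving $\tau^{-1}(C)$ closed.

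Conversely, suppose $\tau^{-1}(C)$ is closed in $S(\cR)$, so its complement is a union $\bigcup_i [X_i]$ of basic opens with $X_i \in \cR$. Unpacking through realizations in $G^*$ gives $\Sigma \setminus \pi^{-1}(C) = \bigcup_{i,j}(X_i \cap \Sigma_j)$, which is $\cR$-ind-definable, hence ind-definable. For any definable $X \subseteq \Sigma$, the set $X \setminus \pi^{-1}(C) = X \cap (\Sigma \setminus \pi^{-1}(C))$ is therefore ind-definable, so by saturation $X \cap \pi^{-1}(C)$ is type-definable, confirming that $C$ is logic-closed. The only delicate step is the continuity direction: the logic-closedness hypothesis only gives an ordinary ind-definable description of $\Sigma \setminus \pi^{-1}(C)$, whereas a basic-open neighborhood of $p$ in $S(\cR)$ requires an $\cR$-definable witness; the trick is to use the $\cR$-type-definability of the entire coset $a\Gamma$ to extract such a witness via saturation.
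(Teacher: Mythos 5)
Your reduction of the statement to ``$C$ is logic-closed if and only if $\tau\inv(C)$ is closed in $S(\cR)$'' is the right reading of ``finest topology,'' and your continuity half is essentially the paper's own argument (the paper proves only this half, explicitly leaving the rest as an exercise): separate the $\cR$-type-definable coset $\tau(p)=a\Gamma=\bigcap_k a\Gamma_k$ from $\pi\inv(C)$ by saturation, and take the resulting $W\in\cR$ as a basic neighborhood of $p$ avoiding $\tau\inv(C)$. One point needs repair, though it is fixable: a type $p\in S(\cR)$ is in general \emph{not} realized in $G^*$, so you cannot ``pick a realization $a\in G^*$ of $p$,'' and the justification ``$W$ contains $a$, hence $[W]$ is a neighborhood of $p$'' is not the right reason. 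Instead take $a\in G^*\cap\Sigma$ to be a representative of the coset $\tau(p)$; since $p\models a\Gamma$ and each $a\Gamma_k$ lies in $\cR$ by left-$\Sigma$-invariance, completeness of the $\cR$-type together with saturation forces $a\Gamma_k\in p$ for every $k$, hence $W\in p$. Similarly, your last step (``any realization of $q$ lies in $W$, so $\tau(q)\notin C$'') tacitly uses saturation once more, to see that $W\cap b\Gamma=\emptyset$ holds already in $G^*$ whenever $b\Gamma\seq\pi\inv(C)$; this is the same level of detail the paper suppresses, so it is acceptable.

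The genuine gap is in the converse direction, which is precisely the part the paper leaves as an exercise. You write the open set $S(\cR)\setminus\tau\inv(C)$ as a union of basic opens $[X_i]$ and then declare $\Sigma\setminus\pi\inv(C)=\bigcup_{i,j}(X_i\cap\Sigma_j)$ to be $\cR$-ind-definable. But in this paper ``ind-definable'' means a union of \emph{boundedly} many definable sets (Definition \ref{def:ind}), and nothing guarantees the family $\{X_i\}$ can be taken bounded: an arbitrary open subset of $S(\cR)$ may need as many basic opens as $|\cR|$, and $|\cR|$ is typically at least the saturation cardinal (e.g.\ $\cR_{\!A}$ contains all translates $gA$ with $g\in\langle A\rangle$). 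Consequently the final complementation step produces $X\cap\pi\inv(C)$ as an intersection of possibly unboundedly many definable sets, which is not type-definability in the paper's sense, so logic-closedness of $C$ does not follow as written. A correct route: given definable $X\seq\Sigma$, use saturation and $\cR$-ind-definability of $\Sigma$ to find $Y\in\cR$ with $X\seq Y$; the basic open $[Y]$ is compact, so $K:=\tau\bigl(\tau\inv(C)\cap[Y]\bigr)$ is compact in the logic topology by the continuity you already proved, hence logic-closed since $\Sigma/\Gamma$ is Hausdorff; and one checks directly (via the realized $\cR$-type of each $b\in X$) that $\pi\inv(C)\cap X=\pi\inv(K)\cap X$, which is type-definable because $K$ is logic-closed. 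This uses the half you did prove, plus compactness of $[Y]$, instead of any boundedness of the basic-open cover.
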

\begin{proof}
We will only prove that $\tau$ is continuous (which is all we need in subsequent results), and leave the rest as an exercise.

Suppose that  $C$ is closed in the logic topology, and fix $p\not\in\tau\inv(C)$. Then $\tau(p)\cap\pi\inv(C)=\emptyset$ and so, by saturation and since $\tau(p)$ is $\cR$-type-definable, there is some $X\in\cR$ such that $\tau(p)\seq X$ and $X\cap\pi\inv(C)=\emptyset$. So $[X]$ is an open set in $S(\cR)$, which contains $p$ and is disjoint from $\tau\inv(C)$.
\end{proof}

Our main results will require us to ``localize" the topology on $\Sigma/\Gamma$ to particular formulas. In particular, we will need the following result.

\begin{theorem}\label{thm:LCLT}
Let $\cR$ be a subring of $\Def(G^*)$. Suppose $\Sigma$ is an
$\cR$-ind-definable subgroup of $G^*$, $\Gamma$ is an $\cR$-type-definable bounded-index subgroup of $\Sigma$, and $\cR$ is left-$\Sigma$-invariant. Then a set $C\seq\Sigma/\Gamma$ is closed if and only if $X\cap\pi\inv(C)$ is $\cR$-type-definable for any $\cR$-definable set $X\seq\Sigma$.
\end{theorem}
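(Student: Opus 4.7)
The plan is to prove the two directions separately. The backward direction is an essentially trivial saturation argument, while the forward direction is the substantive one and relies on Proposition \ref{prop:LCLT} together with a separation argument inside the locally compact space $S(\cR)$.

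For the backward direction, suppose $X\cap\pi\inv(C)$ is $\cR$-type-definable for every $\cR$-definable $X\seq\Sigma$, and let $X'\seq\Sigma$ be an arbitrary definable set. Since $\Sigma$ is $\cR$-ind-definable and $\cR$ is closed under finite unions, saturation produces a single $\cR$-definable $\Sigma_0$ with $X'\seq\Sigma_0\seq\Sigma$. Then $X'\cap\pi\inv(C)=X'\cap(\Sigma_0\cap\pi\inv(C))$, where the bracketed set is $\cR$-type-definable (hence type-definable) by hypothesis, so the whole intersection is type-definable and $C$ is closed in the logic topology by definition.

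For the forward direction, assume $C$ is closed and fix $X\in\cR$ with $X\seq\Sigma$; the goal is to show that $T:=X\cap\pi\inv(C)$ is $\cR$-type-definable. By Proposition \ref{prop:LCLT}, $\tau\colon S(\cR)\to\Sigma/\Gamma$ is continuous, so $\tau\inv(C)$ is closed in $S(\cR)$ and $F:=\tau\inv(C)\cap[X]$ is a closed subset of the compact-open set $[X]$, hence compact. I would then seek, for each $a\in(M^*)^{\xbar}\setminus T$, an $\cR$-definable $Y_a$ with $T\seq Y_a$ and $a\not\in Y_a$; then $T=\bigcap_{a\not\in T}Y_a$ is a bounded intersection of $\cR$-sets. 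When $a\not\in X$ the set $Y_a:=X$ works; when $a\in X\setminus T$, the complete $\cR$-type $p_a:=\tp_{\cR}(a/G^*)$ lies in $[X]\setminus F$, so the task reduces to separating $p_a$ from the compact set $F$ inside $S(\cR)$.

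This separation is the main technical step, and I expect it to be the subtlest point. It relies on two structural features of $S(\cR)$ when $\cR$ is a ring: first, each basic set $[Z]$ with $Z\in\cR$ is clopen (the complement is open because axiom (v) of $\cR$-types gives, for any $p\not\in[Z]$ and any $A\in p$, that $A\setminus Z\in p$, placing $p$ in the basic open set $[A\setminus Z]\seq S(\cR)\setminus[Z]$); second, finite unions of such basic clopens are again of the form $[Z]$, since $\cR$ is closed under finite unions. Using Hausdorffness of $S(\cR)$ and compactness of $F$, a routine finite-cover argument then yields $Z\in\cR$ with $F\seq[Z]$ and $p_a\not\in[Z]$. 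To close, $T\seq Z$ (because for any $b\in T$ we have $\tp_{\cR}(b/G^*)\in F\seq[Z]$, so $Z\in\tp_{\cR}(b/G^*)$ and hence $b\in Z$) and $a\not\in Z$ (directly from $p_a\not\in[Z]$), so $Y_a:=Z$ completes the construction.
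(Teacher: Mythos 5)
Your backward direction is fine (it is exactly the paper's one-line argument from $\cR$-ind-definability of $\Sigma$), and the topological ingredients of your forward direction are all correct as far as they go: $[Z]$ is clopen, finite unions of basic sets are again basic since $\cR$ is closed under unions, and a point of $[X]$ can be separated from the compact set $F=\tau\inv(C)\cap[X]$ by some $[Z]$ with $Z\in\cR$. What this yields, however, is only that $T=X\cap\pi\inv(C)$ is an intersection of \emph{some} family of sets from $\cR$. The assertion that $\bigcap_{a\notin T}Y_a$ is a \emph{bounded} intersection is exactly the point at issue and is not justified: the index set is all of $G^*\setminus T$, and the number of distinct separating sets $Y_a$ can be as large as $|\cR|$, which in the intended applications (e.g.\ $\cR=\cR_{\!A}$, generated by the translates $gAh$ with $g,h\in\langle A\rangle$) has cardinality at least the degree of saturation $\kappa$. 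Definition \ref{def:ind} requires an intersection of \emph{boundedly} many sets of $\cR$, and this cannot be recovered from your argument by a further compactness step, because saturation only applies to bounded families; in general, an arbitrary (unbounded) intersection of definable sets need not be type-definable at all. Note also that your separation argument uses none of the group-theoretic hypotheses beyond continuity of $\tau$: the bounded index of $\Gamma$, the fact that $\Gamma$ is cut out by a bounded family of sets from $\cR$, and the left-$\Sigma$-invariance of $\cR$ are precisely what force boundedness, so a proof that never invokes them cannot close this gap.

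For comparison, the paper obtains the boundedness through the Claim that $\pi\inv(\pi(Y))$ is $\cR$-type-definable for every type-definable $Y\seq\Sigma$ (a generalization of \cite[Lemma 4.1]{CPpfNIP}, whose proof uses the group structure), and then shows that the resulting ``$\cR$-topology'' on the compact set $\pi(X)$ is compact Hausdorff and hence coincides with the logic topology. If you want to repair your route rather than adopt the paper's, the natural target is an interpolation statement: since $C$ is closed, $T$ is type-definable via a bounded family of definable sets $(D_i)_{i\in I}$, so it suffices to show that for every definable $D\supseteq T$ there is $Z\in\cR$ with $T\seq Z\seq D$; then $T$ is the intersection of the boundedly many interpolants $Z_i$. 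Proving that interpolation is essentially the content of the paper's Claim, and it is where the bounded index of $\Gamma$, its $\cR$-type-definability by a bounded family, and the invariance of $\cR$ must enter; compactness of $F$ in $S(\cR)$ alone does not deliver it.
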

\begin{proof}
Let $\pi\colon \Sigma\to\Sigma/\Gamma$ be the quotient map. The following claim can be proved via a straightforward generalization of \cite[Lemma 4.1]{CPpfNIP}.
\medskip

\noindent\textit{Claim:} If $X\seq\Sigma$ is type-definable then $\pi\inv(\pi(X))$ is $\cR$-type-definable. 
 \medskip
 
 Now fix a definable set $X\seq\Sigma$, and set $K=\pi(X)\seq\Sigma/\Gamma$. Then $K$ is compact by Fact \ref{fact:LCLT}. Define the \emph{$\cR$-topology} on $K$ by declaring $C\seq K$ to be \emph{$\cR$-closed} if $\pi\inv(C)$ is $\cR$-type-definable. We claim that this defines a compact Hausdorff topology on $K$. The verification of this nearly identical to the case of the logic topology (e.g., following \cite{LaPi}), and crucially relies on the claim above. Note that any $\cR$-closed subset of $K$ is closed, and so the induced logic topology on $K$ refines the $\cR$-topology. Since both topologies are compact and Hausdorff, they must coincide.  
 
Now we prove the theorem. The left-to-right implication follows immediately from the fact that $\Sigma$ is $\cR$-ind-definable.

For the other direction, suppose $C\seq\Sigma/\Gamma$ is closed and fix an $\cR$-definable set $X\seq\Sigma$. Then $C'=C\cap\pi(X)$ is a closed subset of $\pi(X)$, and hence is $\cR$-closed by the above. So $\pi\inv(C')$ is $\cR$-type-definable. Since $X\cap\pi\inv(C)=X\cap\pi\inv(C')$, it follows that $X\cap\pi\inv(C)$ is $\cR$-type-definable.
\end{proof}

\begin{remark}\label{rem:LCLT}
Given the previous theorem, it is easy to verify the analogous localization of Fact \ref{fact:LCLT} to a specific ring $\cR$.
\end{remark}

\begin{definition}
Suppose $\Sigma\leq G^*$ is ind-definable. A definable set $X\seq \Sigma$ is \textbf{left} (resp., \textbf{right}) \textbf{generic in $\Sigma$} if any definable subset of $\Sigma$ is covered by finitely many left (resp., right) translates of $X$.
\end{definition}

Our use of the word \emph{generic} follows Definition 7.2 in \cite{HPP}. In some sources, a subset $X$ of an abstract group $G$ is called (left) generic if $G$ can be covered by finitely  many (left) translates of $X$, and so the reader should make a note of this distinction.

\begin{remark}\label{rem:gen-group}
Suppose $\Sigma\leq G^*$ is ind-definable, and $\Gamma\leq \Sigma$ is type-definable of bounded index. Then, by saturation and compactness, any definable subset of $\Sigma$ containing $\Gamma$ is left and right generic in $\Sigma$.
\end{remark}

\begin{remark}
In the subsequent results, we will consider ind-definable subgroups of the form $\Sigma=\langle A\rangle$ for some definable set $A\seq G^*$. In this case, a definable set $X\seq \Sigma$ is left (resp., right) generic in $\Sigma$ if and only if $A^{\pm m}$ is covered by finitely many left (resp., right) translates of $X$ for all $m\geq 1$.
\end{remark}

\subsection{Pseudofinite sets and NIP formulas}\label{sec:PFNIP}
We now review some terminology and facts about pseudofinite sets and pseudofinite counting measures. For simplicity, we work in the precise setting that will be used later to prove our main results. Specifically, let $(M_i)_{i\in I}$ be a collection of first-order structures in a common language, where $I$ is some index set. Let $\cU$ be an ultrafilter over $I$ and set $M=\prod_{\cU}M_i$. A set $X\seq M$ is \textbf{internal} if $X=\prod_{\cU}X_i$ for some sequence $(X_i)_{i\in I}$ with $X_i\seq M_i$ for all $i\in I$. Note that any definable subset of $M$ is internal. An internal set $X\seq M$ is \textbf{pseudofinite} if  $X=\prod_{\cU}X_i$ where each $X_i$ is finite. 

Given a sequence $(r_i)_{\in I}$ in the extended nonnegative real line $R:=\R_{\geq 0}\cup\{\infty\}$, there is a unique $s\in R$ such that $\lim_{\cU}r_i=s$, i.e.,  $\{i\in I:|r_i-s|<\epsilon\}\in\cU$ for all $\epsilon>0$ (here we set $\infty-\infty=0$). If $X\seq M$ is nonempty, internal, and pseudofinite, then we define the \textbf{$X$-normalized pseudofinite counting measure}, denoted $\mu_X$, such that, given an internal set $A=\prod_{\cU}A_i$, $\mu_X(A)=\lim_{\cU}|A_i|/|X_i|$. Then $\mu_X$ is an extended Keisler measure over the Boolean algebra of internal subsets of $M$. For the subsequent results, we will need to lift $\mu_X$  to elementary extensions of $M$. To do this, we expand $M$ by a sort for $(R,+,<)$ and, for any formula $\phi(x;\ybar)$ and any pseudofinite definable set $X\seq M$, a function $f^X_\phi$ from the $\ybar$ sort in $M$ to the $R$ sort, which is interpreted as $\bbar\mapsto \mu_X(\phi(M;\bbar))$. From now on, we view $M$ as a structure in this expanded language.\footnote{To avoid ambiguity in words like ``definable", one can iterate the expansion by the symbols $f^X_\phi$.}

Now let $M^*$ be an elementary extension of $M$ in this expanded language. Then in the $R$ sort we have an elementary extension $R^*$ of $R$, which can be equipped with the usual \emph{standard part map}, denoted $\st$, on elements in $R^*$ of finite absolute value. We extend $\st$ to all of $R^*$ by setting $\st(x)=\infty$ for any infinite $x\in R^*$. 

Given $X\seq M^*$, we let $X(M)$ denote $X\cap M$.
 A definable set $X\seq M^*$ is \textbf{pseudofinite} if $X$ is definable over $M$ and $X(M)$ is pseudofinite. Given a pseudofinite definable set $X\seq M^*$, we define the \textbf{$X$-normalized pseudofinite counting measure} $\mu_X$ on $\Def(M^*)$ such that, given a formula $\phi(x;\ybar)$ and $\bbar\in (M^*)^{\ybar}$, $\mu_X(\phi(M^*;\bbar))=\st(f^{X(M)}_\phi(\bbar))$. Then $\mu_X$ is an extended $\Def(M^*)$-Keisler measure, and $\mu_X(X)=1$. Moreover, $\mu_X(A)=\mu_X(A(M))$ for any $A\seq M^*$ definable over $M$ (where $\mu_X(A(M))$ is computed in $M$). This is one of the reasons why we will take some care to keep track of definability of certain objects over $M$. 

For the rest of the paper, when we say that a structure  is sufficiently saturated, we will work in this expanded language for pseudofinite counting measures, and assume that $M^*$ is an elementary extension of an ultraproduct $M$ as above.  When considering expansions of groups, we will use $G^*$ and $G$ instead of $M^*$ and $M$.

The next proposition describes the definability and finite satisfiability properties of the measures $\mu_X$ with respect to NIP formulas.

\begin{proposition}\label{prop:DFS}
Suppose $\phi(x;\ybar)$ is an NIP formula, and $X\seq M^*$ is definable and pseudofinite. Let $Y=\{\bbar\in (M^*)^{\ybar}:\phi(M^*;\bbar)\seq X\}$,  and fix $\epsilon>0$. 
\begin{enumerate}[$(a)$]
\item There is a finite set $F\seq X$ such that, if $\bbar\in Y$ and $\mu_X(\phi(x;\bbar))>\epsilon$, then $\phi(M^*;\bbar)\cap F\neq\emptyset$.
\item There is a set $D\seq (M^*)^{\ybar}$, which is countably $\phi^*$-type-definable over $X$, such that, given $\bbar\in Y$, $\mu_X(\phi(x;\bbar))\leq\epsilon$ if and only if $\bbar\in D$.
\end{enumerate}
If, moreover, $\phi(x;\ybar)$ is over $M$, then in part $(a)$ we may assume $F\seq X(M)$, and in part $(b)$ we may assume that $D$ is countably $\phi^*$-type-definable over $X(M)$.
\end{proposition}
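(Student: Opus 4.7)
My plan is to prove part $(a)$ using the VC theorem (Theorem \ref{thm:VC}) applied in the finite components of the ultraproduct $M = \prod_\cU M_i$, and then to derive part $(b)$ by iterating with a sequence of decreasing approximation parameters.

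I first consider part $(a)$ under the additional assumption that $\phi(x;\ybar)$ is over $M$. Let $d$ be such that $\phi$ is $d$-NIP. For $\cU$-many $i$, the set system $\{\phi(M_i;\bbar) \cap X_i : \bbar \in M_i^{\ybar}\}$ on the finite set $X_i$ has VC-dimension strictly less than $d$, so Theorem \ref{thm:VC}, applied with parameter $\epsilon/2$ and against the uniform measure on $X_i$, produces a tuple $(x_{1,i}, \ldots, x_{N,i}) \in X_i^N$ of size $N \leq O_{d,\epsilon}(1)$ (where $N$ is made independent of $i$ by pigeonholing on $\cU$) that is an $\epsilon/2$-approximation, and hence also an $\epsilon$-net, for this family. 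Taking the ultraproduct yields a finite set $F \seq X(M)$ of size at most $N$; the $\epsilon$-net property can be expressed in the expanded language as a single first-order sentence of the form ``$\forall \bbar ( Y(\bbar) \wedge f^X_\phi(\bbar) > \epsilon \Rightarrow \bigvee_{a \in F} \phi(a;\bbar))$'' (where $Y(\bbar)$ abbreviates $\forall x (\phi(x;\bbar) \Rightarrow x \in X)$), which holds in $M$ and transfers to $M^*$ by elementarity, using that $\mu_X(\phi(x;\bbar)) > \epsilon$ implies $f^X_\phi(\bbar) > \epsilon$.

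For the general case, where $\phi$ may carry parameters $\dbar \in M^*$, I write $\phi(x;\ybar) = \phi'(x;\ybar,\dbar)$ with $\phi'$ over $\emptyset$ and observe that both the $d$-NIP hypothesis on $\phi'(x;\ybar,\zbar)$ at a specific value of $\zbar$ and the existence of a bounded-size $F \seq X$ with the $\epsilon$-net property are first-order conditions in the parameter $\zbar$ (in the expanded language). The implication between them holds in $M$ for every $\dbar' \in M^{\zbar}$ by the previous paragraph, and transfers to $M^*$ via elementarity at $\zbar = \dbar$, yielding the desired $F$ — now only guaranteed inside $X$ rather than $X(M)$. For part $(b)$, I apply the full $\epsilon$-approximation content of Theorem \ref{thm:VC} with $\delta_n = 1/n$ for each $n \geq 1$, obtaining (via the same transfer) finite tuples $\xbar_n \in X^{N_n}$ satisfying $\bigl|\mu_X(\phi(x;\bbar)) - \tfrac{1}{N_n}|\{i : \phi(x_{n,i};\bbar)\}|\bigr| \leq \delta_n$ for all $\bbar \in Y$. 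Then for $\bbar \in Y$, $\mu_X(\phi(x;\bbar)) \leq \epsilon$ if and only if $\tfrac{1}{N_n}|\{i : \phi(x_{n,i};\bbar)\}| \leq \epsilon + \delta_n$ for every $n \geq 1$ — the forward direction is immediate, and the reverse follows by letting $n \to \infty$. Each condition is a single $\phi^*$-formula over $\{x_{n,1},\ldots,x_{n,N_n}\} \seq X$, so taking $D$ to be the intersection over all $n$ realizes $D$ as countably $\phi^*$-type-definable over a countable subset of $X$ (lying in $X(M)$ if $\phi$ is over $M$).

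The main obstacle is ensuring that the approximating tuple produced by Theorem \ref{thm:VC} inside each finite component survives the passage to $M^*$, especially when $\phi$ carries parameters outside of $M$. The resolution is that both the NIP hypothesis and the existence of a good bounded-size approximating tuple can be phrased uniformly as first-order conditions in the expanded language of the pseudofinite counting measure, so that elementarity from $M$ to $M^*$ does the work.
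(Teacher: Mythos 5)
Your proof is correct and follows exactly the route the paper intends: the paper's own proof is just the one-line observation that the result follows from Theorem \ref{thm:VC} applied in the finite factors together with {\L}o\'{s}'s Theorem (citing \cite{CPpfNIP} for details), and your argument is a careful working-out of precisely that transfer, including the uniform-in-parameters formulation needed when $\phi$ has parameters outside $M$ and the $1/n$-approximation scheme for part $(b)$.
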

\begin{proof}
This is a consequence of Theorem \ref{thm:VC} and {\L}o\'{s}'s Theorem. (See also \cite[Section 2]{CPpfNIP}, which deals with the case that $M^*$ is pseudofinite and $X=M^*$.)
\end{proof}

Finally, we state a pseudofinite analogue of Theorem \ref{thm:Mat}.

\begin{proposition}\label{prop:MatPF}
Let $\phi(x;\ybar)$ be a $d$-NIP formula. Suppose $X\seq M^*$ is definable and pseudofinite, and $Y\seq (M^*)^{\ybar}$ is a definable set such that $\phi(M^*,\bbar)\seq X$ for all $\bbar\in Y$. Fix $p\geq q\geq 2^{d+1}$ and suppose that for any $\bbar_1,\ldots,\bbar_p\in Y$ there is $I\seq[p]$ with $|I|=q$ such that $\bigcap_{i\in I}\phi(M^*;\bbar_i)\neq\emptyset$. Then there is a finite set $F\seq X$ such that, for any $\bbar\in Y$, $\phi(M^*;\bbar)\cap F\neq\emptyset$.
\end{proposition}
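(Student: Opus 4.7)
The plan is to deduce the statement from the finitary Matou\v{s}ek theorem (Theorem~\ref{thm:Mat}) via an ultraproduct argument: transfer everything down to the factors $M_i$ of the ultraproduct $M=\prod_\cU M_i$ using {\L}o\'{s}'s theorem, apply Matou\v{s}ek in each $M_i$ to produce a uniformly bounded transversal, and then take the ultraproduct of these transversals and lift back to $M^*$.

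Concretely, I may assume without loss of generality that both $X$ and $Y$ are definable over $M$ (any parameters of $Y$ can be absorbed into $M$ by standard ultraproduct bookkeeping, which does not disturb the pseudofinite counting structure on $X$ since $X$ is already over $M$). Then $X(M)=\prod_\cU X_i$ with each $X_i$ finite, and $Y(M)=\prod_\cU Y_i$. Three pieces of first-order data transfer to $\cU$-almost every $i$: that $\phi$ is $d$-NIP in $M_i$; that $\phi(M_i;\bbar)\seq X_i$ for every $\bbar\in Y_i$; and the $(p,q)$-condition for the family indexed by $Y_i$, which, for fixed $p$ and $q$, is a single first-order sentence.

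For each such $i$, I would apply Theorem~\ref{thm:Mat} to the finite family $\cS_i=\{\phi(M_i;\bbar)\cap X_i:\bbar\in Y_i\}$ of subsets of the finite set $X_i$. Since $\VC(\cS_i)<d$ and the $(p,q)$-condition holds, Matou\v{s}ek yields $F_i\seq X_i$ with $|F_i|\leq c$ for some constant $c$ depending only on $p$, $q$, and $d$ (independent of $i$), such that $F_i\cap\phi(M_i;\bbar)\neq\emptyset$ for every $\bbar\in Y_i$. Setting $F=\prod_\cU F_i$, the uniform cardinality bound forces $F$ to be a genuine finite subset of $X(M)\seq X$ of size at most $c$. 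Then {\L}o\'{s}'s theorem applied to the first-order sentence $\forall\bbar\,(\bbar\in Y\to\bigvee_{f\in F}\phi(f;\bbar))$ gives the conclusion in $M$, and elementarity $M\preceq M^*$ lifts it to $M^*$. I expect the only real obstacle to be this opening bookkeeping reduction that places $Y$ over $M$; the core combinatorial content is entirely packaged inside Theorem~\ref{thm:Mat}, and the remainder is routine ultraproduct manipulation in the style of Section~\ref{sec:PFNIP}.
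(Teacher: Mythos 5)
Your argument is essentially the paper's own proof: the paper disposes of this proposition in one line by applying Theorem \ref{thm:Mat} to the finite sets $X_i$ and then {\L}o\'{s}'s theorem, which is exactly your factorwise Matou\v{s}ek-plus-transfer scheme, and the core of your write-up is correct. The one imprecise point is the opening reduction: the parameters of $Y$ (and any parameters of $\phi$) may lie in $M^*\setminus M$, and they cannot literally be ``absorbed into $M$,'' since $M$ is the fixed ultraproduct over which the expanded language and the factorization into the $M_i$ are built. The standard repair uses precisely the uniform bound your construction produces: since Matou\v{s}ek gives $|F|\leq N$ with $N=N(p,q,d)$ independent of the parameters, the implication ``hypotheses imply the existence of a transversal of size at most $N$ inside $X$'' is a single first-order sentence universally quantified over the parameter variables of $Y$ and $\phi$, whose only remaining parameters are those of $X$ (which do lie in $M$); one verifies this sentence in $\cU$-almost every $M_i$ by Theorem \ref{thm:Mat} applied to the finite set system on $X_i$, hence in $M$ by {\L}o\'{s}, and hence in $M^*$ by elementarity, where it is applied to the actual parameters of $Y$ and $\phi$. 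This is the same elementarity step you already invoke at the end, just carried out with the parameters quantified rather than assumed to lie in $M$, so nothing essential is missing from your approach.
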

\begin{proof} This can be seen by applying Theorem \ref{thm:Mat} to the finite sets $X_{i}$ (of which $X$ is the ultraproduct) and then again applying {\L}o\'{s}'s Theorem.

\end{proof}

\section{The NIP stabilizer theorem}\label{sec:NIPLie}

In this section we let $G^*$ be a sufficiently saturated first-order structure expanding a group. So we work over a fixed ultraproduct $G=\prod_{\cU}G_i\prec G^*$, where each $G_i$ is an expansion of a group in a common language.
The next definition provides analogues of some notions from Section \ref{sec:Gprelim1}. 

\begin{definition}\label{def:pftrip}
Suppose $A\seq G^*$ is definable.
\begin{enumerate}
\item $A$ has \textbf{finite tripling} if it is pseudofinite and $\mu_A(A^3)<\infty$.
\item $A$ is an \textbf{approximate group} if $A$ is symmetric  and $A^2$ can be covered by finitely many left translates of $A$.
\end{enumerate}
\end{definition}

In our earlier definition of a subset $A$ of a group $G$ having $k$-tripling (Definition 3.1) there was an inbuilt assumption that $A$ was finite. As we have now moved to a ``pseudofinite setting", this is reflected in the previous definition with the inbuilt assumption that $A$ is pseudofinite (but possibly infinite). So the reader should be aware of this ambiguity, which can always be clarified by taking note of the surrounding context.

Note that a symmetric definable set $A\seq G^*$ is an approximate group if and only if $A$ is left generic in $\langle A\rangle$. 
Recall also that if $A\seq G^*$ is definable and pseudofinite then $A(G)=\prod_{\cU}A_i$, where each $A_i\seq G_i$ is finite. In this case, one can check that $A$ has finite tripling if and only if there is some $k\geq 1$ such that the set of $i\in I$, for which $A_i$ has $k$-tripling, is in $\cU$. Similarly, $A$ is an approximate group if and only if there is some $k\geq 1$ such that the set of $i\in I$, for which $A_i$ is a $k$-approximate group, is in $\cU$. Applying Theorem \ref{thm:TaoPSE}, we obtain the following conclusion.

\begin{corollary}\label{cor:TaoPSE}
Suppose $A\seq G^*$ is definable and pseudofinite with finite tripling. Then $\mu_A(A^{\pm n})<\infty$ for all $n\geq 1$, and $A^{\pm 2}$ is an approximate group. 
\end{corollary}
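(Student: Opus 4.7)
The plan is a direct ultraproduct transfer of Theorem~\ref{thm:TaoPSE} via {\L}o\'{s}'s Theorem and elementarity $G \prec G^*$. Since $A$ is definable over $M = G$ and pseudofinite, write $A(G) = \prod_\cU A_i$ with each $A_i \subseteq G_i$ finite. The hypothesis $\mu_A(A^3) < \infty$ unwinds to give $k \geq 1$ such that $\{i \in I : |A_i^3| \leq k|A_i|\} \in \cU$, so $\cU$-almost every $A_i$ has $k$-tripling in the sense of Definition~\ref{def:approxg}. All subsequent steps work at this uniform value of $k$.

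For the first assertion, fix $n \geq 1$. The set $A^{\pm n}$ is a union of at most $2^n + 1$ sets, each of the form $\{1\}$ or $A^{\epsilon_1}\cdots A^{\epsilon_n}$ with $\epsilon_j \in \{1,\nv 1\}$. Applying Theorem~\ref{thm:TaoPSE}$(a)$ to each $A_i$ gives a constant $C_n$, depending only on $k$ and $n$, such that $|A_i^{\pm n}| \leq C_n |A_i|$ for $\cU$-almost all $i$. Since $A^{\pm n}$ is definable over $M$, the identification of $\mu_A$ with its pseudofinite value from Section~\ref{sec:PFNIP} gives
\[
\mu_A(A^{\pm n}) = \mu_A(A^{\pm n}(G)) = \st\bigl(\lim_\cU |A_i^{\pm n}|/|A_i|\bigr) \leq C_n < \infty.
\]

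For the second assertion, $A^{\pm 2}$ is symmetric by construction, so by Definition~\ref{def:pftrip}$(2)$ it suffices to cover $A^{\pm 4} = (A^{\pm 2})^2$ by finitely many left translates of $A^{\pm 2}$. Theorem~\ref{thm:TaoPSE}$(b)$ yields a constant $C = O(k^{O(1)})$ (independent of $i$) such that for $\cU$-almost every $i$ there are $g_{i,1}, \ldots, g_{i,C} \in G_i$ with $A_i^{\pm 4} \subseteq \bigcup_{j=1}^C g_{i,j} A_i^{\pm 2}$. Setting $g_j = \prod_\cU g_{i,j} \in G$, {\L}o\'{s}'s Theorem gives $A^{\pm 4}(G) \subseteq \bigcup_j g_j A^{\pm 2}(G)$. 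Because the existence of $C$ witnesses to the covering is a first-order statement (with $C$ a fixed integer), elementarity of $G \prec G^*$ lifts this to $A^{\pm 4} \subseteq \bigcup_j g_j A^{\pm 2}$ in $G^*$, which is exactly what is required.

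There is no real obstacle here: the corollary is a pseudofinite restatement of Theorem~\ref{thm:TaoPSE}, and the only point that needs attention is that the constants $C_n$ and $C$ depend only on $k$ (and $n$), so that a single value works for $\cU$-almost all coordinates and hence survives both the ultralimit and the elementary extension.
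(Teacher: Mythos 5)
Your proposal is correct and follows essentially the same route as the paper: the paper states the corollary right after observing that finite tripling (resp.\ being an approximate group) for $A$ is equivalent to $k$-tripling (resp.\ being a $k$-approximate group) of $\cU$-almost all $A_i$ for a fixed $k$, and then simply applies Theorem~\ref{thm:TaoPSE} coordinatewise, exactly as you do via {\L}o\'{s} and elementarity.
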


Consequently, a pseudofinite definable set $A\seq G^*$ with finite tripling gives rise to a ``locally pseudofinite" ind-definable subgroup $\langle A\rangle$. In particular, if $X\seq \langle A\rangle$ is definable then, by saturation, $X\seq A^{\pm n}$ for some $n\geq 1$, and thus $\mu_A(X)<\infty$ (which implies $X$ is pseudofinite if $X$ is definable over $G$).

In this setting, a simplified version of Hrushovski's stabilizer theorem from \cite{HruAG} is as follows: If $A\seq G^*$ is definable and pseudofinite with finite tripling, then  there is a type-definable bounded-index normal subgroup $\Gamma$ of $\langle A\rangle$ such that $\Gamma\seq A^{\pm 8}$. Note that this statement can be viewed as a pseudofinite Freiman-Ruzsa property, in the same sense as discussed at the start of Section \ref{sec:intro2}. Hrushovski's approach is based the general theory of stabilizers of types in definable groups, and ``type-definability" of $\Gamma$ is in the sense of a similarly expanded language for measures. In \cite{BGT}, Breuillard, Green, and Tao provide a different proof based on a combinatorial argument of Sanders \cite{SanBS}. This approach was later modified by Massicot and Wagner \cite{MassWa} in order to remove the necessity of expanding the language. In \cite{CoBogo}, the first author used similar modifications to obtain $\Gamma\seq A^2A^{\nv 2}\cap A^{\nv 2}A^2\cap (AA\inv)^2\cap (A\inv A)^2$. 

The goal of this section is to prove a stronger version of the above statement under the assumption that $A$ is NIP. In particular, we will exhibit stronger definability properties for $\Gamma$, and also that $\Gamma\seq AA\inv\cap A\inv A$. 

\begin{definition}
Suppose $A\seq G^*$ is definable and pseudofinite. Given $\epsilon\geq 0$ and a definable set $X\seq G^*$, define 
\begin{align*}
\Stab_{\!A,\epsilon}^\ell(X) &= \{g\in G^*:\mu_A(gX\smd X)\leq \epsilon\},\text{ and}\\
\Stab_{\!A,\epsilon}^r(X) &= \{g\in G^*:\mu_A(Xg\smd X)\leq\epsilon\}.
\end{align*}
Set $\Stab^\ell_{\!A}(X)=\Stab^\ell_{\!A,0}(X)$ and $\Stab^r_{\!A}(X)=\Stab^r_{\!A,0}(X)$. 
\end{definition}

\begin{proposition}\label{prop:mstab}
Suppose $A,X\seq G^*$ are definable, and $A$ is pseudofinite.
\begin{enumerate}[$(a)$]
\item For any $\epsilon\geq 0$, $\Stab^\ell_{\!A,\epsilon}(X)$ and $\Stab^r_{\!A,\epsilon}(X)$ are symmetric subsets of $G^*$.
\item $\Stab^\ell_{\!A}(X)$ and $\Stab^r_{\!A}(X)$ are subgroups of $G^*$.
\item If $0\leq\epsilon<\mu_A(X)$ then $\Stab^\ell_{\!A,\epsilon}(X)\seq XX\inv$ and $\Stab^r_{\!A,\epsilon}(X)\seq X\inv X$.
\end{enumerate}
\end{proposition}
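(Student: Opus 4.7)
The plan is to reduce everything to two basic properties of the pseudofinite counting measure $\mu_A$: it is bi-invariant under translation (inherited via {\L}o\'{s}'s theorem from the obvious invariance of the finite counting measure, since $|gB|=|Bg|=|B|$ for any finite $B$), and it satisfies the triangle-type inequality $\mu_A(U \smd W) \leq \mu_A(U \smd V) + \mu_A(V \smd W)$ from finite subadditivity together with $U \smd W \seq (U \smd V) \cup (V \smd W)$.

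For part $(a)$, membership of $1$ is immediate since $1{\cdot}X \smd X = \emptyset$. For symmetry under inversion, I would rewrite $g\inv X \smd X = g\inv(X \smd gX)$ and use left-invariance of $\mu_A$ to conclude $\mu_A(g\inv X \smd X) = \mu_A(X \smd gX) = \mu_A(gX \smd X) \leq \epsilon$; the argument for $\Stab^r_{\!A,\epsilon}(X)$ is analogous, using right-invariance together with $Xg\inv \smd X = (X \smd Xg)g\inv$.

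For part $(b)$, I would verify closure under multiplication: given $g, h \in \Stab^\ell_{\!A}(X)$, the triangle inequality gives
\[
\mu_A(ghX \smd X) \leq \mu_A(ghX \smd gX) + \mu_A(gX \smd X) = \mu_A(hX \smd X) + \mu_A(gX \smd X) = 0,
\]
where the middle equality uses $ghX \smd gX = g(hX \smd X)$ and left-invariance. Combined with part $(a)$, this shows $\Stab^\ell_{\!A}(X)$ is a subgroup; the right version is symmetric.

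For part $(c)$, the key observation is the set-theoretic equivalence $g \in XX\inv \iff gX \cap X \neq \emptyset$, and likewise $g \in X\inv X \iff Xg \cap X \neq \emptyset$. So if $g \notin XX\inv$, then $gX$ and $X$ are disjoint, hence $X \seq gX \smd X$ and $\mu_A(gX \smd X) \geq \mu_A(X) > \epsilon$, contradicting $g \in \Stab^\ell_{\!A,\epsilon}(X)$. The argument for $\Stab^r_{\!A,\epsilon}(X) \seq X\inv X$ is identical. There is no real obstacle here; the only care needed is to pair left-invariance with left stabilizers and right-invariance with right stabilizers when applying the cancellation $gX \smd X \mapsto X \smd g\inv X$.
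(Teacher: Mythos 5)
Your proof is correct and follows essentially the same route as the paper: bi-invariance of $\mu_A$ for part $(a)$, finite (sub)additivity via the triangle inequality for the symmetric difference in part $(b)$, and for part $(c)$ the observation that $g\notin XX\inv$ forces $gX\cap X=\emptyset$, hence $\mu_A(gX\smd X)\geq\mu_A(X)>\epsilon$, which is just the contrapositive of the paper's one-line argument. You merely make explicit the routine details the paper leaves implicit.
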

\begin{proof}
Part $(a)$ follows from bi-invariance of $\mu_A$, and part $(b)$ then follows from finite additivity. For part $(c)$, note that if $\mu_A(gX\smd X)<\mu_A(X)$ then $\mu_A(gX\cap X)>0$, and so $gX\cap X\neq\emptyset$, i.e. $g\in XX\inv$. The other inclusion is similar.
\end{proof}

Next we investigate the behavior of these stabilizer subgroups when the set $A$ in question is NIP. First, we note a pseudofinite analogue of Theorem \ref{thm:TaoNIP}.

\begin{corollary}\label{cor:TaoNIP}
Suppose $A\seq G^*$ is definable, NIP, and pseudofinite with finite tripling. Then for all $m\geq 2$ there are finite $E_m,F_m\seq A(G)^{\pm(m+1)}$ such that $A^{\pm m}\seq E_mA\cap AF_m$. In particular, $A^{\pm 1}$ is an approximate group.
\end{corollary}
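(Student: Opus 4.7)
The strategy is to prove the corresponding finite statement for each factor $A_i$ in the ultraproduct $G=\prod_\cU G_i$ by a direct generalization of the proof of Theorem \ref{thm:TaoNIP}, and then transfer to $G^*$ by {\L}o\'{s}'s Theorem and elementarity. Since $A$ has finite tripling and is NIP, there exist constants $k\geq 1$ and $d\geq 1$ and an index set $I_0\in\cU$ such that, for every $i\in I_0$, the finite set $A_i$ has $k$-tripling and is $d$-NIP.

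Fix $m\geq 2$. The argument of Theorem \ref{thm:TaoNIP} works verbatim with $m+1$ in place of $3$: in each $G_i$ with $i\in I_0$, set $X=A_i^{\pm(m+1)}$ and $\cS=\{A_ig:g\in A_i^{\pm m}\}$, so every $S\in\cS$ is contained in $X$ with $|S|=|A_i|$. Theorem \ref{thm:TaoPSE}$(a)$ gives $|X|\leq k^{O_m(1)}|A_i|$, so every $S\in\cS$ has density at least $\epsilon:=k^{\nv O_m(1)}$ in $X$. By Corollary \ref{cor:VCG}$(b)$, $\VC(\cS)\leq 2^d$, so Theorem \ref{thm:VC} produces a hitting set $E'_i\seq X$ with $|E'_i|\leq N_m$, where $N_m=N_m(d,k,m)$ is a uniform bound extracted from the explicit estimate in Theorem \ref{thm:VC}. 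Using symmetry of $A_i^{\pm m}$, for each $g\in A_i^{\pm m}$ one has $Ag^{\nv 1}\cap E'_i\neq\emptyset$, hence $g\in (E'_i)\inv A_i$; thus $E_{m,i}:=(E'_i)\inv\seq A_i^{\pm(m+1)}$ satisfies $A_i^{\pm m}\seq E_{m,i}A_i$. Running the same argument with $\cS'=\{gA_i:g\in A_i^{\pm m}\}$, whose VC-dimension is strictly less than $d$ by the NIP hypothesis, yields $F_{m,i}\seq A_i^{\pm(m+1)}$ of size at most $N_m$ with $A_i^{\pm m}\seq A_iF_{m,i}$.

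Forming the ultraproducts of $(E_{m,i})_{i\in I_0}$ and $(F_{m,i})_{i\in I_0}$ along $\cU$ yields internal finite sets $E_m,F_m\seq A(G)^{\pm(m+1)}$ with $|E_m|,|F_m|\leq N_m$, and by {\L}o\'{s}'s Theorem the inclusion $A(G)^{\pm m}\seq E_mA(G)\cap A(G)F_m$ holds in $G$. Since $E_m$ and $F_m$ are finite, this inclusion is a first-order statement in the parameters naming their elements, so it lifts to the elementary extension $G^*$, giving $A^{\pm m}\seq E_mA\cap AF_m$ as required. For the ``in particular'' clause, take $m=2$: the inclusion $A^{\pm 2}\seq E_2A\seq E_2A^{\pm 1}$, combined with the fact that $A^{\pm 1}$ is symmetric by construction, shows that $A^{\pm 1}$ is an approximate group in the sense of Definition \ref{def:pftrip}. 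No substantive obstacle is anticipated: the only point of care is the uniformity of $N_m$ over $i\in I_0$, which is immediate from the explicit size bound in Theorem \ref{thm:VC} depending only on the VC-dimension and the density parameter.
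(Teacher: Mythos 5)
Your proposal is correct and takes essentially the same route as the paper: the paper transfers the finite statement of Theorem \ref{thm:TaoNIP} to the pseudofinite setting at $m=2$ and then handles general $m$ by induction, whereas you simply re-run the same finite argument (the Pl\"{u}nnecke--Ruzsa density bound from Theorem \ref{thm:TaoPSE}$(a)$, Corollary \ref{cor:VCG}, and the hitting-set consequence of Theorem \ref{thm:VC}) at each level $m$ in the groups $G_i$ before transferring by {\L}o\'{s} and elementarity. This variant is fine, and if anything it delivers the containments $E_m,F_m\subseteq A(G)^{\pm(m+1)}$ directly, avoiding the small pruning of covering elements that the paper's inductive step leaves implicit.
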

\begin{proof}
For $m=2$, transfer Theorem \ref{thm:TaoNIP} to the pseudofinite setting. Then proceed by induction. 
\end{proof}

Given a pseudofinite definable set $A\seq G^*$, we say that $X\seq \langle A\rangle$ is \textbf{$\cR_{\!A}$-definable over $G$} if $X\in\cR_{\!A}$ and $X$ is definable over $G$. Note that $X$ is $\cR_{\!A}$-definable over $G$ if and only if it is in the ring generated by $\{gAh:g,h\in \langle A(G)\rangle\}$. 
We formulate analogous notions with $\cR_{\!A}$ replaced by $\cR^{\ell}_{\!A}$ or $\cR^r_{\!A}$ in the obvious way. A consequence of the previous corollary is that if $A\seq G^*$ is definable, NIP, and pseudofinite with finite tripling, then $\langle A\rangle$ is countably $\cR^\ell_{\!A}$-ind-definable over $G$ and countably $\cR^r_{\!A}$-ind-definable over $G$.

\begin{lemma}\label{lem:NIPLie}
Suppose $A\seq G^*$ is definable, NIP, and pseudofinite with finite tripling. Then $\Stab^\ell_{\!A}(A)$ and $\Stab^r_{\!A}$ are bounded-index subgroups of $\langle A\rangle$. Moreover, $\Stab^\ell_{\!A}(A)$ is countably $\cR^r_{\!A}$-type-definable over $G$, and $\Stab^r_{\!A}(A)$ is countably $\cR^\ell_{\!A}$-type-definable over $G$.
\end{lemma}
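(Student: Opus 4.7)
The two statements are left-right symmetric, so I focus on $\Stab^\ell_{\!A}(A)$; the argument for $\Stab^r_{\!A}(A)$ is the mirror, using $\phi(x;y):=A(x)\wedge\neg A(xy)$ in place of what follows.

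For type-definability, the plan is to apply Proposition~\ref{prop:DFS}(b) to the formula $\phi(x;y):=A(x)\wedge\neg A(yx)$, which is NIP as a Boolean combination of NIP formulas. Bi-invariance of $\mu_A$ gives the identity $\mu_A(gA\smd A)=2\mu_A(A\setminus g\inv A)=2\mu_A(\phi(G^*;g))$, so $g\in\Stab^\ell_{\!A}(A)$ iff $\mu_A(\phi(G^*;g))=0$. Proposition~\ref{prop:DFS}(b) with $X=A$ produces, for each $n\geq 1$, a countably $\phi^*$-type-definable set $D_n$ over $A(G)$ equal to $\{g:\mu_A(\phi(G^*;g))\leq 1/n\}$. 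A direct computation gives $\phi^*(G^*;a)=G^*\setminus Aa\inv$ for $a\in A(G)$ (and $\emptyset$ otherwise), so after intersecting with $\langle A\rangle$ and re-expressing using the countable $\cR^r_{\!A}$-ind-definability of $\langle A\rangle$ (Corollary~\ref{cor:TaoNIP}), each $D_n\cap\langle A\rangle$ is countably $\cR^r_{\!A}$-type-definable over $G$. Hence so is $\Stab^\ell_{\!A}(A)=\bigcap_n(D_n\cap\langle A\rangle)$.

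For bounded index in $\langle A\rangle$, I would adapt the Massicot--Wagner chain argument employed in \cite{CoBogo} to the NIP setting. The key inputs are (i) the uniform finite sample $F_\epsilon\seq A(G)$ from Proposition~\ref{prop:DFS}(a), which witnesses membership in $\Stab^\ell_{\!A,\epsilon}(A)$ for each $\epsilon>0$; (ii) the triangle inclusion $\Stab^\ell_{\!A,\epsilon}(A)\cdot\Stab^\ell_{\!A,\epsilon}(A)\seq\Stab^\ell_{\!A,2\epsilon}(A)$; and (iii) the approximate-group structure of $A^{\pm 1}$ supplied by Corollary~\ref{cor:TaoNIP}. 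Together these yield, for each $\epsilon$, a cover of each pseudofinite piece $A^{\pm m}$ by boundedly many left translates of $\Stab^\ell_{\!A,\epsilon}(A)$, and running the NIP-variant of the Massicot--Wagner iteration produces a type-definable bounded-index subgroup of $\langle A\rangle$ contained in $\bigcap_{\epsilon>0}\Stab^\ell_{\!A,\epsilon}(A)=\Stab^\ell_{\!A}(A)$.

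The main obstacle is this bounded-index step: Proposition~\ref{prop:DFS}(a) alone only controls each $\Stab^\ell_{\!A,\epsilon}(A)$ with an $\epsilon$-dependent covering bound, and a naive limit of bounded-index approximations need not have bounded index. The NIP hypothesis is what closes the gap, supplying a uniform VC-theoretic sampling bound that is scale-independent and allowing the Massicot--Wagner averaging to produce an honest bounded-index subgroup sitting inside the exact stabilizer, not merely inside one of its $\epsilon$-approximations.
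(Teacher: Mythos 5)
Your overall skeleton (write $\Stab^\ell_{\!A}(A)$ as $\bigcap_{\epsilon>0}\Stab^\ell_{\!A,\epsilon}(A)$, get type-definability from Proposition~\ref{prop:DFS}$(b)$, get bounded index from Proposition~\ref{prop:DFS}$(a)$ plus Corollary~\ref{cor:TaoNIP}) is the paper's, but both halves have a genuine gap. For type-definability: your identity $\mu_A(gA\smd A)=2\mu_A(A\setminus g\inv A)$ and the application of Proposition~\ref{prop:DFS}$(b)$ are fine, but the last step fails as written. Each $D_n$ is a countable intersection of Boolean combinations of instances $\phi^*(y;a)=G^*\setminus Aa\inv$ with $a\in A(G)$, and these involve genuine complements, which do not lie in the ring $\cR^r_{\!A}$; intersecting with $\langle A\rangle$ and invoking its countable $\cR^r_{\!A}$-ind-definability does not convert a countable intersection of such Boolean combinations, cut by a countable \emph{union} of ring elements, into a countable intersection of sets in $\cR^r_{\!A}$, which is what ``countably $\cR^r_{\!A}$-type-definable'' demands (Definition~\ref{def:ind}). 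The correct repair is the paper's: trap the set inside a single ring element. Since $D_n=\Stab^\ell_{\!A,2/n}(A)$, Proposition~\ref{prop:mstab}$(c)$ gives $D_n\seq AA\inv$ for $n\geq 3$, and Corollary~\ref{cor:TaoNIP} gives $AA\inv\seq A^{\pm 2}\seq AF$ for some finite $F\seq A(G)^{\pm 3}$, where $AF\in\cR^r_{\!A}$ is definable over $G$; relativizing every Boolean combination to $AF$ turns complements into relative complements inside the ring, and the desired expression follows. (The paper runs this with $\theta(x;y)=A(y\cdot x)\smd A(x)$ and $X=A^{\pm 5}$; your one-sided formula works equally well once relativized.)

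For bounded index, the crux of the paper's proof is that \emph{each} $\Stab^\ell_{\!A,\epsilon}(A)$ is left generic in $\langle A\rangle$, i.e.\ every $A^{\pm m}$ is covered by finitely many left translates of it; together with $\Stab^\ell_{\!A,\epsilon/2}(A)\cdot\Stab^\ell_{\!A,\epsilon/2}(A)\seq\Stab^\ell_{\!A,\epsilon}(A)$ and symmetry, bounded index of the intersection then follows by a routine saturation/pigeonhole argument, with no Massicot--Wagner machinery at all. Your sketch asserts this covering but never proves it, and the appeal to a Massicot--Wagner iteration is misplaced: that method requires as input precisely the approximate-subgroup/genericity structure (with controlled covering numbers) of the sets in the chain that is at issue here, and its output is a subgroup inside a fourth power of a set in the chain rather than inside the exact stabilizer; once each $\Stab^\ell_{\!A,\epsilon}(A)$ is known to be generic, no iteration is needed. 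The missing idea is the paper's use of Proposition~\ref{prop:DFS}$(a)$ applied to the two-parameter NIP formula $A(y_1\inv\cdot x)\smd A(y_2\inv\cdot x)$: it yields a finite set $F$ such that if $b,g\in A^{\pm m}$ satisfy $F\cap bA=F\cap gA$ then $\mu_A(bA\smd gA)\leq\epsilon$, i.e.\ $g\inv b\in\Stab^\ell_{\!A,\epsilon}(A)$; pigeonholing on the finitely many possible traces $F\cap gA$ covers $A^{\pm m}$ by finitely many left translates. Note also that the VC-theoretic sample is not ``scale-independent'': $|F|$ depends on $\epsilon$ (and on $m$), which is harmless for bounded index but contradicts the mechanism you describe.
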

\begin{proof}
We just consider $\Stab^\ell_{\!A}(A)$ (the proof for $\Stab^r_{\!A}(A)$ is similar). Note first that $\Stab^\ell_{\!A}(A)=\bigcap_{\epsilon>0}\Stab^\ell_{\!A,\epsilon}(A)$, and so it suffices to fix $0<\epsilon<1$, and show that $S:=\Stab^\ell_{\!A,\epsilon}(A)$ is countably $\cR^r_{\!A}$-type-definable over $G$ and left generic in $\langle A\rangle$.

For any $m\geq 1$, $A^{\pm m}$ is definable and pseudofinite. If we set $t_m=\mu_A(A^{\pm m})$ then $1\leq t_m<\infty$ by Corollary \ref{cor:TaoPSE}, and $\mu_A=t_m\mu_{A^{\pm m}}$.  Let $\theta(x;y)$ denote the formula $A(y\cdot x)\smd A(x)$, and note that $\theta(x;y)$ is NIP. By Corollary \ref{cor:TaoNIP}, there is a finite set $F\seq A(G)^{\pm 3}$ such that $A^{\pm 2}\seq AF$. So $S\seq AF$ by Proposition \ref{prop:mstab}$(c)$ and since $\epsilon<1$. Set $Y=\{b\in G^*:\theta(G^*;b)\seq A^{\pm 5}\}$. Then $S\seq A^{\pm 2}\seq AF\seq A^{\pm 4}\seq Y$, and so $S=\{g\in AF:\mu_{A^{\pm 5}}(\theta(x;g))\leq\epsilon/t_5\}$. By Proposition \ref{prop:DFS}$(b)$, $S=D\cap AF$ for some set $D\seq G^*$, which is countably $\theta^*$-type-definable over $A(G)^{\pm 5}$. Note that any set, which is $\theta^*$-definable over $A(G)^{\pm 5}$, is a Boolean combination of right translates of $A$ by elements in $A(G)^{\pm 5}$. So $S$ is countably $\cR^r_{\!A}$-type-definable over $G$.

Finally, we fix $m\geq 1$ and show that $X:=A^{\pm m}$ can be covered by finitely many left translates of $S$. Let $\phi(x;y_1,y_2)$ denote the formula $A(y_1\inv \cdot x)\smd A(y_2\inv \cdot x)$. Then $\phi(x;y_1,y_2)$ is NIP, and if $b_1,b_2\in A^{\pm m}$ then $\phi(G^*;b_1,b_2)\seq X$. By Proposition \ref{prop:DFS}$(a)$, there is a finite set $F\seq X$ such that if $b_1,b_2\in A^{\pm m}$ and $\mu_X(\phi(x;b_1,b_2))>\epsilon/t_{m+1}$, then $\phi(G^*;b_1,b_2)\cap F\neq\emptyset$. Define an equivalence relation $\sim$ on $A^{\pm m}$ such that $g\sim h$ if $F\cap gA=F\cap hA$. Since $F$ is finite, we can choose representatives $g_1,\ldots,g_n\in A^{\pm m}$ of all $\sim$-classes in $A^{\pm m}$. Now fix $b\in A^{\pm m}$. Then $b\sim g_i$ for some $i\leq n$, which implies $\phi(G^*;b,g_i)\cap F=\emptyset$. So $\mu_A(g_i\inv bA\smd A)=\mu_A(bA\smd g_iA)=t_{m+1}\mu_X(\phi(x;b,g_i))\leq \epsilon$, and thus we have $g_i\inv b\in S$, i.e., $b\in g_iS$. Altogether, $A^{\pm m}\seq g_1S\cup\ldots\cup g_nS$.
\end{proof}

\begin{definition}
Given a pseudofinite definable set $A\seq G^*$, define 
\[
\Gamma^\ell_{\!A} = \textstyle\bigcap_{g\in \langle A\rangle}\Stab_{\!A}^\ell(gA)\makebox[.5in]{and} \Gamma^r_{\!A} = \textstyle\bigcap_{g\in \langle A\rangle}\Stab_{\!A}^r(Ag).
\]
\end{definition}

We now consider stabilizers of types. Note that if $A\seq G^*$ is definable, then $\langle A\rangle$ acts by left and right translation on the space $S(\cR_{\!A})$ of $\cR_{\!A}$-types. 

\begin{definition}
Suppose $A\seq G^*$ is definable and pseudofinite. Given $p\in S(\cR_{\!A})$, define the stabilizers
\[
\Stab^\ell_{\!A}(p)=\{g\in \langle A\rangle:gp=p\}\makebox[.5in]{and} \Stab^r_{\!A}(p)=\{g\in \langle A\rangle :pg=p\}.
\]
\end{definition}

\begin{lemma}\label{lem:Stabs}
Suppose $A\seq  G^*$ is definable and pseudofinite. Fix $p\in S(\cR_{\!A})$ and  $\star\in\{\ell,r\}$.
\begin{enumerate}[$(a)$]
\item If $p$ is $\mu_A$-wide then $\Gamma^\star_{\!A}\leq \Stab^\star_{\!A}(p)$. 
\item $\Stab^\star_{\!A}(p)$ is contained in any $\cR_{\!A}$-type-definable bounded-index subgroup of $\langle A\rangle$.
\end{enumerate}
\end{lemma}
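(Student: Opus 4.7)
The plan is to translate both items into concrete conditions on the partial $\cR_{\!A}$-type determined by $p$. I treat $\star = \ell$ throughout; the case $\star = r$ is symmetric.

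For $(a)$, fix $g \in \Gamma^\ell_{\!A}$. The first step is to promote the defining condition $\mu_A(g \cdot hA \smd hA) = 0$ for every $h \in \langle A\rangle$ to $\mu_A(gX \smd X) = 0$ for every $X \in \cR_{\!A}$. For a generator $h_1 A h_2$, right-invariance of $\mu_A$ gives $\mu_A(g \cdot h_1 A h_2 \smd h_1 A h_2) = \mu_A((gh_1A \smd h_1A)h_2) = 0$, and an induction on Boolean complexity using $g(Y_1 \cup Y_2) \smd (Y_1 \cup Y_2) \seq (gY_1 \smd Y_1) \cup (gY_2 \smd Y_2)$ and analogous inclusions for $\cap$ and $\setminus$ extends this to all $X \in \cR_{\!A}$. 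Here we use that $\Gamma^\ell_{\!A} \seq AA\inv \seq \langle A\rangle$ by Proposition \ref{prop:mstab}$(c)$, so $gX$ is again in $\cR_{\!A}$. The second step closes the argument: given $X \in p$, the set $X \setminus gX$ lies in $\cR_{\!A}$ and is $\mu_A$-null, so by $\mu_A$-wideness of $p$ it is not in $p$; the complete-type axioms then force $X \cap gX \in p$, hence $gX \in p$. Since $g\inv \in \Gamma^\ell_{\!A}$ as well (stabilizers are symmetric), the reverse inclusion follows and $gp = p$.

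For $(b)$ the approach is via concentration on cosets, reduced to the normal case. Suppose first that $H$ is moreover normal in $\langle A\rangle$. Since $H$ is $\cR_{\!A}$-type-definable of bounded index, each left coset of $H$ in $\langle A\rangle$ is $\cR_{\!A}$-type-definable, and by saturation any two disjoint cosets can be separated by a pair of disjoint $\cR_{\!A}$-definable sets. Consequently the complete type $p$ concentrates on a unique left coset $aH$. Given $g \in \Stab^\ell_{\!A}(p)$, any $Y \in \cR_{\!A}$ with $Y \supseteq gaH$ can be written $Y = gX$ with $X = g\inv Y \supseteq aH$; since $X \in p$, we obtain $Y = gX \in gp = p$, so $p$ also concentrates on $gaH$. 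Uniqueness gives $gaH = aH$, whence $g \in aHa\inv = H$ by normality. For a general $H$, the normal core $N = \bigcap_{b \in \langle A\rangle} bHb\inv$ is normal, $\cR_{\!A}$-type-definable and of bounded index (there are boundedly many conjugates, because $N_{\langle A\rangle}(H)$ has bounded index), and contained in $H$; the normal case then yields $\Stab^\ell_{\!A}(p) \seq N \seq H$.

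The main obstacle is the non-normal case of $(b)$: the direct concentration-coset argument places $g$ only in a conjugate $aHa\inv$ that depends on $p$, and moving from there to $H$ itself requires the passage to the normal core sketched above. The remaining steps---the Boolean-complexity induction in $(a)$ and the saturation-based separation of cosets in $(b)$---are routine.
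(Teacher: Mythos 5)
Your proposal is correct, but part $(b)$ takes a genuinely different route from the paper. Part $(a)$ is essentially the paper's argument: the paper proceeds by contradiction, noting that if $xp\neq p$ then the types already disagree on a generator $gAh$ of $\cR_{\!A}$, so $\mu_A(xgA\smd gA)>0$, contradicting $x\in\Gamma^\ell_{\!A}$; your direct propagation of null symmetric differences from the generators $h_1Ah_2$ through the ring, followed by the complete-type axioms, is the same content (and your use of $g\inv\in\Gamma^\ell_{\!A}$ to upgrade $gp\seq p$ to $gp=p$ is fine). For $(b)$, the paper never reduces to the normal case: it takes the unique \emph{right} coset $C=\Gamma a$ on which $p$ concentrates and observes that $x\in\Stab^\ell_{\!A}(p)$ gives $p\models xC\cap C$, hence $x\Gamma a\cap\Gamma a\neq\emptyset$, hence $x\in\Gamma$ directly --- testing the left stabilizer against right cosets makes the conjugate $aHa\inv$ never appear, so no normality and no core are needed. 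Your detour through the normal core does work, but the one point you should justify more carefully is the boundedness of the index of $N=\bigcap_{b}bHb\inv$: your parenthetical only establishes that there are boundedly many conjugates, and from that the index of $N$ is controlled only by the \emph{product} of the indices of those conjugates (roughly $\mu^{\lambda}$ with $\mu=[\langle A\rangle:H]$ and $\lambda$ the number of conjugates), which is again bounded under the usual monster-model conventions on the saturation cardinal but does not follow from ``boundedly many conjugates, each of bounded index'' alone. (Also, in the normal case, the step ``so $p$ concentrates on $gaH$'' is cleanest phrased by translating a bounded defining family: if $aH=\bigcap_i Z_i$ with $Z_i\in p$, then $gaH=\bigcap_i gZ_i$ and each $gZ_i\in gp=p$.) The paper's right-coset trick buys exactly the avoidance of this cardinality bookkeeping; if you keep your version, add the line bounding $[\langle A\rangle:N]$ by the product of the indices and say why that is still bounded.
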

\begin{proof}
We consider the case when $\star$ is $\ell$ (the other case is  similar).

Part $(a)$. Assume $p$ is $\mu_A$-wide and fix $x\in\Gamma^\ell_{\!A}$. For a contradiction, suppose $x\not\in\Stab^\ell_{\!A}(p)$. Since $x\in\langle A\rangle$ (by Proposition \ref{prop:mstab}$(c)$), we have $xp\neq p$, and so there are  $g,h\in \langle A\rangle$ such that $xgAh\smd gAh\in p$. So  $\mu_A(xgAh\smd gAh)>0$, i.e., $\mu_A(xgA\smd gA)>0$, and thus $x\not\in \Stab^\ell_{\!A}(gA)$, which contradicts $x\in \Gamma^\ell_{\!A}$.

Part $(b)$. Let $\Gamma$ be an $\cR_{\!A}$-type-definable bounded-index subgroup of $\langle A\rangle$. Let $C$ be the unique right coset of $\Gamma$ such that $p\models C$. Then for any $x\in\Stab^\ell_{\!A}(p)$, we have $p\models xC\cap C$, which implies $xC\cap C\neq\emptyset$, and thus $x\in\Gamma$.
\end{proof}

We can now state and prove the main result of this section.

\begin{theorem}\label{thm:Stabs}
Suppose $A\seq G^*$ is definable, NIP, and pseudofinite with finite tripling. 
\begin{enumerate}[$(a)$]
\item $\langle A\rangle$ has a smallest $\cR_{\!A}$-type-definable subgroup of bounded-index, denoted $\Gamma_{\!A}$. 
\item $\Gamma_{\!A}$ is countably $\cR_{\!A}$-type-definable over $G$ and normal in $\langle A\rangle$.
\item $\Gamma_{\!A}=\Gamma^\ell_{\!A}=\Gamma^r_{\!A}$ and $\Gamma_{\!A}\seq AA\inv\cap A\inv A$.
\item If $p\in S(\cR_{\!A})$ is $\mu_A$-wide, then $\Gamma_{\!A}=\Stab^\ell_{\!A}(p)=\Stab^r_{\!A}(p)$. 
\end{enumerate}
\end{theorem}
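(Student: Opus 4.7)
The plan is to define $\Gamma_{\!A} := \Gamma^\ell_{\!A}$ and verify the four claims by combining Lemma \ref{lem:NIPLie} with the stabilizer machinery of Lemma \ref{lem:Stabs}, bridging the left--right asymmetries through the stabilizer of a $\mu_A$-wide type. Since $\mu_A(A) = 1 > 0$, Fact \ref{fact:wide} furnishes a $\mu_A$-wide type $p_0 \in S(\cR_{\!A})$ with $A\in p_0$.

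The first step is to verify that $\Gamma^\ell_{\!A}$, and symmetrically $\Gamma^r_{\!A}$, is a countably $\cR_{\!A}$-type-definable-over-$G$, normal, bounded-index subgroup of $\langle A\rangle$. Bi-invariance of $\mu_A$ gives the identity $\Stab^\ell_{\!A}(gA) = g\,\Stab^\ell_{\!A}(A)\,g\inv$, so $\Gamma^\ell_{\!A}$ is exactly the normal core in $\langle A\rangle$ of $S^\ell := \Stab^\ell_{\!A}(A)$. By Lemma \ref{lem:NIPLie}, $S^\ell$ has bounded index, and the standard bound $[\langle A\rangle : \Gamma^\ell_{\!A}] \leq [\langle A\rangle : S^\ell]!$ gives bounded index for $\Gamma^\ell_{\!A}$. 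For countable $\cR_{\!A}$-type-definability over $G$, saturation allows one to choose coset representatives of $S^\ell$ inside $\langle A(G)\rangle$, so $\Gamma^\ell_{\!A} = \bigcap_{g \in \langle A(G)\rangle} g S^\ell g\inv$, a countable intersection; conjugating the countable $\cR^r_{\!A}$-over-$G$ definition of $S^\ell$ by each such $g$ turns Boolean combinations of right translates $Ah$ into Boolean combinations of sets of the form $gAhg\inv$, which lie in $\cR_{\!A}$ over $G$.

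To prove (c), (d), and the smallest property in (a) together, I would chain the inclusions
\[
\Gamma^\ell_{\!A} \seq \Stab^\ell_{\!A}(p_0) \seq \Gamma^r_{\!A} \seq \Stab^r_{\!A}(p_0) \seq \Gamma^\ell_{\!A},
\]
where the first and third inclusions come from Lemma \ref{lem:Stabs}(a), and the second and fourth from Lemma \ref{lem:Stabs}(b) applied to the bounded-index $\cR_{\!A}$-type-definable subgroups $\Gamma^r_{\!A}$ and $\Gamma^\ell_{\!A}$ established in the previous paragraph. All four sets therefore coincide, giving $\Gamma^\ell_{\!A} = \Gamma^r_{\!A}$ in (c); and replacing $p_0$ by any $\mu_A$-wide $p$ in the same chain gives the full statement of (d). The inclusion $\Gamma_{\!A} \seq AA\inv$ follows from Proposition \ref{prop:mstab}(c) applied to $S^\ell$ (taking $g = 1$ in the intersection defining $\Gamma^\ell_{\!A}$), and symmetrically $\Gamma_{\!A} \seq A\inv A$. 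The smallest property in (a) is then immediate from Lemma \ref{lem:Stabs}(b): any $\cR_{\!A}$-type-definable bounded-index subgroup of $\langle A\rangle$ contains $\Stab^\ell_{\!A}(p_0) = \Gamma_{\!A}$.

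The main obstacle I anticipate is the countable $\cR_{\!A}$-type-definability of $\Gamma^\ell_{\!A}$ over $G$: verifying that restricting the conjugators to $\langle A(G)\rangle$ still captures the full normal core rather than some strictly larger subgroup. This will hinge on combining saturation with the bounded-index bound on $S^\ell$, so that each coset of $S^\ell$ in $\langle A\rangle$ really is witnessed by some element of $\langle A(G)\rangle$. One must also confirm that the conjugates of the $\cR^r_{\!A}$-defining formulas for $S^\ell$ land back in the two-sided ring $\cR_{\!A}$, which is exactly where the extra flexibility of $\cR_{\!A}$ (over $\cR^r_{\!A}$ or $\cR^\ell_{\!A}$) is essential.
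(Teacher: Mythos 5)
Your treatment of (a), (c), (d) and of normality, bounded index, and plain $\cR_{\!A}$-type-definability in (b) is essentially the paper's proof: the identity $\Stab^\ell_{\!A}(gA)=g\Stab^\ell_{\!A}(A)g\inv$ reduces everything to Lemma \ref{lem:NIPLie}, the containments in $AA\inv$ and $A\inv A$ come from Proposition \ref{prop:mstab}$(c)$, and the four-term chain $\Gamma^\ell_{\!A}\seq\Stab^\ell_{\!A}(p)\seq\Gamma^r_{\!A}\seq\Stab^r_{\!A}(p)\seq\Gamma^\ell_{\!A}$ via Lemma \ref{lem:Stabs} is exactly the argument in the text, and it correctly yields minimality and part (d) for an arbitrary $\mu_A$-wide $p$.

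The gap is where you anticipated it: the ``countably $\cR_{\!A}$-type-definable over $G$'' clause of (b). First, saturation of $G^*$ does not let you choose coset representatives of $S^\ell=\Stab^\ell_{\!A}(A)$ inside $\langle A(G)\rangle$: saturation produces realizations in $G^*$ of types over small sets, not realizations inside the fixed small model $G$, and in general a bounded-index subgroup type-definable over a model can have cosets missing that model (consider $\bigcap_n n!\,\Z$ in a saturated model of $\Th(\Z,+)$, most of whose cosets do not meet $\Z$). So the equality $\Gamma^\ell_{\!A}=\bigcap_{g\in\langle A(G)\rangle}gS^\ell g\inv$, while true, is not available at this stage; in the paper it appears only as Remark \ref{rem:Stabs}, \emph{derived from} the theorem. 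Second, even granting that equality, the index set $\langle A(G)\rangle$ is uncountable, and the number of distinct conjugates $gS^\ell g\inv$ is only bounded (it can have the cardinality of $\langle A\rangle/\Gamma_{\!A}$), so calling the intersection countable needs a separate compactness/elementarity argument, which again seems to require an over-$G$ countable type-definition of $\Gamma_{\!A}$ as input. The paper avoids this circle differently: having identified $\Gamma_{\!A}$ as the \emph{smallest} $\cR_{\!A}$-type-definable bounded-index subgroup, it notes that $\Gamma_{\!A}$ is invariant under all automorphisms in the group language expanded by a predicate for $A$; since that language is countable, $\Gamma_{\!A}$ is countably type-definable over $\emptyset$, and a standard saturation argument then converts this into a countable $\cR_{\!A}$-type-definition over $G$. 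You should either adopt that canonicity argument or supply a genuinely new proof of the restriction-to-$\langle A(G)\rangle$ claim; as written, the step does not go through, and the countability you need later (e.g.\ for second countability of $\G_{\!A}$ and for the decreasing sequences $(W_n)$ in Section \ref{sec:proofs}) is exactly what is at stake.
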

\begin{proof}
We first show that $\Gamma^\ell_{\!A}$ is an $\cR_{\!A}$-type-definable bounded-index normal subgroup of $\langle A\rangle$, which is contained in $AA\inv$. Note first that $\Gamma^\ell_{\!A}\seq AA\inv$ by  Proposition \ref{prop:mstab}$(c)$. Let $H=\Stab^{\ell}_{\!A}(A)$. If $g\in G^*$ then $\Stab^\ell_{\!A}(gA)=gHg\inv$, and so $\Gamma^\ell_{\!A}=\bigcap_{g\in\langle A\rangle}gHg\inv$. So it follows from Lemma \ref{lem:NIPLie} that $\Gamma^\ell_{\!A}$ is an $\cR_{\!A}$-type-definable bounded-index normal subgroup of  $\langle A\rangle$.

By a similar argument, $\Gamma^r_{\!A}$ is an $\cR_{\!A}$-type-definable bounded-index normal subgroup of $\langle A\rangle$, which is contained in $A\inv A$. 

Now let $p\in S(\cR_{\!A})$ be $\mu_A$-wide (note that such types exist by Fact \ref{fact:wide}). By Lemma \ref{lem:Stabs}, we have
\[
\Gamma^\ell_{\!A}\leq \Stab^\ell_{\!A}(p)\leq\Gamma^r_{\!A}\leq\Stab^r_{\!A}(p)\leq\Gamma^\ell_{\!A},
\]
and so $\Gamma^\ell_{\!A}$, $\Gamma^r_{\!A}$, $\Stab^\ell_{\!A}(p)$, and $\Stab^r_{\!A}(p)$ are all the same group. It also follows from Lemma \ref{lem:Stabs} that this group is the smallest $\cR_{\!A}$-type-definable bounded-index subgroup of $\langle A\rangle$. Thus we have the group $\Gamma_{\!A}$ described in part $(a)$. It remains to show that $\Gamma_{\!A}$ is countably $\cR_{\!A}$-type-definable over $G$. For this, we first note that, by part $(a)$, $\Gamma_{\!A}$ is automorphism invariant with respect to the group language expanded by a formula defining $A$. It follows that $\Gamma_{\!A}$ is countably type-definable over $\emptyset$. A standard saturation argument then shows that $\Gamma_{\!A}$ is countably $\cR_{\!A}$-type-definable over $G$ (indeed, over any model $M\prec G^*$). 
\end{proof}

One could also denote $\Gamma_{\!A}$ by ${\langle A\rangle}^{00}_{\cR_{\!A}}$ (the type-definable connected component of ${\langle A\rangle}$ with respect to the ring $\cR_{\!A}$ of formulas, or definable sets). 

\begin{remark}\label{rem:Stabs}
It follows from the proof of the previous theorem that if $A\seq G^*$ is definable, NIP, and pseudofinite with finite tripling, then $\Gamma_{\!A}$ coincides with $\bigcap_{g\in I}g\Stab^\ell_{\!A}(A)g\inv$ and $\bigcap_{g\in I}g\Stab^r_{\!A}(A)g\inv$ for some countable $I\seq \langle A(G)\rangle$.

If $\langle A\rangle$ is abelian, then we can further conclude $\Gamma_{\!A}=\Stab^\ell_{\!A}(A)=\Stab^r_{\!A}(A)$. But this can fail in general. For example, suppose $G^*$ is pseudofinite and $H$ is a non-normal finite-index definable subgroup of $G^*$. Let $A=aH$ where $a\in G^*$ is such that $aHa\inv\neq H$. Then $A$ is NIP and has finite tripling, but $\Stab^\ell_{\!A}(A)=aHa\inv$ and $\Stab^r_{\!A}(A)=H$. 
\end{remark}

\section{Measures and domination}\label{sec:dom}

The goal of this section is to prove Theorem \ref{thm:GCD}. Roughly speaking, this result says that if $G^*$ is a sufficiently saturated group and $A\seq G^*$ is definable, NIP, and pseudofinite with finite tripling, then the collection of cosets $C$ of $\Gamma_{\!A}$ in $\langle A\rangle$, such that both $C\cap A$ and $C\backslash A$ are $\mu_A$-wide, is Haar null as a subset of the locally compact group $\langle A\rangle/\Gamma_{\!A}$. This will be the origin for condition $(iii)$ of our main result (Theorem \ref{thm:NIPgen}).

In the special case that $\langle A\rangle$ is \emph{definable} (which, in particular, implies that $\langle A\rangle$ is pseudofinite and $\langle A\rangle/\Gamma_{\!A}$ is compact), the analogue of Theorem \ref{thm:GCD} was proved by the authors in \cite{CPpfNIP}. The proof in this case will follow a similar strategy, and most of the work will involve adapting various tools and techniques to the setting of locally compact groups.

\subsection{Measures}

Let $G^*\succ G=\prod_{\cU}G_i$ be a sufficiently saturated group. Note that if $A\seq G^*$ is definable and pseudofinite with finite tripling, then the $A$-normalized pseudofinite counting measure $\mu_A$ is a bi-invariant Keisler measure on $\Def(\langle A\rangle)$. In particular, the finite tripling assumption ensures that $\mu_A$ is $\R_{\geq 0}$-valued on $\Def(\langle A\rangle)$.

The first main result of this section (Lemma \ref{lem:null} below) will require the following standard Borel-Cantelli-type result about probability measures.

\begin{fact}\label{fact:BC}
For any $\epsilon>0$ and $q\geq 1$, there are $\delta>0$ and $p\geq 1$ such that the following holds. Let $B$ be a Boolean algebra and $\nu$ be a finitely additive probability measure on $B$. Suppose $x_1,\ldots,x_p\in B$ are such that $\nu(x_i)\geq\epsilon$ for all $i\in[p]$. Then there is a $q$-element set $I\seq[p]$ such that $\nu(\bigwedge_{i\in I}x_i)\geq\delta$.
\end{fact}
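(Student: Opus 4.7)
The plan is to run a standard moment / pigeonhole argument on the finite Boolean subalgebra generated by $x_1,\ldots,x_p$. Since only finitely many events are involved, I would pass to this finite subalgebra, whose atoms are $A_J = \bigwedge_{i\in J} x_i \wedge \bigwedge_{i\not\in J}\lnot x_i$ for $J\seq[p]$, and work entirely with the atomic expansion of $\nu$. For each atom, set $f(J)=|J|$, which records how many of the events $x_i$ contain $A_J$.

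From the hypothesis $\nu(x_i)\geq\epsilon$, summing over $i$ and re-summing over atoms gives $\sum_J f(J)\nu(A_J) = \sum_i \nu(x_i) \geq p\epsilon$. I would then apply a Markov-type estimate: since $f(J)\leq p$, the total $\nu$-measure of the atoms with $f(J)\geq p\epsilon/2$ is at least $\epsilon/2$. On those atoms $\binom{f(J)}{q}\geq \binom{\lceil p\epsilon/2\rceil}{q}$, provided $p\geq 2q/\epsilon$. The key combinatorial identity is
\[
\sum_{|I|=q}\nu\Bigl(\bigwedge_{i\in I}x_i\Bigr) \;=\; \sum_{J\seq[p]}\binom{f(J)}{q}\nu(A_J),
\]
obtained by double counting: each atom $A_J$ lies in $\bigwedge_{i\in I}x_i$ precisely when $I\seq J$, so it contributes to $\binom{|J|}{q}$ many of the terms on the left. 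Combining these estimates gives $\sum_{|I|=q}\nu(\bigwedge_{i\in I}x_i)\geq \tfrac{\epsilon}{2}\binom{\lceil p\epsilon/2\rceil}{q}$, and dividing by the number $\binom{p}{q}$ of $q$-subsets yields, via pigeonhole, some $I$ with $\nu(\bigwedge_{i\in I}x_i)\geq\delta$, where
\[
\delta \;=\; \frac{\epsilon}{2}\cdot\frac{\binom{\lceil p\epsilon/2\rceil}{q}}{\binom{p}{q}}.
\]
Choosing $p$ sufficiently large in terms of $\epsilon$ and $q$ (something like $p=\lceil 4q/\epsilon\rceil$ suffices), this ratio is bounded below by an explicit positive function of $\epsilon$ and $q$, which gives the required $\delta$.

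I do not anticipate a substantive obstacle. The only small caveat is that $\nu$ is merely finitely additive on an abstract Boolean algebra rather than a countably additive measure on a space, but since every manipulation takes place inside the finite subalgebra generated by $x_1,\ldots,x_p$ and its $2^p$ atoms, finite additivity is all that is used, and the argument is purely combinatorial.
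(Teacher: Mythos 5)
Your proof is correct, and there is nothing in the paper to compare it against: the paper states Fact \ref{fact:BC} as a ``standard Borel-Cantelli-type result'' and gives no proof, so your argument is exactly the kind of routine double-counting proof being taken for granted. All the steps check out: working in the finite subalgebra generated by $x_1,\ldots,x_p$ only uses finite additivity (and $\nu(0)=0$, which follows from it); the identity $\sum_{|I|=q}\nu(\bigwedge_{i\in I}x_i)=\sum_{J\subseteq[p]}\binom{f(J)}{q}\nu(A_J)$ is a correct double count since $A_J\leq\bigwedge_{i\in I}x_i$ exactly when $I\subseteq J$; the Markov step gives total atom mass at least $\epsilon/2$ on $\{J:f(J)\geq p\epsilon/2\}$; and with $p=\lceil 4q/\epsilon\rceil$ one has $\lceil p\epsilon/2\rceil\geq q$, so the resulting $\delta=\tfrac{\epsilon}{2}\binom{\lceil p\epsilon/2\rceil}{q}/\binom{p}{q}$ is a positive constant depending only on $\epsilon$ and $q$, which is all the statement requires. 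A minor cosmetic variant: the Markov step can be avoided by using convexity of $t\mapsto\binom{t}{q}$ (for $t\geq q-1$) to bound $\sum_J\binom{f(J)}{q}\nu(A_J)\geq\binom{p\epsilon}{q}$ directly, which yields $\delta$ of order $\epsilon^q$ and works for every sufficiently large $p$, but your version is perfectly adequate.
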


For the rest of this section, we fix a definable, NIP, pseudofinite set $A\seq G^*$ with finite tripling. 

\begin{lemma}\label{lem:null}
Given $X\in\cR_{\!A}$, the following are equivalent:
\begin{enumerate}[$(i)$]
\item $X$ is left generic in $\langle A\rangle$,
\item $X$ is right generic in $\langle A\rangle$,
\item $\mu_A(X)>0$.
\end{enumerate}
So a type $p\in S(\cR_{\!A})$ is $\mu_A$-wide if and only if every $X\in p$ is (left) generic in $\langle A\rangle$.
\end{lemma}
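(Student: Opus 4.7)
I would establish a four-arrow cycle around (iii). The easy arrows (i) $\Rightarrow$ (iii) and (ii) $\Rightarrow$ (iii) are one-line measure computations: if $A$ is covered by $n$ left translates of $X$, then bi-invariance of $\mu_A$ gives $1 = \mu_A(A) \leq n \mu_A(X)$, so $\mu_A(X) > 0$; the right-generic version is identical. The ``moreover'' statement then follows immediately, since $p \in S(\cR_{\!A})$ is $\mu_A$-wide iff every member has positive $\mu_A$-measure, which by the main equivalence is the same as every member being (left, equivalently right) generic in $\langle A\rangle$.

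The substantive work is (iii) $\Rightarrow$ (i). I would fix $m \geq 1$ and aim to cover $A^{\pm m}$ by finitely many left translates of $X$; by saturation this suffices, since every definable subset of $\langle A\rangle$ lies in some $A^{\pm m}$. Pick $N$ so that $X \subseteq A^{\pm N}$ and set $t := \mu_A(A^{\pm(m+N)})$, which is finite by Corollary \ref{cor:TaoPSE}. Consider the formula $\phi(x;y) := X(x\inv y)$, whose instances are exactly the left translates $\phi(G^*;a) = a X\inv$, all sitting inside $A^{\pm(m+N)}$ for $a \in A^{\pm m}$. The plan is to apply the pseudofinite $(p,q)$-theorem (Proposition \ref{prop:MatPF}) to $\phi$ with big set $A^{\pm(m+N)}$ and parameter set $A^{\pm m}$, producing a finite $F$ that meets every $a X\inv$, which re-reads as $A^{\pm m} \subseteq FX$.

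Two ingredients feed Proposition \ref{prop:MatPF}. For NIP-ness of $\phi$, one uses Corollary \ref{cor:VCG}(a) together with the closure of NIP under translation and inversion: since $X$ is a Boolean combination of bi-translates of the NIP set $A$, both $X$ and $X\inv$ are NIP, giving a bound $d$ on the VC-dimension of $\{a X\inv : a \in G^*\}$, and I fix $q := 2^{d+1}$. For the $(p,q)$-hypothesis, apply Fact \ref{fact:BC} to the probability measure $\mu_A/t$ on the Boolean algebra of definable subsets of $A^{\pm(m+N)}$ with measure threshold $\mu_A(X)/t > 0$: by bi-invariance each $a X\inv$ has $\mu_A$-measure equal to $\mu_A(X)$, so the Borel-Cantelli-type conclusion supplies $p$ guaranteeing that among any $p$ such translates some $q$-subfamily has a positive-measure, hence nonempty, common intersection. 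The implication (iii) $\Rightarrow$ (ii) runs symmetrically using $\phi(x;y) := X(y x\inv)$, whose instances are the right translates $X\inv a$.

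The main obstacle is orchestrating the constants in the correct sequence: the VC-dimension bound (from NIP of $X$) fixes $q$; Fact \ref{fact:BC}, applied to the rescaling $\mu_A/t$, then produces the $p$ matching that $q$; only with both in hand can Proposition \ref{prop:MatPF} be invoked to close the argument. Everything else reduces to routine bookkeeping with bi-invariance of $\mu_A$ and saturation in $G^*$.
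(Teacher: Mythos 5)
Your argument is correct, but for the substantive implication $(iii)\Rightarrow(i)$ you take a genuinely different route from the paper. The paper argues as in Theorem \ref{thm:TaoNIP}: since $\mu_A$ is the pseudofinite counting measure, the translates of $X$ with parameter ranging over $A^{\pm m}$ form an NIP family of definable sets, each of measure at least $\mu_A(X)>0$ and all contained in a fixed pseudofinite definable set, so Proposition \ref{prop:DFS}$(a)$ (finite satisfiability of wide instances, coming from the VC theorem) immediately produces the finite transversal $F$ with $A^{\pm m}\seq F\inv X$. You instead verify the $(p,q)$-hypothesis via Fact \ref{fact:BC} and then apply the pseudofinite $(p,q)$-theorem, Proposition \ref{prop:MatPF}; this is exactly the argument the paper reserves for Theorem \ref{thm:unique}, where the measure is an arbitrary left-invariant $\cR_{\!A}$-Keisler measure $\nu$ and the counting-measure-specific Proposition \ref{prop:DFS}$(a)$ is unavailable. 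So your proof buys generality---it uses only finite additivity, invariance, and finiteness of the measure on each $A^{\pm n}$---at the cost of the heavier $(p,q)$-theorem, which the paper notes is not needed for its main results; the paper's route is the more economical one for $\mu_A$ itself. One small repair: $\mu_A(aX\inv)=\mu_A(X)$ is not literally bi-invariance; left-invariance gives $\mu_A(aX\inv)=\mu_A(X\inv)$, and you then need invariance of the pseudofinite counting measure under inversion (true, and in fact $\mu_A(X\inv)>0$ is all you need), while the closure of NIP sets under inversion that you invoke follows from Corollary \ref{cor:VCG}$(b)$ since left translates of $X\inv$ are the images under inversion of right translates of $X$. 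Both points can be sidestepped by using the paper's formula $X(x\cdot y)$, whose instances $Xb\inv$ are right translates of $X$ and whose transversal again yields a covering of $A^{\pm m}$ by finitely many left translates of $X$.
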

\begin{proof}
Note that $(i)\Rightarrow(iii)$ and $(ii)\Rightarrow(iii)$ follow immediately from finite additivity and invariance of  $\mu_A$. 

$(iii)\Rightarrow (i)$. The argument is similar to Theorem \ref{thm:TaoNIP}. Suppose $\mu_A(X)>0$. Fix $m\geq 1$. We want to find a finite set $F\seq\langle A\rangle$ such that $A^{\pm m}\seq FX$. Without loss of generality, we can assume $X\seq A^{\pm m}$. Since $A$ has finite tripling, we have $\mu_{AA^{\pm m}}(A)>0$ and so, for any $b\in G^*$, 
\[
\mu_{AA^{\pm m}}(Xb)=\mu_{AA^{\pm m}}(X)=\mu_A(X)\mu_{AA^{\pm m}}(A)>0.
\] 
Let $\phi(x;y)$ be the formula $X(x\cdot y)$. Then $\phi(x;y)$ is NIP and, if $b\in A^{\pm m}$, then $\phi(G^*,b)\seq AA^{\pm m}$. By Proposition \ref{prop:DFS}$(a)$, there is a finite set $F\seq AA^{\pm m}$ such that if $b\in A^{\pm m}$ then $Xb\inv\cap F\neq\emptyset$. So $A^{\pm m}\seq F\inv X$.

$(iii)\Rightarrow (ii)$. One can use a similar strategy as in $(iii)\Rightarrow (i)$, except applied to the formula $X(y \cdot x)$. 
\end{proof}


We call  a set $X\in\cR_{\!A}$ \textbf{generic} if it satisfies the equivalent conditions of the previous corollary. We will also refer to $\mu_A$-wide types in $S(\cR_{\!A})$ as \textbf{generic}, and let $S^g(\cR_{\!A})$ denote the set of generic $\cR_{\!A}$-types.

\begin{remark}\label{rem:genclosure}
The set $S^g(\cR_{\!A})$ is (topologically) closed in $S(\cR_{\!A})$, since any non-generic $\cR_{\!A}$-type contains a non-generic set in $\cR_{\!A}$. In fact, by modifying the usual argument for Boolean algebras (e.g., \cite[Lemma 1.7]{NewTD}), one can show that if $p\in S^g(\cR_{\!A})$ then $S^g(\cR_{\!A})$ is the closure of the orbit $\{gp:g\in\langle A\rangle\}$. 
\end{remark}

 Let $\G_{\!A}=\langle A\rangle/\Gamma_{\!A}$, and let $\pi\colon \langle A\rangle\to \G_{\!A}$ be the quotient map. 
 Recall that $\G_{\!A}$ is a second countable locally compact Hausdorff group under the logic topology defined in Section \ref{sec:LCLT}.

\begin{definition}
Given $X\in\cR_{\!A}$ and $p\in S^g(\cR_{\!A})$, define 
\[
\B^p_X=\{a\Gamma_{\!A}\in\G_{\!A}:X\in ap\}.
\]
Note that $\B^p_X$ is well-defined since $\Gamma_{\!A}=\Stab^\ell_{\!A}(p)$ by Theorem \ref{thm:Stabs}$(d)$. 
\end{definition}

\begin{lemma}\label{lem:Borel}
If $X\in\cR_{\!A}$ and $p\in S^g(\cR_{\!A})$, then $\B^p_X$ is $F_\sigma$ and $G_\delta$ in $\G_{\!A}$.
\end{lemma}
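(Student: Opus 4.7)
The space $\G_{\!A}$ is Polish: it is locally compact Hausdorff by the construction in Section~\ref{sec:LCLT}, and second countable by Fact~\ref{fact:LCLT}(e), since $\langle A\rangle$ is countably ind-definable (Corollary~\ref{cor:TaoNIP}) and $\Gamma_{\!A}$ is countably $\cR_{\!A}$-type-definable over $G$ (Theorem~\ref{thm:Stabs}(b)). In any Polish space, a set is simultaneously $F_\sigma$ and $G_\delta$ if and only if both it and its complement are $F_\sigma$. The plan is therefore to exhibit $\B^p_X$ and $\G_{\!A}\setminus\B^p_X$ each as a countable union of closed sets.

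Because right translation is a homeomorphism of $\G_{\!A}$ (Fact~\ref{fact:LCLT}(f)), we may first replace $p$ by $c\inv p$ for a coset representative $c$ with $p$ concentrated on $c\Gamma_{\!A}$, reducing to the case where $p$ itself concentrates on $\Gamma_{\!A}$; equivalently, $Z\in p$ for every $Z\in\cR_{\!A}$ with $Z\supseteq\Gamma_{\!A}$. Fix a decreasing sequence $(Z_n)_n$ of symmetric sets in $\cR_{\!A}$, definable over $G$, with $\bigcap_n Z_n=\Gamma_{\!A}$. For each $n,m\geq 1$, the definable sets
\[
V_{n,m}=\{a\in A^{\pm m}: aZ_n\seq X\}\quad\text{and}\quad W_{n,m}=\{a\in A^{\pm m}: aZ_n\cap X=\emptyset\}
\]
have compact $\pi$-images by Fact~\ref{fact:LCLT}(c), hence closed in the Hausdorff space $\G_{\!A}$. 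Since $Z_n\in p$, the type axioms force $\pi(V_{n,m})\seq\B^p_X$ (as $Z_n\seq a\inv X$ entails $a\inv X\in p$) and $\pi(W_{n,m})\seq\G_{\!A}\setminus\B^p_X$ (as $Z_n\cap a\inv X=\emptyset$ forces $a\inv X\notin p$). A saturation argument, together with $\langle A\rangle=\bigcup_m A^{\pm m}$, identifies $\bigcup_{n,m}\pi(V_{n,m})$ with the open set $\{a\Gamma_{\!A}: a\Gamma_{\!A}\seq X\}$ of Fact~\ref{fact:LCLT}(d) and $\bigcup_{n,m}\pi(W_{n,m})$ with $\{a\Gamma_{\!A}: a\Gamma_{\!A}\cap X=\emptyset\}$; these are complementary $F_\sigma$ sets contained in $\B^p_X$ and its complement, respectively.

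The remaining and most delicate task is to cover the ``boundary'' cosets $a\Gamma_{\!A}$ which meet both $X$ and $\langle A\rangle\setminus X$ nontrivially, and to decide for each such coset whether $X\in ap$. Here the NIP hypothesis should be invoked through Proposition~\ref{prop:DFS}: for a fixed $Y_0\in p$ with $\mu_A(Y_0)>0$, the NIP formula $\phi(x;y)=X(y\cdot x)$ yields, for each rational $\alpha>0$, a countably type-definable set $D_\alpha=\{a:\mu_A(a\inv X\cap Y_0)>\alpha\}$ whose $\pi$-image is a countable intersection of compact sets. The hard part will be to show, using the genericity of $p$ and its ultrafilter-like behavior on the Boolean algebra $\cR_{\!A}^{Y_0}$, that ``$X\in ap$'' is detected by a countable Boolean combination of the conditions $a\in D_\alpha$; once this measure-to-type equivalence is established on boundary cosets, adjoining the corresponding closed sets to the $\pi(V_{n,m})$ and $\pi(W_{n,m})$ produces the required $F_\sigma$ covers of $\B^p_X$ and its complement.
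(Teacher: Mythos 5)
Your proposal stops exactly at the crux of the lemma. The reduction to showing both $\B^p_X$ and its complement are $F_\sigma$, the translation to the case $p\models\Gamma_{\!A}$, and the observation that the ``interior'' cosets (those with $a\Gamma_{\!A}\seq X$ or $a\Gamma_{\!A}\cap X=\emptyset$) form $F_\sigma$ sets via compact images $\pi(V_{n,m})$, $\pi(W_{n,m})$ are all fine (and note these two unions are disjoint but not complementary, as you then acknowledge). But the entire content of the lemma is the part you defer: showing that on the remaining cosets, the condition ``$X\in ap$'' carves out a set of bounded Borel complexity. Your plan to detect ``$X\in ap$'' by countable Boolean combinations of measure-threshold conditions $\mu_A(a\inv X\cap Y_0)>\alpha$ is not justified and is unlikely to work: $p$ is merely some $\mu_A$-wide type, not the measure itself, and nothing prevents two elements $a_1,a_2$ with identical values of $\mu_A(a_i\inv X\cap Y_0)$ (for every $Y_0\in p$) from satisfying $a_1\inv X\in p$ and $a_2\inv X\notin p$. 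Genericity of $p$ only tells you that sets in $p$ are wide; it does not make membership in $p$ a measurable function of $\mu_A$-data, so no ``measure-to-type equivalence'' is available, and the proposal never supplies one.

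The paper's proof handles this point with a different and essential input. Using Proposition \ref{prop:DFS}$(a)$ for the NIP formula involved, one extracts a \emph{countable} set $E$ such that whenever $a_1\inv X\backslash a_2\inv X$ is $\mu_A$-wide (which happens whenever $a_1\inv X\in p$ but $a_2\inv X\notin p$, by genericity of $p$), some point of $E$ separates $a_1\inv X$ from $a_2\inv X$. Packaging the data into an auxiliary structure $\cZ=(Z;U_Y,R_X)$ interpretable in $G^*$ and choosing a countable model $M\prec\cZ$ containing $E$, the induced complete $\phi$-type $p_0$ (with $\phi(x;a)\in p_0$ iff $a\inv X\in p$) becomes $M$-invariant. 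One then invokes Simon's theorem on NIP formulas and Rosenthal compacta (\cite[Theorem 0.1]{SimRC}) to conclude that $\{q\in S_y(M):\phi(x;a)\in p_0\text{ for any }a\models q\}$ is $F_\sigma$, whence $\pi\inv(\B^p_X)$ is a countable union of type-definable sets and $\B^p_X$ is a countable union of compact sets. Your sketch contains no substitute for this Borel-definability theorem, which is the missing idea; without it (or an argument of comparable strength), the ``hard part'' you flag remains a genuine gap.
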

\begin{proof}
We first note that it suffices to show that $\B^p_X$ is $F_\sigma$ for all $X\in\cR_{\!A}$ and $p\in S^g(\cR_{\!A})$. Indeed, suppose this holds. By Corollary \ref{cor:TaoNIP}, we can write $\langle A\rangle=\bigcup_{m=1}^\infty Y_m$, where $Y_m\in\cR_{\!A}$. Now, given $X\in\cR_{\!A}$ and $p\in S^g(\cR_{\!A})$, we have $\G_{\!A}\backslash \B^p_X=\bigcup_{m=1}^\infty \B^p_{Y_m\backslash X}$, which is $F_\sigma$, and so $\B^p_X$ is $G_\delta$.

Now fix $X\in\cR_{\!A}$ and $p\in S^g(\cR_{\!A})$. Let $D\in p$ be arbitrary. If $a\in\pi\inv(\B^p_X)$ then $p\models a\inv X\cap D$, and so we have $\pi\inv(\B^p_X)\seq Y:=XD\inv$.  Let $Z=Y\cup  Y\inv X$, and note that $Z$ is definable.  By Proposition \ref{prop:DFS}$(a)$, there is a countable set $E\subset G^*$ such that for any $\epsilon>0$, if $a_1,a_2\in Y$ and $\mu_A(a_1\inv X\backslash a_2\inv X)>0$, then $(a_1\inv X\backslash a_2\inv X)\cap E\neq\emptyset$. Without loss of generality, we may assume that $E\seq  Y\inv X\seq Z$.

We now define a new first-order structure $\cZ=(Z;U_Y,R_X)$ with universe $Z$, a unary relation $U_Y$ interpreted as $Y$, and a binary relation $R_X(x,y)$ interpreted as $X(x\cdot y)$. Note that $\cZ$ is interpretable in $G^*$. Working for now with respect to $\Th(\cZ)$, let $\phi(x;y)$ denote the NIP formula $R_X(y, x)\wedge U_Y(y)$. Then there is a complete $\phi$-type $p_0\in S_\phi(Z)$ such that, given $a\in Z$, $\phi(x;a)\in p_0$ if and only if $a\inv X\in p$. 
Let $M\prec \cZ$ be a countable submodel such that $E\seq M$. We claim that $p_0$ is $M$-invariant. For a contradiction, suppose we have $a_1,a_2\in Z$ such that $a_1\equiv_M a_2$ and $\phi(x;a_1),\neg\phi(x;a_2)\in p_0$. Then $a_1\in Y$, which implies $a_2\in Y$, and thus $a_1\inv X\backslash a_2\inv X\in p$. Since $p$ is generic, there is some $m\in E\seq M$ such that $m\in a_1\inv X\backslash a_2\inv X$. So $\cZ\models R_X(a_1,m)\wedge\neg R_X(a_2,m)$, which contradicts $a_1\equiv_M a_2$. 

Let $S=\{q\in S_y(M):\phi(x;a)\in p_0\text{ for some/any }a\models q\}$. By \cite[Theorem 0.1]{SimRC}, $S$ is an $F_\sigma$ set in $S_y(M)$. So if $W=\{a\in Z:a\models q\text{ for some }q\in S\}$, then we can write $W=\bigcup_{n=0}^\infty W_n$, where $W_n\seq Z$ is type-definable (over $M$). 
It is also straightforward to show that $W=\pi\inv(\B^p_X)$ (recall that $\pi\inv(\B^p_X)\seq Y$ by assumption).

Now we return to the structure $G^*$. Note that each set $W_n$ above is type-definable (with respect to $G^*$) since $X$, $Y$, and $Z$ are all definable. So each $\pi(W_n)\seq\G_{\!A}$ is closed by Fact \ref{fact:LCLT}$(c)$, and thus $\B^p_X=\bigcup_{n=0}^\infty \pi(W_n)$ is $F_\sigma$. 
\end{proof}

\begin{definition}
Let $\eta$ be a Haar measure on $\G_{\!A}$. Given $p\in S^g(\cR_{\!A})$ and $X\in\cR_{\!A}$, define $\eta_p(X)=\eta(\B^p_X)$. (This is well-defined by Lemma \ref{lem:Borel}.)
\end{definition}

\begin{proposition}\label{prop:Haar}
Let $\eta$ be a Haar measure on $\G_{\!A}$ and suppose $p\in S^g(\cR_{\!A})$. Then $\eta_p$ is a left-$\langle A\rangle$-invariant $\cR_{\!A}$-Keisler measure.
\end{proposition}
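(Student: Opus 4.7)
The plan is to verify the four defining properties of a left-$\langle A\rangle$-invariant $\cR_{\!A}$-Keisler measure: finite additivity, $\mathbb{R}_{\geq 0}$-valuedness, left-invariance, and nontriviality. The first three follow from direct set-theoretic manipulations together with basic properties of Haar measure; nontriviality is the main obstacle.

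\textbf{Additivity and invariance.} For disjoint $X, Y \in \cR_{\!A}$, since $ap$ is a complete $\cR_{\!A}$-type, $X \cup Y \in ap$ if and only if exactly one of $X, Y$ lies in $ap$ (using the complement-closure axiom of complete types and $\emptyset \notin ap$). Hence $\B^p_{X \cup Y} = \B^p_X \sqcup \B^p_Y$, and additivity of $\eta_p$ follows from additivity of $\eta$. For $g \in \langle A\rangle$, compute: $a\Gamma_{\!A} \in \B^p_{gX}$ iff $a^{-1}gX \in p$ iff $(g^{-1}a)^{-1}X \in p$. Substituting $b = g^{-1}a$, this shows $\B^p_{gX} = g \cdot \B^p_X$ as a left-translate in $\mathbb{G}_{\!A}$ (well-defined by normality of $\Gamma_{\!A}$ in $\langle A\rangle$, Theorem \ref{thm:Stabs}$(b)$). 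Left-invariance of $\eta_p$ is then immediate from left-invariance of $\eta$.

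\textbf{Finite-valuedness.} Fix any $D \in p$. If $a\Gamma_{\!A} \in \B^p_X$ then $a^{-1}X \in p$, so $a^{-1}X \cap D \neq \emptyset$ (both being in the filter $p$), which yields $a \in XD^{\nv 1}$. Thus $\pi^{-1}(\B^p_X) \subseteq XD^{\nv 1}$, which is $\cR_{\!A}$-definable and hence type-definable. By Fact \ref{fact:LCLT}$(c)$, $\pi(XD^{\nv 1})$ is compact in $\mathbb{G}_{\!A}$, so has finite Haar measure, giving $\eta_p(X) \leq \eta(\pi(XD^{\nv 1})) < \infty$.

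\textbf{Nontriviality.} The plan is to exhibit an $X \in \cR_{\!A}$ such that $\B^p_X$ contains the image under $\pi$ of a definable set of positive Haar measure. Since $\langle A\rangle = \bigcup_m A^{\pm m}$ with each $A^{\pm m}$ definable, Fact \ref{fact:LCLT}$(c)$ writes $\mathbb{G}_{\!A} = \bigcup_m \pi(A^{\pm m})$ as a countable union of compacta. By Baire category some $\pi(A^{\pm m})$ has nonempty interior, hence positive Haar measure. Let $C = \tau(p) = b\Gamma_{\!A}$. Since $A^{\pm m}b$ is type-definable, the claim in the proof of Lemma \ref{lem:Borel} implies that $A^{\pm m}C = \pi^{-1}(\pi(A^{\pm m}b))$ is $\cR_{\!A}$-type-definable, so we may write $A^{\pm m}C = \bigcap_j X_j$ with $X_j \in \cR_{\!A}$. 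For any $j$ and any $a \in A^{\pm m}$, since $1 \in a^{-1}A^{\pm m}$, we have $a^{-1}X_j \supseteq a^{-1}(A^{\pm m}C) \supseteq C$, and therefore $a^{-1}X_j \in p$ because $p$ concentrates on $C$. Hence $\pi(A^{\pm m}) \subseteq \B^p_{X_j}$, yielding $\eta_p(X_j) \geq \eta(\pi(A^{\pm m})) > 0$. The main technical hurdle is precisely this step, where one must translate the Haar-positivity of $\pi(A^{\pm m})$ into positivity of $\eta_p$ on a specific set in the smaller ring $\cR_{\!A}$ (rather than on all of $\Def(\langle A\rangle)$), and the $\cR_{\!A}$-type-definability of $A^{\pm m}C$ is the key tool.
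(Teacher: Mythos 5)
Your proof is correct, and the verifications of additivity, left-invariance, and $\R_{\geq 0}$-valuedness coincide with the paper's (the paper likewise uses $\B^p_{gX}=g\Gamma_{\!A}\cdot\B^p_X$ and the containment $\B^p_X\seq\pi(XY\inv)$ for an arbitrary $Y\in p$). Where you genuinely diverge is nontriviality. The paper's argument is shorter: it fixes any $X\in\cR_{\!A}$ with $\Gamma_{\!A}\seq X$ (e.g.\ $AA\inv$), notes that $U=\{a\Gamma_{\!A}:a\Gamma_{\!A}\seq X\}$ is nonempty and open by Fact \ref{fact:LCLT}$(d)$, hence of positive Haar measure, and checks $U\seq\B^p_X$ using that $ap$ concentrates on a coset contained in $X$; no Baire category is needed. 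Your route instead extracts a positive-measure compact piece $\pi(A^{\pm m})$ by Baire category, and then --- this is the real content of your argument --- converts it into positivity of $\eta_p$ on an actual element of the ring by invoking the saturation claim that $\pi\inv(\pi(A^{\pm m}b))=A^{\pm m}\tau(p)$ is $\cR_{\!A}$-type-definable; note that this claim lives in the proof of Theorem \ref{thm:LCLT}, not of Lemma \ref{lem:Borel} as you cite. What your version buys is that it treats an arbitrary generic $p$ directly through its coset $\tau(p)$, rather than phrasing the final step as if $p$ concentrated on $\Gamma_{\!A}$ (in the paper one should read $ap\models a\tau(p)$, or reduce to $p\models\Gamma_{\!A}$ by a right translation, which does not affect nontriviality); what the paper's version buys is brevity, since Fact \ref{fact:LCLT}$(d)$ already hands you an open set inside $\B^p_X$ without any category argument or appeal to the type-definability claim.
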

\begin{proof}
We clearly have $\eta_p(\emptyset)=0$. If $X,Y\in\cR_{\!A}$ then $\B^p_{X\cup Y}=\B^p_X\cup \B^p_Y$ and $\B^p_{X\cap Y}=\B^p_X\cap \B^p_Y$, and so finite additivity of $\eta_p$ is inherited from additivity of $\eta$. Also, if $X\in\cR_{\!A}$ and $g\in\langle A\rangle$ then $\B^p_{gX}=g\Gamma\cdot \B^p_X$, and so $\eta_p(gX)=\eta_p(X)$ by left-invariance of $\eta$. 

Next, we show that $\eta_p$ is $\R_{\geq 0}$-valued. First, fix $X\in \cR_{\!A}$, and let $Y\in p$ be arbitrary. Then (as in the proof of Lemma \ref{lem:Borel}) we have $\B^p_X\seq\pi(XY\inv)$, which is compact Fact \ref{fact:LCLT}$(c)$. 
Therefore $\eta_p(X)\leq\eta(\pi(XY\inv))<\infty$. 

Finally, we show that $\eta_p$ is nontrivial. Fix $X\in\cR_{\!A}$ such that $\Gamma_{\!A}\seq X$. Let $U=\{a\Gamma_{\!A}\in\G_{\!A}:a\Gamma_{\!A}\seq X\}$. Then $U$ is nonempty by assumption, and open by Fact \ref{fact:LCLT}$(d)$. So $\eta(U)>0$. Moreover, if $a\Gamma_{\!A}\in U$ then, since $ap\models a\Gamma_{\!A}$, it follows that $X\in ap$, i.e., $a\Gamma_{\!A}\in \B^p_X$. So $U\seq \B^p_X$, and thus we have $\eta_p(X)>0$. 
\end{proof}

\subsection{Generic locally compact domination}
Before returning to the setting of the previous section, we first state a result of Simon \cite{SimGCD}, which is central to the proof of Theorem \ref{thm:GCD}. 

\begin{definition}
Let $\G$ be a locally compact group and let $\eta$ be a Haar measure on $\G$. A Borel set $W\seq\G$ is \textbf{pointwise large} if $\eta(W\cap U)>0$ for any open set $U\seq \G$ such that $W\cap U\neq\emptyset$.
\end{definition}

Note that whether a Borel set is pointwise large is independent of the choice of Haar measure.
The next result is Theorem 3.7 of \cite{SimGCD}. 

\begin{theorem}[Simon \cite{SimGCD}]\label{thm:SimGCD}
Let $\G$ be a locally compact second countable group, and suppose $B\seq \G$ is an NIP set such that both $B$ and $\G\backslash B$ are $F_\sigma$ and pointwise large. Then $\partial B$ is Haar null.
\end{theorem}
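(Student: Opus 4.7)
The plan is to argue by contradiction, combining a Lebesgue-differentiation step with a shatter-extraction that invokes NIP. Suppose $\partial B$ is not Haar null, fix a Haar measure $\eta$ on $\G$, and choose a precompact open set $U\seq \G$ with $\eta(\partial B\cap U)>0$; normalize $\eta$ restricted to $U$ to a probability measure $\nu$. Since $\G$ is second countable and locally compact, one can equip $U$ with a compatible left-invariant metric, so that a Vitali or Besicovitch covering argument yields a Lebesgue differentiation statement: for $\nu$-almost every $x\in U$, the density $\delta_B(x)=\lim_{r\to 0}\nu(B\cap B_r(x))/\nu(B_r(x))$ exists.

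The first step is to observe that the two pointwise-largeness hypotheses force the inclusion $\partial B\seq\{x\in U:0<\delta_B(x)<1\}$ on the conull set where $\delta_B$ is defined. Indeed, if $x\in\partial B$ then every neighborhood $V$ of $x$ meets both $B$ and $\G\backslash B$; by pointwise largeness both intersections have positive $\nu$-measure, so the density, if it exists at $x$, cannot equal $0$ or $1$. Thus it suffices to show that the set $E:=\{x\in U:0<\delta_B(x)<1\}$ is $\nu$-null.

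The heart of the argument is to deduce $\nu(E)=0$ from NIP. If $\nu(E)>0$, one picks $\eta_0>0$ small so that $E_0:=\{x\in E:\eta_0\leq \delta_B(x)\leq 1-\eta_0\}$ still has $\nu(E_0)>0$, and a $\nu$-density point $x_0$ of $E_0$. Every sufficiently small ball around $x_0$ then contains many points of $E_0$, each witnessing that both $B$ and $\G\backslash B$ occupy at least an $\eta_0$-fraction of some small neighborhood of it. An iterative selection of translating elements $g_1,g_2,\ldots\in\G$ and evaluation points drawn from such neighborhoods produces a configuration shattered by the translate family $\{gB:g\in\G\}$, contradicting the assumed finite VC-dimension. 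The main obstacle is exactly this combinatorial extraction: the density data is only ``fractional,'' and converting it into an arbitrarily long honest $\{0,1\}$-alternation pattern requires a careful Ramsey-style iteration, typically executed by running the VC theorem as a pigeonhole device or by invoking the equivalent characterization of NIP via bounded alternation number, combined with the fact that pointwise largeness lets one replace ``positive density on $B$'' by ``a point actually in $B$'' inside any ball where the density is bounded away from zero.
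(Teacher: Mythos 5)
The paper does not prove this statement at all: it is quoted directly from Simon \cite{SimGCD} (Theorem 3.7 there), so your argument has to stand on its own, and it does not. The decisive flaw is your first reduction. Pointwise largeness gives that every ball around a point of $\partial B$ meets both $B$ and $\G\backslash B$ in sets of positive Haar measure, but this does not force the density $\delta_B(x)$, when it exists, to lie strictly between $0$ and $1$: a positive-measure intersection at every scale is perfectly compatible with the ratio $\nu(B\cap B_r(x))/\nu(B_r(x))$ tending to $0$. Concretely, take $\G=\R$, let $C\seq[0,1]$ be a closed nowhere dense set of positive measure every portion of which has positive measure (a ``fat Cantor set''), and let $B=\R\backslash C$. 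Then $B$ and $C=\G\backslash B$ are both $F_\sigma$ and pointwise large, $\partial B=C$ has positive Haar measure, and by the Lebesgue density theorem almost every point of $\partial B$ has $\delta_B=0$; so the inclusion $\partial B\seq\{x:0<\delta_B(x)<1\}$ (even modulo null sets) is false. Worse, if that inclusion were valid, then the Lebesgue density theorem alone would already make $\partial B$ null without any use of NIP --- but the same example shows the statement is false without the NIP hypothesis (so $B$ there cannot be NIP), which means a reduction rendering NIP redundant cannot be correct.

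The later steps also contain gaps. The Lebesgue differentiation theorem for balls of an invariant metric is not automatic in an arbitrary locally compact second countable group (no doubling or Besicovitch property is available in general), so even the existence of $\delta_B$ almost everywhere needs an argument. More importantly, the ``heart'' of your proof --- converting a positive-measure set of intermediate-density points into arbitrarily large finite sets shattered by the translates $\{gB:g\in\G\}$ --- is precisely the hard content of the theorem, and you leave it as a sketch; note also that an iterative selection which only produces long alternation patterns would contradict stability rather than NIP, whereas contradicting NIP requires realizing all $2^n$ membership patterns on a common finite set, which your outline does not produce. Simon's actual proof exploits the $F_\sigma$ hypotheses and VC-theoretic sampling in an essentially different way, and you should consult \cite{SimGCD} rather than attempt a density-point dichotomy.
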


We now continue to fix a sufficiently saturated group $G^*$ and an NIP pseudofinite definable set $A\seq G^*$ with small tripling. Let $\G_{\!A}$ and $\pi\colon\langle A\rangle \to \G_{\!A}$ be as above. Recall that we also have a well-defined continuous surjective  map $\tau\colon S(\cR_{\!A})\to \G_{\!A}$ such that $p\models\tau(p)$ for any $p\in S(\cR_{\!A})$.

\begin{definition}
Given $X\in\cR_{\!A}$, define $\E_X$ to be the set of cosets $a\Gamma_{\!A}\in\G_{\!A}$ such that both $\Gamma_{\!A}\cap X$ and $\Gamma_{\!A}\backslash X$ are $\mu_A$-wide.
\end{definition}

Note that, by Fact \ref{fact:wide}, a coset $a\Gamma_{\!A}$ is in $\E_X$ if and only if there are $q,q'\in S^g(\cR_{\!A})$ such that $q\models a\Gamma_{\!A}\cap X$ and $q'\models a\Gamma_{\!A}\backslash X$.

\begin{lemma}\label{lem:GCD}
Suppose $X\in\cR_{\!A}$ and $p\in S^g(\cR_{\!A})$ is such that $p\models\Gamma_{\!A}$.
\begin{enumerate}[$(a)$]
\item $\B^p_X$ is NIP in $\G_{\!A}$.
\item $\B^p_X$ and $\G_{\!A}\backslash \B^p_X$ are pointwise large.
\item $\E_X$ is compact.
\item $\E_X\seq\partial \B^p_X$.
\end{enumerate}
\end{lemma}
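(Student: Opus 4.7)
My plan addresses the four parts in order, leveraging throughout the identifications $g\Gamma_{\!A}\cdot\B^p_Y=\B^p_{gY}$ and the equivalence $c\Gamma_{\!A}\in\B^p_Y$ iff $c\inv Y\in p$. For (a), I observe that the $\G_{\!A}$-translates of $\B^p_X$ form the family $\{\B^p_{gX}:g\in\langle A\rangle\}$. A shattering of cosets $\{a_i\Gamma_{\!A}\}_{i=1}^n$ by witnesses $\{g_\sigma\}_{\sigma\seq[n]}$ would, after fixing a realization $c$ of $p$ in a larger model, produce $g_\sigma\inv a_ic\in X$ iff $i\in\sigma$. This contradicts the NIP-ness of the formula $X(y\inv x)$, whose instances cut out the left translates $\{yX:y\in\langle A\rangle\}$; these have bounded VC-dimension since $X\in\cR_{\!A}$ is a Boolean combination of bi-translates of $A$, each NIP by Corollary \ref{cor:VCG}.

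The main technical work is (b), and it reduces to the claim that any generic $Y\in\cR_{\!A}$ satisfies $\eta_p(Y)>0$. If $\eta_p(Y)=0$, then left-invariance of $\eta_p$ (Proposition \ref{prop:Haar}) gives $\eta_p(gY)=0$ for all $g\in\langle A\rangle$; left-genericity of $Y$ (Lemma \ref{lem:null}) supplies a finite cover $\langle A\rangle^{\pm 2}\seq g_1Y\cup\cdots\cup g_kY$, forcing $\eta_p(\langle A\rangle^{\pm 2})=0$. But $\Gamma_{\!A}\seq\langle A\rangle^{\pm 2}$ by Theorem \ref{thm:Stabs}, so the set $\{c\Gamma_{\!A}:c\Gamma_{\!A}\seq\langle A\rangle^{\pm 2}\}$ is nonempty, open in $\G_{\!A}$ (Fact \ref{fact:LCLT}(d)), and contained in $\B^p_{\langle A\rangle^{\pm 2}}$ (as $p\models\Gamma_{\!A}$ forces $c\inv\langle A\rangle^{\pm 2}\in p$ whenever $\Gamma_{\!A}\seq c\inv\langle A\rangle^{\pm 2}$), producing $\eta_p(\langle A\rangle^{\pm 2})>0$, a contradiction. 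With the claim in hand, given open $U\ni a\Gamma_{\!A}\in\B^p_X$, I choose a compact-open $K=\pi(Z)$ with $a\Gamma_{\!A}\in K\seq U$ and $Z=\pi\inv(K)$ a definable union of $\Gamma_{\!A}$-cosets; a direct check gives $\B^p_Z=K$ and $K\cap\B^p_X=\B^p_{Z\cap X}$. Since $a\inv(Z\cap X)\in p$, the set $Z\cap X$ is generic (Lemma \ref{lem:null}), so the claim yields $\eta(U\cap\B^p_X)\geq\eta_p(Z\cap X)>0$. The case $a\Gamma_{\!A}\in U\setminus\B^p_X$ is symmetric: completeness of $p$ gives $a\inv(Z\setminus X)\in p$, and $\B^p_{Z\setminus X}=K\setminus\B^p_X$ finishes the argument.

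For (c), I write $\E_X=B_+\cap B_-$ where $B_+=\tau(S^g(\cR_{\!A})\cap[X])$ and $B_-=\tau(S^g(\cR_{\!A})\setminus[X])$, using Fact \ref{fact:wide} to encode the two wideness conditions via generic-type witnesses. Since $[X]$ is compact-open in $S(\cR_{\!A})$ and $S^g(\cR_{\!A})$ is closed (Remark \ref{rem:genclosure}), $B_+$ is the continuous image (Proposition \ref{prop:LCLT}) of a compact set, hence compact. For closure of $\E_X$, take a sequence $a_n\Gamma_{\!A}\to a\Gamma_{\!A}$ in $\E_X$ (valid since $\G_{\!A}$ is second countable by Fact \ref{fact:LCLT}(e)) contained in some $\pi(\langle A\rangle^{\pm m})$; the accompanying witnesses $q_n'\in S^g(\cR_{\!A})\setminus[X]$ with $\tau(q_n')=a_n\Gamma_{\!A}$ lie in the compact set $[\langle A\rangle^{\pm(m+2)}]$, so a convergent subnet produces $q'\in S^g(\cR_{\!A})\setminus[X]$ with $\tau(q')=a\Gamma_{\!A}$, giving $a\Gamma_{\!A}\in B_-$ (while $a\Gamma_{\!A}\in B_+$ by compactness of $B_+$). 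For (d), fix $a\Gamma_{\!A}\in\E_X$ and open $U\ni a\Gamma_{\!A}$; Fact \ref{fact:wide} supplies $q\in S^g(\cR_{\!A})\cap[X]$ with $\tau(q)=a\Gamma_{\!A}$. The open neighborhood $V=\tau\inv(U)\cap[X]$ of $q$ in $S(\cR_{\!A})$ meets the orbit $\{gp:g\in\langle A\rangle\}$ (dense in $S^g(\cR_{\!A})$ by Remark \ref{rem:genclosure}), producing $g$ with $gp\in V$, so $g\Gamma_{\!A}\in U\cap\B^p_X$. The same argument with a witness in $S^g(\cR_{\!A})\setminus[X]$ yields a point of $U\cap(\G_{\!A}\setminus\B^p_X)$, so $a\Gamma_{\!A}\in\partial\B^p_X$.
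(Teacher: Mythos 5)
The essential flaw is in your part (b), at the step ``I choose a compact-open $K=\pi(Z)$ with $a\Gamma_{\!A}\in K\seq U$ and $Z=\pi\inv(K)$ definable''. Such a $K$ need not exist: compact-open sets form a neighborhood basis only when $\G_{\!A}$ is totally disconnected, and in general $\G_{\!A}=\langle A\rangle/\Gamma_{\!A}$ can be connected. For instance, if $A$ is an ultraproduct of the intervals $\{-N,\ldots,N\}\seq\Z$, then $\G_{\!A}\cong\R$, whose only compact-open subset is $\emptyset$, so no such $K$ exists for any nonempty $U$. (Indeed, if compact-open neighborhoods were always available, the subgroup argument of Proposition \ref{prop:GYbd} would go through with no exponent hypothesis and coset nilprogressions would never be needed.) The repair is to work in the type space $S(\cR_{\!A})$, which \emph{is} totally disconnected with compact-open basis $\{[Y]:Y\in\cR_{\!A}\}$: the fiber $\tau\inv(\{a\Gamma_{\!A}\})$ is compact and contained in the open set $\tau\inv(U)$, so there is $Y\in\cR_{\!A}$ with $\tau\inv(\{a\Gamma_{\!A}\})\seq[Y]\seq\tau\inv(U)$; then $X\cap Y\in ap$ (using $p\models\Gamma_{\!A}$), this set is generic, so $\eta_p(X\cap Y)>0$, while $\B^p_Y\seq U$, giving $\eta(U\cap\B^p_X)>0$. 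For the complement one writes $\G_{\!A}\backslash\B^p_X=\bigcup_m\B^p_{Y_m\backslash X}$ with $\langle A\rangle=\bigcup_m Y_m$, $Y_m\in\cR_{\!A}$ (Corollary \ref{cor:TaoNIP}), and applies the same argument to each piece. This is the proof in the paper; your version conflates compact-open sets of $S(\cR_{\!A})$ with compact-open sets of $\G_{\!A}$.

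Two smaller points in (b): your preliminary claim that generic sets in $\cR_{\!A}$ have positive $\eta_p$-measure is correct and is implicitly how the paper argues, but as written you apply $\eta_p$ and $\B^p$ to ``$\langle A\rangle^{\pm 2}$'' (presumably $A^{\pm 2}$), which need not lie in $\cR_{\!A}$; instead cover a set $W\in\cR_{\!A}$ with $\Gamma_{\!A}\seq W$ (such $W$ exists since $\Gamma_{\!A}$ is $\cR_{\!A}$-type-definable), for which positivity of $\eta_p(W)$ is exactly the nontriviality argument already in the proof of Proposition \ref{prop:Haar}. Parts (a) and (d) are essentially the paper's arguments and are fine (in (a) you shatter with a realization of $p$ in a larger model, where the paper finds a witness in $\langle A\rangle$ by a finite intersection inside $p$; both work). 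Your (c) is also correct but takes a slightly different route: since your $B_-=\tau(S^g(\cR_{\!A})\setminus[X])$ is not compact, you must prove closedness of $\E_X$ by a limit/subnet argument (legitimate, given second countability and the fact that every compact subset of $\G_{\!A}$ lies in $\pi(Y)$ for some $Y\in\cR_{\!A}$, again replacing ``$[\langle A\rangle^{\pm(m+2)}]$'' by $[Y]$ with $Y\in\cR_{\!A}$); the paper avoids this by fixing $D\in\cR_{\!A}$ with $X\Gamma_{\!A}\seq D$ and using $[D\backslash X]$, so that both pieces are compact outright.
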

\begin{proof}
Part $(a)$. Fix $X\in\cR_{\!A}$ and $p\in S^g(\cR_{\!A})$. We want to bound the VC-dimension of $\cS=\{\B^p_{gX}:g\in\langle A\rangle\}$. So suppose $\cS$ shatters $\{a_1\Gamma_{\!A},\ldots,a_n\Gamma_{\!A}\}\seq \G_{\!A}$.  Then for any $I\seq[n]$ we can choose $g_I\in\langle A\rangle$ such that $a_i\Gamma_{\!A}\in \B^p_{g_IX}$ if and only if $i\in I$. In other words, for $i\in [n]$, we have $a_i\inv g_IX\in p$ if and only if $i\in I$. It follows that there is some $b\in \langle A\rangle$ such that, for $i\in [n]$, $b\in a_i\inv g_IX$ if and only if $i\in I$. So $\{gX:g\in \langle A\rangle\}$ shatters $\{a_1b,\ldots,a_nb\}$. Altogether, $\VC(\cS)\leq \VC(\{gX:g\in\langle A\rangle\})$, and so $\VC(\cS)<\infty$ since $X$ is NIP (by Corollary \ref{cor:VCG}).

Part $(b)$. We first consider $\B^p_X$.  Suppose $U\seq\G_{\!A}$ is open and $\B^p_X\cap U\neq \emptyset$. We show $\eta(\B^p_X\cap U)>0$. Fix $a\Gamma_{\!A}\in \B^p_X\cap U$. Let $K=\tau\inv(\{a\Gamma_{\!A}\})$ and $V=\tau\inv(U)$. Then $K$ is closed, and contained in $[Y]$ for any $Y\in\cR_{\!A}$ containing $a\Gamma_{\!A}$. In particular, $K$ is compact. Since $V$ is open, and $K\seq V$, there is some $Y\in\cR_{\!A}$ such that $K\seq [Y]\seq V$. Note that $X\in ap$ since $a\Gamma_{\!A}\in\B^p_X$, and $Y\in ap$ since $ap\models a\Gamma_{\!A}$. So $X\cap Y\in ap$. Since $ap$ is generic, we have $\eta_p(X\cap Y)>0$ by Proposition \ref{prop:Haar}, which implies $\eta(\B^p_X\cap \B^p_Y)>0$. Thus it suffices to show that $\B^p_Y\seq U$.  So fix $g\Gamma_{\!A}\in \B^p_Y$. Then $Y\in gp$, i.e., $gp\in [Y]\seq V=\tau\inv(U)$. So $g\Gamma_A=\tau(gp)\in U$.

Now, for $\G\backslash\B^p_X$, recall from the proof of Lemma \ref{lem:Borel} that $\G\backslash \B^p_X=\bigcup_{m=1}^\infty\B^p_{Y_m\backslash X}$ for some $Y_m\in\cR_{\!A}$. By the above argument, each $\B^p_{Y_m\backslash X}$ is pointwise large, which implies that $\G\backslash\B^p_X$ is pointwise large.

Part $(c)$.  Choose an arbitrary set $D\in\cR_{\!A}$ such that $X\Gamma_{\!A}\seq D$. We claim that 
\[
\E_X=\tau(S^g(\cR_{\!A})\cap [X])\cap \tau(S^g(\cR_{\!A})\cap[D\backslash X]).
\]
To see this, first suppose $a\Gamma_{\!A}=\tau(q)=\tau(q')$ for some $q\in S^g(\cR_{\!A})\cap [X]$ and $q'\in S^g(\cR_{\!A})\cap[D\backslash X]$.  Then $q\models a\Gamma_{\!A}\cap X$ and $q'\models a\Gamma_{\!A}\cap D\backslash X$. In particular, $a\Gamma_{\!A}\cap X\neq\emptyset$, which implies $a\Gamma_{\!A}\seq D$. So $a\Gamma_{\!A}\cap D\backslash X=a\Gamma_{\!A}\backslash X$, which yields $a\Gamma_{\!A}\in \E_X$. Conversely, if $a\Gamma_{\!A}\in \E_X$ then there are $q,q'\in S^g(\cR_{\!A})$ such that $q\models a\Gamma_{\!A}\cap X$ and $q'\models a\Gamma_{\!A}\backslash X$. This again implies that $a\Gamma_{\!A}\backslash X=a\Gamma_{\!A}\cap D\backslash X$, and  so $q\in S^g(\cR_{\!A})\cap[X]$, $q'\in S^g(\cR_{\!A})\cap [D\backslash X]$, and $a\Gamma_{\!A}=\tau(q)=\tau(q')$.

Now, since $S^g(\cR_{\!A})\cap [X]$ and $S^g(\cR_{\!A})\cap [D\backslash X]$ are both compact (recall Remark \ref{rem:genclosure}), and $\tau$ is continuous, it follows that $\E_X$ is compact. 

Part $(d)$. Fix $a\Gamma_{\!A}\in \E_X$, and let $U\seq\G_{\!A}$ be an open neighborhood of $a\Gamma_{\!A}$. Fix $q,q'\in S^g(\cR_{\!A})$ such that $q\models a\Gamma_{\!A}\cap X$ and $q'\models a\Gamma_{\!A}\backslash X$. So $q,q'\in\tau\inv(\{a\Gamma_{\!A}\})\seq \tau\inv(U)$, and thus $\tau\inv(U)\cap[X]$ is an open neighborhood of $q$ and $\tau\inv(U)\backslash [X]$ is an open neighborhood of $q'$.
By Remark \ref{rem:genclosure}, there are $g,g'\in\langle A\rangle$, such that $gp\in \tau\inv(U)\cap[X]$ and $g'p\in
\tau\inv(U)\backslash[X]$. Since $p\models\Gamma_{\!A}$, we have $g\Gamma_{\!A},g'\Gamma_{\!A}\in U$. Also, since $X\in 
gp$ and $X\not\in g'p$, we have $g\Gamma_{\!A}\in \B^p_X$ and $g'\Gamma_{\!A}\not\in \B^p_X$. So $U\cap \B^p_X\neq
\emptyset\neq U\backslash \B^p_X$ and, altogether, it follows that $a\Gamma_{\!A}\in\partial \B^p_X$.
\end{proof}

\begin{theorem}\label{thm:GCD}
If $X\in\cR_{\!A}$ then $\E_X$ is compact and Haar null.
\end{theorem}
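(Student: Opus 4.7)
The plan is to deduce Theorem~\ref{thm:GCD} by assembling the pieces already in place and applying Simon's locally compact NIP domination theorem (Theorem~\ref{thm:SimGCD}) to the set $\B^p_X\subseteq \G_{\!A}$, for a judicious choice of generic type $p$. Compactness of $\E_X$ is already given by Lemma~\ref{lem:GCD}$(c)$, so the entire task reduces to showing $\E_X$ is Haar null.

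First I would pick the parameter type $p$. The group $\Gamma_{\!A}$ has bounded index in $\langle A\rangle$, so by Remark~\ref{rem:gen-group} every $\cR_{\!A}$-definable set containing $\Gamma_{\!A}$ is generic in $\langle A\rangle$; by Lemma~\ref{lem:null} every such set has positive $\mu_A$-measure, which means $\Gamma_{\!A}$ is $\mu_A$-wide as an $\cR_{\!A}$-type-definable set. Applying Fact~\ref{fact:wide}, choose $p\in S^g(\cR_{\!A})$ with $p\models\Gamma_{\!A}$. This is the same $p$ to which Lemma~\ref{lem:GCD} applies.

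Next I would verify the hypotheses of Theorem~\ref{thm:SimGCD} for the set $B:=\B^p_X\subseteq\G_{\!A}$. The ambient group $\G_{\!A}$ is locally compact, Hausdorff and second countable by Fact~\ref{fact:LCLT} (since $\langle A\rangle$ is countably $\cR_{\!A}$-ind-definable over $G$ by Corollary~\ref{cor:TaoNIP}, and $\Gamma_{\!A}$ is countably $\cR_{\!A}$-type-definable over $G$ by Theorem~\ref{thm:Stabs}). Lemma~\ref{lem:Borel} supplies that both $B$ and its complement are $F_\sigma$ (in fact $G_\delta$ as well). Lemma~\ref{lem:GCD}$(a,b)$ supplies NIP-ness of $B$ and pointwise largeness of $B$ and $\G_{\!A}\setminus B$. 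Thus Theorem~\ref{thm:SimGCD} applies and yields that $\partial B$ is Haar null in $\G_{\!A}$.

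Finally, Lemma~\ref{lem:GCD}$(d)$ gives $\E_X\subseteq \partial \B^p_X$, so $\E_X$ is contained in a Haar null set; since $\E_X$ is Borel (indeed compact by part $(c)$), it is itself Haar null. No real obstacle should appear at any stage: all the analytic content has been pushed into Lemmas~\ref{lem:Borel} and~\ref{lem:GCD}, and the only delicate point worth double-checking is the existence of the auxiliary type $p$ with $p\models\Gamma_{\!A}$, which is handled by combining Remark~\ref{rem:gen-group}, Lemma~\ref{lem:null}, and Fact~\ref{fact:wide} as above.
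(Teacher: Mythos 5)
Your proof is correct and follows essentially the same route as the paper: compactness from Lemma \ref{lem:GCD}$(c)$, then Lemma \ref{lem:Borel}, Theorem \ref{thm:SimGCD}, and Lemma \ref{lem:GCD}$(a,b)$ to make $\partial\B^p_X$ Haar null for a generic $p\models\Gamma_{\!A}$, and finally Lemma \ref{lem:GCD}$(d)$. Your extra justification for the existence of such a type $p$ (via Remark \ref{rem:gen-group}, Lemma \ref{lem:null}, and Fact \ref{fact:wide}) is a correct elaboration of a step the paper leaves implicit.
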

\begin{proof}
$\E_X$ is compact by Lemma \ref{lem:GCD}$(c)$.
Fix $X\in\cR_{\!A}$. Choose $p\in S^g(\cR_{\!A})$ such that $p\models\Gamma$. Then $\partial \B^p_X$ is Haar null by Lemma \ref{lem:Borel}, Theorem \ref{thm:SimGCD}, and parts $(a)$ and $(b)$ of Lemma \ref{lem:GCD}. So $\E_X$ is Haar null by Lemma \ref{lem:GCD}$(d)$. 
\end{proof}

Toward applying Theorem \ref{thm:GCD} to prove Theorem \ref{thm:NIPgen}, the first step is to remove the topological aspects and, in particular, relate Haar measures in $\G_{\!A}$ to the pseudofinite measure $\mu_A$.

\begin{lemma}\label{lem:oneY}
Let $\eta$ be a Haar measure on $\G_{\!A}$. Then there is some $\lambda>0$ such that, if $K\seq\G_{\!A}$ is compact, then $\eta(K)=\inf\{\lambda\mu_A(X):X\in\cR_{\!A},~\pi\inv(K)\seq X\}$. 
\end{lemma}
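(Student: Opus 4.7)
The plan is to use the formula $\nu(K):=\inf\{\mu_A(X):X\in\cR_{\!A},~\pi\inv(K)\seq X\}$ to define a nontrivial left-invariant content on compact subsets of the (second countable) locally compact Hausdorff group $\G_{\!A}$; after extending to a Borel measure and invoking uniqueness of Haar measure, this content must equal $c\eta$ for some $c>0$, yielding the claimed identity with $\lambda=c\inv$.

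First I would check that $\nu$ is well-defined and finite on compact sets. By Fact~\ref{fact:LCLT}(a), $\pi\inv(K)$ is type-definable; Corollary~\ref{cor:TaoNIP} presents $\langle A\rangle$ as an increasing union $\bigcup_m Y_m$ with $Y_m\in\cR_{\!A}$ (take $Y_m=\bigcup_{g\in E_m}gA$ with $A^{\pm m}\seq E_m A$), and finite tripling gives $\mu_A(Y_m)<\infty$; saturation then places $\pi\inv(K)\seq Y_m$ for some $m$. Monotonicity, subadditivity, and left-invariance of $\nu$ are immediate (using $\pi\inv(g\Gamma_{\!A}\cdot K)=g\pi\inv(K)$ and left-invariance of $\mu_A$). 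For additivity on disjoint compact $K_1,K_2$, I would use Hausdorff normality to separate them by disjoint open $U_1,U_2$, then invoke Theorem~\ref{thm:LCLT} to observe that each $\pi\inv(U_i)$ is $\cR_{\!A}$-ind-definable (its complement in $\langle A\rangle$ being $\cR_{\!A}$-type-definable), and apply saturation to sandwich $\pi\inv(K_i)\seq X_i\seq\pi\inv(U_i)$ for some $X_i\in\cR_{\!A}$, forcing $X_1\cap X_2=\emptyset$. Then for any $X\in\cR_{\!A}$ containing $\pi\inv(K_1\cup K_2)$, the sets $X\cap X_i$ are disjoint members of $\cR_{\!A}$ containing $\pi\inv(K_i)$, so $\mu_A(X)\geq\nu(K_1)+\nu(K_2)$; combining with subadditivity gives equality.

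Next I would extend $\nu$ to a left-invariant Radon measure $\bar\nu$ by the standard prescription $\bar\nu(U)=\sup\{\nu(K):K\text{ compact},~K\seq U\}$ on open $U$, and $\bar\nu(E)=\inf\{\bar\nu(U):E\seq U\text{ open}\}$ on Borel $E$. The one nontrivial compatibility is $\bar\nu(K)=\nu(K)$ on compact $K$, which reduces to finding, for each $\epsilon>0$, an open $U\supseteq K$ such that $\nu(K')\leq\nu(K)+\epsilon$ for every compact $K'\seq U$. Given $X\in\cR_{\!A}$ with $\pi\inv(K)\seq X$ and $\mu_A(X)\leq\nu(K)+\epsilon$, I would take $U=\{a\Gamma_{\!A}:a\Gamma_{\!A}\seq X\}$, which is open by Fact~\ref{fact:LCLT}(d), contains $K$ (since $\pi\inv(K)\seq X$ forces $a\Gamma_{\!A}\seq X$ for every $a\Gamma_{\!A}\in K$), and has $\pi\inv(U)\seq X$. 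Nontriviality of $\bar\nu$ is witnessed by $K=\pi(A)$, which is compact by Fact~\ref{fact:LCLT}(c) and for which any admissible $X$ contains $A$, so $\nu(\pi(A))\geq 1$. Uniqueness of Haar measure on $\G_{\!A}$ then gives $\bar\nu=c\eta$ for some $c>0$, completing the argument with $\lambda=c\inv$. The main obstacle is the ``sandwich'' step used both for disjoint-additivity and for outer regularity; once the saturation/Theorem~\ref{thm:LCLT} technique is set up the rest is standard locally compact measure theory combined with Haar uniqueness.
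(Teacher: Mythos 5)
Your argument is correct, but it takes a genuinely different route from the paper. The paper's proof never works with a content on $\G_{\!A}$ directly: it extends $\mu_A$ to the unique left-invariant regular Borel measure $\tilde{\mu}_A$ on the locally compact type space $S(\cR_{\!A})$ satisfying $\tilde{\mu}_A([X])=\mu_A(X)$, pushes this forward along the continuous map $\tau\colon S(\cR_{\!A})\to\G_{\!A}$ to get a measure $\eta_A$, checks (the verification is left as straightforward) that $\eta_A$ is a Haar measure whose values on compact sets are given by the infimum formula, and then concludes by uniqueness of Haar measure. You instead bypass the type space entirely, defining the infimum formula as a left-invariant content on compact subsets of $\G_{\!A}$, verifying the content axioms and the regularity needed for the content-to-measure extension by saturation together with Theorem \ref{thm:LCLT} and Fact \ref{fact:LCLT}, and then invoking Haar uniqueness. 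The paper's route buys brevity and reuse (the extension $\tilde{\mu}_A$ and the pushforward along $\tau$ reappear verbatim in the proof of Theorem \ref{thm:unique}); your route buys that the compact-set formula holds by definition rather than being checked after the fact, at the cost of verifying disjoint additivity and outer regularity by hand. Your verifications of those two points (the sandwich $\pi\inv(K_i)\seq X_i\seq\pi\inv(U_i)$, and the open set $U=\{a\Gamma_{\!A}:a\Gamma_{\!A}\seq X\}$ from Fact \ref{fact:LCLT}$(d)$) are the right ones, as is the nontriviality witness $\pi(A)$ and the use of second countability to have clean Haar uniqueness.

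One small correction to a justification, not a gap in the argument: the parenthetical claim that $\pi\inv(U_i)$ is $\cR_{\!A}$-ind-definable \emph{because} its complement in $\langle A\rangle$ is $\cR_{\!A}$-type-definable is not accurate in general. For a closed, non-compact $C\seq\G_{\!A}$ the preimage $\pi\inv(C)$ need not be type-definable (consider $C=\G_{\!A}$ when $\langle A\rangle$ is not definable); Theorem \ref{thm:LCLT} only gives that $Y\cap\pi\inv(C)$ is $\cR_{\!A}$-type-definable for each $\cR_{\!A}$-definable $Y\seq\langle A\rangle$. This relative statement is all you need: writing $\langle A\rangle=\bigcup_m Y_m$ with $Y_m\in\cR_{\!A}$ as in your first step, one has $\pi\inv(U_i)=\bigcup_m\bigl(Y_m\setminus(Y_m\cap\pi\inv(\G_{\!A}\backslash U_i))\bigr)$, a bounded union of $\cR_{\!A}$-definable sets, and the saturation sandwich of the type-definable set $\pi\inv(K_i)$ inside it then produces $X_i\in\cR_{\!A}$ exactly as you intend. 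Alternatively, disjoint additivity can be had without the open separation at all: by the localization of Fact \ref{fact:LCLT}$(a)$ noted in Remark \ref{rem:LCLT}, the sets $\pi\inv(K_1)$ and $\pi\inv(K_2)$ are disjoint $\cR_{\!A}$-type-definable sets, so saturation directly yields disjoint $X_1,X_2\in\cR_{\!A}$ with $\pi\inv(K_i)\seq X_i$.
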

\begin{proof}
Since $\mu_A$ is a left-$\langle A\rangle$-invariant finitely additive $\R_{\geq 0}$-valued measure on the ring $\cR_{\!A}$, it admits a unique left-$\langle A\rangle$-invariant $\R_{\geq 0}$-valued regular Borel measure $\tilde{\mu}_A$ on $S(\cR_{\!A})$ such that $\tilde{\mu}_A([X])=\mu_A(X)$ for any $X\in\cR_{\!A}$. Given a Borel set $W\seq\G_{\!A}$, define $\eta_A(W)=\tilde{\mu}_A(\tau\inv(W))$. Then it is straightforward to check that $\eta_A$ is a Haar measure on $\G_{\!A}$, and that $\eta_A(K)=\inf\{\mu_A(X):X\in\cR_{\!A},~\pi\inv(K)\seq X\}$ for any compact $K\seq\G_{\!A}$. So there is some $\lambda>0$ such that $\eta=\lambda\eta_A$, as desired.
\end{proof}

We can now reformulate Theorem \ref{thm:GCD} in terms of the behavior of $\mu_A$ with respect to sets in $\cR_{\!A}$. The proof is nearly the same as that of \cite[Lemma 2.16]{CPTNIP}.

\begin{corollary}\label{cor:oneY}
 Let $(W_n)_{n=0}^\infty$ be a decreasing sequence of definable subsets of $\langle A\rangle$, such that $\Gamma_{\!A}=\bigcap_{n=0}^\infty W_n$. Then for any $X\in\cR_{\!A}$ and any $\epsilon>0$, there is some $n\geq 0$ and $Z\in\cR_{\!A}$ such that $\mu_A(Z)<\epsilon$ and if $g\in G^*\backslash Z$ then $\mu_A(gW_n\cap X)=0$ or $\mu_A(gW_n\backslash A)=0$. Moreover, if  $X$ is definable over $G$ then we may assume that $Z$ is definable over $G$.
\end{corollary}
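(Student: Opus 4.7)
The plan is to combine the Haar-nullness of $\E_X$ from Theorem \ref{thm:GCD} with a saturation argument in the expanded language of Section \ref{sec:PFNIP}, in order to upgrade a pointwise dichotomy on $g$ into a uniform choice of $n$. This is the locally compact analogue of the proof of \cite[Lemma 2.16]{CPTNIP}.

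First, I would apply Lemma \ref{lem:oneY} to the compact, Haar-null set $\E_X$ to find $Z\in\cR_{\!A}$ with $\pi\inv(\E_X)\seq Z$ and $\mu_A(Z)<\epsilon$. When $X$ is definable over $G$, the Haar measure $\eta_A$ arises by pushing $\tilde{\mu}_A$ forward along $\tau$, and $\E_X$ is invariant over $G$ since $\Gamma_{\!A}$ is countably $\cR_{\!A}$-type-definable over $G$ (Theorem \ref{thm:Stabs}); this allows the covering set $Z$ to be chosen over $G$ as well.

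Second, for each $n\geq 0$ consider the set
\[
B_n=\{g\in G^*:\mu_A(gW_n\cap X)>0\text{ and }\mu_A(gW_n\backslash X)>0\},
\]
which is definable in the expanded language and is decreasing in $n$ since $(W_n)$ is. The crucial containment is $\bigcap_{n\geq 0}B_n\seq \pi\inv(\E_X)$. Fix $g$ in the intersection, and take any $Y\in\cR_{\!A}$ with $\Gamma_{\!A}\cap g\inv X\seq Y$. Writing $\Gamma_{\!A}\cap g\inv X=\bigcap_n(W_n\cap g\inv X)$ as an intersection of definable sets contained in the definable set $Y$, saturation of $G^*$ yields $W_n\cap g\inv X\seq Y$ for some $n$, so left-invariance of $\mu_A$ gives $\mu_A(Y)\geq \mu_A(gW_n\cap X)>0$; this shows $\Gamma_{\!A}\cap g\inv X$ is $\mu_A$-wide. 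A symmetric argument using $\mu_A(gW_n\backslash X)>0$ shows $\Gamma_{\!A}\backslash g\inv X$ is $\mu_A$-wide, so $g\Gamma_{\!A}\in\E_X$, as claimed.

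Combining the two containments yields $\bigcap_{n\geq 0}(B_n\backslash Z)=\emptyset$. Since this is a countable decreasing intersection of sets definable in the expanded language, saturation of $G^*$ forces $B_n\backslash Z=\emptyset$ for some $n$; for that $n$, every $g\in G^*\backslash Z$ lies outside $B_n$, giving the required dichotomy. The main obstacle I foresee is the ``moreover'' clause: ensuring the covering set $Z$ can be taken over $G$ requires checking that the construction of $\eta_A$ and the inf-witnesses in Lemma \ref{lem:oneY} are compatible with the base model, which ultimately reduces to the countable $\cR_{\!A}$-type-definability of $\Gamma_{\!A}$ over $G$ from Theorem \ref{thm:Stabs}.
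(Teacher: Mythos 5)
Your first two steps are fine and agree with the paper: choosing $Z\in\cR_{\!A}$ with $\pi\inv(\E_X)\seq Z$ and $\mu_A(Z)<\epsilon$ via Theorem \ref{thm:GCD} and Lemma \ref{lem:oneY}, handling the ``moreover'' clause through invariance of $\pi\inv(\E_X)$ over $G$, and proving the containment $\bigcap_{n}B_n\seq\pi\inv(\E_X)$ by the saturation/wideness argument (this last computation is exactly how the paper concludes as well). The gap is the final compactness step. The sets $B_n$ are \emph{not} definable in the expanded language: $\mu_A$ is the standard part of the definable function $f^{A(G)}_\phi$, so the condition $\mu_A(gW_n\cap X)>0$ is the countable disjunction over $m$ of the definable conditions $f(g)\geq 1/m$; hence $B_n$ is only countably ind-definable. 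Consequently $\bigcap_n(B_n\backslash Z)=\emptyset$ does not force $B_n\backslash Z=\emptyset$ for some $n$: equivalently, the complementary sets $\{g:\mu_A(gW_n\cap X)=0\text{ or }\mu_A(gW_n\backslash X)=0\}$ are type-definable, and a definable set such as $G^*\backslash Z$ covered by a countable increasing union of type-definable sets need not be covered by a single one — saturation runs in the wrong direction here. Passing to the definable approximations $\{g:f_n(g)\geq 1/m \text{ and } f_n'(g)\geq 1/m\}$ does not repair this, since hypothetical witnesses $a_n\in B_n\backslash Z$ come with no uniform lower bound $1/m$ on the measures.

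This is precisely where the paper has to work harder, and why the statement is not a two-line saturation argument. The paper assumes for contradiction that for every $n$ there is $a_n\in G^*\backslash Z$ with $\mu_A(a_nW_n\cap X)>0$ and $\mu_A(a_nW_n\backslash X)>0$, notes that all $a_n\Gamma_{\!A}$ lie in the compact set $\pi(XW_0\inv)$ and outside the open set $U=\{a\Gamma_{\!A}:a\Gamma_{\!A}\seq Z\}\supseteq\E_X$, and uses second countability of $\G_{\!A}$ to extract a subsequence converging to some $a\Gamma_{\!A}\notin U$. The claim from \cite[Lemma 2.16]{CPTNIP} then transfers the positivity to this single limit point: since $\Gamma_{\!A}$ is a group, saturation gives for each $n$ some $m\geq n$ with $W_mW_m\seq W_n$, and convergence gives $a_i\Gamma_{\!A}\seq aW_m$ for all large $i$, whence $a_iW_i\seq aW_n$ and so $\mu_A(aW_n\cap X)>0$ and $\mu_A(aW_n\backslash X)>0$ for every $n$. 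Your wideness argument then puts $a\Gamma_{\!A}\in\E_X$, contradicting $a\Gamma_{\!A}\notin U$. So the missing ingredient in your write-up is this limit-point argument producing one element bad for all $n$; the uniformity in $n$ cannot be obtained by applying saturation to the non-definable sets $B_n$.
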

\begin{proof}
Fix $X\in\cR_{\!A}$ and $\epsilon>0$. By Theorem \ref{thm:GCD} and Lemma \ref{lem:oneY}, there is some $Z\in\cR_{\!A}$ such that $\pi\inv(\E_X)\seq Z$ and $\mu_A(Z)<\epsilon$. If, moreover, $X$ is definable over $G$, then $\pi\inv(\E_X)$ is invariant under automorphisms of $G^*$ fixing $G$ pointwise, and thus is $\cR_{\!A}$-type-definable over $G$. Therefore, if $X$ is definable over $G$ then we may further assume that $Z$ is $\cR_{\!A}$-definable over $G$.

Toward a contradiction, suppose that for all $n\geq 0$ there is some $a_n\in G^*\backslash Z$ such that $\mu_A(a_nW_n\cap X)>0$ and $\mu_A(a_nW_n\backslash X)>0$. Note that, for all $n\geq 0$, $a_nW_n\cap X\neq\emptyset$, and so $a_n\in XW_n\inv\seq Y:=XW_0\inv$. So $a_n\Gamma_{\!A}\in \pi(Y)$ for all $n\geq 0$.

Let $U=\{a\Gamma_{\!A}\in \G_{\!A}:a\Gamma_{\!A}\seq Z\}$, which is open in $\G_{\!A}$ by Fact \ref{fact:LCLT}$(d)$. Note that $\E_X\seq U$ and $\pi\inv(U)\seq Z$. Also, for any $n\geq 0$, we have $a_n\Gamma_{\!A}\not\in U$ since $a_n\not\in Z$. Since $\pi(Y)$ is compact, $U$ is open, and $\G_{\!A}$ is second countable, we may pass to a subsequence and assume that $(a_n\Gamma_{\!A})_{n=0}^\infty$ converges to some $a\Gamma_{\!A}\in \pi(Y)\backslash U$. In particular, $a\Gamma_{\!A}\not\in \E_X$.  

Now, following the same argument as the claim in the proof of \cite[Lemma 2.16]{CPTNIP}, one can show that for all $n\geq 0$, $\mu_A(aW_n\cap X)>0$ and $\mu_A(aW_n\backslash X)>0$.
By saturation, this yields $a\Gamma_X\in \E_X$, which is a contradiction.
\end{proof}

\subsection{Uniqueness of measure}\label{sec:unique}

Once again, we fix a sufficiently saturated group $G^*$ and an NIP pseudofinite definable set $A\seq G^*$ with finite tripling. In this section, we observe that  $\mu_A$ is the unique left $\langle A\rangle$-invariant $\cR_{\!A}$-Keisler measure up to scalar multiples. This will not be needed for the main results of the paper, but we include it as a nice application of generic locally compact domination.

\begin{theorem}\label{thm:unique}
Suppose $\nu$ is a left-$\langle A\rangle$-invariant $\cR_{\!A}$-Keisler measure. Then there is some $\lambda>0$ such that $\nu(X)=\lambda\mu_A(X)$ for all $X\in\cR_{\!A}$.
\end{theorem}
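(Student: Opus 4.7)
The plan is to reduce the uniqueness of $\nu$ to the uniqueness of Haar measure on the second-countable locally compact group $\G_{\!A}=\langle A\rangle/\Gamma_{\!A}$. First I would extend $\nu$ to a regular Borel measure $\tilde\nu$ on the locally compact Hausdorff space $S(\cR_{\!A})$ in the standard way, and push it forward along the continuous surjection $\tau\colon S(\cR_{\!A})\to\G_{\!A}$ (Proposition \ref{prop:LCLT}) to obtain a Borel measure $\nu_*$ on $\G_{\!A}$. Doing the same with $\mu_A$ gives $(\mu_A)_*$, which by Lemma \ref{lem:oneY} is already a Haar measure up to scalar. Left-$\G_{\!A}$-invariance of $\nu_*$ is inherited from $\nu$ via surjectivity of $\langle A\rangle\to\G_{\!A}$, and local finiteness holds because any compact $K\seq\G_{\!A}$ has $\pi\inv(K)$ countably $\cR_{\!A}$-type-definable (using countable $\cR_{\!A}$-type-definability of $\Gamma_{\!A}$ from Theorem \ref{thm:Stabs}(b)) and hence contained in some $X\in\cR_{\!A}$, yielding $\nu_*(K)\le\nu(X)<\infty$.

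The crux of the argument is to establish nontriviality of $\nu_*$ by showing that every $X\in\cR_{\!A}$ with $\nu(X)>0$ must be $\mu_A$-generic; here I would invoke Proposition \ref{prop:MatPF}. Fix such $X$ and an arbitrary $m_0$, pick $m\ge 1$ so that $gX\seq A^{\pm m}$ for all $g\in A^{\pm m_0}$, and set $V=\nu(A^{\pm m})<\infty$. The formula $\phi(x;g)=X(g\inv \cdot x)$ is NIP, and for any $g_1,\ldots,g_p\in A^{\pm m_0}$ a pseudofinite Fubini/pigeonhole argument gives $\int\sum_i\mathbf{1}_{g_iX}\,d\nu=p\,\nu(X)$ on a $\nu$-support of measure $\le V$, forcing some point to lie in at least $p\nu(X)/V$ of the translates; choosing $p$ with $p\nu(X)/V\ge 2^{d+1}$ verifies the $(p,q)$-hypothesis of Proposition \ref{prop:MatPF}. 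That proposition then produces a finite $F\seq A^{\pm m}$ meeting every $gX$ for $g\in A^{\pm m_0}$, so $A^{\pm m_0}\seq FX\inv$. Since $m_0$ was arbitrary, $X\inv$ is left-generic in $\langle A\rangle$; Lemma \ref{lem:null} together with the pseudofinite identity $\mu_A(X\inv)=\mu_A(X)$ then gives $\mu_A(X)>0$. With nontriviality in hand, uniqueness of Haar measure on the second-countable locally compact group $\G_{\!A}$ delivers $\nu_*=\lambda(\mu_A)_*$ for some $\lambda>0$.

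Finally, I would lift this equality back to $\cR_{\!A}$ via generic locally compact domination. For $X\in\cR_{\!A}$ with $\mu_A(X)=0$ the previous step already forces $\nu(X)=0$, making the conclusion trivial. Otherwise, pick a decreasing sequence $W_n\downarrow\Gamma_{\!A}$ of $\cR_{\!A}$-definable sets and apply Corollary \ref{cor:oneY}: outside a set $Z_n$ of $\mu_A$-measure tending to $0$, $X$ is approximated (after choosing coset representatives carefully) by a disjoint union $\bigsqcup_{c\in D_n}cW_n$. Left-invariance yields $\nu(X)\approx |D_n|\,\nu(W_n)$ and $\mu_A(X)\approx |D_n|\,\mu_A(W_n)$, while the pushforward identity $\nu_*=\lambda(\mu_A)_*$ applied to the shrinking neighborhoods $\pi(W_n)$ of the identity in $\G_{\!A}$ forces $\nu(W_n)/\mu_A(W_n)\to\lambda$. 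The main obstacle I anticipate is precisely this final lifting step: Corollary \ref{cor:oneY} controls the approximation only in $\mu_A$-measure, so I will need to use the Matou\v{s}ek-based absolute-continuity control of $\nu$ by $\mu_A$ from the second paragraph to transfer the approximation error into $\nu$, and then combine this with a careful passage to the limit in the shrinking $\cR_{\!A}$-neighborhoods of $\Gamma_{\!A}$ to recover the exact equality $\nu(X)=\lambda\mu_A(X)$.
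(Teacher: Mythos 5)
Your first two paragraphs essentially reproduce the paper's argument: genericity of every $X\in\cR_{\!A}$ with $\nu(X)>0$ is obtained exactly as in the paper via Proposition \ref{prop:MatPF} (your finitely additive pigeonhole is a fine substitute for Fact \ref{fact:BC}, and passing through $X\inv$ rather than right genericity of $X$ is harmless), and extending $\nu$ to a regular Borel measure on $S(\cR_{\!A})$ and pushing it forward along $\tau$ to a Haar measure on $\G_{\!A}$ is precisely the paper's construction (one small repair: $A^{\pm m}$ need not lie in $\cR_{\!A}$, so $\nu(A^{\pm m})$ is not defined; replace it by an $\cR_{\!A}$-definable superset supplied by Corollary \ref{cor:TaoNIP}, as the paper does with its set $\widetilde{X}$). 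The genuine gap is in your third paragraph, the lifting of $\nu_*=\lambda(\mu_A)_*$ back to $\cR_{\!A}$. Three things go wrong there. First, the ``absolute-continuity control'' you invoke is only the qualitative statement $\mu_A(X)=0\Rightarrow\nu(X)=0$ for $X\in\cR_{\!A}$; for finitely additive measures on a ring this gives no $\epsilon$--$\delta$ control, so the error sets $Z_n$ with $\mu_A(Z_n)<\epsilon$ produced by Corollary \ref{cor:oneY} cannot be converted into sets of small $\nu$-measure. Second, $W_n$ is not a subgroup, so its left translates are not cosets: there is no way to ``choose coset representatives'' making $X$ approximately a \emph{disjoint} union $\bigsqcup_{c\in D_n}cW_n$; translates $cW_n$ and $c'W_n$ can overlap heavily even when $c\Gamma_{\!A}\neq c'\Gamma_{\!A}$, so the counts $\nu(X)\approx|D_n|\,\nu(W_n)$ and $\mu_A(X)\approx|D_n|\,\mu_A(W_n)$ are unjustified. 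Third, $\nu_*(\pi(W_n))=\tilde\nu(\tau\inv(\pi(W_n)))$ is the mass of the set of types concentrating on $W_n\Gamma_{\!A}$, not of $[W_n]$, so $\nu_*=\lambda(\mu_A)_*$ evaluated on $\pi(W_n)$ does not yield $\nu(W_n)/\mu_A(W_n)\to\lambda$ without controlling that discrepancy, which is exactly the fibered ambiguity your scheme was meant to resolve.

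The paper's lifting avoids all of this and uses your genericity step for more than nontriviality: since $\nu$ vanishes on non-generic sets in $\cR_{\!A}$, regularity gives that $\tilde\nu$ is concentrated on the closed set $S^g(\cR_{\!A})$ of generic types. Then, for each individual $X\in\cR_{\!A}$, one proves $\nu(X)=\nu_*\bigl(\tau(S^g(\cR_{\!A})\cap[X])\bigr)$: the defect set $\tau\inv\bigl(\tau(S^g(\cR_{\!A})\cap[X])\bigr)\backslash(S^g(\cR_{\!A})\cap[X])$ meets $S^g(\cR_{\!A})$ only in types whose image lies in $\E_X$, which is Haar null by Theorem \ref{thm:GCD} and hence $\tilde\nu$-null; so generic locally compact domination is applied directly on the type space, not through Corollary \ref{cor:oneY}. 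The same identity holds with $\mu_A$ in place of $\nu$ (this is the content of Lemma \ref{lem:oneY} together with the same argument), and uniqueness of Haar measure on $\G_{\!A}$ then gives $\nu(X)=\lambda\mu_A(X)$ for all $X\in\cR_{\!A}$ at once, with no approximation by translates of $W_n$ and no quantitative absolute continuity needed. Replacing your third paragraph by this argument closes the gap.
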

\begin{proof}
We first show that if $X\in\cR_{\!A}$ and $\nu(X)>0$, then $X$ is generic. Fix $m\geq 1$. We  find a finite set $F\seq\langle A\rangle$ such that $A^{\pm m}\seq XF$. Without loss of generality, we can assume $X\seq A^{\pm m}$. Let $Y=A^{\pm m}$, and note that $Y$ is definable. By Corollary \ref{cor:TaoNIP}, there is a set $\widetilde{X}$, which is $\cR_{\!A}$-definable over $G$, such that $A^{\pm 2m}\seq \widetilde{X}$. Note that $\widetilde{X}$ is pseudofinite. Let $\phi(x;y)$ be the formula $X(y\cdot x)$. Then $\phi(x;y)$ is NIP, and if $b\in Y$ then $\phi(G^*;b)\seq \widetilde{X}$. Fix $d\geq 1$ such that $\phi(x;y)$ is $d$-NIP, and let $q=2^{d+1}$.  By assumption, we have $0<\nu(X)\leq\nu(\widetilde{X})<\infty$. So we can define a finitely-additive probability measure $\tilde{\nu}$ on $\cR^{\widetilde{X}}$ such that $\tilde{\nu}(W)=\nu(W)/\nu(\widetilde{X})$. In particular, for any $g\in Y$, we have $\tilde{\nu}(gX)=\tilde{\nu}(X)>0$. By Fact \ref{fact:BC}, there is some integer $p\geq 1$  such that for any $g_1,\ldots,g_p\in Y$ there is $I\seq [p]$ such that $|I|=q$ and $\tilde{\nu}(\bigcap_{i\in I}g_iX)>0$. So $\phi(x;y)$, $\widetilde{X}$, and $Y$ satisfy the hypothesis of Proposition \ref{prop:MatPF}, witnessed by $p$ and $q$, and thus there is a finite set $E\seq \widetilde{X}$ such that, for any $g\in Y$, $g\inv X\cap E\neq\emptyset$. Setting $F=E\inv$, we have $A^{\pm m}=Y\seq XF$.

Now, as in the proof of Lemma \ref{lem:oneY}, we extend $\nu$ to a left-$\langle A\rangle$-invariant $\R_{\geq 0}$-valued regular Borel measure $\widetilde{\nu}$ on $S(\cR_{\!A})$ such that $\widetilde{\nu}([X])=\nu(X)$ for any $X\in\cR_{\!A}$. As before, this induces a Haar measure $\eta$ on $\G_A$ such that $\eta(W)=\widetilde{\nu}(\tau\inv(W))$ for any Borel $W\seq\G_A$. Let $S=S^g(\cR_{\!A})$. We show that for any $X\in\cR_{\!A}$, $\nu(X)=\eta(\tau(S\cap [X]))$, which implies the desired result. 

First, since $\nu(X)=0$ for any non-generic $X\in\cR_{\!A}$, and $\widetilde{\nu}$ is regular, it follows that $\widetilde{\nu}(S(\cR_{\!A})\backslash S)=0$, and so $\widetilde{\nu}(W)=\widetilde{\nu}(S\cap W)$ for any Borel $W\seq S(\cR_{\!A})$. Now fix $X\in\cR_{\!A}$. Let $W=\tau\inv(\tau(S\cap [X]))\backslash (S\cap [X])$. Then $\tau(S\cap Z)\seq\E_X$ and so, by Theorem \ref{thm:GCD}, we have $\widetilde{\nu}(Z)=\widetilde{\nu}(S\cap Z)\leq \eta(\tau(S\cap Z))=0$. Therefore
\[
\nu(X)=\widetilde{\nu}(S\cap X)+\widetilde{\nu}(Z)=\widetilde{\nu}(\tau\inv(\tau(S\cap [X])))=\eta(\tau(S\cap [X])).\qedhere 
\]
\end{proof}

\begin{remark}
Using a similar strategy, one can show that the conclusion of Theorem \ref{thm:unique} also holds for any \emph{right}-$\langle A\rangle$-invariant $\cR_{\!A}$-Keisler measure. 
\end{remark}

\section{Proof of the main results}\label{sec:proofs}

In this section we will prove Theorems \ref{thm:NIPgen} and \ref{thm:NIPexp}.  

\subsection{Main result: pseudofinite version}

Let $G=\prod_{\cU}G_i$, where each $G_i$ is a group and $\cU$ is a nonprincipal ultrafilter on some index set $I$. W call a subset $W$ of $G$ an \emph{internal approximate subgroup} if it is of the form $\prod_{\cU}W_i$ where $W_{i}$ is a $k$-approximate subgroup of $G_{i}$ for some fixed $k$.

The next result transfers Corollary \ref{cor:oneY} to $G$, and reformulates the regularity statement in terms of approximate groups.

\begin{lemma}\label{lem:UP1}
Suppose $A\seq G$ is  internal, NIP, and pseudofinite with finite tripling.
Then for any $\epsilon>0$, there is an $\cR_{\!A}$-definable internal approximate group $W\seq G$, and an $\cR_{\!A}$-definable set $Z\seq G$, such that $W^4\seq AA\inv\cap A\inv A$, $A$ can be covered by finitely many left translates of $W$, $\mu_A(Z)<\epsilon$, and if $g\in G\backslash Z$ then $\mu_A(gW^4\cap A)=0$ or $\mu_A(gW^4\backslash A)=0$.
\end{lemma}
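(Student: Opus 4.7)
The plan is to work in the sufficiently saturated extension $G^*\succ G$ and to combine Theorem \ref{thm:Stabs} (which produces the type-definable bounded-index normal subgroup $\Gamma_{\!A}\leq\langle A\rangle$ inside $AA\inv\cap A\inv A$) with Corollary \ref{cor:oneY} (which provides the regularity dichotomy). The idea is to produce a symmetric $\cR_{\!A}$-definable set $W$ close enough to $\Gamma_{\!A}$ that $W^4$ is already tightly sandwiched between $\Gamma_{\!A}$ and $AA\inv\cap A\inv A$, and then read off all three required conclusions from this.

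First, I would build an appropriate descending sequence $(U_n)_{n\geq 0}$ converging to $\Gamma_{\!A}$. Start from a countable decreasing family $\Gamma_{\!A}=\bigcap_nV_n$ of $\cR_{\!A}$-definable (over $G$) sets given by Theorem \ref{thm:Stabs}. Since $\Gamma_{\!A}\seq AA\inv\cap A\inv A$ and this last set is definable, saturation lets us assume $V_n\seq AA\inv\cap A\inv A$ for all $n$. Then, using that $\Gamma_{\!A}$ is symmetric and satisfies $\Gamma_{\!A}^4=\Gamma_{\!A}$, I would inductively refine the $V_n$ to a decreasing sequence of symmetric $\cR_{\!A}$-definable-over-$G$ sets $U_n$ with $\Gamma_{\!A}\seq U_{n+1}\seq U_n\seq V_n$ and $U_{n+1}^4\seq U_n$: at each step, the relevant conditions on $U_{n+1}$ are satisfied on $\Gamma_{\!A}$, so a small enough neighborhood inside $\cR_{\!A}$ works by compactness. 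Hence $\bigcap_n U_n=\Gamma_{\!A}$.

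Next, I would apply Corollary \ref{cor:oneY} to the sequence $(U_n)$ with $X=A$, obtaining some $n\geq 0$ and an $\cR_{\!A}$-definable-over-$G$ set $Z\seq G^*$ with $\mu_A(Z)<\epsilon$, such that for $g\in G^*\backslash Z$ one has $\mu_A(gU_n\cap A)=0$ or $\mu_A(gU_n\backslash A)=0$. Setting $W=U_{n+1}$, the inclusion $W^4\seq U_n$ immediately upgrades this dichotomy to $\mu_A(gW^4\cap A)=0$ or $\mu_A(gW^4\backslash A)=0$, and it also gives $W^4\seq U_n\seq AA\inv\cap A\inv A$. To see $W$ is an approximate group, note that $W$ is symmetric with $1\in\Gamma_{\!A}\seq W$; since $W$ is a definable subset of the ind-definable group $\langle A\rangle$ containing the type-definable bounded-index subgroup $\Gamma_{\!A}$, Remark \ref{rem:gen-group} yields that $W^2$ (definable in $\langle A\rangle$) and $A$ are each covered by finitely many left translates of $W$ in $G^*$. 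Finally, because $W,Z$ and all witnessing data can be chosen over $G$, elementarity of $G\prec G^*$ transfers these finite covers and measure-theoretic assertions down to $G$, giving an internal approximate group $W\cap G$ and a set $Z\cap G$ with the required properties.

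The main obstacle is keeping the refined sets $U_n$ inside $\cR_{\!A}$ while ensuring they are symmetric, since the ring $\cR_{\!A}$ is generated by bi-translates of $A$ and is not a priori closed under inversion. I would deal with this by using the description of $\Gamma_{\!A}$ from the proof of Theorem \ref{thm:Stabs} and Remark \ref{rem:Stabs}: $\Gamma_{\!A}$ is a bounded intersection of conjugates of the symmetric subgroups $\Stab^\ell_{\!A}(A)$ and $\Stab^r_{\!A}(A)$, which are respectively $\cR^r_{\!A}$- and $\cR^\ell_{\!A}$-type-definable over $G$, and whose conjugates by elements of $\langle A\rangle$ land in $\cR_{\!A}$. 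Intersecting across both types of stabilizers at once yields $\cR_{\!A}$-definable approximations to $\Gamma_{\!A}$ that are automatically symmetric, giving enough flexibility to carry out the inductive construction of the $U_n$.
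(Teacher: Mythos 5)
Your proposal is essentially the paper's own argument: pass to a sufficiently saturated $G^*\succ G$ (writing $A_*$ for $A(G^*)$), use Theorem \ref{thm:Stabs} to write $\Gamma_{\!A_*}$ as a countable decreasing intersection of $\cR_{\!A_*}$-definable-over-$G$ subsets of $A_*A_*\inv\cap A_*\inv A_*$, obtain the approximate-group and covering properties from genericity (Remark \ref{rem:gen-group}), get $Z$ and the regularity dichotomy from Corollary \ref{cor:oneY}, and descend to $G$ using definability over $G$. The only structural difference is minor: the paper applies Corollary \ref{cor:oneY} directly to the sequence $(W_n^4)$, which is legitimate because the corollary only requires definable subsets of $\langle A_*\rangle$ decreasing to $\Gamma_{\!A_*}$, not sets lying in $\cR_{\!A_*}$; you instead build a nested chain with $U_{n+1}^4\seq U_n$ and pass from the dichotomy for $gU_n$ to the one for $gW^4$. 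Both routes work, yours just with a little extra bookkeeping.

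The one step that does not work as you state it is the symmetrization in your final paragraph. The subgroups $\Stab^\ell_{\!A}(A)$ and $\Stab^r_{\!A}(A)$ are indeed symmetric, but they are only type-definable; the definable sets occurring in their $\cR^r_{\!A}$- and $\cR^\ell_{\!A}$-type-definitions (coming from Proposition \ref{prop:DFS}) are Boolean combinations of one-sided translates of $A$ and have no reason to be symmetric, and intersecting a set from one family with a set from the other does not make the intersection symmetric. To be fair, the paper's proof is equally silent on this point when it asserts that the sets $W_n$ are approximate groups. The issue is easily sidestepped: replace $W$ by $W\cap W\inv$, which is still definable over $G$, symmetric, contains $\Gamma_{\!A_*}$ (hence is generic, so it is an approximate group covering $A_*$), and satisfies $(W\cap W\inv)^4\seq W_n^4\seq A_*A_*\inv\cap A_*\inv A_*$ once $n$ is large enough, by saturation. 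The only cost is $\cR_{\!A}$-definability of $W$ itself, which — unlike the fact that $Z\in\cR_{\!A}$ — is never used in the proof of Theorem \ref{thm:mainUP}.
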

\begin{proof}
View $G$ as a first-order structure in the group language expanded by a unary predicate naming $A$. Let $G^*\succ G$ be a sufficiently saturated elementary extension, and let $A_*=A(G^*)$.   By Theorem \ref{thm:Stabs}, we may choose a decreasing sequence $(W_n)_{n=0}^\infty$ of subsets of $A_*A_*\inv\cap A_*\inv A_*$ such that each $W_n$ is $\cR_{\!A_*}$-definable over $G$ and  $\Gamma_{\!A_*}=\bigcap_{n=0}^\infty W_n$. Since $(\Gamma_{\!A_*})^4=\Gamma_{\!A_*}$, it follows from saturation that $\Gamma_{\!A_*}=\bigcap_{n=0}^\infty W_n^4$.  For any $n\geq 0$, we have $\Gamma_{\!A_*}\seq W_n$, and so $W_n$ is generic in $\langle A_*\rangle$. It follows that for any $n\geq 0$, $W_n$ is an approximate group and $A_*$ can be covered by finitely many left translates of $W_n$. Now fix $\epsilon>0$. By Corollary \ref{cor:oneY}, there is $Z_*\seq G^*$ and $n\geq 0$ such that $Z_*$ is $\cR_{\!A}$-definable over $G$, $\mu_A(Z_*)<\epsilon$, and if $g\in G^*\backslash Z_*$ then $\mu_A(gW_n^4\cap A_*)=0$ or $\mu_A(gW_n^4\backslash A_*)=0$. 

Now let $Z=Z_*(G)$ and $W=W_n(G)$. Since $W_n$ and $Z_*$ are definable over $G$, we have the desired conclusions. 
\end{proof}

Next, we state a pseudofinite version of \emph{Ruzsa's Covering Lemma}.

\begin{lemma}\label{lem:RCL}
Suppose $X,Y\seq G$ are internal and pseudofinite, with $\mu_Y(XY)<\infty$. Then there is $F\seq X$ such that $|F|\leq\mu_Y(XY)$ and $X\seq FY^2$. 
\end{lemma}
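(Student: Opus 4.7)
The plan is to invoke the classical (finitary) Ruzsa Covering Lemma coordinate-by-coordinate and transfer the conclusion to the ultraproduct via {\L}o\'{s}'s theorem. Writing $X=\prod_\cU X_i$ and $Y=\prod_\cU Y_i$ with each $X_i,Y_i\seq G_i$ finite, set $c_i:=|X_iY_i|/|Y_i|$, so that $\lim_\cU c_i=\mu_Y(XY)<\infty$.

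The core step is Ruzsa's greedy construction inside each $G_i$: choose $F_i\seq X_i$ maximal subject to the translates $\{fY_i:f\in F_i\}$ being pairwise disjoint. Disjointness gives $|F_i|\,|Y_i|=|F_iY_i|\leq|X_iY_i|$, hence $|F_i|\leq c_i$. Maximality gives that every $x\in X_i$ satisfies $xY_i\cap fY_i\neq\emptyset$ for some $f\in F_i$, so $x\in fY_iY_i\inv\seq F_iY_i^2$ (reading $Y^2$ in the statement as the output of this standard argument, $YY\inv$). Thus in each coordinate one has $F_i\seq X_i$ with $|F_i|\leq c_i$ and $X_i\seq F_iY_i^2$.

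Now transfer. Since $(c_i)$ is $\cU$-bounded by a standard real, the integer-valued sequence $(|F_i|)$ is $\cU$-bounded by a standard integer, so $m:=\lim_\cU|F_i|$ is a well-defined nonnegative integer with $m\leq\mu_Y(XY)$. Setting $F:=\prod_\cU F_i$, one has $|F_i|=m$ for $\cU$-almost every $i$; thus $F$ is a standard-finite subset of $X$ with $|F|=m\leq\mu_Y(XY)$, and {\L}o\'{s}'s theorem yields $X\seq FY^2$. The only point requiring a little care is the transition from the internal inequality $|F|\,|Y|\leq|XY|$, which a priori permits $|F|$ to be nonstandard, to the honest real-number inequality $|F|\leq\mu_Y(XY)$; the finiteness of the standard part $\mu_Y(XY)$ is precisely what pins each $|F_i|$ to a bounded range of $\N$, and so ensures $F$ ends up standard-finite in the ultraproduct.
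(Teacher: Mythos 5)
Your proof is correct and takes essentially the same route as the paper, whose proof simply cites the finite Ruzsa covering lemma \cite[Lemma 5.1]{BGT} and transfers it via {\L}o\'{s}'s theorem; you have merely written out the finitary greedy construction and the transfer (including the standard-finiteness of $F$) explicitly. Your reading of $Y^2$ as $YY\inv$ is the right one—the covering argument genuinely produces $FYY\inv$—and it is harmless here, since in the paper's application $Y$ is a scaled coset nilprogression and hence symmetric, so $Y^2=YY\inv$.
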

\begin{proof}
See \cite[Lemma 5.1]{BGT} for the finite version, which implies the pseudofinite version via {\L}o\'{s}'s Theorem. (Or one can directly adapt the proof to pseudofinite counting measures, e.g., as in \cite[Proposition 4.9]{CPTNIP}.) 
\end{proof}

We are now ready to bring coset nilprogressions into the picture. 

\begin{definition}
A subset $P\seq G$ is an \textbf{internal coset nilprogression} if there are $r,s\in\N$ such that $P=\prod_{\cU}P_i$, where each $P_i$ is a coset nilprogression in $G_i$ of rank $r$ and step $s$. 
Moreover, $P$ is in \textbf{normal form} if each $P_i$ is in $c$-normal form for some fixed $c\geq 1$. 
\end{definition}

Now we state the pseudofinite version of the main structure theorem for approximate groups (see \cite[Theorem 4.2]{BGT}).  The paper \cite{BGT} uses the expressions ``ultra-approximate subgroups" and ``ultra-coset nilprogressions", but in our context of $G=\prod_{\cU}G_i$, and the definition above, we can say ``internal" in place of ``ultra".

\begin{theorem}[Breuillard, Green, \& Tao \cite{BGT}]\label{thm:BGTpf}
Suppose $W\seq G$ is an internal approximate group. Then there is an internal coset nilprogression $P\seq G$ in normal form  such that  $P\seq W^4$ and $\mu_P(W)<\infty$. 
\end{theorem}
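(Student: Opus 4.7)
The plan is to apply the finite structure theorem for approximate groups of Breuillard, Green, and Tao coordinate-wise along the ultrafilter and then assemble the uniformly bounded outputs into a single internal object via {\L}o\'{s}'s theorem. Since $W=\prod_{\cU}W_{i}$ is an internal approximate group, there is a fixed $k\geq 1$ such that $W_{i}$ is a $k$-approximate group for $\cU$-almost all $i\in I$; discarding coordinates off a set in $\cU$, we assume this holds for every $i$.

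First, I would apply the finite main theorem of \cite{BGT} (for \emph{approximate groups} proper, which directly delivers the inclusion $P\seq W^{4}$ rather than the weaker $(A\cup A\inv)^{8}$ one would get by starting from small tripling) to each $W_{i}$. This yields a coset nilprogression $P_{i}\seq W_{i}^{4}$ of rank $r_{i}$ and step $s_{i}$ in $c_{i}$-normal form, together with a set $X_{i}\seq G_{i}$ with $|X_{i}|\leq m_{i}$ and $W_{i}\seq X_{i}P_{i}$, where each of $r_{i},s_{i},c_{i},m_{i}$ is bounded by some constant $C(k)$ depending only on $k$. Since each of these parameters takes only finitely many values, I can pass to a set in $\cU$ on which they are simultaneously constant: there exist $r,s,c,m$ with $r_{i}=r$, $s_{i}=s$, $c_{i}=c$ and $|X_{i}|\leq m$ for $\cU$-almost all $i$. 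Set $P=\prod_{\cU}P_{i}$; then $P$ is by construction an internal coset nilprogression of rank $r$ and step $s$ in $c$-normal form, hence ``in normal form" per the definition above, and {\L}o\'{s}'s theorem transfers $P_{i}\seq W_{i}^{4}$ to $P\seq W^{4}$.

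Finally, I must verify that $\mu_{P}(W)<\infty$. From $W_{i}\seq X_{i}P_{i}$ and $|X_{i}|\leq m$, we have $|W_{i}|\leq m|P_{i}|$, so
\[
\mu_{P}(W)=\lim_{\cU}\frac{|W_{i}|}{|P_{i}|}\leq m<\infty.
\]
No substantive obstacle arises; the entire statement is a formal ultraproduct transfer of the finite theorem, and the only point requiring any care is the uniformization step, which is automatic from the uniform bound $C(k)$ that the finite theorem provides. Indeed, this is exactly the viewpoint adopted in \cite{BGT} itself, where the analogous assertion is formulated directly for ``ultra-approximate subgroups" and ``ultra-coset nilprogressions".
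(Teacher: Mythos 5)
Your proposal is correct, but it takes a different route from the paper, which offers no argument at all here: Theorem \ref{thm:BGTpf} is simply a restatement of \cite[Theorem 4.2]{BGT}, which is already formulated for ``ultra approximate groups'' and ``ultra coset nilprogressions'', and the paper merely observes that in the setting $G=\prod_{\cU}G_i$ one may read ``internal'' for ``ultra''. You instead invoke the \emph{finite} Breuillard--Green--Tao theorem coordinatewise and reassemble by {\L}o\'{s}'s theorem; this is formally sound, and your handling of the uniformization (constancy of rank, step and normal-form constant on a set in $\cU$) and of $\mu_P(W)\leq m$ via the covering $W_i\seq X_iP_i$ is fine. The one point deserving care is your appeal to a finite statement that simultaneously gives $P_i\seq W_i^4$, the $c$-normal form, and rank/step/covering bounds depending only on $k$: the effective finite theorem \cite[Theorem 2.12]{BGT} only places $P_i$ in $W_i^{12}$ (which is why Theorem \ref{thm:BGT} of this paper mentions relaxing $(A\cup A\inv)^8$ to $(A\cup A\inv)^{24}$ for the quantitative bounds), and Theorem \ref{thm:BGT} applied directly to the approximate group $W_i$ (which has $k^2$-tripling) would only give $P_i\seq W_i^8$. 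The exponent-$4$ finite statement with uniform normal form that you cite is available, but in \cite{BGT} it is itself extracted from the ultraproduct statement \cite[Theorem 4.2]{BGT} by a compactness argument; so your proof is a round trip (ultra $\to$ finite $\to$ ultra) rather than a genuinely independent derivation, whereas the paper short-circuits this by citing the ultra statement directly. This costs nothing logically as long as the finite theorem is used as a black box, but it does not yield any new content, and if one insisted on only the explicitly stated quantitative finite theorem one would lose the containment $P\seq W^4$.
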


\begin{remark}
We will apply the previous theorem in the setting where  $W$ is in fact NIP. Following the sketch in Section \ref{sec:sketch}, the ``stabilizer theorem step" of Theorem \ref{thm:BGTpf} (which is also called ``Sanders-Croot-Sisask theory" in \cite{BGT}) could be replaced by Theorem \ref{thm:Stabs} above. On the other hand, we still need  the Gleason-Yamabe steps and ``escape norm" technology underlying \cite[Theorem 4.2]{BGT} in order recover coset nilprogressions. 
\end{remark}

We can now prove a pseudofinite version of Theorem \ref{thm:NIPgen}. 

\begin{theorem}\label{thm:mainUP}
Suppose $A\seq G$ is internal, NIP, and pseudofinite with finite tripling. Then for any $\epsilon>0$, there is an internal coset nilprogression $P\seq G$ in normal form and an internal set $Z\seq AP$, with $\mu_A(Z)<\epsilon$, satisfying the following properties.
\begin{enumerate}[$(i)$]
\item $P\seq AA\inv\cap A\inv A$ and $A\seq CP$ for some finite $C\seq A$.
\item There is a set $D\seq C$ such that $\mu_A((A\smd DP)\backslash Z)=0$.
\item  If $g\in G\backslash Z$ then $\mu_A(gP\cap A)=0$ or $\mu_A(gP\backslash A)=0$.
\end{enumerate}
\end{theorem}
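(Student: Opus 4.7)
\emph{Plan.} The strategy combines Lemma~\ref{lem:UP1}, producing an internal approximate group $W$ regularizing $A$, with the Breuillard--Green--Tao structure theorem for approximate groups (Theorem~\ref{thm:BGTpf}), which refines $W$ to a coset nilprogression.  First, apply Lemma~\ref{lem:UP1} with auxiliary parameter $\epsilon' > 0$, to be chosen sufficiently small, obtaining an $\cR_{\!A}$-definable internal approximate group $W$ with $W^4 \subseteq AA\inv \cap A\inv A$, a finite covering $A \subseteq F_0 W$, and an $\cR_{\!A}$-definable set $Z_0$ with $\mu_A(Z_0) < \epsilon'$ such that $gW^4$ is all-or-nothing with respect to $A$ for every $g \notin Z_0$.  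Then apply Theorem~\ref{thm:BGTpf} to $W$ to obtain an internal coset nilprogression $P_0$ in normal form with $P_0 \subseteq W^4$ (hence $P_0 \subseteq AA\inv \cap A\inv A$) and $\mu_{P_0}(W) < \infty$; since $gP_0 \subseteq gW^4$, the regularity restricts to give $gP_0$ all-or-nothing for $g \notin Z_0$.

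To produce the covering $A \subseteq CP$ in~(i), I would pass to a scaled coset nilprogression $P := P_0^{(n)}$ of the same rank and step (in normal form with possibly adjusted constant, via Proposition~\ref{prop:scaleCN} and the scaling machinery from \cite[Appendix~C]{BGT}) sufficiently large to absorb several iterations of squaring.  Taking $W$ small enough in the defining decreasing sequence for $\Gamma_{\!A}$ (and rerunning Lemma~\ref{lem:UP1} with a suitable higher power of $W$ in place of $W^4$) arranges $P \subseteq AA\inv \cap A\inv A$ with the regularity dichotomy transferring to $P$.  Ruzsa's covering Lemma~\ref{lem:RCL} applied to $A$ and $P_0$, using $\mu_{P_0}(AP_0) < \infty$ (a consequence of $\mu_{P_0}(W) < \infty$ together with $AP_0 \subseteq AW^4$ and Corollary~\ref{cor:TaoPSE}), produces a finite $F \subseteq A$ with $A \subseteq F P_0^2 \subseteq FP$.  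The remaining subtlety is to ensure $C \subseteq A \setminus Z_0$: replace each $f \in F \cap Z_0$ having ``large'' $\mu_A(fP \cap A) > \mu_A(Z_0)$ by some $a_f \in (A \setminus Z_0) \cap fP$ (possible since $\mu_A(fP \cap A) > \mu_A(Z_0)$ forces $fP \cap (A \setminus Z_0) \neq \emptyset$), with $fP \subseteq a_f P^2$ absorbed by the scaling of $P$; the ``small'' $f$'s contribute a set $W^* \subseteq A \cap Z_0$ of $\mu_A$-measure at most $|F|\mu_A(Z_0)$, which is absorbed into $Z$.

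Set $Z := (Z_0 \cup W^*) \cap AP$, so $Z \subseteq AP$ and $\mu_A(Z) \leq (1 + |F|)\epsilon' < \epsilon$ for $\epsilon'$ chosen small enough; define $D := \{c \in C : \mu_A(cP \setminus A) = 0\}$.  Since $C \subseteq A \setminus Z_0$, every $c \in C$ is subject to the all-or-nothing dichotomy for $P$, so either $c \in D$ or $\mu_A(cP \cap A) = 0$; summing over the finite $C$, one obtains $\mu_A(DP \setminus A) = 0$ and $\mu_A(A \setminus DP) = 0$ modulo $W^* \subseteq Z$, hence $\mu_A((A \smd DP) \setminus Z) = 0$, verifying~(ii).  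Condition~(iii) follows from the regularity for $P$ off $Z_0 \supseteq Z$.  The main obstacle is the careful coordination of all these parameter choices — ensuring $P$ is scaled enough to absorb Ruzsa's $P_0^2$ and the replacement step while remaining inside $AA\inv \cap A\inv A$, and choosing $\epsilon'$ compatibly with $|F|$ and the doubling of $P$ so that both the replacement argument succeeds and $\mu_A(Z) < \epsilon$ — and in particular ensuring that the residual ``small'' piece $W^*$ is genuinely set-theoretically absorbable into $Z$ rather than contributing additional uncovered parts of $A$.
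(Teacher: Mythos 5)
Your opening moves (Lemma~\ref{lem:UP1}, then Theorem~\ref{thm:BGTpf}, then Ruzsa covering via Lemma~\ref{lem:RCL}) are the same as the paper's, but the way you manufacture the covering $A\seq CP$ creates gaps that your closing sentence flags without closing. You enlarge the BGT nilprogression to a dilate $P=P_0^{(n)}$ so that it absorbs $P_0^2$ and the relocation $fP\seq a_fP^2$. This goes in the wrong direction: everything you need from $P$ in $(i)$ and $(iii)$ --- containment in $AA\inv\cap A\inv A$, the all-or-nothing dichotomy off $Z$, and the normal form required by the statement --- is inherited by passing to \emph{subsets} of $W^4$, and is jeopardized by enlarging. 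A dilate $P_0^{(n)}$ is not in general contained in the product set $P_0^{\,n}$ (already for generalized progressions in a free group), so placing it inside a power of $W$ needs an extra word-splitting argument whose exponent depends on the rank of $P_0$; but that rank is only known \emph{after} applying Theorem~\ref{thm:BGTpf} to $W$, while the power $W^{4m}$ for which you demand regularity, and the containment $W^{4m}\seq AA\inv\cap A\inv A$, must be fixed when you ``rerun Lemma~\ref{lem:UP1}'', i.e.\ before $W$ is chosen. The same circularity infects your error bookkeeping: $|F|$ comes from Ruzsa covering applied to data that depend on $\epsilon'$, so the requirement $(1+|F|)\epsilon'<\epsilon$ cannot be met by choosing $\epsilon'$ ``small enough'' after the fact; moreover the residue $W^*$ of the small centers is a subset of $A$ of small measure, not of $A\cap Z_0$ as you assert. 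Finally, normal form for the enlarged progression is not addressed: Proposition~\ref{prop:scaleCN} only compares cardinalities under shrinking.

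The paper resolves exactly this point by scaling \emph{down} rather than up. With $P$ the output of Theorem~\ref{thm:BGTpf}, set $Q=P^{(1/2)}$; then $Q^2\seq P$ is immediate from the definition of generalized progressions, and $\mu_Q(P)<\infty$ by Proposition~\ref{prop:scaleCN}. One checks $\mu_Q(AQ)<\infty$ by chaining $\mu_Q(P)$, $\mu_P(W)$, $\mu_W(A)$, $\mu_A(AQ)$, and then applies Lemma~\ref{lem:RCL} \emph{twice}: once to $A$, giving $C\seq A$ with $A\seq CQ^2\seq CP$, and once directly to the internal set $A\backslash Z$, giving $F\seq A\backslash Z$ with $A\backslash Z\seq FQ^2\seq FP$. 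Since the centers in $F$ avoid $Z$ by construction, the dichotomy applies to them as is, $D=\{g\in F:\mu_A(gP\backslash A)=0\}$ works verbatim, and no relocation of centers, no enlargement of $Z$, and no coordination of parameters is needed; $P$ itself is untouched, so $P\seq W^4\seq AA\inv\cap A\inv A$, the normal form, and condition $(iii)$ come for free. If you replace your dilation-and-relocation step with these two devices, the rest of your argument for $(ii)$ goes through essentially as you wrote it.
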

\begin{proof}
Fix $\epsilon>0$. By Lemma \ref{lem:UP1}, there is an internal approximate group $W\seq G$, and a set $Z\in\cR_{\!A}$, such that $W^4\seq AA\inv\cap A\inv A$, $A$ can be covered by finitely many left translates of $W$, $\mu_A(Z)<\epsilon$, and if $g\in G\backslash Z$ then $\mu_A(gW^4\cap A)=0$ or $\mu(gW^4\backslash A)=0$.  By Theorem \ref{thm:BGTpf}, there is an internal coset nilprogression $P\seq G$ in normal form such that $P\seq W^4$ and $\mu_P(W)<\infty$. In particular, since $P\seq W^4$, we have $\mu_A(gP\cap A)=0$ or $\mu_A(gP\backslash A)=0$ for any $g\in G\backslash Z$.

By assumption, we have $P=\prod_{\cU}P_i$ where each $P_i\seq G_i$ is a coset nilprogression of rank $r$  in $c$-normal form, for some fixed $c,r\geq 0$. For $i\in\N$, let $Q_i=P_i^{(1/2)}$, and set $Q=\prod_{\cU}Q_i$. Note that $Q^2\seq P$. By Proposition \ref{prop:scaleCN}, we have $|P_i|\leq O_{r,c}(|Q_i|)$ for all $i$, and so $\mu_Q(P)<\infty$. 

We claim that $\mu_Q(AQ)<\infty$. Indeed, we have shown $\mu_Q(P)<\infty$, and we have $\mu_P(W)<\infty$ by assumption.  Also, $\mu_W(A)<\infty$ since $A$ can be covered by finitely many left translates of $W$. Finally, $\mu_A(AQ)<\infty$ since $Q\seq P\seq W\seq AA\inv$ and $A$ has finite tripling. Altogether,
\[
\mu_Q(AQ)=\mu_Q(P)\mu_P(W)\mu_W(A)\mu_A(AQ)<\infty.
\]

Now, by Lemma \ref{lem:RCL}, there is a finite set $C\seq A$ such that $A\seq CQ^2\seq CP$, and a finite set $F\seq A\backslash Z$ such that $A\backslash Z\seq FQ^2\seq FP$. 
Without loss of generality, assume $F\seq C$. Define $D=\{g\in F:\mu_A(gP\backslash A)=0\}$. Note that, since $F\cap Z=\emptyset$, we have $\mu_A(gP\cap A)=0$ for all $g\in E:=F\backslash D$. So $\mu_A(DP\backslash A)=0$ and $\mu_A(EP\cap A)=0$. Therefore,  to show $\mu_A((A\smd DP)\backslash Z)=0$, it suffices to show $A\backslash DP\seq Z\cup (EP\cap A)$. So fix $x\in A\backslash DP$, and suppose $x\not\in Z$. Then $x\in A\backslash Z\seq FP$, and so there is some $g\in F$ such that $x\in gP$. Since $x\not\in DP$, it follows that $g\in E$, and so $x\in EP\cap A$.
\end{proof}

\begin{remark}
In the previous proof, by employing \cite{BGT} to trade the approximate group $W$ for the coset nilprogression $P$, we lose ``definability" of $P$  in terms of $\cR_{\!A}$. However, we do maintain $Z\in\cR_{\!A}$. Therefore, in the statement of Theorem \ref{thm:NIPgen}, one could add that the error set $Z$ is a Boolean combination of bi-translates of $A$ (of ``bounded complexity" as detailed in Remark \ref{rem:complex}).
\end{remark}

\subsection{Proof of Theorem \ref{thm:NIPgen}}\label{sec:NIPgenpf}
We will postpone discussion of the abelian case to the end of the proof. So suppose the main statement of Theorem \ref{thm:NIPgen} fails. Then we have fixed $d,k\geq 1$ and  $\epsilon>0$ such that, for any integer $i\geq 1$, there is a group $G_i$ and a finite $d$-NIP set $A_i\seq G_i$ with $k$-tripling such that, if $P\seq G_i$ is a coset nilprogression of rank and step $i$, and in $i$-normal form, and $Z\seq A_iP$ satisfies $|Z|<\epsilon|A_i|$, then it is not the case that all three of the following properties hold:
\begin{enumerate}[$(i)$]
\item $P\seq A_iA_i\inv\cap A_i\inv A_i$ and $A_i\seq CP$ for some $C\seq A_i$ with $|C|\leq i$.
\item There is a set $D\seq C$ such that $|(A_i\smd DP)\backslash Z|<\epsilon|P|$.
\item For any $g\in G_i\backslash Z$, either $|gP\cap A_i|<\epsilon|P|$ or $|gP\backslash A_i|<\epsilon|P|$.
\end{enumerate}

Let $\cU$ be a nonprincipal ultrafilter on $\Z^+$, and let $G=\prod_{\cU}G_i$ and $A=\prod_{\cU}A_i\seq G$. Then $A$ is pseudofinite and $d$-NIP, and we have $\mu_A(A^3)\leq k<\infty$. By Theorem \ref{thm:mainUP}, there is an internal coset nilprogression $P\seq G$ in normal form and an internal set $Z\seq G$, with $\mu_A(Z)<\epsilon$, satisfying the following properties:
\begin{enumerate}[\hspace{5pt}$\ast$]
\item $P\seq AA\inv\cap A\inv A$ and $A\seq CP$ for some finite $C\seq A$.
\item There is a set $D\seq C$ such that $\mu_A((A\smd DP)\backslash Z)=0$.
\item  If $g\in G\backslash Z$ then $\mu_A(gP\cap A)=0$ or $\mu_A(gP\backslash A)=0$.
\end{enumerate}

Write $Z=\prod_{\cU}Z_i$ where each $Z_i$ is a subset of $G_i$. By definition, there are $c,r,s\in\N$ such that $P=\prod_{\cU}P_i$, where each $P_i\seq G_i$ is a coset nilprogression of rank $r$ and step $s$ in $c$-normal form. Set $n=|C|$. Let $I\seq\Z^+$ be the set of $i\in\Z^+$ such that the following properties hold:
\begin{enumerate}[\hspace{5pt}$(1)$]
\item $P_i\seq A_iA_i\inv\cap A_i\inv A_i$ and $A_i\seq C_iP_i$ for some $C_i\seq A_i$ with $|C_i|\leq n$.
\item There is a set $D_i\seq C_i$ such that $|(A_i\smd D_iP_i)\backslash Z_i|/|A_i|<\epsilon/n$.
\item  If $g\in G_i\backslash Z_i$ then $|gP_i\cap A_i|/|A_i|<\epsilon/n$ or $|gP_i\backslash A_i|/|A_i|<\epsilon/n$.
\item $Z_i\seq A_iP_i$ and $|Z_i|/|A_i|<\epsilon$.
\end{enumerate}
Then $I\in\cU$ and so, since $\cU$ is nonprincipal, we may choose some $i\in I$ such that $i\geq\max\{c,n,r,s\}$. Note that $|A_i|\leq n|P_i|$ by condition $(1)$, and so $\epsilon/n\leq\epsilon|P_i|/|A_i|$. Altogether, conditions $(1)$ through $(4)$ contradict the choice of $G_i$ and $A_i$.

Finally, we explain how to get proper coset progressions when restricting to abelian groups. From the above proof, it is clear that we only need to ensure that, in the statement of Theorem \ref{thm:mainUP}, if $G=\prod_{\cU}G_i$ is abelian then $P$ can be assumed to be an ultraproduct of proper coset progressions (i.e., we may take $c=1$ in the proof). For this, one can transfer Theorem \ref{thm:GR} to the ultraproduct $G$, and use this in place of Theorem \ref{thm:BGTpf}. Alternatively, it is easy to check that if $G$ is abelian and $P\seq G$ is an internal coset progression in normal form, then there is an internal proper coset progression $P'\seq P$ such that $\mu_{P'}(P)<\infty$. So Theorem \ref{thm:BGTpf} also recovers proper coset progressions in abelian groups.
\qed

\subsection{Finite exponent: pseudofinite version}

The goal of this subsection is to prove a pseudofinite analogue of Theorem \ref{thm:NIPexp}, which is our main result for NIP sets of small tripling in the setting of bounded exponent. The key use of bounded exponent is essentially the fact that a locally compact Hausdorff torsion group is totally disconnected, which is a consequence of the Gleason-Yamabe theorem. In the model-theoretic setting, this fact can be used to show that if $G$ is a sufficiently saturated group and $\Sigma$ is an ind-definable subgroup of finite exponent, then any type-definable bounded-index normal subgroup of $\Sigma$ is in fact an intersection of definable \emph{subgroups} of $\Sigma$.  This is a key ingredient in the proof of Theorem \ref{thm:BGTbe}  in \cite{BGT} and \cite{HruAG}. Our version follows \cite[Corollary 5.6]{vdDag}, which relaxes the finite exponent assumption somewhat. We also add extra control on the definability properties of the objects involved.

\begin{proposition}\label{prop:GYbd}
Let $G^*$ be a sufficiently saturated group and let $\cR$ be a subring of $\Def(G^*)$. Suppose $\Sigma$ is an $\cR$-ind-definable subgroup of $G^*$, $\Gamma$ is an $\cR$-type-definable bounded-index normal subgroup of $G$, and $\cR$ is left-$\Sigma$-invariant. If $X\seq \Sigma$ is a definable set of finite exponent containing $\Gamma$, then there is an $\cR$-definable subgroup $H\leq \Sigma$ such that $\Gamma\leq H\seq X$.
\end{proposition}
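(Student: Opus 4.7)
The plan is to pass to the locally compact Hausdorff quotient $\Sigma/\Gamma$, find a compact-open subgroup contained in the image of $X$ via a Gleason-Yamabe argument, and pull it back to an $\cR$-definable subgroup of $\Sigma$ using Theorem \ref{thm:LCLT}.

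Let $\pi\colon \Sigma \to \Sigma/\Gamma$ be the quotient map, so that $\Sigma/\Gamma$ is locally compact Hausdorff in the logic topology (Fact \ref{fact:LCLT}). Set $U := \{a\Gamma \in \Sigma/\Gamma : a\Gamma \seq X\}$. By Fact \ref{fact:LCLT}(d), $U$ is open in $\Sigma/\Gamma$, and $\Gamma \in U$ since $\Gamma \seq X$. If $r$ is an exponent bound for $X$, then every $a\Gamma \in U$ satisfies $a \in X$, hence $a^r = 1$ and $(a\Gamma)^r = \Gamma$; thus $U$ has exponent at most $r$ in $\Sigma/\Gamma$.

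Next I would invoke the Gleason-Yamabe argument in the style of \cite[Corollary 5.6]{vdDag}. Since $\Sigma/\Gamma$ is a locally compact Hausdorff group containing an open neighborhood of the identity of bounded exponent, its connected component of identity must be trivial: by Gleason-Yamabe, a connected locally compact group is an inverse limit of connected Lie groups, and a connected Lie group with a neighborhood of the identity of bounded exponent is trivial. Hence $\Sigma/\Gamma$ is totally disconnected, so by van Dantzig's theorem there is a compact-open subgroup $K \leq \Sigma/\Gamma$ with $K \seq U$.

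Set $H := \pi\inv(K)$. Then $H$ is a subgroup of $\Sigma$ containing $\Gamma$, and since $K \seq U$ we have $H \seq \pi\inv(U) \seq X$. It remains to check $H$ is $\cR$-definable. Since $X$ is definable and $\Sigma$ is $\cR$-ind-definable as a directed union, saturation yields $Y \in \cR$ with $X \seq Y$, hence $H \seq Y$. As $K$ is clopen in $\Sigma/\Gamma$, both $K$ and $\Sigma/\Gamma \setminus K$ are closed, so Theorem \ref{thm:LCLT} gives that both $Y \cap H$ and $Y \setminus H$ are $\cR$-type-definable. Thus $H = Y \cap H$ is simultaneously $\cR$-type-definable and, as the complement in the $\cR$-definable set $Y$ of an $\cR$-type-definable set, $\cR$-ind-definable; a standard saturation/compactness argument then forces $H$ to equal a single element of $\cR$. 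The main obstacle is the topological step—extracting a compact-open subgroup from an open bounded-exponent neighborhood via Gleason-Yamabe—while the $\cR$-definability of the preimage is routine bookkeeping once Theorem \ref{thm:LCLT} is available.
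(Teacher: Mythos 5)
Your proof is correct and follows essentially the same route as the paper: pass to the locally compact quotient $\Sigma/\Gamma$, use the bounded-exponent hypothesis together with Gleason--Yamabe to locate a compact-open subgroup $K$ inside the open set corresponding to $X$, and pull back $H=\pi\inv(K)$, getting $\cR$-definability from Theorem \ref{thm:LCLT}. The only cosmetic differences are that you reach $K$ via total disconnectedness plus van Dantzig rather than applying the local Gleason--Yamabe statement directly to $\Sigma/\Gamma$, and that you re-derive by hand (via the clopen set $K$, its complement, and saturation) the localized version of Fact \ref{fact:LCLT}$(b)$ which the paper simply cites through Remark \ref{rem:LCLT}.
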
 
\begin{proof}
Let $\pi\colon \Sigma\to \Sigma/\Gamma$ be the quotient map. By Fact \ref{fact:LCLT}, there is an open identity neighborhood $U\seq\Sigma/\Gamma$ such that $\pi\inv(U)\seq X$. By the Gleason-Yamabe-Theorem (see, e.g., \cite[Theorem B.17]{BGT}), there is an open subgroup $K\leq\Sigma/\Gamma$, and a compact normal subgroup $N\leq K$ such that $N\seq U$ and $K/N$ is a connected Lie group. Since $X$ has finite exponent in $\Sigma$, it follows that $K/N$ is a connected Lie group with an identity neighborhood  of finite exponent. 
Therefore $K/N$ is trivial (e.g., by considering $1$-parameter subgroups).
So $K$ is a compact-open subgroup of $\Sigma/\Gamma$ contained in $U$. Let $H=\pi\inv(K)$. Then $H\in\cR$ by Theorem \ref{thm:LCLT} (and Remark \ref{rem:LCLT}). Moreover, $H$ is a subgroup of $\Sigma$ and $\Gamma\seq H\seq \pi\inv(U)\seq X$. 
\end{proof}

We can now prove a pseudofinite version of Theorem \ref{thm:NIPexp}. 

\begin{theorem}\label{thm:expUP}
Let $G=\prod_{\cU}G_i$ where each $G_i$ is a group and $\cU$ is an ultrafilter on some index set $I$.
Suppose $A\seq G$ is internal, NIP, and pseudofinite with finite tripling, and assume that $AA\inv\cap A\inv A$ has finite exponent. Then for any $\epsilon>0$, there is an $\cR_{\!A}$-definable subgroup $H$ of $G$ and a set $Z\seq AH$, which is a union of left cosets of $H$ with $\mu_A(Z)<\epsilon$, satisfying the following properties.
\begin{enumerate}[$(i)$]
\item $H\seq AA\inv\cap A\inv A$ and $A\seq CH$ for some finite $C\seq A$.
\item There is a set $D\seq C$ such that $\mu_A((A\backslash Z)\smd DH)=0$.
\item  If $g\in G\backslash Z$ then $\mu_A(gH\cap A)=0$ or $\mu_A(gH\backslash A)=0$.
\end{enumerate}
\end{theorem}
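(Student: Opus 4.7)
The plan is to adapt the proof of Theorem \ref{thm:mainUP}, using Proposition \ref{prop:GYbd} (Gleason-Yamabe) in place of Theorem \ref{thm:BGTpf} to upgrade the type-definable subgroup $\Gamma_{\!A_*}$ coming from the stabilizer theorem into an honest definable subgroup. First, I would pass to a sufficiently saturated elementary extension $G^*\succ G$ and set $A_*:=A(G^*)$. By pseudofiniteness, the hypothesis that $AA\inv\cap A\inv A$ has finite exponent yields a uniform exponent bound, which transfers to $A_*A_*\inv\cap A_*\inv A_*$.

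By Theorem \ref{thm:Stabs}, let $\Gamma_*:=\Gamma_{\!A_*}$ be the smallest $\cR_{\!A_*}$-type-definable bounded-index (normal) subgroup of $\langle A_*\rangle$; it is countably $\cR_{\!A_*}$-type-definable over $G$ and contained in the finite-exponent set $A_*A_*\inv\cap A_*\inv A_*$. Writing $\Gamma_*=\bigcap_n V_n$ with $(V_n)$ a decreasing sequence of $\cR_{\!A_*}$-definable-over-$G$ subsets of $A_*A_*\inv\cap A_*\inv A_*$, I would apply Proposition \ref{prop:GYbd} to each $V_n$ to produce an $\cR_{\!A_*}$-definable subgroup $\widetilde H_n$ with $\Gamma_*\leq\widetilde H_n\seq V_n$. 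A standard saturation argument---the parameters defining such a subgroup realize a consistent type over $G$---lets me take $\widetilde H_n$ to be over $G$. Setting $H^*_n:=\bigcap_{m\leq n}\widetilde H_m$ yields a decreasing sequence of $\cR_{\!A_*}$-definable-over-$G$ subgroups of $\langle A_*\rangle$ with $\bigcap_n H^*_n=\Gamma_*$.

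Next I would apply Corollary \ref{cor:oneY} with $X=A_*$ and $W_n=H^*_n$ to obtain some $n$ and an $\cR_{\!A_*}$-definable-over-$G$ set $Z_0$ with $\mu_A(Z_0)<\epsilon$, such that for $g\notin Z_0$ either $\mu_A(gH^*_n\cap A_*)=0$ or $\mu_A(gH^*_n\backslash A_*)=0$. Setting $H:=H^*_n(G)$ gives an $\cR_{\!A}$-definable subgroup of $G$ contained in $AA\inv\cap A\inv A$. Since $H^*_n\supseteq\Gamma_*$ is generic in $\langle A_*\rangle$, Lemma \ref{lem:RCL} applied with $X=A$ and $Y=H$ (using $\mu_H(AH)<\infty$ by finite tripling and $H^2=H$) produces a finite $C\seq A$ with $A\seq CH$, establishing (i).

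Finally I would define
\[
Z := \{g\in G : \mu_A(gH\cap A)>0 \text{ and } \mu_A(gH\backslash A)>0\},
\]
which depends only on the coset $gH$ and is thus a union of left cosets of $H$; since $Z\seq Z_0(G)$ one has $\mu_A(Z)<\epsilon$, and $Z\seq AH$ because every coset in $Z$ meets $A$. Condition (iii) holds by construction for $g\notin Z$. For (ii), I would set $D:=\{c\in C : \mu_A(cH\backslash A)=0\}$. Then $DH\cap Z=\emptyset$ and $\mu_A(DH\backslash A)=\sum_{c\in D}\mu_A(cH\backslash A)=0$; for $c\in C\backslash D$ either $c\in Z$ (so $cH\seq Z$) or the dichotomy forces $\mu_A(cH\cap A)=0$, giving $\mu_A(cH\cap A\backslash Z)=0$ in either case. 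Summing over $C\backslash D$ yields $\mu_A((A\backslash Z)\backslash DH)=0$, and combined with $\mu_A(DH\backslash A)=0$ this gives $\mu_A((A\backslash Z)\smd DH)=0$. The delicate point I expect to fuss over is the saturation step in the second paragraph guaranteeing that the Gleason-Yamabe subgroups can be chosen over $G$, which is what ultimately makes $H$ an $\cR_{\!A}$-definable (and hence, in the finitary transfer, finite Boolean combination of bi-translates of $A$) subgroup of $G$.
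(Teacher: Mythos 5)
Your proposal is correct and follows essentially the same route as the paper: pass to a saturated extension, use Theorem \ref{thm:Stabs} together with Proposition \ref{prop:GYbd} (and a saturation/elementarity argument over $G$) to write $\Gamma_{\!A_*}$ as a decreasing intersection of $\cR_{\!A_*}$-definable-over-$G$ subgroups, apply Corollary \ref{cor:oneY}, transfer to $G$, and then do the same $D$-versus-dichotomy bookkeeping for $(ii)$ and $(iii)$. Your only deviations are cosmetic: you define $Z$ intrinsically as the union of irregular cosets (which is fine, since it is a finite union of cosets inside $CH$, hence internal and contained in the set from Corollary \ref{cor:oneY}), and you invoke Lemma \ref{lem:RCL} for the covering where the paper just cites genericity of $H_n\supseteq\Gamma_{\!A_*}$.
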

\begin{proof}
We start as in the proof of Lemma \ref{lem:UP1} and move to a saturated extension $G^*$. Let $A_*=A(G^*)$. By Proposition \ref{prop:GYbd}, there is a decreasing sequence $(H_n)_{n=0}^\infty$ of $\cR_{\!A_*}$-definable subgroups of $\langle A_*\rangle$ such that $\Gamma_{\!A_*}=\bigcap_{n=0}^\infty H_n$. Since $\Gamma_{\!A^*}$ is countably $\cR_{\!A_*}$-type-definable over $G$, we may assume that each $H_n$ is $\cR_{\!A_*}$-definable over $G$. 
Now fix $\epsilon>0$. By Corollary \ref{cor:oneY}, there is a definable set $Z_*\seq G^*$ and some $n\geq 0$ such that  $\mu_A(Z_*)<\epsilon$, and if $g\in G^*\backslash Z_*$ then $\mu_A(gH_n\cap A_*)=0$ or $\mu_A(gH_n\backslash A_*)=0$. Without loss of generality, we may assume that $Z_*$ is a union of left cosets of $H_n$.

Let $Z=Z_*(G)$ and $H=H_n(G)$. Then, working in $G$, we have  $H\seq AA\inv\cap A\inv A$, $A\seq CH$ for some finite $C\seq G$, $\mu_A(Z)<\epsilon$, and if $g\in G\backslash Z$ then $\mu_A(gH\cap A)=0$ or $\mu(gH\backslash A)=0$.  We may assume $Z\seq AH$ since if $g\not\in AH$ then $gH\cap A=\emptyset$. Similarly, we may assume that $gH\cap A\neq\emptyset$ for all $g\in C$ and thus, after changing coset representatives, assume $C\seq A$. Now let $D=\{g\in C\backslash Z:\mu_A(gH\backslash A)=0\}$ and $E=\{g\in C\backslash Z:\mu_A(gH\cap A)=0\}$. By construction, 
\[
(A\backslash Z)\smd DH\seq Z\cup (DH\backslash A)\cup (EH\cap A),
\]
and so $\mu_A((A\backslash Z)\smd DH)=0$. 
\end{proof}

\subsection{Proof of Theorem \ref{thm:NIPexp}}
The proof involves a similar ultraproduct construction as in the proof of Theorem \ref{thm:NIPgen} in Section \ref{sec:NIPgenpf}. Thanks to the bounded exponent assumption, one can use Theorem \ref{thm:expUP} in place of Theorem \ref{thm:mainUP} to replace coset nilprogressions by subgroups. Details are left to the reader. \qed

\begin{remark}\label{rem:complex}
In the proof of Theorem \ref{thm:NIPexp}, one can also control the ``complexity" of $H$ as a Boolean combination of bi-translates of $A$ in the following way. Given a Boolean function $f(x_1,\ldots,x_n)$ in finitely many variables, a set $A\seq G$, and $n$-tuples $\gbar,\hbar$ of elements in $G$, let $f^A(\gbar,\hbar)$ be the resulting Boolean combination of $g_1Ah_1,\ldots,g_nA h_n$. Now let $(f_t)_{t=0}^\infty$ be an enumeration of all such Boolean functions. Then, in the conclusion, of Theorem \ref{thm:NIPexp}, we can say more precisely that there are $m,t\leq O_{d,k,r,\epsilon}(1)$ and $n$-tuples $\gbar,\hbar$ of elements in $A^{\pm m}$, where $f_t$ is in $n$ variables, such that $H=f^A_t(\gbar,\hbar)$. In fact, since we work with the ring $\cR_{\!A}$, we may restrict to Boolean functions in disjunctive normal form, in which each disjunct contains at least one positive literal. 
\end{remark}

\subsection{Theorem \ref{thm:NIPexp} via NIP arithmetic regularity in finite groups}

As explained at the start of Section \ref{sec:dom}, Theorem \ref{thm:GCD} was obtained as a generalization of a related statement for pseudofinite groups from \cite{CPpfNIP}. This result from \cite{CPpfNIP} is used in \cite[Theorem 3.1]{CPTNIP} to show that if $G$ is a pseudofinite group of finite exponent, and $A\seq G$ is internal and NIP, then $A$ can be approximated by (cosets of) a definable finite-index normal subgroup of $G$. Now, in the proof of Theorem \ref{thm:expUP}, we obtained an internal \emph{pseudofinite group} $H$ contained in $AA\inv\cap A\inv A$, such that $A\seq CH$ for some finite set $C$. Therefore, one can apply the results from \cite{CPTNIP} to directly to $H$, in order to approximate any set of the form $aH\cap A$ by a finite-index subgroup $H_a$ of $H$. This leads to an approximation of $A$ by $\bigcap_{a\in C}H_a$. We leave the details (which involve some careful calculation) to the interested reader. 

Altogether, this yields a proof of Theorem \ref{thm:NIPexp} that only requires Theorem \ref{thm:Stabs} and Proposition \ref{prop:GYbd} (in addition to \cite{CPpfNIP,CPTNIP}). From a broad perspective, this strategy is analogous to the proof of the \emph{stable} Freiman-Ruzsa result of from \cite{MPW}, which we discuss in the next section (see also Remark \ref{rem:MPW}).

\subsection{The stable case}\label{sec:stable}

In this section, we  discuss the \emph{stable} Freiman-Rusza result of Martin-Pizarro, Palac\'{i}n, and Wolf \cite{MPW}.

\begin{definition}
Let $G$ be a group. Then a set $A\seq G$ is \textbf{$d$-stable} if there do not exist $a_1,\ldots,a_d,b_1,\ldots,b_d\in G$ such that $a_ib_j\in A$ if and only if $i\leq j$.
\end{definition}

\begin{remark}\label{rem:stable}
Note that stability of $A\seq G$ can be rephrased in terms of forbidden subgraphs in the bipartite graph $\Gamma_G(A)=(G,G;yx\in A)$. Indeed, $A$ is $d$-stable if and only if $\Gamma_G(A)$ omits $([d],[d];\leq)$. One can easily check that a $d$-stable subset of a group is $d$-NIP. Recall also that, as stated at the start of Section \ref{sec:intro4}, a nonempty set $A\seq G$ is a coset of subgroup of $G$ if and only if $A$ is $2$-stable. 
\end{remark}

As with NIP, we call a  set $A\seq G$ \textbf{stable} if it is $d$-stable for some $d\geq 1$. More generally, a formula $\phi(\xbar;\ybar)$ is \textbf{stable} with respect to a first-order structure $M$ if, for some $d\geq 1$, there do not exist $\abar_1,\ldots,\abar_d\in M^{\xbar}$ and $\bbar_1,\ldots,\bbar_d\in M^{\ybar}$ such that $M\models\phi(\abar_i,\bbar_j)$ if and only if $i\leq j$. One of the most important properties of  stable formulas is the following result of Shelah (see Theorem II.2.2 in \cite{Shbook}). 

\begin{theorem}[Shelah \cite{Shbook}]\label{thm:DOT}
Let $M$ be a first-order structure. Suppose $\phi(\xbar;\ybar)$ is a stable formula and $p\in S_\phi(M)$ is a complete $\phi$-type over $M$. Then the set $\{\bbar\in M^{\ybar}:\phi(\xbar;\bbar)\in p\}$ is definable.
\end{theorem}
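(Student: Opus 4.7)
The plan is to follow Shelah's classical route via averages over $\phi$-indiscernible sequences. First I would establish the combinatorial consequence of stability that controls alternation along indiscernibles: there is an integer $N = N(\phi)$ such that for every $\phi$-indiscernible sequence $(\abar_i)_{i<\omega}$ and every parameter $\bbar$, the set $\{i : \models \phi(\abar_i;\bbar)\}$ has at most $N$ alternations between $\phi$ and $\neg\phi$. This is the standard consequence of the non-existence of arbitrarily long half-graph configurations: any long alternation pattern could be converted, via Ramsey extraction on $\phi$-indiscernibility, into the forbidden order-configuration witnessing instability. A convenient corollary is that if $(\abar_i)_{i < n}$ is sufficiently long and $\phi$-indiscernible, then the truth value of $\phi(\abar_i;\bbar)$ is decided by any ``majority vote'' on the sequence.

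Next, passing to a sufficiently saturated elementary extension $M^* \succ M$, I would build a $\phi$-indiscernible sequence $(\abar_1,\ldots,\abar_n)$ in $(M^*)^{\xbar}$ with $n = 2N+1$, each $\abar_i$ realizing $p$ over $M$. Such a sequence is produced by iterating realizations of suitable extensions of $p$, then extracting an indiscernible subsequence by Ramsey and saturation. Because each $\abar_i$ realizes $p$, for any $\bbar \in M^{\ybar}$ the formula $\phi(\xbar;\bbar)$ lies in $p$ iff $\models \phi(\abar_i;\bbar)$ for all (equivalently, the majority of) $i$. Writing $\chi(\ybar;\xbar_1,\ldots,\xbar_n)$ for the ``majority vote'' $\phi^*$-formula, the tuple $(\abar_1,\ldots,\abar_n)$ therefore provides a $\phi^*$-definition of $\{\bbar \in M^{\ybar} : \phi(\xbar;\bbar) \in p\}$, albeit with external parameters.

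The third step is to descend these parameters into $M$. Consider the partial type $\pi(\ubar_1,\ldots,\ubar_n)$ over $M$ asserting that the tuple is $\phi$-indiscernible, that each $\ubar_i$ satisfies a prescribed fragment of $p$, and that $\chi(\bbar;\ubar_1,\ldots,\ubar_n) \leftrightarrow \phi(\xbar;\bbar)\in p$ for every $\bbar \in M^{\ybar}$. The external tuple $(\abar_1,\ldots,\abar_n)$ witnesses that $\pi$ is consistent, but moreover $\pi$ is \emph{finitely satisfiable in $M$}: any finite fragment mentions only finitely many $\bbar_j$'s, and one can realize the corresponding finite fragment of $p$ together with the correct $\phi$-truth values by choosing elements $\abar'_1,\ldots,\abar'_n \in M$ directly, since $p$ is a complete $\phi$-type over $M$ and each required constraint is a consistent $\phi$-condition over $M$. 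Compactness then produces a realization in $M$, giving the desired definition by a $\phi^*$-formula over parameters in $M$.

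The step I expect to be the main obstacle is the descent to parameters in $M$: one must verify finite satisfiability of the type that pins down a good defining tuple, which requires checking that the indiscernibility and ``correct voting'' constraints can all be simultaneously approximated inside $M$. An alternative, more hands-on route that bypasses this descent is to work via $\phi$-rank: show that stability of $\phi$ forces all consistent $\phi$-formulas to have finite $\phi$-rank, pick a formula $\psi(\xbar;\cbar)\in p$ with $\cbar \in M$ of minimal rank, and verify that $\phi(\xbar;\bbar)\in p$ iff $\psi(\xbar;\cbar)\wedge\phi(\xbar;\bbar)$ has the same rank as $\psi$. Since the set of $\bbar$ for which the rank does not drop is itself $\phi^*$-definable over $\cbar$ (rank being a first-order condition once a bound is fixed), this would give the definition directly inside $M$.
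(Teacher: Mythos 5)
Your main route breaks down at exactly the step you flag, and the proposed fix does not work. The theorem is only nontrivial if ``definable'' means definable with parameters from $M$ (with external parameters the statement is trivial for \emph{any} formula: a single realization $\abar_1\models p$ in $M^*$ already gives $\{\bbar\in M^{\ybar}:\phi(\xbar;\bbar)\in p\}=\{\bbar\in M^{\ybar}:\models\phi(\abar_1;\bbar)\}$, so your Steps 1--2 carry no content yet). For the descent you form a type $\pi(\ubar_1,\ldots,\ubar_n)$ over $M$ containing one ``correct voting'' condition for \emph{each} $\bbar\in M^{\ybar}$, observe it is finitely satisfiable in $M$, and conclude by compactness that it is realized \emph{in $M$}. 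Compactness only produces a realization in an elementary extension; a model does not realize arbitrary finitely satisfiable types over itself. Moreover, your finite-satisfiability argument nowhere uses stability: for an arbitrary formula $\phi$, any finite fragment of $\pi$ is a consistent formula over $M$, hence realized in $M$ by elementarity. So if the ``compactness gives a realization in $M$'' step were valid, every $\phi$-type over every model would be definable, which is false --- e.g.\ $\phi(x;y):=x<y$ in $(\Q,<)$ and $p$ the $\phi$-type of the cut at $\sqrt{2}$, whose $\phi$-definition would have to carve out $\{b\in\Q:b<\sqrt 2\}$. The missing idea is precisely where stability must act in the descent: one must show that \emph{finitely many} well-chosen constraints already entail all the voting conditions (Harrington's argument does this by building a maximal sequence of ``errors'' $(\abar_i,\bbar_i)$ inside $M$, whose length is bounded via the no-order-property bound; the resulting finite consistent condition over $M$ is then realized in $M$ by elementarity, and its realizations vote correctly on every $\bbar\in M^{\ybar}$). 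Without such a termination/finiteness argument, the ``$\phi$-indiscernible majority'' sketch proves nothing beyond what a single external realization gives.

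Your alternative route via the local rank is sound and is essentially the classical proof (the paper itself gives no argument, citing Shelah, and the standard textbook proofs go exactly this way): stability forces $R_\phi$ of consistent $\phi$-formulas to be finite; choosing $\psi(\xbar;\cbar)\in p$ with $\cbar\in M$ of minimal rank $r$ among formulas of $p$, exactness of the rank (which is defined by binary splitting along a single instance) shows that for $\bbar\in M^{\ybar}$ at most one of $\psi\wedge\phi(\xbar;\bbar)$, $\psi\wedge\neg\phi(\xbar;\bbar)$ can have rank $r$, and completeness of $p$ plus minimality shows it is the conjunct lying in $p$; since ``$R_\phi\geq k$'' is, for each fixed $k$, a first-order condition on $\bbar$ over $\cbar$, this defines the set over $M$. (With this binary rank no multiplicity/degree minimization is needed, and the defining formula need not be a $\phi^*$-formula, but the theorem as used in the paper only requires plain definability.) If you want to salvage the majority-vote picture, replace Step 3 by the Harrington-type maximal-error argument indicated above.
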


By Remark \ref{rem:stable}, one could approximate finite stable sets of small tripling by coset nilprogressions as in Theorem \ref{thm:NIPgen}. However, stable sets are much more well-behaved for two reasons. First, in the setting of Theorem \ref{thm:Stabs}, if $A$ is stable then $\Gamma_{\!A}$ is an intersection of definable \emph{subgroups} of $\langle A \rangle$. Second, in the setting of Theorem \ref{thm:GCD}, if $A$ is stable then $\E_X=\emptyset$ for any $X\in\cR_{\!A}$. Both of these statements rely on Theorem \ref{thm:DOT}, and will be shown in the proof of Theorem \ref{thm:stabUP} below.

Let us now state the stable Freiman-Ruzsa result, which is nearly identical to \cite[Theorem A]{MPW}, but with some small additions.

\begin{theorem}[Martin-Pizarro, Palac\'{i}n, Wolf]\label{thm:MPW}
Suppose $G$ is a group and $A\seq G$ is a finite $d$-stable set with $k$-tripling. Given $\epsilon>0$, there is a subgroup $H\leq G$ satisfying the following properties.
\begin{enumerate}[$(i)$]
\item $H\seq AA\inv\cap A\inv A$ and $A\seq CH$ for some $C\seq A$ with $|C|\leq O_{d,k,\epsilon}(1)$. 
\item There is a set $D\seq C$ such that $|A\smd DH|<\epsilon|H|$.
\item For any $g\in G$, either $|gH\cap A|<\epsilon|H|$ or $|gH\cap A|>(1-\epsilon)|H|$.
\end{enumerate}
Moreover, $H$ is a finite Boolean combination of bi-translates of $A$.
\end{theorem}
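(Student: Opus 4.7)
The plan is to mirror the proof of Theorem \ref{thm:NIPexp} almost verbatim, replacing the two ingredients specific to NIP/bounded-exponent by stronger stable counterparts: namely (a) in Theorem \ref{thm:Stabs}, the subgroup $\Gamma_{\!A}$ is a countable intersection of $\cR_{\!A}$-definable \emph{subgroups} of $\langle A \rangle$, and (b) in Theorem \ref{thm:GCD}, the exceptional set $\E_X$ is actually empty for every $X\in\cR_{\!A}$. Granting these, I would first prove a pseudofinite version: given an internal, stable, pseudofinite set $A\seq G=\prod_{\cU}G_i$ of finite tripling, there is an $\cR_{\!A}$-definable subgroup $H\leq G$ with $H\seq AA\inv\cap A\inv A$ and $A\seq CH$ for some finite $C\seq A$, such that for every $g\in G$ one has $\mu_A(gH\cap A)=0$ or $\mu_A(gH\setminus A)=0$. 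Once this is in hand, the standard ultraproduct/contradiction argument from Section \ref{sec:NIPgenpf} transfers the statement back to finite groups and yields Theorem \ref{thm:MPW}, with the Boolean-combination conclusion coming from $H\in\cR_{\!A}$ (and Remark \ref{rem:complex} giving the complexity bound).

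For (a), I would run through the proof of Theorem \ref{thm:Stabs} and note the one place where NIP was used---the appeal to Proposition \ref{prop:DFS} to get a \emph{type}-definable $\Stab_{\!A}^\ell(A)$. In the stable case, Shelah's definability of types (Theorem \ref{thm:DOT}) applied to each stable formula $\phi(x;\bar y)$ cutting out a set in $\cR_{\!A}$ shows that a generic $\cR_{\!A}$-type $p$ has definable $\phi$-restrictions, so each of the stabilizers $\Stab^\ell_{\!A}(gA)$ is $\cR_{\!A}$-definable over $G$ (not merely countably $\cR_{\!A}$-type-definable). Intersecting boundedly many conjugates as in Remark \ref{rem:Stabs} exhibits $\Gamma_{\!A}$ as a countable intersection of $\cR_{\!A}$-definable subgroups, which via saturation lets us replace the coset nilprogression in the proof of Theorem \ref{thm:mainUP} by a single definable subgroup $H\seq W^4\seq AA\inv\cap A\inv A$ of $\langle A\rangle$. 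This is the direct stable analogue of Proposition \ref{prop:GYbd}, but obtained without any Gleason--Yamabe input.

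For (b), the key observation is that in a stable context $\mu_A$ admits a unique generic type on each coset of $\Gamma_{\!A}$. Concretely, if $p$ is a generic $\cR_{\!A}$-type with $p\models\Gamma_{\!A}$, then (by Theorem \ref{thm:Stabs}(d) plus definability of generic types) the set of generic $\cR_{\!A}$-types is exactly $\{gp:g\in\langle A\rangle/\Gamma_{\!A}\}$; so for any coset $a\Gamma_{\!A}$ the only generic type concentrating on it is $ap$, and for any $X\in\cR_{\!A}$ either $X\in ap$ or $(\langle A\rangle\setminus X)\in ap$ but not both. This rules out the existence of two generic types $q,q'$ with $q\models a\Gamma_{\!A}\cap X$ and $q'\models a\Gamma_{\!A}\setminus X$, giving $\E_X=\emptyset$. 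Feeding $\E_X=\emptyset$ into the proof of Corollary \ref{cor:oneY} lets us take the error set $Z$ empty, which in turn removes the exceptional set from condition (iii).

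The main obstacle I expect is pinning down the definability-of-types step cleanly at the level of a \emph{ring} of definable sets rather than a single stable formula: each $\cR_{\!A}$-formula is a Boolean combination of instances of the ``fundamental'' stable formula $A(y\cdot x\cdot z)$, and some care is needed to see that the corresponding $\phi$-stabilizers intersect down to $\Gamma_{\!A}$ along a \emph{countable} family of \emph{subgroups} (as opposed to merely definable sets containing $\Gamma_{\!A}$). Once that is arranged, the remaining ingredients---Ruzsa covering (Lemma \ref{lem:RCL}) to produce the finite set $C$, partitioning $C$ into $D$ and its complement by measure of $gH\cap A$ to obtain condition (ii), and the standard {\L}o\'s/compactness transfer to the finite setting---are routine and proceed exactly as in the NIP case.
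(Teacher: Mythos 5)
Your skeleton is essentially the paper's: the paper also proves a pseudofinite statement (its Theorem \ref{thm:stabUP}) producing an $\cR_{\!A}$-definable subgroup $H\seq AA\inv\cap A\inv A$ with $A\seq CH$ and every coset $\mu_A$-null or conull in $A$, using stability exactly at your two points (a) and (b), and then transfers by the standard ultraproduct argument. The problem is that at both stable-specific points you assert the conclusion without the argument, and those arguments are the real content. For (a), definability of a wide type does \emph{not} directly give definability of the measure stabilizers $\Stab^\ell_{\!A}(gA)=\{h:\mu_A(hgA\smd gA)=0\}$: a $\mu_A$-null condition is not an instance of a $p$-definition. The paper bridges this gap (Claim 1 in the proof of Theorem \ref{thm:stabUP}) by taking a wide type $p$ concentrating on $\Gamma:=\Stab^\ell_{\!A}(A_*)$, noting that the \emph{type} stabilizer $H=\Stab^r_{G^*}(p)$ is definable by Theorem \ref{thm:DOT} and is sandwiched $\Gamma_{\!A_*}\leq H\leq\Gamma$, and then using bounded index plus a compactness argument to get $[\Gamma:H]<\infty$, so that $\Gamma$ is a finite union of cosets of a definable subgroup, hence definable, and then $\cR^r_{\!A_*}$-definable by saturation. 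This sandwich/finite-index step is missing from your sketch; note also that the obstacle you flag (ring versus single formula) is not really the issue, since only the single stable formulas $A(x\cdot y)$ and $A(y\cdot x)$ are ever needed.

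For (b), your claim that the generic $\cR_{\!A}$-types are exactly the orbit $\{gp:g\in\langle A\rangle\}$ (one wide type per coset of $\Gamma_{\!A}$) is essentially a restatement of the statement $\E_X=\emptyset$ that you are trying to prove, and ``Theorem \ref{thm:Stabs}(d) plus definability of generic types'' does not establish it: Remark \ref{rem:genclosure} only says $S^g(\cR_{\!A})$ is the \emph{closure} of the orbit, and extracting uniqueness of the wide type on each coset is a genuine piece of local stable group theory not contained in your cited ingredients. The paper proves less and needs less: only $\E_{A_*}=\emptyset$, obtained from Lemma \ref{lem:GCD}(d) because $\pi\inv(\B^p_{\!A_*})$ is definable by Theorem \ref{thm:DOT} applied to $A(y\cdot x)$, so $\B^p_{\!A_*}$ is compact-open by Fact \ref{fact:LCLT}(b) and hence has empty boundary. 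So either supply an actual proof of the uniqueness-of-generics claim or replace it by this clopen-boundary argument. With those two repairs, the remainder of your outline (Ruzsa covering for $C$, splitting $C$ into $D$ and its complement, the transfer to finite groups, and the complexity bound via $H\in\cR_{\!A}$ and Remark \ref{rem:complex}) goes through as in the paper.
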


We will give a proof of the previous theorem, which only requires Lemma \ref{lem:GCD}$(d)$, Theorem \ref{thm:DOT}, and the features of $\Gamma_{\!A}$ proved in Section \ref{sec:NIPLie}. See Remark \ref{rem:MPW} for a discussion of how this compares to the proof in  \cite{MPW}.  As was the case with Theorems \ref{thm:NIPgen} and \ref{thm:NIPexp}, it suffices to prove the following pseudofinite version. 

\begin{theorem}\label{thm:stabUP}
Let $G=\prod_{\cU}G_i$, where each $G_i$ is a group and $\cU$ is a nonprincipal ultrafilter on some index set $I$. Suppose $A\seq G$ is internal, stable, and pseudofinite with finite tripling. Then there is an $\cR_{\!A}$-definable subgroup $H$ of $G$ satisfying the following properties.
\begin{enumerate}[$(i)$]
\item $H\seq AA\inv\cap A\inv A$ and $A\seq CH$ for some finite $C\seq A$.
\item There is a set $D\seq C$ such that $\mu_A(A\smd DH)=0$.
\item If $g\in G$ then $\mu_A(gH\cap A)=0$ or $\mu(gH\backslash A)=0$.
\end{enumerate}
\end{theorem}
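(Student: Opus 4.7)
The plan is to emulate the proof of Theorem \ref{thm:expUP}, replacing the Gleason-Yamabe step by a stable-group-theoretic argument and exploiting the vanishing of the error set $\E_X$. First, I would expand $G$ by a predicate for $A$ and pass to a sufficiently saturated elementary extension $G^* \succ G$; set $A_* = A(G^*)$. Theorem \ref{thm:Stabs} supplies $\Gamma_{\!A_*} \leq \langle A_* \rangle$, a countably $\cR_{\!A_*}$-type-definable (over $G$), bounded-index, normal subgroup contained in $A_*A_*\inv \cap A_*\inv A_*$. Fix a generic $\cR_{\!A_*}$-type $p \models \Gamma_{\!A_*}$, so that $\Gamma_{\!A_*} = \Stab^\ell_{\!A_*}(p)$ by Theorem \ref{thm:Stabs}(d).

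Stability enters through two ingredients. Since $A$ is stable, so is $\psi(x; y_1, y_2) := A(y_1 \cdot x \cdot y_2)$, and every set in $\cR_{\!A_*}$ is a Boolean combination of its instances. Applying Shelah's definability theorem \ref{thm:DOT} to $p$, together with the Baldwin-Saxl chain condition for stable formulas, the stabilizer of $p$ restricted to each fixed Boolean complexity level $\phi$ of $\cR_{\!A_*}$-formulas is itself an $\cR_{\!A_*}$-definable subgroup of $\langle A_* \rangle$. Intersecting over an exhaustive sequence of such levels produces a decreasing chain $(H_n)_{n \geq 0}$ of $\cR_{\!A_*}$-definable subgroups, definable over $G$, with $\bigcap_n H_n = \Gamma_{\!A_*}$. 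Second, Shelah's theorem also implies that for each $X \in \cR_{\!A_*}$, the set $\pi\inv(\B^p_X) = \{a \in \langle A_* \rangle : a\inv X \in p\}$ is $\cR_{\!A_*}$-definable (and similarly for its complement in $\langle A_* \rangle$), so by Fact \ref{fact:LCLT}(b) and Remark \ref{rem:LCLT}, $\B^p_X$ is clopen in $\G_{\!A_*}$. Hence $\partial \B^p_X = \emptyset$, and Lemma \ref{lem:GCD}(d) gives $\E_X = \emptyset$.

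The conclusion then follows by adapting Corollary \ref{cor:oneY}: applied to $X = A_*$ with the decreasing chain $(H_n)$, the vanishing $\E_{A_*} = \emptyset$ lets us take the error set to be empty in the corollary's proof, yielding some $n$ such that, for every $g \in G^*$, either $\mu_A(g H_n \cap A_*) = 0$ or $\mu_A(g H_n \backslash A_*) = 0$. Setting $H = H_n(G)$ and descending back to $G$, $H$ is an $\cR_{\!A}$-definable subgroup with $H \seq AA\inv \cap A\inv A$; since $H \supseteq \Gamma_{\!A_*}$ is generic in $\langle A_* \rangle$ (Remark \ref{rem:gen-group}), $A$ is covered by finitely many left translates of $H$, giving a finite $C \seq A$ with $A \seq CH$. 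Property $(iii)$ is immediate and property $(ii)$ follows from the standard choice $D = \{g \in C : \mu_A(gH \backslash A) = 0\}$ as at the end of the proof of Theorem \ref{thm:expUP}.

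The main technical obstacle is assembling the chain $(H_n)$ in the first stable input: this is a ``local'' refinement of the classical fact that type-definable subgroups in stable theories are intersections of definable subgroups, and care is needed to ensure the chain is both $\cR_{\!A_*}$-definable and definable over $G$. Shelah's theorem plus Baldwin-Saxl are the key tools, but the bookkeeping over Boolean complexity levels is the delicate step.
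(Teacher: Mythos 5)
Your overall architecture coincides with the paper's: both proofs feed Shelah's Theorem \ref{thm:DOT} into exactly two places (producing a chain of definable subgroups intersecting to $\Gamma_{\!A_*}$, and showing $\B^p_{A_*}$ is clopen so that $\E_{A_*}=\emptyset$ via Lemma \ref{lem:GCD}$(d)$), and both then run Corollary \ref{cor:oneY} with empty error set and finish with the same choice of $C$ and $D$. Your second claim and the endgame essentially reproduce the paper's Claim 2 and conclusion.

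The genuine soft spot is your construction of the chain $(H_n)$. You assert that ``the stabilizer of $p$ restricted to each fixed Boolean complexity level is an $\cR_{\!A_*}$-definable subgroup, definable over $G$,'' citing Shelah plus Baldwin--Saxl, but neither tool delivers this as stated. Shelah's theorem gives that such a level stabilizer is \emph{definable} (by a formula quantifying over the parameter variables of the defining schema), not that it lies in the ring generated by bi-translates of $A_*$, which is what the statement of the theorem requires of $H$ (and what feeds the ``Boolean combination of bi-translates'' clause of Theorem \ref{thm:MPW}); and Baldwin--Saxl concerns uniformly definable \emph{families of subgroups}, whereas the sets cut out by individual instances of the stabilizer condition are not subgroups, so invoking it here requires the full local stable-group-theory apparatus you defer to ``bookkeeping.'' The paper avoids all of this with a short concrete argument: take a $\mu_A$-wide type $p$ in the algebra of \emph{right} translates of $A_*$ concentrating on $\Gamma:=\Stab^\ell_{\!A}(A_*)$; by Theorem \ref{thm:DOT} its stabilizer $H$ is definable, and $\Gamma_{\!A_*}\leq H\leq\Gamma$ forces $[\Gamma:H]$ to be finite by compactness, so $\Gamma$ is definable; since $\Gamma$ is $\cR^r_{\!A_*}$-type-definable over $G$ (Lemma \ref{lem:NIPLie}), saturation upgrades this to $\cR^r_{\!A_*}$-definability over $G$, and Remark \ref{rem:Stabs} then yields the decreasing chain of $\cR_{\!A_*}$-definable conjugates intersecting to $\Gamma_{\!A_*}$ (with containment in $A_*A_*\inv\cap A_*\inv A_*$ obtained at some finite stage by saturation). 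Two further minor points: your appeal to stability of the two-sided formula $A(y_1\cdot x\cdot y_2)$ needs the (true, but nowhere proved in this paper) fact that stability is preserved under re-partitioning variables, whereas the paper only ever uses the one-sided formulas $A(x\cdot y)$ and $A(y\cdot x)$, which are stable directly from the definition of a stable subset; and to apply Theorem \ref{thm:DOT} you should first extend your generic $\cR_{\!A_*}$-type to a wide type over the full algebra of translates by arbitrary elements of $G^*$, as the paper does, since Shelah's theorem is applied to complete $\phi$-types over the model.
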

\begin{proof}
Note that $(ii)$ and $(iii)$ are equivalent. We prove $(i)$ and $(iii)$. Let $G^*\succ G$ be sufficiently saturated and set $A_*=A(G^*)$.  Let $\cB$, $\cB^\ell$, and $\cB^r$ denote the Boolean algebras of subsets of $G^*$ generated by all bi-translates, left translates, and right translates of $A_*$, respectively. Let $\Gamma=\Stab^\ell_{\!A}(A_*)$. 

\medskip

\noindent\textit{Claim 1:} $\Gamma$ is $\cR^r_{\!A_*}$-definable over $G$. 

\noindent\textit{Proof:} Recall that $\Gamma$ is $\cR^r_{\!A_*}$-type-definable over $G$ by Lemma \ref{lem:NIPLie}. Fix a $\mu_A$-wide type $p\in S(\cB^r)$ such that $p\models \Gamma$. Then $p$ is a complete $\phi$-type over $G$ for the stable formula $\phi(x;y):=A(x\cdot y)$, and so $H:=\Stab^r_{G^*}(p)$ is definable by Theorem \ref{thm:DOT}. Note that $H\leq \Gamma$ since $p\models\Gamma$. Note also that $H=\Stab^r_{\!A^*}(p{\upharpoonright}\cR^r_{\!A_*})$, and so $\Gamma_{\!A_*}\leq H$ by Lemma \ref{lem:Stabs}$(a)$. Altogether, $H$ is a definable subgroup of the type-definable group $\Gamma$, and $[\Gamma:H]$ is bounded since $\Gamma_{\!A_*}\leq H$. By a straightforward compactness exercise, $H$ has finite index in $\Gamma$ (e.g., set $X=\Gamma$ and $Y=H$ in \cite[Lemma 1.6]{HruAG}). So $\Gamma$ is definable, and therefore $\cR^r_{\!A_*}$-definable over $G$ by saturation.\claim
\medskip

By Claim 1 and Remark \ref{rem:Stabs}, $\Gamma_{\!A_*}=\bigcap_{n=0}^\infty H_n$, where each $H_n\leq G^*$ is $\cR_{\!A_*}$-definable over $G$ and contained in $A_*A_*\inv\cap A_*\inv A_*$. Note also that, for all $n\geq 0$, $A_*$ is covered by finitely many left cosets of $H_n$.

\medskip

\noindent\emph{Claim 2:} $\E_{A_*}=\emptyset$.

\noindent\emph{Proof:}  Let $p_*\in S(\cB)$ be a $\mu_A$-wide type such that $p_*\models\Gamma_{\!A}$, and let $p=p_*{\upharpoonright}\cR_{\!A_*}$. Then $\E_{A_*}\seq\partial\B^p_{\!A_*}$ by Lemma \ref{lem:GCD}$(d)$, and so it suffices to show that $\B^p_{\!A_*}$ is clopen in $\langle A_*\rangle/\Gamma_{\!A_*}$. Note that $\pi\inv(\B^p_{A_*})=\{a\in G^*:A_*\in ap\}$. Moreover, if $p_0=p_*{\upharpoonright}\cB^\ell$, then $p_0$ is a complete $\phi$-type with respect to the stable formula $\phi(x;y):=A(y\cdot x)$, and $\phi(x;a)\in p_0$ if and only if $A_*\in ap$. So $\pi\inv(\B^p_{A_*})$ is definable by Theorem \ref{thm:DOT}, and thus $\B^p_{\!A_*}$ is compact-open by Fact \ref{fact:LCLT}$(b)$.\claim
\medskip

By Claim 2, we can carry out the proof of Corollary \ref{cor:oneY} with $Z=\emptyset$. So there is some $n\geq 0$ such that if $g\in G^*$ then $\mu_A(gH_n\cap A_*)=0$ or $\mu_A(H_n\backslash A_*)=0$. This yields the desired result in $G$ as usual.
\end{proof}

In the context of the previous proof, one can similarly show that $\Stab^r_{\!A}(A_*)$ is in $\cR^\ell_{\!A_*}$, and that $\E_X=\emptyset$ for any $X\in\cR_{\!A_*}$. Note that we also obtain an explicit description of the subgroup $H$ in the statement of Theorem \ref{thm:stabUP}, namely, $H=\bigcap_{g\in F}g\Stab^\ell_{\!A}(A)g\inv$ for some finite $F\subset G$.

\begin{remark}\label{rem:MPW}
The proof of Theorem \ref{thm:MPW} in \cite{MPW} follows the same strategy with ultraproducts, and a similar argument involving definability of types is employed to find a definable subgroup $H\seq AA\inv$ such that $A$ is covered by finitely many left cosets of $H$. They then apply the stable arithmetic regularity lemma of the authors and Terry from \cite{CPT} to obtain the remaining properties. We also note that a quantitative version of Theorem \ref{thm:MPW} was proved recently by the first author \cite{CoQSAR}.
\end{remark}

\section{The abelian case}\label{sec:abelian}
 
The goal of this section is to prove Freiman-Ruzsa results for finite NIP sets with small doubling in abelian groups \emph{directly} from arithmetic regularity results for dense NIP sets in finite groups. The reason for doing this is that these regularity results for finite abelian groups are often effective (although sometimes qualitatively weaker; see Remark \ref{rem:qual} below). For example, using work of Alon, Fox, and Zhao \cite{AFZ}, we will obtain an effective Freiman-Ruzsa result for finite NIP sets with small doubling in abelian groups of bounded exponent. Altogether, the material in this section does not require the theorems proved above  (although Theorem \ref{thm:NIPgen-ab} crucially relies on \cite{CPpfNIP} and \cite{CPTNIP}).

Let us now state the two main results that will be proved in this section. The first is simply a restatement of Theorem \ref{thm:NIPgen} for abelian groups.

\begin{theorem}\label{thm:NIPgen-ab}
Suppose $G$ is an abelian group and $A\seq G$ is a finite $d$-NIP set with $k$-doubling. Given $\epsilon>0$, there is a proper coset progression $P\seq G$ of rank $O_{d,k,\epsilon}(1)$, and a set $Z\seq A+P$ with $|Z|<\epsilon|A|$, satisfying the following properties.
\begin{enumerate}[$(i)$]
\item $P\seq A-A$ and $A\seq C+P$ for some $C\seq A$ with $|C|\leq O_{d,k,\epsilon}(1)$. 
\item There is a set $D\seq C$ such that $|(A\smd(D+P))\backslash Z|<\epsilon|P|$.
\item For any $g\in G\backslash Z$, either $|(g+P)\cap A|<\epsilon|P|$ or $|(g+P)\backslash A|<\epsilon|P|$.
\end{enumerate}
\end{theorem}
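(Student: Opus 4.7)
The plan is to reduce Theorem \ref{thm:NIPgen-ab} to arithmetic regularity for NIP subsets of finite abelian groups (the referenced Theorem \ref{thm:CPTNIPab}) by means of the Green-Ruzsa modeling lemma. Specifically, given the input data $(G,A,d,k,\epsilon)$, choose a sufficiently large integer $s=s(d,\epsilon)\geq 2$ (to be fixed after inspecting the arguments below) and apply Green-Ruzsa modeling to $A$: this produces a finite abelian group $H$, a Freiman $s$-isomorphism $\phi\colon X\to X'\seq H$, where $A\seq X\seq sA-sA$, together with the density bound $|H|\leq O_{k,s}(|X'|)$. Since $\phi$ is a bijection and an $s$-isomorphism, it induces a bijection between $nA-mA$ and $n\phi(A)-m\phi(A)$ for all $n+m\leq s$, and in particular preserves relations of the form $a-b=a'-b'$ with entries in $A$.

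The next step is to transfer the NIP hypothesis: the bipartite graph $\Gamma_H(\phi(A))$ restricted to parameter translations by elements of $\phi(A)-\phi(A)$ is isomorphic (via $\phi$) to the corresponding restriction of $\Gamma_G(A)$. Using the Sisask observation recorded just after Theorem \ref{thm:TaoNIP}, $d$-NIP can be checked on parameters in $AA\inv$, so $\phi(A)$ is $d'$-NIP in $H$ for $d'=O(d)$ (provided $s\geq 2$ to ensure the relevant Freiman arithmetic is respected). Now apply the arithmetic regularity statement for NIP subsets of finite abelian groups (Theorem \ref{thm:CPTNIPab}) to the dense NIP set $\phi(A)\seq H$: this yields a subgroup $K\leq H$ of index $O_{d,k,\epsilon}(1)$, a set $C^{\circ}\seq \phi(A)$, a subset $D^{\circ}\seq C^{\circ}$, and an error set $Z^{\circ}\seq \phi(A)+K$ with $|Z^{\circ}|<\epsilon|\phi(A)|$ such that $\phi(A)\seq C^{\circ}+K$, $|(\phi(A)\smd (D^{\circ}+K))\backslash Z^{\circ}|<\epsilon|K|$, and every non-exceptional coset of $K$ is either $<\epsilon|K|$-intersecting or $>(1-\epsilon)|K|$-intersecting with $\phi(A)$.

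Finally, pull this structure back through $\phi$. Provided $s$ was chosen large enough that sums of boundedly many elements of $X'$ are respected by $\phi\inv$, the subgroup $K$ of $H$ pulls back to a proper coset progression $P=\phi\inv(K)\seq A-A$ of rank $O_{d,k,\epsilon}(1)$ (here one uses the standard fact that the preimage of a finite subgroup under a sufficiently high-order Freiman homomorphism into an abelian group is a proper coset progression of rank bounded by $\log_2|K|$, which is $O_{d,k,\epsilon}(1)$). Setting $C=\phi\inv(C^{\circ})$, $D=\phi\inv(D^{\circ})$, $Z=\phi\inv(Z^{\circ})$, the covering, approximation, and regularity statements transfer verbatim, giving conditions $(i)$, $(ii)$, $(iii)$ of Theorem \ref{thm:NIPgen-ab}. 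The main obstacle is bookkeeping the Freiman order $s$: it must be large enough simultaneously that (a) the NIP test, which a priori involves translates by arbitrary $g\in G$, survives the transfer via Corollary \ref{cor:VCG} and the Sisask reduction, and (b) the subgroup $K\leq H$ has a preimage that is a proper coset progression realized inside $A-A$. Both constraints are satisfied once $s$ is taken to be a large absolute constant multiple of a quantity depending only on $d$, $k$, and $\epsilon$, which completes the reduction.
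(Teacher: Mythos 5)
Your overall strategy (Green--Ruzsa modeling, regularity in the finite model, pull back through the Freiman isomorphism) is the same as the paper's, but there is a genuine gap at the central step: you invoke Theorem \ref{thm:CPTNIPab} as if it produced a subgroup $K\leq H$ of bounded index with the regularity properties. It does not, and it cannot: for a general finite abelian group (e.g.\ $H$ close to $\Z/N\Z$ with $N$ prime, which is what modeling an interval in $\Z$ produces) there are no proper subgroups of bounded index at all, yet NIP dense sets such as intervals certainly exist and fail the coset dichotomy with respect to $K=H$. The subgroup form of NIP arithmetic regularity is only available under a bounded exponent hypothesis (Alon--Fox--Zhao), which is exactly why the paper proves Theorem \ref{thm:NIPexp-ab} separately. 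What Theorem \ref{thm:CPTNIPab} actually gives is a $(\delta,r)$-Bohr set $B\seq A'-A'$ whose translates are regular for $A'$; the paper must then insert an additional step, Proposition \ref{prop:TV}$(b)(i)$, to sandwich a proper coset progression $P'$ of rank $\leq r$ between $B_\tau(s^{\nv 2s}\delta)$ and $B$, and the function $f(u,v)=\epsilon(u/2v^{2v})^{2v}$ in the regularity theorem is chosen precisely to absorb the size ratio $|B|/|B_\tau(s^{\nv 2s}\delta)|$ when converting ``regular for $B$'' into ``regular for $P'$''. Your proposal skips this conversion entirely, and it is the crux of the abelian argument.

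A second problem is the pullback: you appeal to a ``standard fact'' that the preimage of a finite subgroup under a high-order Freiman homomorphism is a proper coset progression of rank bounded by $\log_2|K|$. Even granting a subgroup output, $|K|$ is comparable to $|H|\approx|A|$ (only its index is bounded), so $\log_2|K|$ is nothing like $O_{d,k,\epsilon}(1)$, and the asserted fact is not the right tool. What actually works, and what the paper uses, is Proposition \ref{prop:TV}$(a)$: a Freiman $2$-isomorphism fixing $0$ carries a proper coset progression to a proper coset progression of the \emph{same} rank. To apply it one needs the progression to lie inside the domain of the induced map; this is why the paper adds the conclusion $B\seq A'-A'$ to Theorem \ref{thm:CPTNIPab} and builds the induced Freiman $2$-isomorphism $\psi$ on $3A'-2A'$ (so that $A'+2P'$ is in its domain) before transferring conditions $(i)$--$(iii)$, with the error parameters rescaled by the factor $\gamma$ coming from the Bohr-set-to-progression step. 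Fixing your argument essentially amounts to reinstating these two missing ingredients.
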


We still do not obtain an effective bound on $O_{d,k,\epsilon}(1)$. However, the only barrier to obtaining such bounds would be an effective version of Theorem \ref{thm:CPTNIPab} below (see Remark \ref{rem:abSis}). On the other hand, using work of Alon, Fox, and Zhao \cite{AFZ}, we will obtain an effective result in the setting of bounded exponent. The statement uses the following notation (which is explained by Lemma \ref{lem:GR}). Given integers $k,r\geq 1$ define
\[
c_r(k)=
\begin{cases}
(r-1) k^{12} & \text{if $r$ is prime,}\\
k^{2}r^{2k^2-2} & \text{otherwise.}
\end{cases}
\]
We now state a quantitative version of Theorem \ref{thm:NIPexp}  for abelian groups.

\begin{theorem}\label{thm:NIPexp-ab}
Suppose $G$ is an abelian group of exponent $r$, and $A\seq G$ is a finite $d$-NIP set with $k$-doubling. Fix $0<\delta\leq 1$ and let $\epsilon=\delta/c_r(k)$. Then there is a subgroup $H\leq G$ satisfying the following properties.
\begin{enumerate}[$(i)$]
\item $H\seq A-A$ and $A\seq C+H$ for some $C\seq A$ with $|C|\leq \exp(O_r(d^8))\epsilon^{\nv d}$.
\item There is a set $D\seq C$ such that $|A\smd (D+H)|<\delta|A|$.
\item There is a set $Z\seq A+H$, which is a union of cosets of $H$ with $|Z|<\delta^{1/2}|A|$, such that if $g\in G\backslash Z$ then $|(g+H)\cap A|<\epsilon^{1/4}|H|$ or $|(g+H)\backslash A|<\epsilon^{1/4}|H|$.
\end{enumerate}
\end{theorem}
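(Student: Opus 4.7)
The plan is to deduce Theorem \ref{thm:NIPexp-ab} from the quantitative NIP arithmetic regularity lemma of Alon--Fox--Zhao \cite{AFZ} for dense subsets of finite abelian groups of bounded exponent, using the Green--Ruzsa Freiman modeling theorem to pass from the infinite setting to a finite one. The formula $c_r(k)$ is exactly the density loss coming from Green--Ruzsa modeling, which explains the rescaling $\epsilon=\delta/c_r(k)$ between the ``internal'' regularity parameter and the final error bound $\delta$.

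First, I would invoke a Green--Ruzsa-type modeling lemma (Lemma \ref{lem:GR}) to obtain a finite abelian group $G'$ of exponent $r$ with $|G'|\leq c_r(k)|A|$, together with a Freiman $s$-isomorphism $\phi\colon A\to A'\seq G'$ for some $s$ large enough (say $s=8$) to pull back subgroups and to transfer the $d$-NIP property. Note $|A'|=|A|$, so $A'$ has density $|A'|/|G'|\geq 1/c_r(k)$, and $A'$ is $d$-NIP in $G'$ because NIP is encoded through Boolean combinations of translates $gA$ with $g\in A-A$, and a Freiman $s$-isomorphism preserves the incidence pattern of such translates on $A$ provided $s\geq 4$.

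Second, I would apply the Alon--Fox--Zhao NIP arithmetic regularity theorem \cite{AFZ} to $A'$ inside $G'$ with regularity parameter $\epsilon=\delta/c_r(k)$. This produces a subgroup $H'\leq G'$ of index at most $\exp(O_r(d^8))\epsilon^{-d}$ and a union of cosets $D'$ of $H'$ such that $|A'\smd D'|<\epsilon|G'|=\delta|A|$ and all but at most $\epsilon^{1/2}|G'|/|H'|$ cosets of $H'$ are either $\epsilon^{1/4}$-empty or $\epsilon^{1/4}$-full with respect to $A'$. By discarding the cosets of $H'$ disjoint from $A'$ (which allows us to shrink $D'\seq A'+H'$), one may further arrange that the ``bounded-covering'' conclusion $|D'|/|H'|\leq O(\epsilon^{-1})\leq \exp(O_r(d^8))\epsilon^{-d}$ holds.

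Third, I would pull back via $\phi$. Since $H'\seq A'-A'$ is a subgroup of $G'$, the order condition on $\phi$ lets us transfer $H'$ to a symmetric subset $H\seq A-A$ of $G$ which is itself a subgroup of $G$ (here one uses that $\phi$ being a Freiman $s$-isomorphism for $s\geq 8$ preserves the identities $h_1+h_2=h_3$ needed to verify subgroup closure). Likewise, $D'$ pulls back to $D\seq C\seq A$ with $|C|\leq\exp(O_r(d^8))\epsilon^{-d}$, and the measure-theoretic conclusions $(ii)$ and $(iii)$ transfer word-for-word because $\phi$ is a bijection preserving cardinalities of intersections $A\cap(g+H)$. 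The main obstacle will be bookkeeping the precise order of Freiman isomorphism needed so that subgroups, cosets, and NIP-shattering patterns all pull back faithfully, together with extracting the AFZ bounds in exactly the form displayed in the statement; both of these are routine but require care in the constants and in the translation between the ``regularity'' error $\epsilon|G'|$ and the ``structure'' error $\delta|A|$.
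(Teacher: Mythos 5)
Your overall route is the same as the paper's: Green--Ruzsa modeling (Lemma \ref{lem:GR}) plus preservation of $d$-NIP under Freiman isomorphism (Proposition \ref{prop:FItame}), then the Alon--Fox--Zhao regularity argument applied to the dense model $A'\seq G'$, then pullback along the Freiman isomorphism induced on a bounded sumset such as $2A'-A'$.

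There is, however, one step you assert without justification, and it is exactly the point where the hypotheses $\delta\leq 1$ and $\epsilon=\delta/c_r(k)$ do real work: the containment $H'\seq A'-A'$. The AFZ regularity lemma produces a bounded-index subgroup $H'$ with $|A'\smd(D'+H')|<\epsilon|G'|$, but it gives no containment of $H'$ in a difference set of $A'$. This containment is needed both for conclusion $(i)$ and, more seriously, for your pullback step to make sense at all: the Freiman isomorphism $\phi$ only controls bounded sum/difference sets of $A'$, so $H'$ can be transported back to $G$ (and shown to land in $A-A$ as a subgroup) only if it sits inside such a set. The paper supplies the missing argument: since $|A'|=|A|\geq|G'|/c_r(k)=\epsilon|G'|/\delta\geq\epsilon|G'|$ (this is where $\delta\leq 1$ enters), the set $D'$ of majority cosets is nonempty; fixing $x_0\in D'$, for every $h\in H'$ one has $|(x_0+H')\cap(h+A')|=|(x_0+H')\cap A'|>|H'|/2$, hence $(h+A')\cap A'\neq\emptyset$, i.e.\ $h\in A'-A'$. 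Two smaller imprecisions: your intermediate claim that the number of cosets in $D'$ is $O(\epsilon^{-1})$ is unjustified (and unnecessary, since the index bound already yields $|C|\leq\exp(O_r(d^8))\epsilon^{\nv d}$ after choosing coset representatives inside $A'$), and the almost-homogeneous-coset conclusion with the exceptional union of cosets $Z'$ is not the literal statement of \cite{AFZ} but has to be extracted from the proof of \cite[Lemma 2.4]{AFZ} (the paper follows that proof together with \cite[Lemma 8.2]{CoBogo}). With the containment argument added, your proof coincides with the paper's.
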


The implied constant $O_r(1)$ in the bound on $|C|$ from Theorem \ref{thm:NIPexp-ab} is the same as in \cite[Theorem 11.1]{SanBR} up to an absolute multiplicative factor.

\begin{remark}\label{rem:qual}
Although Theorem \ref{thm:NIPexp-ab} is quantitative, the structural approximation of the set $A$ in condition $(ii)$ is qualitatively weaker than in the non-abelian counterparts proved using ultraproducts. Indeed, Theorem \ref{thm:NIPexp}$(ii)$ implies  Theorem \ref{thm:NIPexp-ab}$(ii)$ modulo replacing $\delta$ with $\delta/k^2$ (as explained after Theorem \ref{thm:NIPgen}). On the other hand, Theorem \ref{thm:NIPexp}$(ii)$ provides a finer description of the error between $A$ and $D+H$, which cannot be obtained directly from Theorem \ref{thm:NIPexp-ab}$(ii)$ alone.
\end{remark}

Theorems \ref{thm:NIPgen-ab} and \ref{thm:NIPexp-ab} will both be proved using arithmetic regularity results for subsets of \emph{finite groups}. The key ingredient that makes this reduction possible is Green and Ruzsa's ``modeling lemma", which  lies at the heart of their proof of Freiman's Theorem for arbitrary abelian groups (Theorem \ref{thm:GR} above). Roughly speaking, this lemma says that a finite set of small doubling in an abelian group can be algebraically modeled by a dense set in a finite group. In order to state the result precisely, we first recall the definition of \emph{Freiman isomorphism}.

\begin{definition}
Suppose $G$ and $G'$ are abelian groups, and fix $A\seq G$, $A'\seq G'$, and $s\geq 1$. A function $\phi:A\to A'$ is a \textbf{Freiman $s$-isomorphism} if $\phi$ is bijective and for all $a_1,\ldots,a_s,b_1\ldots,b_s\in A$,
\[
a_1+\ldots+a_s=b_1+\ldots+b_s\miff \phi(a_1)+\ldots+\phi(a_s)=\phi(b_1)+\ldots+\phi(b_s).
\]
\end{definition}

It is easy to check that a Freiman $s$-isomorphism is a Freiman $s'$-isomorphism for any $1\leq s'\leq s$. Next we state the ``modeling lemma". 

\begin{lemma}[Green \& Ruzsa \cite{GrRuz2,GrRuz}]\label{lem:GR}
Suppose $G$ is an abelian group and $A\seq G$ is a finite set with $k$-doubling. Then, for any $s\geq 1$, there is an abelian group $G'$, with $|G'|\leq (10sk)^{10k^2}|A|$ and a Freiman $s$-isomorphism $\phi\colon A\to A'$ for some $A'\seq G'$ with $0\in A'$. Moreover, if $G$ has exponent $r$ then one may  assume that $G'$ has exponent $r$ and size at most $k^2r^{2k^2-2}|A|$. If $r$ is prime, then one may further assume $|G'|\leq (r-1)k^{2s}|A|$.
\end{lemma}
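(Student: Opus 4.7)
The plan is to reduce to the case $0 \in A$ by translation, then apply the Plunnecke-Ruzsa inequalities (Theorem~\ref{thm:TaoPSE}$(a)$ in the abelian case) to get $|tA - tA| \leq k^{2t}|A|$ for every $t\geq 1$. The basic strategy for producing a Freiman $s$-isomorphism $\phi\colon A \to A' \seq G'$ is to find a group homomorphism, defined on a subgroup of $G$ containing $sA$, that is injective on $sA - sA$: the image $A' = \phi(A)$ then sits inside $G'$, and the defining relations of a Freiman $s$-isomorphism are preserved because any collision of $s$-fold sums of $\phi(A)$ would lift to a collision among elements of $sA - sA$.

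In the bounded-exponent cases one can simply take $G' = \langle A \rangle$ with $\phi$ the identity on $A$, so the task reduces to bounding $|\langle A \rangle|$. When $r$ is prime, $G$ is an $\mathbb{F}_r$-vector space and the ascending chain of subspaces $V_t := \mathrm{span}(tA - tA)$ stabilises after boundedly many strict inclusions (each of which forces a multiplicative jump by at least $r$ in size, while Plunnecke-Ruzsa keeps things polynomially bounded in $k$); careful bookkeeping yields the bound $|\langle A \rangle|\leq (r-1)k^{2s}|A|$. For general exponent $r$, one replaces the role of vector-space dimension by the structure theorem for finite abelian groups and runs a similar sumset induction to obtain $|\langle A\rangle|\leq k^2 r^{2k^2-2}|A|$.

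For the general abelian case, $\langle A \rangle$ may be infinite, and one must construct $G'$ externally. I would follow Ruzsa's classical approach: the finitely generated subgroup $\langle 2sA - 2sA\rangle$ decomposes as $\mathbb{Z}^d \oplus T$ for some finite torsion group $T$; project $A$ into the $\mathbb{Z}^d$ factor, and then pick a random surjective homomorphism $\psi\colon \mathbb{Z}^d \to \mathbb{Z}/N\mathbb{Z}$ with $N$ of order $(10sk)^{10k^2}|A|$. A first-moment argument using $|sA - sA| \leq k^{2s}|A|$ shows that with positive probability $\psi$ is injective on $sA - sA$, so the resulting map descends to a Freiman $s$-isomorphism from $A$ into the finite abelian group $(\mathbb{Z}/N\mathbb{Z}) \oplus T$, which meets the required size bound.

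The hard part is the last step in the general case: balancing the random choice of $N$ to guarantee injectivity on $sA - sA$ while simultaneously controlling the contribution of the torsion summand $T$ is what forces the characteristic $(10sk)^{10k^2}$ bound, and it is the interaction between the probabilistic modelling and Plunnecke-Ruzsa (rather than either ingredient alone) that drives the exponent. The bounded-exponent cases are markedly cleaner because $\langle A \rangle$ is automatically finite, so no probabilistic modelling of an ambient $\mathbb{Z}^d$ is needed and one obtains the sharper bounds $(r-1)k^{2s}|A|$ and $k^2r^{2k^2-2}|A|$ by direct combinatorial arguments.
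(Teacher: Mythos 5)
Your proposal sets out to reprove the modeling lemma from scratch, whereas the paper disposes of it by citation (\cite[Proposition 1.2]{GrRuz} for the main claim, \cite[Theorem 6.1]{GrRuz2} for exponent $r$, and a generalization of \cite[Proposition 6.1]{GrRuz} for prime $r$); judged as a proof, your sketch has genuine gaps. In the bounded-exponent part, taking $G'=\langle A\rangle$ (after translating $0$ into $A$) is fine for the crude bound $k^2r^{2k^2-2}|A|$ --- that is exactly the content of \cite[Theorem 6.1]{GrRuz2}, proved via Ruzsa's covering lemma rather than your ``sumset induction'' --- but it cannot give the prime-exponent bound $(r-1)k^{2s}|A|$. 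For $A=\{0,e_1,\ldots,e_n\}\subseteq\F_2^n$ the doubling constant is about $n/2$ while $|\langle A\rangle|=2^n$, vastly larger than $(r-1)k^{2s}|A|$, which is polynomial in $n$. Moreover the mechanism you describe does not function: $\operatorname{span}(tA-tA)=\operatorname{span}(A-A)$ for every $t\geq 1$, so your chain $V_t$ never moves, and strict growth of the sets $tA-tA$ only adds elements rather than multiplying their size by $r$. The $s$-dependence of the bound is the giveaway that the prime case requires a genuine compression (e.g.\ a random linear map of $\operatorname{span}(A)$ onto $\F_r^m$ with $r^m$ of order $rk^{2s}|A|$, chosen injective on $sA-sA$); the identity map on $\langle A\rangle$ cannot meet that bound.

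The general abelian case has two further gaps. First, torsion: $\langle A\rangle\cong\Z^d\oplus T$ with $T$ finite but in no way bounded in terms of $|A|$, $k$, $s$ (take $A=\{0,x\}$ with $x$ of enormous finite order: then $k=2$ but $T=\langle x\rangle$ is huge). So your final group $(\Z/N\Z)\oplus T$ violates the required size bound, and ``projecting $A$ into the $\Z^d$ factor'' is not even injective --- in this example $A$ collapses to a point --- so it is not a Freiman isomorphism; compressing the torsion part is a substantive portion of Green--Ruzsa's argument, not bookkeeping. Second, the numerics do not close: a first-moment/union bound over $sA-sA$ using only Pl\"{u}nnecke--Ruzsa, $|sA-sA|\leq k^{2s}|A|$, forces $N\gtrsim k^{2s}|A|$, which is exponential in $s$, whereas the claimed bound $(10sk)^{10k^2}|A|$ is polynomial in $s$; for fixed $k$ and large $s$ your argument therefore proves something strictly weaker than the statement. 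The polynomial-in-$s$ dependence comes from the covering lemma: one finds $X\subseteq 2A-2A$ with $|X|\leq k^{O(1)}$ and $2A-2A\subseteq X+(A-A)$, whence inductively $sA-sA\subseteq (s-1)X+(A-A)$ and $|sA-sA|\leq s^{O(k^2)}k^{O(1)}|A|$. That step, together with the treatment of torsion and of the gcd issues in choosing the random homomorphism $\Z^d\to\Z/N\Z$, is what actually produces the exponent $10k^2$, and it is missing from your sketch.
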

\begin{proof}
The main claim is \cite[Proposition 1.2]{GrRuz} (one can always ensure $0\in A'$ by translating the isomorphism). If $G$ has exponent $r$ then, by \cite[Theorem 6.1]{GrRuz2}, there is $G'\leq G$ of size $k^2r^{2k^2-2}|A|$, and some $x\in G$, such that $A\seq x+G'$. So $\phi\colon A\to A-x$ such that $\phi(a)=a-x$ is a Freiman $s$-isomorphism for all $s\geq 1$. The final claim when $r$ is prime can be shown by directly generalizing \cite[Proposition 6.1]{GrRuz} (which deals with $r=s=2$). 
\end{proof}

The last ingredient we need is the fact that  NIP sets are preserved by Freiman isomorphism, which was first shown by Sisask \cite[Lemma 4.4]{SisNIP} (using a slightly different setup). Translated to our framework, Sisask's result implies that if $A\seq G$ is $d$-NIP in $G$ and Freiman $2$-isomorphic to $A'\seq G'$, then $A'$ is at worst $(d+1)$-NIP in $G'$. (This discrepancy is necessary in general, e.g., consider $A=A'=G$ and let $G'$ be a group properly containing $G$.) Although this is only a minor loss, the bounds in \cite{AFZ} are quite sharp in terms of $d$. Thus it is worth noticing that that the loss can be avoided when applying the modeling lemma.

 Given a group $G$, a set $A\seq G$, and $\cS\seq\cP([d])$ for some $d\geq 1$, we say that $A$ \emph{cuts out $\cS$ in $G$} if there are $x_1,\ldots,x_d\in G$ and $g_S\in G$ for $S\in\cS$ such that, given $i\in[d]$ and $S\in\cS$, $x_i\in g_S+A$ if and only if $i\in S$. Note that $A$ is $d$-NIP in $G$ if and only if it does not cut out $\cP([d])$ in $G$.

\begin{proposition}\label{prop:FItame}
In the statement of Lemma \ref{lem:GR}, if $s\geq 4$ and $A$ is $d$-NIP  in $G$ then one may assume $A'$ is $d$-NIP in $G'$. 
\end{proposition}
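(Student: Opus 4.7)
The plan is to establish the contrapositive: assuming $A'$ is not $d$-NIP in $G'$, produce witnesses showing that $A$ is not $d$-NIP in $G$, contradicting the hypothesis. Since Lemma~\ref{lem:GR} gives $0\in A'$, there is $a_0\in A$ with $\phi(a_0)=0$; replacing $A$ by $A-a_0$ (which preserves $d$-NIP of $A$ in $G$) and adjusting $\phi$ accordingly, I may assume $0\in A$ and $\phi(0)=0$. Now suppose $A'$ cuts out $\cP([d])$ in $G'$, via $x'_1,\dots,x'_d\in G'$ and $g'_S\in G'$ for $S\seq[d]$, with $x'_i\in g'_S+A'$ iff $i\in S$.

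Applying the case $S=[d]$ and translating the witnesses ($x'_i\mapsto x'_i+g'_{[d]}$, $g'_S\mapsto g'_S-g'_{[d]}$), I may assume $x'_i=a'_i\in A'$ and $g'_{[d]}=0$; for each nonempty $S$, fix $i_S\in S$ and write $g'_S=b'_S-a'_{i_S}$ with $b'_S:=g'_S+a'_{i_S}\in A'$. For nonempty $S$ and any $j\in[d]$, the condition ``$g'_S+x'_j\in A'$'' is equivalent to the existence of $c'\in A'$ satisfying $b'_S+a'_j=a'_{i_S}+c'$, a $2$-versus-$2$ sum equation preserved by $\phi$ as a Freiman $2$-isomorphism (which is automatic from being a Freiman $4$-isomorphism). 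Setting $x_j:=\phi^{-1}(a'_j)\in A$ and $g_S:=\phi^{-1}(b'_S)-\phi^{-1}(a'_{i_S})\in A-A$ thus yields $g_S+x_j\in A$ iff $j\in S$, for all $j\in[d]$ and all nonempty $S\seq[d]$.

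The main obstacle, and where the hypothesis $s\geq 4$ enters decisively, is the empty-set case: I need $g_\emptyset\in G$ with $g_\emptyset+x_j\notin A$ for every $j$. The approach is to exploit the internal structure of the Green--Ruzsa construction underlying Lemma~\ref{lem:GR} to ensure that $g'_\emptyset$ can be taken in a bounded signed sumset such as $A'+A'-A'$ (this is a standard feature of the construction, in which $G'$ arises as a quotient of a sumset of $A$). Writing $g'_\emptyset=p'+q'-r'$ with $p',q',r'\in A'$, the statement ``$g'_\emptyset+x'_j\notin A'$'' becomes, for each $j$, the non-existence of $c'\in A'$ satisfying $p'+q'+a'_j=r'+c'$; padding with the element $0\in A'$ to equalize term counts, this is a $4$-versus-$4$ sum equation, preserved by the Freiman $4$-isomorphism~$\phi$. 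Defining $g_\emptyset:=\phi^{-1}(p')+\phi^{-1}(q')-\phi^{-1}(r')\in G$, the analogous condition $g_\emptyset+x_j\notin A$ for all $j$ then holds in $G$.

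Combining the transferred witnesses $(x_j)_{j\in[d]}$ and $(g_S)_{S\seq[d]}$, I conclude that $A$ cuts out $\cP([d])$ in $G$, contradicting $d$-NIP. The principal subtlety lies in verifying (or arranging via a mild tweak of the construction) that $g'_\emptyset$ can be taken inside a controlled signed sumset of $A'$---this requires looking inside the Green--Ruzsa argument from~\cite{GrRuz2}, whereas the transfer of the nonempty-$S$ witnesses only needs Freiman $2$-isomorphism. The role of $s\geq 4$ is thus dedicated entirely to handling the empty-set case and accommodating the $4$-term structure that arises from $g'_\emptyset\in A'+A'-A'$ (with one $0$-padding on each side).
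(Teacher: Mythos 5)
Your transfer of the nonempty-set witnesses is essentially the paper's first step (modulo a sign slip: from $x'_{i_S}\in g'_S+A'$ with $x'_{i_S}=a'_{i_S}\in A'$ you get $a'_{i_S}-g'_S\in A'$, so $g'_S=a'_{i_S}-c'_S$ with $c'_S\in A'$, not $g'_S=b'_S-a'_{i_S}$ with $b'_S=g'_S+a'_{i_S}\in A'$; this is easily repaired and only needs a Freiman $2$-isomorphism, as you say). The genuine gap is exactly where you locate the crux: the empty-set witness. Your plan rests on the claim that $g'_\emptyset$ ``can be taken'' in a bounded signed sumset such as $A'+A'-A'$, justified only by an appeal to the internal structure of the Green--Ruzsa construction. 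This is not part of the statement of Lemma \ref{lem:GR}, and it is not true in the generality needed: $g'_\emptyset$ is an arbitrary element of $G'$ handed to you by the failure of $d$-NIP, and the model set $A'$ is merely dense (density roughly $(10sk)^{-10k^2}$) in $G'$, which does not force $G'=A'+A'-A'$; even after the harmless reduction to $G'=\langle A'\rangle$, elements of $\langle A'\rangle$ need arbitrarily long words, so no fixed Freiman $s$-isomorphism transfers them. Note also that some modification of the model is unavoidable: as observed in the paper right before the proposition, a Freiman-isomorphic copy of a $d$-NIP set can genuinely fail to be $d$-NIP when $G'$ properly contains $\langle A'\rangle$ (take $A=A'=G\subsetneq G'$), so the ``automatic'' contrapositive you aim for is false as stated and any proof must build in a reduction such as replacing $G'$ by $\langle A'\rangle$.

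The paper avoids transferring $g'_\emptyset$ altogether. After the reduction $G'=\langle A'\rangle$ and the transfer of the nonempty sets, $d$-NIP of $A$ forces every translate $h+A$ to meet $X=\phi^{-1}(X')$ (otherwise $h$ would serve as an empty-set witness in $G$), i.e.\ $G=X-A\subseteq A-A$. The hypothesis $s\geq 4$ is then used not to move a $4$-term equation, but to make the induced map $\psi(a-b)=\phi(a)-\phi(b)$ a Freiman $2$-isomorphism on $A-A=G$; since its domain is a group and $\psi(0)=0$, $\psi$ is a homomorphism, so $\psi(G)=A'-A'$ is a subgroup containing $A'$, hence equals $G'=\langle A'\rangle$, giving $G'=X'-A'$ and contradicting $g'_\emptyset\notin X'-A'$. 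If you want to salvage your outline, you would have to prove (not assume) a statement of the form ``one may take $G'$ equal to a bounded signed sumset of $A'$,'' which is precisely the missing content; as it stands, the empty-set step of your argument does not go through.
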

\begin{proof}
Assume $A$ is $d$-NIP in $G$. Note that any translate of $A$ is $d$-NIP and Freiman $s$-isomorphic to $A$ for any $s$. So we may assume $0\in A$. Let $\phi\colon A\to A'\seq G'$ be a Freiman $s$-isomorphism as in Lemma \ref{lem:GR}, with $s\geq 4$.   Without loss of generality, we may assume $G'$ is generated by $A'$. In this case, we show that $A'$ is $d$-NIP in $G'$. For a contradiction, suppose $A'$ cuts out $\cP([d])$ in $G'$, witnessed by $X'=\{x'_1,\ldots,x'_d\}\seq G'$ and $\{g_S:S\seq[d]\}\seq G'$. After translating by $\nv g_{[d]}$, we may assume $X'\seq A'$.

 Let $\cS=\cP([d])\backslash\{\emptyset\}$. Then, for any $S\in\cS$, we have $(g_S+A')\cap X'\neq\emptyset$, and so $g_S\in A'-A'$. Let $g_S=a_S-b_S$, where $a_S,b_S\in A'$, and set $h_S=\phi\inv(a_S)-\phi\inv(b_S)\in G'$. We claim that, for any $S\in\cS$, we have $\phi\inv(x'_i)\in h_S+A$ if and only if $i\in S$, and so $A$ cuts out $\cS$ in $G$. So fix $S\in\cS$ and $i\in[d]$. Suppose $\phi\inv(x'_i)=h_S+a$ for some $a\in A$, and let $a'=\phi(a)$. Then $\phi\inv(x'_i)+\phi\inv(b_S)=\phi\inv(g_S)+\phi\inv(a')$, and so $x'_i=g_S+a'\in g_S+A'$, which implies $i\in S$. Conversely, if $i\in S$, then there is some $a'\in A'$ such that $x'_i+b_S=a_S+a'$, and so $\phi\inv(x'_i)=h_S+\phi\inv(a')\in h_S+A$. 

Now, since $A$ does not cut out $\cP([d])$, it follows that $(h+A)\cap X\neq \emptyset$ for all $h\in G$, i.e., $G=X-A$. Define $\psi\colon A-A\to A'-A'$ such that, given $a,b\in A$, $\psi(a-b)=\phi(a)-\phi(b)$. Then $\psi$ is a well-defined Freiman $\lfloor s/2\rfloor$-isomorphism with domain $G$. Moreover, $\psi(G)$ is a subgroup of $G'$ containing $A'=\psi(A)$ (recall that $A$ contains $0$). Therefore $G'=\psi(G)=\psi(X-A)=\phi(X)-\phi(A)=X'-A'$. But this is a contradiction, since $g_\emptyset\not\in X'-A'$. 
\end{proof}

The proofs of Theorems \ref{thm:NIPgen-ab} and \ref{thm:NIPexp-ab} will follow a common strategy. Given an abelian group $G$ and a finite NIP set $A\seq G$ with small doubling, we will use Lemma \ref{lem:GR} (and Proposition \ref{prop:FItame}) to move to a finite abelian group and an NIP dense set $A'\seq G'$. We will then apply an appropriate arithmetic regularity result in $G'$ to show that $A'$ can be approximated by algebraically well-structured objects that are preserved by Freiman isomorphism, and thus can be pulled back to approximate the original set $A$. In Theorem \ref{thm:NIPexp-ab}, these algebraic objects will be subgroups, and will transfer easily. However, for Theorem \ref{thm:NIPgen-ab}, we will start with Bohr sets (defined below) in $G'$, which themselves will need to be approximated by coset progressions before being pulled back to $G$. In order to do this secondary approximation, we will need a qualitatively sharper arithmetic regularity result (Theorem \ref{thm:CPTNIPab}) than what is available in the literature of quantitative tools in additive combinatorics. It is for this reason that Theorem \ref{thm:NIPgen-ab} remains ineffective. In order to state Theorem \ref{thm:CPTNIPab}, we need the following definition. 

\begin{definition}
Let $G$ be a group, and suppose $\tau\colon G\to (\R/\Z)^r$ is a group homomorphism, where we view $(\R/\Z)^r$ as an additive group with identity $\boldsymbol{0}$. Given $\delta>0$, define
\[
B_\tau(\delta)=\{x\in G:d(\tau(x),\boldsymbol{0})<\delta\},
\]
where $d$ is the $r$-fold product of the arclength metric on $\R/\Z=S^1$. A subset of $G$ obtained in this way is called a \textbf{$(\delta,r)$-Bohr set in $G$}.
\end{definition}

\begin{theorem}[C., P., Terry]\label{thm:CPTNIPab} Given $d\in\Z^+$, $\alpha,\epsilon\in\R^+$, and $f\colon (0,1]\times\N\to \R$, there is an integer $n=n(d,\alpha,\epsilon,f)$ such that the following holds. Suppose $G$ is a finite abelian group and $A\seq G$ is $d$-NIP, with $|A|\geq\alpha|G|$. Then there is a $(\delta,r)$-Bohr set $B\seq G$ and a set $Z\seq G$ such that:
\begin{enumerate}[$(i)$]
\item $\delta\inv, r\leq n$, $|Z|<\epsilon|G|$, and $B\seq A-A$,
\item for any $g\in G\backslash Z$, either $|(g+B)\cap A|<f(\delta,r)|B|$ or $|(g+B)\backslash A|<f(\delta,r)|B|$.
\end{enumerate}
\end{theorem}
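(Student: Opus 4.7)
The plan is to proceed by contradiction via an ultraproduct argument, paralleling the derivation of Theorem \ref{thm:NIPgen} from Theorem \ref{thm:mainUP} and exploiting the machinery of Sections \ref{sec:NIPLie} and \ref{sec:dom} in the abelian setting. Suppose the theorem fails; then there exist $d$, $\alpha$, $\epsilon$, $f$ and a sequence $(G_n, A_n)_{n \in \N}$ of counterexamples: each $G_n$ is a finite abelian group, each $A_n \seq G_n$ is $d$-NIP with $|A_n| \geq \alpha |G_n|$, and no $(\delta, r)$-Bohr set in $G_n$ with $\delta\inv, r \leq n$ satisfies the required conditions. Passing to a nonprincipal ultraproduct yields a pseudofinite abelian group $G = \prod_{\cU} G_n$ and an internal $d$-NIP set $A = \prod_{\cU} A_n \seq G$ with $\mu_G(A) \geq \alpha$, whence $\mu_A(G) \leq 1/\alpha$, so $A$ has finite doubling and hence finite tripling.

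Next, apply Theorem \ref{thm:Stabs} to obtain a type-definable bounded-index subgroup $\Gamma_{\!A} \leq G$ with $\Gamma_{\!A} \seq A - A$. Since $G$ is abelian and definable, $G/\Gamma_{\!A}$ is a compact Hausdorff abelian topological group under the logic topology. The central claim is that $\Gamma_{\!A}$ admits a neighborhood base, in this topology, coming from images of \emph{internal} Bohr sets: there should be a decreasing sequence $(B_k)_{k \in \N}$ of internal Bohr sets $B_k \seq G$ with bounded parameters $(\delta_k, r_k)$, each $B_k$ an ultraproduct of $(\delta_k, r_k)$-Bohr sets in the $G_n$, such that $\bigcap_k B_k = \Gamma_{\!A}$. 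By Pontryagin duality on $G/\Gamma_{\!A}$, every open neighborhood of the identity contains a set $\{y : d(\chi(y), 0) < \delta\}$ for some continuous character $\chi \colon G/\Gamma_{\!A} \to (\R/\Z)^r$; one must realize such $\chi$ as the lift of an internal character of $G$, by density of ultraproducts of finite-group characters in the appropriate dual. By saturation, each $B_k$ can be further assumed to lie inside $A - A$.

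With $(B_k)$ so constructed, Corollary \ref{cor:oneY} yields, for the given $\epsilon$, some index $k$ and a set $Z \seq G$ with $\mu_A(Z) < \epsilon$ such that for every $g \in G \setminus Z$ we have $\mu_A((g+B_k) \cap A) = 0$ or $\mu_A((g+B_k) \setminus A) = 0$. Once the parameters $(\delta_k, r_k)$ are fixed, the tolerance $f(\delta_k, r_k)$ is a concrete positive real, so by {\L}o\'{s}'s theorem, for $\cU$-almost all $n \geq \max(\delta_k\inv, r_k)$ we can extract a $(\delta_k, r_k)$-Bohr set $B_{k,n} \seq A_n - A_n$ in $G_n$ and an error set $Z_n$ witnessing conditions $(i)$ and $(ii)$ of the theorem for $G_n$ and $A_n$, contradicting the choice of counterexamples.

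The main obstacle is the realization of continuous characters of $G/\Gamma_{\!A}$ as internal characters of $G$, a step with no direct analogue in the non-abelian or bounded-exponent settings treated earlier (where Proposition \ref{prop:GYbd} immediately gives definable subgroup approximations). A cleaner alternative may be to bypass explicit Pontryagin duality and instead invoke the Gleason-Yamabe theorem to present $G/\Gamma_{\!A}$ as an inverse limit of compact abelian Lie groups, each a finite product of circles and finite abelian groups, and then argue that every such Lie quotient of $G/\Gamma_{\!A}$ is induced by the kernel of an internal homomorphism $G \to (\R/\Z)^r$ defining a Bohr set.
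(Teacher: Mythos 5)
Your overall frame (ultraproduct of counterexamples, finite tripling from $\mu_A(G)\leq 1/\alpha$, construct $\Gamma_{\!A}\seq A-A$, apply Corollary \ref{cor:oneY}, transfer back by {\L}o\'{s}) is sound, but the proposal has a genuine gap exactly at the step you yourself flag as the main obstacle: producing \emph{internal} Bohr sets (ultraproducts of genuine $(\delta,r)$-Bohr sets in the finite groups $G_n$, with bounded $\delta\inv,r$) that squeeze between $\Gamma_{\!A}$ and any given definable neighborhood of it. The quotient map $\pi\colon G\to G/\Gamma_{\!A}$ is not definable, so a continuous character $\chi$ of $G/\Gamma_{\!A}$ composed with $\pi$ is merely an abstract homomorphism $G\to(\R/\Z)^r$ with no internal or definable structure; there is no ``density of ultraproducts of finite-group characters in the appropriate dual'' argument that hands you an internal character inducing $\chi$, and the Gleason--Yamabe/inverse-limit variant you suggest faces the same issue (each Lie quotient of $G/\Gamma_{\!A}$ is a quotient by a type-definable subgroup, not visibly the kernel of an internal homomorphism). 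Converting the Lie-model data into honest Bohr sets in the finite groups $G_n$ is the substantive technical content here: in \cite{CPTNIP} it occupies the construction of (approximate) Bohr neighborhoods underlying Theorem 5.5 and Lemma 5.6 there, and it does not follow from saturation alone. So as written the argument assumes precisely the hard part.

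Note also that the paper does not reprove this from scratch: Theorem \ref{thm:CPTNIPab} is obtained by quoting the abelian case of \cite[Lemma 5.6]{CPTNIP} and modifying its proof only to add the density hypothesis and the extra conclusion $B\seq A-A$; the latter is extracted by noting that the approximate Bohr neighborhood there can be taken inside any member of a definable chain intersecting to $G^{00}_{\theta^r}$, and that $G^{00}_{\theta^r}=\Stab^\ell_G(p)\seq A_*-A_*$ for a suitable generic $\theta^r$-type $p$ containing $A_*$ (by \cite{CPpfNIP}). If you want a self-contained route along your lines, you would need to import (or reprove) that Bohr-neighborhood construction; with it in hand, your remaining steps (downward transfer of regularity from a definable $W\supseteq\Gamma_{\!A}$ to a Bohr set $B$ with $\Gamma_{\!A}\seq B\seq W$, genericity of $B$ so that $f(\delta,r)\mu_A(B)>0$, and the final {\L}o\'{s} extraction) do go through; also note you do not need $\bigcap_k B_k=\Gamma_{\!A}$ exactly, only that every definable set containing $\Gamma_{\!A}$ contains such a $B$ of bounded parameters.
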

\begin{proof}
This is essentially the abelian case of \cite[Lemma 5.6]{CPTNIP}, except we have added an extra assumption that $|A|\geq\alpha|G|$ and the extra conclusion $B\seq A-A$. To obtain this modification, note that in the proof of \cite[Lemma 5.6]{CPTNIP} if we have $G=\prod_{\cU}G_t$ and $A=\prod_{\cU}A_t$ with $|A_t|\geq\alpha|G_t|$ for all $t>0$, then we have the extra property that $\mu(A)\geq\alpha>0$, where $\mu$ is the $G$-normalized pseudofinite counting measure.  Let $G^*\succ G$ be sufficiently saturated. Following the proof of \cite[Lemma 5.6]{CPTNIP}, we obtain a definable approximate Bohr neighborhood $Y$ whose translates are regular for $A_*$ (outside a set of measure less than $\epsilon$). From the rest of the proof, one can see that it suffices to show $Y\seq A_*-A_*$. At this point, we turn to the proof of \cite[Theorem 5.5]{CPTNIP}, where $Y$ is obtained inside some definable set $W$ taken from a descending chain $(W_i)_{i=0}^\infty$ of definable sets whose intersection is $G^{00}_{\theta^r}$, where $\theta^r(x;y,z)$ is the formula $A(y\cdot x\cdot z)$. In this situation, we can can replace $W$ by $W_i$ for any large enough $i$, without affecting the rest of the proof. Therefore, it suffices to show $G^{00}_{\theta^r}\seq A_*-A_*$. This follows from the fact that there is a generic $\theta^r$-type $p$ containing $A_*$ by \cite[Proposition 3.12]{CPpfNIP}, and  $G^{00}_{\theta^r}=\Stab^\ell_G(p)$ by \cite[Theorem 3.12]{CPpfNIP}. 
\end{proof}

The next proposition collects several standard facts from additive combinatorics concerning coset progressions and Bohr sets.

\begin{proposition}\label{prop:TV}
Let $G$ be an abelian group.
\begin{enumerate}[$(a)$]
\item Suppose $P\seq G$ is a coset progression, and $\phi\colon P\to G'$ is a Freiman $2$-isomorphism from $P$ to a subset of an abelian group $G'$, with $\phi(0)=0$.  Then $\phi(P)$ is a coset progression of the same rank as $P$. Moreover, if $P$ is proper then so is $\phi(P)$.
\item Suppose $G$ is finite and $B=B_\tau(\delta)$ is a $(\delta,r)$-Bohr set in $G$.
\begin{enumerate}[$(i)$]
\item There is a proper coset progression $P\seq G$ of rank $s\leq r$ such that $B_\tau(s^{\nv 2s}\delta)\seq P\seq B_\tau(\delta)$. 
\item For any $A\seq G$ there is $X\seq A$ such that $|X|\leq (2/\delta)^r$ and $A\seq X+B$.
\end{enumerate}
\end{enumerate}
\end{proposition}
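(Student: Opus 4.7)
The plan is to handle each part separately: (a) is a direct algebraic verification using that $\phi(0) = 0$; (b)(i) is a standard geometry-of-numbers result that we will cite; and (b)(ii) is a short greedy packing argument.

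For part (a), write $P = P_0 + H$ where $P_0 = P(u_1, \ldots, u_r; L_1, \ldots, L_r)$ and $H$ is a finite subgroup. The single observation driving everything is that since $0 \in H \seq P$ and $\phi(0) = 0$, whenever $a, b, c \in P$ satisfy $a + b = c$ in $G$ we may rewrite this as $a + b = c + 0$ and invoke the 2-isomorphism to conclude $\phi(a) + \phi(b) = \phi(c)$ in $G'$. Applied with $a, b \in H$ this shows $\phi(H)$ is closed under addition, and applied to $h + (-h) = 0$ it gives $\phi(-h) = -\phi(h)$; hence $H' := \phi(H)$ is a subgroup of $G'$. Inducting on the number of generators and on $|n_i|$ (all partial sums $\sum_i n_i u_i$ remain in $P_0 \seq P$ when $|n_i| \leq L_i$) yields $\phi(P_0) = P(\phi(u_1), \ldots, \phi(u_r); L_1, \ldots, L_r) =: P_0'$. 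Finally, applied to each $p + h \in P$ with $p \in P_0$ and $h \in H$, one gets $\phi(P) = P_0' + H'$, a coset progression of rank $r$ in $G'$. Since $\phi$ is bijective, $|P_0'| = |\phi(P_0)| = |P_0|$, so properness of $P_0$ transfers to $P_0'$.

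For part (b)(i), this is a classical fact in additive combinatorics: proper coset progressions are cofinal inside Bohr sets, with the stated scale $s^{-2s}\delta$. The standard proof (see e.g.\ \cite[Lemma 4.22]{TaoVu}) runs Minkowski's second theorem on the lattice $\Lambda \seq \R^r$ associated with a lift of $\tau$, extracts a basis of short successive minima, and assembles the corresponding proper generalized arithmetic progression together with $\ker\tau$ as the subgroup part. The effective rank $s$ equals the dimension of the $\R$-span of the image lattice, so $s \leq r$, and the $s^{-2s}$ factor comes directly from the Minkowski bounds. We will simply cite this result.

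For part (b)(ii), greedily pick $x_1, x_2, \ldots \in A$ with $x_{i+1} \not\in (x_1 + B) \cup \cdots \cup (x_i + B)$ until the process terminates, leaving $X = \{x_1, \ldots, x_N\} \seq A$ with $A \seq X + B$. For $i \neq j$ one has $x_i - x_j \not\in B$, so $d(\tau(x_i), \tau(x_j)) \geq \delta$; hence the open balls of radius $\delta/2$ around the points $\tau(x_i)$ in the unit-volume torus $(\R/\Z)^r$ are pairwise disjoint. Each such ball has volume $\delta^r$, giving $N \leq \delta^{-r} \leq (2/\delta)^r$. The only genuinely nontrivial input is the geometry-of-numbers lemma invoked in (b)(i), which is entirely classical and taken off the shelf; parts (a) and (b)(ii) are short and direct.
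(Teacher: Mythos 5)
Your proposal is correct, but it takes a more self-contained route than the paper, which disposes of all three parts by citation to \cite{TaoVu}: Proposition 5.24 there for part $(a)$, Lemma 4.22 for $(b)(i)$ (exactly the geometry-of-numbers fact you describe and also cite), and, for $(b)(ii)$, the Ruzsa covering lemma (Lemma 2.14) combined with the Bohr-set size bound $|B_\tau(\delta/2)|\geq(\delta/2)^r|G|$ (Lemma 4.20) and the inclusion $B_\tau(\delta/2)+B_\tau(\delta/2)\seq B_\tau(\delta)$. Your part $(a)$ is a sound direct verification of what the cited Proposition 5.24 asserts: since $0\in P$ and $\phi(0)=0$, any relation $a+b=c$ inside $P$ transfers to $\phi(a)+\phi(b)=\phi(c)$, which gives that $\phi(H)$ is a subgroup, that $\phi$ is additive on $P_0$ by induction on $\sum_i|n_i|$ (all intermediate sums staying in $P_0$), and that $\phi(P)=\phi(P_0)+\phi(H)$; properness transfers because $\phi$ is injective, so distinct coefficient tuples still give distinct elements. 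Your part $(b)(ii)$ replaces the paper's Ruzsa-covering argument by a greedy selection plus a volume-packing bound in the torus: this yields a covering of $A$ by $X+B$ directly, without passing through $B_\tau(\delta/2)$, and in fact the slightly stronger bound $|X|\leq\delta^{\nv r}$. The trade-off is that your volume count (each ball of radius $\delta/2$ having Haar measure $\delta^r$) relies on reading the ``$r$-fold product'' metric as the sup metric and on $\delta\leq 1$, which is the standard convention and the only regime in which the statement has content; the paper's route via $|B_\tau(\delta/2)|\geq(\delta/2)^r|G|$ stays entirely inside $G$ and is insensitive to that convention. Both approaches are standard and correct.
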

\begin{proof}
See \cite[Proposition 5.24]{TaoVu} for part $(a)$,\footnote{In \cite{TaoVu}, coset progressions are not assumed to be symmetric, but it is easy to check that a Freiman $2$-isomorphism preserving the identity also preserves symmetric sets.} 
and \cite[Lemma 4.22]{TaoVu} for part $(b)(i)$. Part $(b)(ii)$ follows from Lemmas 2.14\footnote{This is the abelian case of Ruzsa's Covering Lemma (Lemma \ref{lem:RCL}).} 
and 4.20 of \cite{TaoVu}, together with  the fact that $B_\tau(\delta/2)+B_\tau(\delta/2)\seq B_\tau(\delta)$. (See also \cite[Proposition 4.9]{CPTNIP}.)
\end{proof}

We can now prove the main theorems stated at the beginning of this section.

\begin{proof}[\textnormal{\textbf{Proof of Theorem \ref{thm:NIPgen-ab}}}]
Fix an abelian group $G$ and a finite $d$-NIP set $A\seq G$ with $k$-doubling.   By Lemma \ref{lem:GR} and Proposition \ref{prop:FItame}, there is an abelian group $G'$ of size at most $(100k)^{10k^2}|A|$, a $d$-NIP subset $A'\seq G'$ containing $0$, and a Freiman $10$-isomorphism $\phi\colon A\to A'$. Note that it is enough to prove the desired conditions for a translate of $A$, and so after shifting $\phi$ and $A$, we can further assume $0\in A$ and $\phi(0)=0$.

Fix $\epsilon>0$, and set $f(u,v)=\epsilon(u/2v^{2v})^{2v}$ and  $\alpha=(100k)^{\nv 10k^2}$. Let $n=n(d,\alpha, \epsilon\alpha,f)$ be as in Theorem \ref{thm:CPTNIPab}.
 Then, by Theorem \ref{thm:CPTNIPab}, there is a $(\delta,r)$-Bohr set $B=B_\tau(\delta)\seq G'$, with $\delta\inv ,r\leq n$, and a set $Z'\seq G'$, with $|Z|<\epsilon \alpha|G'|\leq \epsilon|A'|$, such that $B\seq A'-A'$ and, for all $x\in G'\backslash Z'$, either $|(x+B)\cap A'|<f(\delta,r)|B|$ or $|(x+B)\backslash A'|<f(\delta,r)|B|$. Set $\gamma=(\delta/2r^{2r})^r$, and note that $f(\delta,r)=\epsilon\gamma^2$. 

By Proposition \ref{prop:TV}$(b)(i)$, we may fix a proper coset progression $P'$ in $G'$ of rank $s\leq r$ such that $B':=B_\tau(s^{\nv 2s}\delta)\seq P'\seq B$. Note also that $|B|\leq \gamma|B'|$ by Proposition \ref{prop:TV}$(b)(ii)$, and so $f(\delta,r)|B|=\epsilon\gamma^2|B|\leq \epsilon\gamma|B'|$. 
It follows that for any $x\in G'\backslash Z'$, we have $|(x+P')\cap A'|<\epsilon\gamma|B'|$ or $|(x+P')\backslash  A'|<\epsilon\gamma|B'|$. 
Without loss of generality, we may assume $Z'\seq A'+P'$ while still maintaining the previous property and  $|Z'|<\epsilon|A'|$. By Proposition \ref{prop:TV}$(b)(ii)$, there are $C'\seq A'$ and $F'\seq A'\backslash Z'$ such that $|C'|,|F'|\leq\gamma\inv$, $A'\seq C'+B'$, and $A'\backslash Z'\seq F'+B'$. 

Let us summarize what has been proved. We have a proper coset progression $P'\seq G'$ of rank $s\leq O_{d,k,\epsilon}(1)$, and a set $Z'\seq A'+P'$, such that:
\begin{enumerate}[\hspace{5pt}$(1)$]
\item $|Z'|<\epsilon|A'|$ and $P'\seq A'-A'$,
\item if $x\in G\backslash Z'$ then either $|(x+P')\cap A'|<\epsilon\gamma|P'|$ or $|(x+P')\backslash A'|<\epsilon\gamma|P'|$,
\item there is $C'\seq A'$ such that $|C'|\leq \gamma\inv$ and $A'\seq C'+P'$, and
\item there is $F'\seq A'\backslash Z'$ such that $|F'|\leq \gamma\inv$ and $A'\backslash Z'\seq F'+P'$.
\end{enumerate}

Recall that we assumed $0\in A'$, and so the sets $A'$, $P'$, $A'+P'$, and $A'+2P'$ are all contained in $W':=3A'-2A'$. Let $W=3A-2A$. Then we have a well-defined Freiman $2$-isomorphism $\psi\colon W'\to W$ such that, given $a,b,c,x,y\in A'$,
\[
\psi(a+b+c-x-y)=\phi\inv(a)+\phi\inv(b)+\phi\inv(c)-\phi\inv(x)-\phi\inv(y).
\]
Note that if $x\in A'$, then $\psi(x)=\psi(x+0+0-0-0)=\phi\inv(x)$, and so $\psi(A')=\phi\inv(A')=A$. Moreover, since $\psi$ is a Freiman $2$-isomorphism and $\psi(0)=0$, it follows that for any $X,Y\seq W'$, if $X+Y\seq W'$ then $\psi(X+Y)=\psi(X)+\psi(Y)$, and if $X-Y\seq W'$ then $\psi(X-Y)=\psi(X)-\psi(Y)$. 

Let $P=\psi(P')$. By Proposition \ref{prop:TV}$(a)$,  $P$ is a proper coset progression of rank $s\leq O_{d,k,\epsilon}(1)$. Let $Z=\psi(Z')$. Then $|Z|=|Z'|<\epsilon|A'|=\epsilon|A|$. We show that $P$ and $Z$ satisfy conditions $(i)$, $(ii)$, and $(iii)$ in the statement of the theorem. Note first that $P\seq\psi(A'-A')=\psi(A')-\psi(A')=A-A$. Similarly, if we set $C=\psi(C')$ then using property $(3)$ above, together with the fact that $\psi$ is a bijection and $A=\psi(A')$, we have $|C|\leq \gamma\inv\leq O_{d,k,\epsilon}(1)$ and $C\seq A\seq C+P$. So we have $(i)$.

Next we show  $(iii)$ in a stronger form. Fix $g\in G\backslash Z$. If $g\not\in A+P$ then $(g+P)\cap A=\emptyset$, so assume $g\in A+P$. Thus we can write $g=\psi(x)$ for some $x\in (A'+P')\backslash Z'$. By property $(2)$, either $|(x+P')\cap A'|<\epsilon\gamma|P'|=\epsilon\gamma|P|$ or $|(x+P')\backslash A'|<\epsilon\gamma|P'|=\epsilon\gamma|P|$. Note that $x+P'\seq A'+2P'\seq W'$, and so $\psi((x+P')\cap A')=(g+P)\cap A$ and $\psi((x+P')\backslash A')=(g+P)\backslash A$. So either $|(g+P)\cap A|<\epsilon\gamma|P|$ or $|(g+P)\backslash A|<\epsilon\gamma|P|$. Since $\epsilon\gamma\leq\epsilon$, this yields  $(iii)$.

Finally, we prove $(ii)$. Let $F=\psi(F')$. So $|F|\leq\gamma\inv$ and $A\backslash Z\seq F+P$ by property $(4)$. Set $D=\{g\in F:|(g+P)\backslash A|<\epsilon\gamma|P|\}$, and note that if $g\in E:=F\backslash D$ then $|(g+P)\cap A|<\epsilon\gamma|P|$. Since $A\backslash Z\seq F+P$, it follows (as in the proof of Theorem \ref{thm:mainUP}) that
\[
A\smd (D+P)\seq Z\cup ((D+P)\backslash A)\cup((E+P)\cap A).
\] 
Therefore $|(A\smd(D+P))\backslash Z|\leq |F|\epsilon\gamma |P'|\leq \epsilon|P'|$. 
\end{proof}

\begin{remark}\label{rem:abSis}
As previously stated, in order to obtain explicit bounds in Theorem \ref{thm:NIPgen-ab} via the previous proof, one would need  a quantitative version of Theorem \ref{thm:CPTNIPab} for $f(u,v)=\epsilon(u/2v^{2v})^{2v}$. In particular, given explicit dependences $\delta(d,\alpha,\epsilon)$ and $r(d,\alpha,\epsilon)$ in Theorem \ref{thm:CPTNIPab}, the previous proof would yield $|C|\leq (2r^{2r}/\delta)^r$ in the statement of Theorem \ref{thm:NIPgen-ab}, where $\alpha=(100k)^{\nv 10k^2}$, $\delta=\delta(d,\alpha,\epsilon)$ and $r=r(d,\alpha,\epsilon)$.  The tools developed by Sisask in \cite{SisNIP} could be one avenue toward such a result. On the other hand, with Remark \ref{rem:qual} in mind, it is perhaps more reasonable to expect a version of Theorem \ref{thm:NIPgen-ab} with better dependence of $|C|$ on $\delta$ and $r$, but with the weaker statement $|A\smd (D+P)|<\epsilon\alpha |A|$ in condition $(ii)$. 
\end{remark}

\begin{proof}[\textnormal{\textbf{Proof of Theorem \ref{thm:NIPexp-ab}}}]
The strategy is the same as in Theorem \ref{thm:NIPgen-ab}, and so we will only sketch the important changes. We again fix an abelian group $G$ and a finite $d$-NIP set $A\seq G$ with $k$-doubling.  By Lemma \ref{lem:GR} and Proposition \ref{prop:FItame}, there is an abelian group $G'$ of exponent $r$, with $|G'|\leq c_r(k)|A|$, and a Freiman $6$-isomorphism $\phi'\colon A\to A'$ where $0\in A'\seq G'$ and $A'$ is $d$-NIP in $G'$. As before, we assume without loss of generality that $0\in A$ and $\phi(0)=0$.

Now fix $0<\delta\leq 1$ and let $\epsilon=\delta/c_r(k)$. Following the proof of \cite[Lemma 2.4]{AFZ}, there is a subgroup $H'\leq G'$ of index $n\leq \exp(O_r(d^8))\epsilon^{\nv d}$ such that $|(x+A')\smd A'|<\epsilon|G'|$ for all $x\in H'$. Let $D'=\{x\in G':|(x+H')\cap A'|>|H'|/2\}$. Then, as in \cite[Lemma 2.4]{AFZ}, it follows that $|A'\smd (D'+H')|<\epsilon|G'|\leq\delta|A|$. Using similar methods (which are detailed explicitly in \cite[Lemma 8.2]{CoBogo}), it also follows that there is some $Z'\seq A'+H'$, which is a union of cosets of $H'$ with $|Z'|<\epsilon^{1/2}|G'|\leq\delta^{1/2}|A|$, such that for all $x\in G'\backslash Z'$ either $|(x+H')\cap A'|<\epsilon^{1/4}|H'|$ or $|(x+H')\backslash A'|<\epsilon^{1/4}|H'|$. 

Since $[G':H']\leq n$, we can choose $C'\seq G'$ with $|C'|\leq n$ such that $A'\seq C'+H'$. By choosing $C'$ of minimal size, we have $(x+H')\cap A'\neq\emptyset$ for all $x\in C'$, and so after possibly changing coset representatives we may assume $C'\seq A'$. Similarly, we may assume without loss of generality that $D'\seq C'$. So the last thing we need before transferring back to $G$ is $H'\seq A'-A'$. To prove this, first note that $D'\neq\emptyset$ since $\delta\leq 1$. So fix $x_0\in D'$. For any $h\in H'$, we have $|(x_0+H')\cap (h+A')|=|(x_0+H')\cap A'|>|H'|/2$, which implies $(h+A')\cap A'\neq\emptyset$, i.e., $h\in A'-A'$.

Now one proceeds exactly as in the proof of Theorem \ref{thm:NIPgen-ab} to transfer the above situation back to $G$ via the Freiman $2$-isomorphism $\psi\colon 2A'-A'\to 2A-A$ induced by $\phi\inv$. (We can make do with $2A'-A'$ since $2H'=H'$.)
The only extra detail required is that $\psi(H')$ is a subgroup of $G$, which is easily verified. 
\end{proof}

\begin{remark}
Given the discussion of stable sets in Section \ref{sec:stable}, a natural question is whether a quantitative version of Theorem \ref{thm:MPW} can be obtained for abelian groups using a similar strategy with Freiman isomorphism, together with the (effective) arithmetic regularity results of Terry and Wolf \cite{TeWo} for stable subsets of finite abelian groups. We leave it as an exercise to verify that this is indeed the case. In particular, using similar arguments, one can show that if $A\seq G$ is $k$-stable and Freiman $s$-isomorphic to $A'\seq G'$, for some $s\geq 2$, then $A'$ is $k$-stable (in $G'$). On the other hand, this method is unnecessary in the stable case due to recent work of the first author \cite{CoQSAR} which directly gives a quantitative version of Theorem \ref{thm:MPW}, with improved bounds compared to what would be obtained from applying \cite{TeWo}. 
\end{remark}

\end{document}